\documentclass[12pt]{amsart}
\usepackage{blindtext}
\textwidth 17 cm \textheight 21cm 
\topmargin 1cm 
\oddsidemargin -0.5 cm
\evensidemargin -0.5 cm

\usepackage[english]{babel} 
\usepackage[active]{srcltx}
\usepackage{subfigure} 
\usepackage{amssymb}
\usepackage{amsmath,amstext,amsthm,amsfonts,latexsym}
\usepackage[colorlinks=true,linkcolor=red,urlcolor=black]{hyperref} 
\usepackage{lineno} 
\usepackage{verbatim}

\usepackage{soul} 
\usepackage{graphicx}

\theoremstyle{plain}

\newtheorem{maintheorem}{Theorem}
\newtheorem{mainlemma}{Lemma}
\newtheorem{maincor}[maintheorem]{Corollary}

\newtheorem{teo}{Theorem}[section]
\newtheorem{prop}[teo]{Proposition}
\newtheorem{lema}[teo]{Lemma}

\makeatletter
\def\referencia#1#2{\begingroup
#2%
\def\@currentlabel{#2}%
\phantomsection\label{#1}\endgroup
}
\makeatother


%

\newcommand{\dist}{\mathrm{dist }}
\newcommand{\supp}{\mathrm{supp\, }}
\renewcommand{\S}{\mathbb{S}^1}
\newcommand{\Td}{\mathbb{T}^d}
\newcommand{\ms}{\mathrm{Leb}_{\mathbb{S}^1}}
\newcommand{\md}{\mathrm{Leb}_{\mathbb{T}^d}}
\newcommand{\m}{\mathrm{Leb}}
\newcommand{\hK}{\hat{K}}

\newcommand{\hS}{\hat{S}}
\newcommand{\hE}{\hat{E}}
\newcommand{\hpi}{\hat{\pi}}
\newcommand{\htheta}{\hat{\theta}}
\newcommand{\halpha}{\hat{\alpha}}
\newcommand{\hbeta}{\hat{\beta}}
\newcommand{\hgamma}{\hat{\gamma}}
\newcommand{\hmu}{\hat{\mu}}
\newcommand{\hnu}{\hat{\nu}}

\newcommand{\htop}{\mathrm{h_{top}}}
\newcommand{\Ptop}{\mathrm{P_{top}}}

\title{Measures maximizing the entropy for Kan endomorphisms}

\author{B\'arbara N\'u\~nez-Madariaga, 
Sebasti\'an A. Ram\'irez \& 
Carlos H. V\'asquez}
\address{Pontificia Universidad Cat\'olica de Valpara\'{\i}so, Blanco Viel 596, Cerro Bar\'on, Valpara\'{\i}so-Chile.}
\email{barbara.nunez@pucv.cl, sebastian.ramirez@pucv.cl, carlos.vasquez@pucv.cl}
\thanks{All the authors were partially supported by Proyecto Fondecyt 1210168. 
B.N.M. was partially supported by the National Agency for Research and Development (ANID) / Scholarship Program / Doctorado Becas Chile / 2021 - 21210374.
S.R. was partially supported by the National Agency for Research and Development (ANID) / Scholarship Program / Doctorado Becas Chile / 2018 - 21181113.}

\subjclass{Primary:37C40, 37D30, 37D35.}

\keywords{Partial hyperbolicity, Measures maximizing entropy, Lyapunov exponents}

\date{\today}

\begin{document}

\begin{abstract} In 1994, Ittai Kan provided the first example of maps with intermingled basins. The Kan example corresponds to a partially hyperbolic endomorphism defined on a surface, with the boundary exhibiting two intermingled hyperbolic physical measures. Both measures are supported on the boundary, and they also maximize the topological entropy. In this work, we prove the existence of a third hyperbolic measure supported in the interior of the cylinder that maximizes the entropy. We also prove this statement for a larger class of invariant measures of large class maps including perturbations of the Kan example.

\end{abstract}

\maketitle


\section{Introduction: The Kan example}\label{sec:Kan}

In 1994, Ittai Kan \cite{Kan:1994kw} provided the first examples of maps with intermingled basins. More precisely, he considered the map $K: M\to M$ defined on the cylinder $M=\mathbb{S}^1\times I$, $I=[0,1]$ by the explicit expression
\begin{equation}\label{eq:Kanoriginal}
K(\theta,t)=(3\theta \;(\hspace{-.4cm}\mod \mathbb{Z}), t+\cos(2\pi\theta)\left(\frac{t}{32}\right)(1-t)),
\end{equation}
where $(\theta,t)\in[0,1)\times I$.

Recall that the \textit{basin} of a $K$-invariant measure $\mu$ is the set
$$\mathcal B(\mu)=\{x\in M\::\: \lim_{n\to\infty}\frac{1}{n} \sum_{j=0}^{n-1}\psi(K^j(x))=\int \psi\,d\mu,\mbox{ for every } \psi\in C^0(M,\mathbb{R})\}.$$
A $K$-invariant probability measure $\mu$ is \textit{physical} if $\m(\mathcal B(\mu))>0$. Physical measures are relevant because they account for the statistics associated with the dynamics for a number of relevant points with respect to the intrinsic measure (Lebesgue measure) of the ambient space, even if it is not invariant. Two (physical) measures $\mu_0$ and $\mu_1$ are \textit{intermingled} (with respect to the Lebesgue measure in $M$) if for every open set $U\subseteq M$, we have
\begin{equation*}\label{eq:intermcondleb}
\m(\mathcal B(\mu_0)\cap U)>0\quad\mbox{ and }\quad \m(\mathcal B(\mu_1)\cap U)>0.
\end{equation*}
Kan proved that the measures $\mu_0=\ms\times \delta_0$ and $\mu_1=\ms\times\delta_1$ are physical, that their basins are intermingled and that their union covers $\m_M$-a.e. the cylinder $M=\mathbb{S}^1\times I$. Moreover, this phenomenon is robust under small perturbations \cite{IKS08}. 

Research on the problem of establishing conditions to guarantee the existence of physical measures has achieved great developments in the last 20 years, and the interested reader can refer to \cite{V1999, Y2002} and the references therein for an introductory reading. In the case of the Kan example, it corresponds to a mostly contracting partially hyperbolic system. Mostly contracting systems were introduced by Bonatti and Viana in \cite{BV2000}. These authors proved that such systems have only finitely many physical measures, and the union of their basins is a total Lebesgue measure. Some recent developments about mostly contracting systems can be found in \cite{A2010, VY2013, DVY2016}. 

The magic in the Kan's example occurs due two key facts:

\begin{itemize}
\item The Lebesgue measure is an invariant measure in each boundary and boundaries have negative central Lyapunov exponents, and
\item The fiber maps at $\theta = 0$ and $\theta = 1/2$ form a (heterodimensional) cycle between the boundaries.
\end{itemize} 

Such are the ingredients collected in \cite[chapter 11]{BDV} to define the \textit{Kan-like maps} as follows. 
We consider the more general (noninvertible) maps $K: M\to M$, being a local diffeomorphism defined on $M=\Td\times I$ by
\begin{equation}\label{eq:Kanmap}
K(\theta,t)=(E(\theta),\varphi(\theta, t)),
\end{equation}
where 
\begin{enumerate}
\item[\referencia{KE}{[KE]}] $E: \Td\to \Td$ is a $C^r$, $r\geq 1$, expanding map having two different fixed points $p, q\in \Td$. 
\end{enumerate}

Recall that a $C^1$-transformation $ f: M \to M $ defined on a compact connected Riemannian manifold $M$ is \textit{expanding} if there are $\lambda> 1$ and a metric on $ M $ such that $ \|Df (x) v \| \geq \lambda \| v \|,$ for all $ x \in M $ and all $ v \in T_x M $.

We also assume that $\varphi:M\to [0,1]$ is $C^r$, $r\geq 1$, satisfying the following conditions: 
\begin{enumerate}
\item[\referencia{K1}{[K1]}]For every $\theta\in \Td$ we have $\varphi(\theta,0)=0$ and $\varphi(\theta,1)=1$. 
\item[\referencia{K2}{[K2]}] For every $(\theta,t)\in M$, 
\begin{equation*}\label{eq:ph1}
|\partial_t\varphi(\theta,t))|<\frac{1}{2} \|DE (\theta)\|
\end{equation*}
\item[\referencia{K3}{[K3]}] The map $\varphi(p,\cdot):[0,1]\to[0,1]$ has exactly two fixed points: a hyperbolic source at $t=1$ and a hyperbolic sink in $t=0$. Analogously, the map $\varphi(q,\cdot):[0,1]\to[0,1]$ has exactly two fixed points: a hyperbolic sink at $t=1$ and a hyperbolic source in $t=0$.%
\end{enumerate}

Condition \ref{K1} means that $K$ preserves the boundary. Condition \ref{K2} states that the map $K$ is partially hyperbolic. Condition \ref{K3} allows relating the dynamics along the boundary through a (heteroclinical) cycle. We call such a local diffeomorphism $K$ given by \eqref{eq:Kanmap} satisfying conditions \ref{KE}, \ref{K1}, \ref{K2} and \ref{K3} a \textit{Kan-like map}. 

Kan's example \eqref{eq:Kanoriginal} belongs to the interesting particular class of Kan-like maps in which $E$ is an affine expanding map on $\S$; that is, $k\geq 3$:
$$E(\theta)= k\cdot\theta \:(\textrm{mod}\: \mathbb{Z}),\quad \theta\in \S,$$ 
for fixed $k\in\mathbb{Z}$. If $E$ is a linear expanding map, we state that $K$ is an \textit{affine Kan-like map}.

The next theorem provides a complete description of the physical measures for Kan-like maps.

\begin{teo}\label{teo:Kan}
Assume that $K$ is a $C^2$ Kan-like map and
\begin{equation}\label{eq:condK4phys}
\int\log |\partial_t \varphi (\theta,0)|\: d \md(\theta) <0 \mbox{ and }\int\log |\partial_t \varphi (\theta,1)|\: d\md(\theta)<0
\end{equation}
Then, there exist exactly two physical measures, $\mu_0$ and $\mu_1$, supported on the boundaries $\Td\times\{0\}$ and $\Td\times\{1\}$, respectively, such that they are intermingled and the union of their basins covers $\md$ at almost every point on the whole manifold.
\end{teo}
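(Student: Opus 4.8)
The plan is to follow the Bonatti--Viana machinery for mostly contracting diffeomorphisms, adapted to the noninvertible partially hyperbolic setting of Kan-like maps. First I would verify that a $C^2$ Kan-like map satisfying \eqref{eq:condK4phys} is mostly contracting along the center direction. The center bundle here is the one-dimensional $\partial_t$ direction, which is $DK$-invariant thanks to \ref{K1}, and \ref{K2} guarantees the dominated splitting $T M = E^c \oplus E^u$ with $E^u$ tangent to the expanding base direction. The two boundary circles $\Td\times\{0\}$ and $\Td\times\{1\}$ carry the natural invariant measures $\mu_0 = \md\times\delta_0$ and $\mu_1 = \md\times\delta_1$; by \ref{K1} their center exponents are exactly the two integrals in \eqref{eq:condK4phys}, hence negative. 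The point is to show these are the \emph{only} ergodic measures that can be ``physical candidates'': using the cone-field/accessibility structure coming from \ref{KE} (topological transitivity and density of unstable leaves of the base expanding map) together with the cycle condition \ref{K3}, one shows that for Lebesgue-almost every point the center exponent along its forward orbit is negative, and the Pesin-theoretic unstable disks (here, pieces of $\Td\times\{t\}$ graphs) have their iterates accumulating on the boundaries.

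Next I would set up the explicit construction of $\mu_0$ and $\mu_1$ and their basins. Because $E$ is expanding with fixed points $p,q$ and $\varphi(p,\cdot)$ has a sink at $t=0$ while $\varphi(q,\cdot)$ has a sink at $t=1$ (condition \ref{K3}), the two boundaries are ``partially attracting'': there is a positive-measure set of points whose orbits spend enough time near $\theta$-values where $\partial_t\varphi$ contracts that the center exponent is negative and the orbit converges to a boundary. The key quantitative input is \eqref{eq:condK4phys}: averaging $\log|\partial_t\varphi(\cdot,i)|$ against $\md$ controls the center contraction for $\md$-typical base orbits via the Birkhoff theorem for the expanding map $E$ (which preserves $\md$ in the affine case and an equivalent measure in general, by Krzyżewski--Szlenk). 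Standard distortion estimates along unstable disks then upgrade this to: Lebesgue-a.e.\ point in $M$ lies in $\mathcal B(\mu_0)\cup\mathcal B(\mu_1)$.

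Then I would prove intermingling. Given any open $U\subseteq M$, expansivity of $E$ implies some forward iterate of $U$ covers a whole horizontal annulus $\Td\times J$; within such an annulus one finds, near $\theta=p$, a positive-measure set flowing to $\Td\times\{0\}$, and near $\theta=q$ a positive-measure set flowing to $\Td\times\{1\}$ — this is exactly where \ref{K3} does its work, giving the cycle that makes both boundaries ``visible'' from every open set. Pulling these sets back under the (finitely-many-to-one, nonsingular) iterates of $K$ keeps positive Lebesgue measure, so $\m(\mathcal B(\mu_i)\cap U)>0$ for $i=0,1$. Finally, uniqueness: any physical measure must be supported on the closure of the union of Pesin unstable manifolds of a positive-measure set, and the analysis above shows that set is attracted to the boundaries; since $\mu_0,\mu_1$ are the only ergodic measures there with a full basin, there are exactly two.

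The main obstacle I expect is the quantitative passage from the \emph{averaged} hypothesis \eqref{eq:condK4phys} to \emph{almost-every-orbit} center contraction: because $E$ need not preserve Lebesgue (only an equivalent measure) and because the relevant orbits are those of the endomorphism $K$ rather than of $E$ alone, one must run a careful Pesin-type argument controlling the fraction of time typical unstable disks spend in regions of center expansion, with uniform bounded-distortion estimates for the noninvertible map. This is the technical heart; the cycle condition \ref{K3} and transitivity of $E$ handle the topological side (intermingling) comparatively cleanly once the measure-theoretic contraction statement is in hand.
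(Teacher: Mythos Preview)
Your outline is essentially the same approach the paper sketches (and attributes to \cite[Proposition 11.1]{BDV}): negative center exponents on the boundaries from \eqref{eq:condK4phys}, Pesin stable manifolds inside the fibers, a Hopf/distortion argument over the expanding base to show the two basins cover Lebesgue-a.e., and the heteroclinic cycle \ref{K3} to force intermingling. One correction: you write $\mu_i=\md\times\delta_i$, but for a general $C^2$ expanding map $E$ the measure $\md$ is not $E$-invariant, so those products are not $K$-invariant and their ``center exponents'' are not the integrals in \eqref{eq:condK4phys}; the paper takes $\mu_i=m_0\times\delta_i$ with $m_0$ the unique absolutely continuous $E$-invariant measure (Krzy\.zewski--Szlenk), and it is $\int\log|\partial_t\varphi(\cdot,i)|\,dm_0$ that equals $\lambda^c(\mu_i)$ --- you already note this distinction later, so just carry it through from the start.
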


The reader can find a complete proof of this statement in \cite[Proposition 11.1]{BDV} (see also \cite{BM2008}). The proof of the Theorem~\ref{teo:Kan} above proceeds as follows. Recall that since $E$ is a $C^2$ expanding map, there exists a unique $E$-invariant probability measure $m_0$ which is absolutely continuous with respect to the Lebesgue measure on the torus. The measures in the theorem above are defined by $\mu_i=m_0\times\delta_i$, $i=0,1$. Condition \eqref{eq:condK4phys} refers to the fact that central Lyapunov exponents at the boundary points are generically (with respect to the Lebesgue measure) negative, and this implies (using Pesin's theory and Hopf's argument) that the measures are physical. 

Condition \ref{K3} is essential, implying that the basins are intermingled and that their union covers the whole manifold except for a zero Lebesgue measure subset.

Since Kan's work, the intermingled phenomenon has been the subject of various studies. For instance, there are some new examples that have more than two physical measures, and their basins are intermingled (see \cite{AHKY1996,MW2005,BP2018,DVY2016}). Some accurate estimations for the Hausdorff dimension of the set that does not belong to the basins of physical measures were obtained by \cite{IKS08, KS2011}.
Keller \cite{K2017} observed the phenomenon of intermingled basins for a skew product dynamical system on a square with a piecewise expanding Markov base map and a negative Schwarzian fiber map, and he further quantified the degree of intermingledness. Recently, Gan and Shi \cite{GS2019} proved that every affine Kan-like map on $\S\times I$ is $C^2$-robustly topologically mixing, providing more evidence about the difference between the topological and measurable indecomposability (see also \cite{CGS2018}). 

Bonifant and Milnor \cite{BM2008} also studied affine Kan-like maps on $\S\times I$ under the hypothesis of the negative Schwarzian derivative on the fiber map, which requires more regularity of the application $K$. They obtained the result contained in Theorem~\ref{teo:Kan}, and they also observed the existence of a third measure supported in the interior of the cylinder such that it is projected (via push forward) on the Lebesgue measure in the boundary. Those results motivate our study regarding other relevant measures for $K$. 

We slightly change the approach in this work, considering as a \textit{reference measure} any Borel probability $m$ defined on $M$, not necessarily Lebesgue. Therefore, an $f$-invariant probability measure $\mu$ is an \textit {observable measure with respect to the reference measure} $m$ if $ \mathcal {B} (\mu) $ has a positive $m$-measure. In particular, when $m=\m$ in $M$, $\mu$ is an observable measure when it is a physical measure.

Let $f:M\to M$ be a continuous map defined on a compact metric space having invariant measures $\mu$ and $\nu$. We say that (the basins of) $\mu$ and $\nu$ are \textit{intermingled with respect to a Borelean measure} $m$ defined on $M$ if for every open set $U\subseteq M$, we have
\begin{equation}\label{eq:intobs}
m(\mathcal{B}(\mu)\cap U)>0\quad\mbox{ and }\quad m(\mathcal{B}(\nu)\cap U)>0.
\end{equation}
It follows from \eqref{eq:intobs} that $\mu$ and $\nu$ are observable measures with respect to $m$. 
Note that in the particular case when $M$ is a Riemannian manifold and $m=\m$, we recover the notion of physical measures whose basins are intermingled.

Our first result is the following.

\begin{maintheorem}\label{mainteo:KE1} 
Assume that $K$ is a Kan-like map and suppose that $\nu$ is any 
ergodic $E$-invariant probability measure on $\Td$ satisfying
\begin{equation}\label{eq:condK4im}
\int_{\Td} \log |\partial_t \varphi(\theta,j)| d\nu(\theta) < 0, \quad j=0,1. 
\end{equation}
Denote by $m= \nu \times \m_I$ the reference measure. Then, the following is true:
\begin{enumerate}
  \item[\referencia{TeoAi}{(i)}]{\rm{\textbf{Measures supported in the boundaries:}}} The measures $\mu_j=\nu \times \delta_j$ are  $K$-invariant and ergodic, have negative central Lyapunov exponents, are supported on $\Td \times \{j\}$, and satisfy the following properties :
    \begin{itemize}
        \item[(a)] The measures $\mu_j$, $j=0,1$, are observable measures with respect to $m$.
    
        \item[(b)] If $\supp \nu =\Td$, then the basins $\mathcal{B}(\mu_j)$ are intermingled with respect to $m$.

        \item[(c)] If the Jacobian $J_\nu E$ has bounded distortion\footnote{We refer the reader to subsection~\ref{ssec:expandingmaps} for details about the notions of Jacobian and bounded distortion.} in addition, then the union of their basins $\mathcal{B}(\mu_0)\cup \mathcal{B}(\mu_1)$ covers $m$-a.e. the  manifold $M$.
    \end{itemize}    
Moreover, there is no other invariant measure $\mu$ such that $\pi_*\mu=\nu$ with a negative center Lyapunov exponent. 

\end{enumerate} 

\begin{enumerate}
\item[\referencia{TeoAii}{(ii)}]{\rm{\textbf{Measures supported in the interior:}}}
Suppose that $\nu$ is any ergodic $E$-invariant probability measure such that $J_\nu E$ is H\"older continuous. Then,
\begin{enumerate}
        \item[(d)] There is a measurable, not continuous, map $\sigma: \Td \to I$ such that for $\nu$-a.e. $\theta\in \Td$ 
        $$W^s(\theta, 0)=\{\theta\} \times [0, \sigma(\theta)),\quad\mbox{ and }\quad W^s(\theta, 1)=\{\theta\} \times (\sigma(\theta), 1],$$ where $W^s (\theta, j)$ denotes the stable manifold.

       \item[(e)] There exists a unique ergodic $K$-invariant probability measure $\mu$ such that $\pi_*\mu=\nu$ and $\mu\ne\mu_j$, $j=0,1$. Such measure $\mu$ is defined by
$$\int_M\Phi\,d\mu=\int_{\Td}\Phi(\theta,\sigma(\theta))\,d\nu(\theta),$$
for every $\Phi:M\to \mathbb{R}$ continuous.
        \item[(f)] Moreover, $\mu$ satisfies:
        \begin{enumerate}
        \item The central Lyapunov exponent of $\mu$ is positive,
        \item $\mu$ is approximated by measures supported along periodic orbits in the interior of $M$, and
        \item if $K$ is $C^2$ then, $\supp\mu=M$.
            \end{enumerate}
    \end{enumerate}

\end{enumerate}
\end{maintheorem}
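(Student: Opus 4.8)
We sketch the strategy, treating the boundary measures of (i) and the interior measure of (ii) in turn. A structural observation underpins everything: since $K$ is a local diffeomorphism, $\det DK=\det DE\cdot\partial_t\varphi$ never vanishes, so $\partial_t\varphi$ has constant sign, and \ref{K1} forces $\partial_t\varphi>0$; hence every fiber map $\varphi(\theta,\cdot)$ is an increasing homeomorphism of $I$ fixing $0$ and $1$, and $K$ preserves the vertical order on fibers. This is what makes the stable laminations of $\mu_0,\mu_1$ interval-shaped and is used throughout.

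For part~(i): $\mu_j=\nu\times\delta_j$ is $K$-invariant by \ref{K1} (the boundary dynamics is conjugate to $E$), ergodic because $\nu$ is, and has transverse exponent $\lambda^c(\mu_j)=\int_{\Td}\log|\partial_t\varphi(\theta,j)|\,d\nu<0$ by \eqref{eq:condK4im}. Since the unstable direction of $\mu_j$ lies in $\Td\times\{j\}$ and the center is contracted, Pesin theory gives $\nu$-a.e.\ a nondegenerate stable arc sprouting from the boundary into the fiber; Fubini then makes $m=\nu\times\m_I$ charge $\mathcal B(\mu_j)$, which is (a). For (b) I would use topological exactness of $E$ to spread a uniform stable arc over all of $\Td$ along inverse branches of some $E^n$, so that $\supp\nu=\Td$ makes both basins charge every open set. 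Statement (c) is the Bonatti--Viana mostly contracting mechanism: bounded distortion of $J_\nu E$ upgrades ``$\nu$-a.e.\ a stable arc exists'' to ``the fiberwise conditional of $m$ lives on $W^s(\cdot,0)\cup W^s(\cdot,1)$''. For the uniqueness clause, reduce an invariant $\mu$ with $\pi_*\mu=\nu$ and contracted center to the ergodic case; if $\mu$ sat on $\partial M$ then ergodicity and $\pi_*\mu=\nu$ give $\mu=\mu_j$, so assume $\mu$ is interior, disintegrate $\mu=\int\mu_\theta\,d\nu$ over $\nu$, and use order preservation together with $\lambda^c(\mu)<0$ to conclude that $\mu$ is carried by an $E$-invariant measurable graph; the cycle \ref{K3} and \ref{K1} then leave the two boundaries as the only graphs with contracted center.

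The heart of part~(ii) is producing $\sigma$ (statement (d)). By order preservation, $\mathcal B(\mu_0)$ meets each fiber in a lower interval $[0,\sigma^-(\theta))$ and $\mathcal B(\mu_1)$ in an upper interval $(\sigma^+(\theta),1]$, with $\sigma^-\le\sigma^+$ measurable; the content is that $\sigma^-=\sigma^+=:\sigma$ for $\nu$-a.e.\ $\theta$, i.e.\ that the two stable laminations exhaust the cylinder. Here the H\"older hypothesis on $J_\nu E$ is essential: it makes $\nu$ a Gibbs state, hence of full support and with bounded distortion along cylinders, so the estimate of (c) applies to $m=\nu\times\m_I$ and forces $m(\{\sigma^-\le t\le\sigma^+\})=0$. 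One then checks the frontier is $K$-invariant, $\varphi(\theta,\sigma(\theta))=\sigma(E(\theta))$ $\nu$-a.e., that $W^s(\theta,0)=\{\theta\}\times[0,\sigma(\theta))$ and $W^s(\theta,1)=\{\theta\}\times(\sigma(\theta),1]$, and that $\sigma$ is not a.e.\ equal to a continuous function (a continuous interior invariant graph would violate topological transitivity of $K$, and \ref{K3} pins the would-be values at $p,q$ to opposite boundaries). \emph{This is the step I expect to be the main obstacle}: because $E$ is noninvertible, the frontier cannot be iterated backward directly, so $\sigma$ must genuinely be constructed --- and its distortion controlled --- along backward orbits, naturally in the inverse limit of $E$ via a graph transform whose error bookkeeping is exactly what a H\"older base Jacobian governs, and only then shown to descend to a function of the present coordinate and to coincide with the true frontier of the stable laminations.

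Finally the measure and (e), (f). Put $\mu:=(\mathrm{id},\sigma)_*\nu$, so $\int_M\Phi\,d\mu=\int_{\Td}\Phi(\theta,\sigma(\theta))\,d\nu(\theta)$: invariance is the identity $\varphi(\theta,\sigma(\theta))=\sigma(E(\theta))$ just obtained, ergodicity is inherited from $\nu$, and $\pi_*\mu=\nu$ with $\mu\ne\mu_j$. Uniqueness in (e): an ergodic $\mu'$ over $\nu$ with $\mu'\notin\{\mu_0,\mu_1\}$ gives zero mass to $\mathcal B(\mu_0)$ and $\mathcal B(\mu_1)$ by disjointness of basins, hence fiberwise is carried by their complement, which --- once $\sigma^-=\sigma^+$ --- is the single point $\sigma(\theta)$; so $\mu'=\mu$. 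For (f)(1), $\lambda^c(\mu)=\int_{\Td}\log|\partial_t\varphi(\theta,\sigma(\theta))|\,d\nu\ge0$ by (i), and strict positivity follows from a detachment estimate --- fiber points just off the frontier are repelled to $\{t=0\}$ or $\{t=1\}$, so the $\epsilon$-neighborhood of the frontier is expelled in time $O(\log 1/\epsilon)$, which forces the center multiplier along $\mu$ to grow exponentially; thus $\mu$ is hyperbolic (I also flag this positivity as delicate). For (f)(2), approximate $\nu$ weakly by $E$-periodic measures $\nu_n$ (available since $\nu$ is Gibbs), and over each supporting periodic orbit take the $K$-periodic orbit on the frontier --- the orbit of $(\theta,\sigma(\theta))$ for $\theta$ periodic is periodic, and its fiber point is a repelling-from-both-sides fixed point of the return map, which exists by engineering the fiber composition to cross the diagonal using the cycle \ref{K3}; these interior periodic measures project to $\nu_n$, remain off $\partial M$ in the limit (as $\nu$ gives no mass to $p,q$), so any weak limit is $K$-invariant over $\nu$ distinct from $\mu_j$, hence $\mu$ by (e). For (f)(3), if $K$ is $C^2$ then $J_\nu E$ has bounded distortion and $K$ is topologically mixing (Gan--Shi and the Kan-like analogue); combining mixing with the distortion control shows the frontier values $\sigma(B)$ are dense in $I$ for every ball $B\subseteq\Td$, so $\mathrm{graph}(\sigma)$ is dense in $\supp\nu\times I=M$ and $\supp\mu=M$.
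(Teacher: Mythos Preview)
Your outline for part~(i) is broadly aligned with the paper, though two points diverge. For~(b) you propose spreading stable arcs backward along inverse branches of $E$; the paper instead iterates a transverse disc \emph{forward}, uses partial hyperbolicity so that iterates cross the full stable fibers $W^s(p,0)$ and $W^s(q,1)$ furnished by \ref{K3}, and then pulls back the intersection with the stable ``rectangles'' $B_j(\epsilon)$. For the uniqueness clause, your graph-plus-\ref{K3} argument is not the paper's route and, as stated, has a gap: nothing you say excludes a measurable (non-continuous) interior invariant graph with negative center exponent. The paper's argument is shorter and uses what you have already proved: if $\lambda^c(\mu)<0$ then Pesin stable arcs give $m(\mathcal B(\mu))>0$, while (c) says $\mathcal B(\mu_0)\cup\mathcal B(\mu_1)$ has full $m$-measure; disjointness of basins forces $\mu\in\{\mu_0,\mu_1\}$.

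The serious gap is in part~(ii), and you have misidentified where the difficulty lies. Constructing $\sigma$ does \emph{not} require inverse limits or a graph transform: $\sigma(\theta)$ is simply defined as the endpoint of the Pesin stable interval $W^s(\theta,0)\subset I_\theta$ (purely forward dynamics), and $\sigma^-=\sigma^+$ is, as you say, an immediate consequence of~(c). Non-continuity of $\sigma$ follows directly from~(b): a continuous graph would separate the two basins, contradicting intermingling; no appeal to transitivity is needed.

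The real obstacle is (f)(i), strict positivity $\lambda^c(\mu)>0$, and your ``detachment estimate'' does not prove it. The claim that an $\epsilon$-neighborhood of the frontier is expelled in time $O(\log 1/\epsilon)$ is precisely equivalent to $\lambda^c(\mu)>0$, so asserting it begs the question; a priori the repulsion could be subexponential (think of $t\mapsto t+t^3$ near $0$), in which case the exponent along the frontier is zero. The paper's mechanism is entirely different: one lifts $K$ to a smooth cocycle $\hat K$ over the hyperbolic inverse-limit homeomorphism $\hat E$, where the H\"older Jacobian hypothesis gives $\hat\nu$ local product structure. If $\lambda^c(\mu)=0$, the Avila--Viana invariance principle forces the disintegration $\hat\theta\mapsto\hat\mu_{\hat\theta}$ to be continuous and $s$-invariant; a short computation then shows $\theta\mapsto\mu_\theta=\delta_{\sigma(\theta)}$ is continuous, contradicting the non-continuity of $\sigma$ just established. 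So the inverse limit and the H\"older control you flagged \emph{are} needed, but for ruling out the zero exponent via the invariance principle, not for building $\sigma$.
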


In general, the idea of the proof of Theorem~\ref{mainteo:KE1} of part \ref{TeoAi} is to consider $\mu_j=\nu\times\delta_j$, with $j=0,1$, and to observe that the assumption \eqref{eq:condK4im} means that $\mu_j$  has a negative central Lyapunov exponent. We can then follow the argument of Kan (see \cite[chapter 11]{BDV} and \cite{BV2000}). The main obstacle is the use of properties such as absolutely continuous holonomies, density points, or distortion in order to prove (c). In contrast, we strongly use the product structure, density points for Borel measures (from the Besicovich covering theorem), and bounded distortion of the Jacobian.
 
The proof of Theorem~\ref{mainteo:KE1} in part \ref{TeoAii} follows \textit{a la Bonifant-Milnor}. Obtaining the measure slides the mass to one end of the local stable leaf, such as the measure obtained in \cite{RRTU2012} for measures maximizing the entropy. Similar to that work, a key step in the proof of (g) is to establish Lemma~\ref{le:klemma}, which is an application of the invariance principle (see \cite{AV2010}), which must be reformulated to the noninvertible case. H\"older continuity of the Jacobian is the key in this last step. We want to emphasize that our methods work for $C^1$ maps. Nevertheless, the proof of item (f) part (iii) above follows the ideas of \cite{GS2019} where the Sternberg's result is invoked, and so we require $C^2$ regularity.

Recall that an invariant probability measure $\mu$ is a \textit{measure maximizing the entropy} if $ {\rm h}_\mu (K) = \htop (K)$ (see more details in Section~\ref{sec:preliminares}). Such measures are the equilibrium states for the constant potential $\varphi=0$. As an application of Theorem~\ref{mainteo:KE1}, we can recover the conclusion of Theorem~\ref{teo:Kan} for such a class of measures. Recall that if $E$ is an expanding map, then there exists a unique measure maximizing the entropy $\nu_0$ supported on the whole $\Td$ and having a constant Jacobian. 

\begin{maincor}\label{maincor:B} 
Assume that $K$ is a Kan-like map and $\nu_0$ is the supposed   unique measure maximizing the entropy of $E$. Assume that
\begin{equation}\label{eq:condK4mme}
\int\log |\partial_t \varphi (\theta,j)|\: d\nu_0(\theta)<0, \quad j=0,1;
\end{equation}
\noindent Then, there exist exactly three ergodic measures maximizing the entropy 
$\mu_0,\mu,\mu_1$. All of them are hyperbolic, and 
\begin{enumerate}
\item[\referencia{CorBi}{(i)}] $\mu_0$ and $\mu_1$ have negative central Lyapunov exponents, they are supported on the boundaries, each basin has positive weight with respect to $\nu_0\times \m_I$, and the union of their basin cover $\nu_0\times \m_I$ a.e. the whole manifold $M$.
Moreover, their basins are intermingled with respect to $\nu_0\times \m_I$. 
\item[\referencia{CorBii}{(ii)}] $\mu$ has a positive central Lyapunov exponent, $\mu$ is approximated by ergodic measures supported on periodic orbits of $K$ in the interior of $M$, and $\supp \mu=M$ if $K$ is $C^2$.
\end{enumerate}
\end{maincor}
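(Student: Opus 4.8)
The plan is to obtain Corollary~\ref{maincor:B} as a specialization of Theorem~\ref{mainteo:KE1} to $\nu=\nu_0$, and then to upgrade its conclusions to statements about entropy by identifying $\htop(K)$ with $\htop(E)$. First I would check that $\nu_0$ meets all hypotheses of Theorem~\ref{mainteo:KE1}: since $E$ is expanding, $\nu_0$ is its unique measure of maximal entropy, it is ergodic and fully supported ($\supp\nu_0=\Td$), and its Jacobian $J_{\nu_0}E$ is constant, hence of bounded distortion and (trivially) H\"older continuous; moreover \eqref{eq:condK4mme} is exactly \eqref{eq:condK4im} for $\nu=\nu_0$. Applying parts \ref{TeoAi} and \ref{TeoAii} with reference measure $m=\nu_0\times\m_I$ then produces the boundary measures $\mu_0=\nu_0\times\delta_0$, $\mu_1=\nu_0\times\delta_1$ and the interior measure $\mu$ of item (e), together with all the asserted dynamical properties: $\mu_0,\mu_1$ have negative central exponents, are observable with respect to $\nu_0\times\m_I$, have intermingled basins (using $\supp\nu_0=\Td$) whose union covers $\nu_0\times\m_I$-a.e.\ $M$ (using bounded distortion of $J_{\nu_0}E$); and $\mu$ has positive central exponent, is approximated by ergodic measures supported on periodic orbits in $\mathrm{int}(M)$, and has $\supp\mu=M$ when $K$ is $C^2$. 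All three are hyperbolic because the exponents along the $\Td$-direction are bounded below by $\log\lambda>0$ (as $E$ is expanding) and the central exponent is nonzero in each case; this already yields the dynamical content of \ref{CorBi} and \ref{CorBii}.

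What remains is to show that $\mu_0,\mu,\mu_1$ are exactly the ergodic entropy-maximizing measures of $K$, and the crucial point is that the fiber entropy of the skew product vanishes. Indeed, for every $\theta$ the fiber map $\varphi(\theta,\cdot)\colon I\to I$ fixes $0$ and $1$ by \ref{K1}, and $\partial_t\varphi$ never vanishes (the differential $DK$ is block triangular with diagonal blocks $DE$ and $\partial_t\varphi$, and $K$ is a local diffeomorphism), so $\partial_t\varphi>0$ on the connected manifold $M$; hence every composition $\varphi(E^{n-1}\theta,\cdot)\circ\cdots\circ\varphi(\theta,\cdot)$ is an increasing homeomorphism of $I$. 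Since iterated images of $I$ under monotone maps stay nested in $[0,1]$, a short count shows that an $(n,\varepsilon)$-separated subset of a fiber $\pi^{-1}(\theta)=\{\theta\}\times I$ has at most $n/\varepsilon+1$ elements, so the Bowen fiber entropy $h_{\mathrm{top}}\bigl(K,\pi^{-1}(\theta)\bigr)$ equals $0$ for every $\theta$. By the Ledrappier--Walters inequality for the factor map $\pi\colon(M,K)\to(\Td,E)$,
$$h_\rho(K)\ \le\ h_{\pi_*\rho}(E)+\int_{\Td}h_{\mathrm{top}}\bigl(K,\pi^{-1}(\theta)\bigr)\,d(\pi_*\rho)(\theta)\ =\ h_{\pi_*\rho}(E)$$
for every $K$-invariant $\rho$; combined with the reverse inequality $h_\rho(K)\ge h_{\pi_*\rho}(E)$, valid because $\pi$ is a factor map, this gives $h_\rho(K)=h_{\pi_*\rho}(E)$ for every invariant $\rho$, and in particular $\htop(K)=\htop(E)$ since $\pi_*$ maps $M(K)$ onto $M(E)$.

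Consequently a $K$-invariant $\rho$ maximizes the entropy of $K$ if and only if $h_{\pi_*\rho}(E)=\htop(E)$, that is, by uniqueness of the measure of maximal entropy of the expanding map $E$, if and only if $\pi_*\rho=\nu_0$. It then only remains to enumerate the ergodic $K$-invariant measures projecting to $\nu_0$: the last assertion of part \ref{TeoAi} says the only ones with negative central exponent are $\mu_0$ and $\mu_1$, and part (e) of \ref{TeoAii} says $\mu$ is the unique ergodic one distinct from $\mu_0,\mu_1$. Hence $K$ has exactly the three ergodic entropy-maximizing measures $\mu_0,\mu,\mu_1$, all hyperbolic, with the properties listed in \ref{CorBi} and \ref{CorBii}. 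I expect the only genuinely non-routine step to be the vanishing of the fiber entropy (equivalently, $\htop(K)=\htop(E)$ together with $h_\rho(K)=h_{\pi_*\rho}(E)$ for all $\rho$): a Ruelle-type bound on relative entropy by the central exponent would be vacuous for $\mu$, whose central exponent is positive, so one really must exploit the monotonicity of the one-dimensional fiber maps.
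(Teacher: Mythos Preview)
Your proposal is correct and follows essentially the same route as the paper. The paper packages the argument as a special case ($\Phi=0$) of a more general Proposition on equilibrium states for potentials constant along fibers, but the individual steps are the same: apply Theorem~\ref{mainteo:KE1} with $\nu=\nu_0$ (whose Jacobian is constant, hence H\"older with bounded distortion), prove $\htop(K,\pi^{-1}(\theta))=0$ via the bounded-length/monotone-fiber count you give, and use the Ledrappier--Walters formula to identify the ergodic entropy-maximizing measures of $K$ with the ergodic $K$-invariant measures projecting to $\nu_0$, which by Theorem~\ref{mainteo:KE1} are exactly $\mu_0,\mu,\mu_1$.
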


The result above is similar to the conclusion obtained in \cite{RRTU2012} for accessible partially hyperbolic diffeomorphisms of three-dimensional manifolds having compact center leaves. We note that the result above can be extended to the existence of equilibrium states of H\"older continuous potentials which are constant on the fibers. It is also worth mentioning \cite{CT2019}, which includes the same construction although the setting is somewhat different. We also mention \cite{CP2020}, where the authors prove existence and uniqueness of equilibrium states for the Shub example on $\mathbb{T}^4$. As in our case, they consider potentials which are well projected along certain central foliation to the 2-torus where the dynamic is Anosov.

We can extend almost all of the conclusion of Theorem~\ref{mainteo:KE1} to a class of partially hyperbolic maps preserving the boundary and having a regular center foliation. More precisely, we consider $F: M\to M$, being a $C^r$-local diffeomorphism, $r\geq 1$, defined on $M=\Td\times I$ by
\begin{equation*}\label{eq:Fibmap}
F(\theta,t)=(E(\theta,t),\varphi(\theta, t)),
\end{equation*}
and $F$ satisfies the following conditions: 
\begin{enumerate}
\item[\referencia{F1}{[F1]}] $F$ \textit{preserves the boundary}: For every $\theta\in \Td$, we have $\varphi(\theta,0)=0$ and $\varphi(\theta,1)=1$. 
\item[\ref{F2}] $F$ is a \textit{partially hyperbolic endomorphism}. 
\item[\ref{F3}] $F$ \textit{preserves an invariant center foliation $\mathcal{F}^c$.} 
\item[\referencia{F4}{[F4]}] $F$ \textit{relates the dynamics along the boundary through a heteroclinical cycle}: The map $E(\cdot,0)$ has two different fixed points $p, q\in \Td$. Moreover, the map $\varphi_p^c:I\to I$, defined by $\varphi_p(t)=\varphi(p,t)$, has exactly two fixed points: a hyperbolic source at $t=1$, and a hyperbolic sink at $t=0$. Analogously, the map $\varphi_q^c:I\to I$ has exactly two fixed points: a hyperbolic sink at $t=1$, and a hyperbolic source at $t=0$.

\end{enumerate}
Precise statements of conditions \ref{F2} and \ref{F3} can be found in Subsection~\ref{ssec:hiperbparcial} and Subsection~\ref{ssec:foliaciones}, respectively.


It follows from \ref{F1} and \ref{F2} that for $j=0,1$, $E_j:\Td\to\Td$ defined by $E_j(\theta)=E(\theta,j)$ is a $C^2$ expanding map. 


Note that we have a natural identification among the (ergodic) $E_j$-invariant probability measures $\nu_j$ on $\Td$ and the (ergodic) $F$-invariant probability measures $\mu_j$ supported on the boundary $\Td\times\{j\}$ given by $\mu_j=\nu_j\times \delta_j$. In the follow,  denote by $\nu_j$ the invariant probability measure maximizing the entropy corresponding to $E_j$. 


Furthermore,  the existence of a continuous \textit{natural holonomy along the central leaves} $\pi_{j, j^*}:\Td\to \Td$ is deduced from \ref{F3}: If $j,j^*\in\{0,1\}$ such that $j+j^*=1$, we define  $\pi_{j,j*}(\theta) \in\Td$ such that $\mathcal F^c(\theta,j)=\mathcal F^c(\pi_{j,j*}(\theta),j^*)$.  

Therefore, we have that $\pi_{j,j^*}$ is a topological conjugacy between $E_j$ and $E_{j^*}$, and thus it relates the unique measures of maximal entropy for both maps. Thus, if $\mu_j=\nu_j\times\delta_j$, the following relation is satisfied:
\begin{equation}\label{eq:imagenmedintro}
\mu_{j^*}=(\pi_{j,j^*})_*\mu_j.
\end{equation}

Simirlanly, we define  the \textit{natural projection along the central leaves} $\pi_j:M\to \Td$ as $\pi_j(\theta,t)\in\Td$ such that $\mathcal F^c(\theta,t)=\mathcal F^c(\pi_j(\theta,t),j)$.



\begin{enumerate}
\item[\referencia{F5}{[F5]}] The $F$-invariant measures supported on the boundary have \textit{negative central Lyapunov exponents}: Assume that
\begin{equation}\label{eq:condF5mme}
\int\log |DF|E^c|\: d\,\mu_j(\theta)<0, \quad j=0,1.
\end{equation}
\end{enumerate}

Therefore, we can prove the following general version of Theorem~\ref{mainteo:KE1}.

\begin{maintheorem}\label{mainteo:F} 
Let $F:M\to M$ be a $C^r$-local diffeomorphism, $r\geq 1$, satisfying \ref{F1}-\ref{F5}. 
Assume that $\nu_j$ is any ergodic $E_j$-invariant probability measure with $\supp \nu_j=\Td$ and $J_{\nu_j} E_j$ has bounded distortion. Suppose that $\mu_j=\nu_j\times\delta_j$ satisfy  the relation \eqref{eq:imagenmedintro}. Then, there exists a reference measure $m$ on $M$ such that $(\pi_j)_*m=\nu_j$ and its conditional measures on center leaves are equivalent to Lebesgue.  Besides,
   \begin{itemize}
    \item[(i)]   the measures $\mu_j$, $j=0,1$, are observable with respect to $m$,  their basins are intermingled with respect to $m$, and the union of their basins $\mathcal{B}(\mu_0)\cup \mathcal{B}(\mu_1)$ covers $m$-a.e. the whole manifold $M$. Moreover, there is no other $F$-invariant measure $\mu$ on $M$ such that $(\pi _j )_* \mu=\nu_j $ with a negative center Lyapunov exponent. 

        \item[(ii)] There is a unique ergodic $F$-invariant probability measure $\mu$ such that $(\pi_j)_*\mu=\nu_j$ and $\mu\ne\mu_j$, $j=0,1$. Such measure $\mu$ has a nonnegative central Lyapunov exponent, where $\mu$ is approximated by measures supported along periodic orbits in the interior of $M$. If $F$ is $C^2$ and the leaves of $\mathcal{F}^c$ are $C^2$, then $\supp\mu=M$.      

\end{itemize}

\end{maintheorem}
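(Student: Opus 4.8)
The plan is to reduce Theorem~\ref{mainteo:F} to Theorem~\ref{mainteo:KE1} by straightening the center foliation. The key point is that conditions \ref{F1}--\ref{F4} allow us to build a topological conjugacy which turns $F$ into a Kan-like skew product: using the regularity of $\mathcal{F}^c$ from \ref{F3}, I would first define, for each $j=0,1$, a fiber-coordinate chart $h\colon M\to \Td\times I$ sending the center leaf $\mathcal{F}^c(\theta,j)$ to a vertical segment $\{\pi_j(\theta,t)\}\times I$ — concretely, $h(\theta,t)=(\pi_0(\theta,t),\tau(\theta,t))$, where $\tau$ is a coordinate along the center leaf normalized so that $\tau(\cdot,j)=j$, which exists by \ref{F1}. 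Because $F$ preserves $\mathcal{F}^c$ and the boundary, the conjugated map $\widetilde F=h\circ F\circ h^{-1}$ has the form $\widetilde F(\theta,t)=(\widetilde E(\theta),\widetilde\varphi(\theta,t))$ with $\widetilde E=E_0$ expanding (by the remark following \ref{F4}), $\widetilde\varphi(\theta,0)=0$, $\widetilde\varphi(\theta,1)=1$, and the heteroclinic-cycle condition \ref{K3} inherited from \ref{F4} via the conjugacies $\pi_{j,j^*}$. Partial hyperbolicity \ref{F2} plus \ref{F5} should give the domination condition \ref{K2} in these coordinates (here one must be slightly careful: \ref{K2} is a pointwise bound, so it may be necessary to pass to an adapted metric or to invoke the cone-field characterization of partial hyperbolicity, but this is standard). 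Thus $\widetilde F$ is a Kan-like map, and the pushforward $\widetilde\nu_j=(\pi_0)_*\nu_j$ is an ergodic $\widetilde E$-invariant measure; the relation \eqref{eq:imagenmedintro} guarantees $\widetilde\nu_0=\widetilde\nu_1=:\widetilde\nu$, and \ref{F5} translates to \eqref{eq:condK4im} for $\widetilde\nu$.

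With this reduction in hand, I would apply Theorem~\ref{mainteo:KE1} to $\widetilde F$ and $\widetilde\nu$: since $\supp\nu_j=\Td$ and $J_{\nu_j}E_j$ has bounded distortion, the pushed-forward $\widetilde\nu$ has full support and $J_{\widetilde\nu}\widetilde E$ has bounded distortion as well (bounded distortion is preserved under the bi-Hölder, or at least the homeomorphic, change of coordinates $\pi_0$ because $\mathcal{F}^c$ is a continuous foliation by \ref{F3} — again a point to check carefully, see below). Part~\ref{TeoAi} of Theorem~\ref{mainteo:KE1} then gives the two boundary measures $\widetilde\mu_j=\widetilde\nu\times\delta_j$, observable and with intermingled basins with respect to $\widetilde m=\widetilde\nu\times\m_I$, whose basins cover $\widetilde m$-a.e.\ of $M$, together with the uniqueness clause among measures projecting to $\widetilde\nu$ with negative center exponent. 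Pulling everything back by $h^{-1}$ yields $\mu_j=h_*^{-1}\widetilde\mu_j=\nu_j\times\delta_j$ on the boundary (the conjugacy is the identity there), and the reference measure $m:=h_*^{-1}\widetilde m$ satisfies $(\pi_j)_*m=\nu_j$ and has center-conditionals equivalent to Lebesgue by construction of $h$. This proves item (i). For item (ii), I would need the stronger hypothesis that $J_{\widetilde\nu}\widetilde E$ is Hölder, which follows if $J_{\nu_j}E_j$ is Hölder and $h$ is bi-Hölder along center leaves; then part~\ref{TeoAii} produces the unique interior measure $\widetilde\mu$ with the section $\sigma$, nonnegative center exponent, approximation by interior periodic measures, and full support when $\widetilde F$ is $C^2$. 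Pulling $\widetilde\mu$ back gives $\mu$ with all the stated properties, where the $C^2$/$C^2$-leaves hypothesis is exactly what makes $h$ a $C^2$ conjugacy so that $\widetilde F$ is $C^2$ and Sternberg linearization applies.

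I expect the main obstacle to be the regularity of the straightening chart $h$ and, consequently, which properties survive the conjugacy. The center foliation $\mathcal{F}^c$ of a partially hyperbolic endomorphism is in general only continuous (or Hölder) with $C^1$ leaves, so $h$ is typically only a homeomorphism, bi-Hölder along leaves at best. Topological properties (support, being Kan-like, the heteroclinic cycle, ergodicity, intermingling, the structure of the sets $\mathcal{B}(\mu_j)$) transfer immediately. The delicate points are: (a) the \emph{pointwise} inequality \ref{K2}, which I would handle by noting that \ref{F2}+\ref{F5} give uniform domination and center contraction at the boundary in the cone sense and that Theorem~\ref{mainteo:KE1}'s proof only really uses a uniform (rather than literally pointwise) version — alternatively one can conjugate by a further fiberwise $C^\infty$ diffeomorphism to fix the metric; (b) that \emph{bounded distortion} and \emph{Hölder continuity} of $J_{\nu_j}E_j$ are preserved under pushforward by $\pi_0$, which is true because $\pi_0$ intertwines $E_j$ with $\widetilde E$ and is a homeomorphism, but the quantitative constants require $\mathcal{F}^c$ to have Hölder holonomies — a standard fact for partially hyperbolic systems with $C^1$ center but worth stating as a lemma; and (c) making sure the reference measure's center-conditionals are \emph{equivalent} to Lebesgue, not merely absolutely continuous, which follows from choosing the fiber coordinate $\tau$ to be a $C^1$ diffeomorphism along each center leaf (possible since leaves are $C^1$) so that the Jacobian of $h$ restricted to a center leaf is positive and continuous. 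Once these three technical lemmas are in place, the theorem follows formally from Theorem~\ref{mainteo:KE1}.
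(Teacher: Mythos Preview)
Your reduction strategy is different from the paper's and contains a genuine gap. The paper does \emph{not} conjugate $F$ to a Kan-like skew product; instead it reproves each lemma of Theorem~\ref{mainteo:KE1} directly for $F$, using the center projection $\pi_j$ only as a semi-conjugacy onto the boundary expanding map $E_j$ (Propositions~\ref{prop:medidasnegativasparaF} and~\ref{prop:medidapositivaF}). The point of working with $F$ itself is that $F$ is $C^r$, so Lyapunov exponents, Pesin stable manifolds inside center leaves, and the cone-field growth of transverse discs are all available; only the measure-theoretic bookkeeping (basins, distortion, density points) is transported along $\pi_j$.

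The gap in your approach is that the straightening chart $h$ is only a homeomorphism. Condition \ref{F3} gives $C^1$ leaves with H\"older holonomy (\ref{F3ii}), but the transverse dependence $p\mapsto\mathcal{F}^c(p)$ is merely H\"older, so $\pi_0$ and hence $h$ are not differentiable. Consequently $\widetilde F=h\circ F\circ h^{-1}$ is not a $C^1$ local diffeomorphism, and Theorem~\ref{mainteo:KE1} --- whose definition of Kan-like map requires $C^r$ --- does not apply as a black box. Your workarounds (a)--(c) address side issues but not this one. In particular, your claim that ``the $C^2$/$C^2$-leaves hypothesis is exactly what makes $h$ a $C^2$ conjugacy'' is false: $C^2$ leaves means each individual leaf is a $C^2$ curve, not that the foliation chart is $C^2$; normally hyperbolic foliations generically have only H\"older transverse regularity regardless of leaf smoothness. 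This is precisely why the paper cannot invoke the invariance principle in this setting and only concludes $\lambda^c(\mu)\ge 0$ rather than $>0$ (see the closing paragraph of Section~\ref{sec:ProofThF}): the smooth skew-product structure needed for Lemma~\ref{le:klemma} is genuinely unavailable, and your conjugacy does not recover it. Finally, even granting the conjugacy, your route to item~(ii) would require $J_{\nu_j}E_j$ to be H\"older in order to feed into Theorem~\ref{mainteo:KE1} part~\ref{TeoAii}, whereas Theorem~\ref{mainteo:F} assumes only bounded distortion --- so you would be proving a strictly weaker statement.
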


We point out that in this case, we do not have a cocycle associated with the inverse limit, so we cannot apply the invariance principle and then, we cannot discard the possibility of a zero central Lyapunov exponent in (ii). 

Kan in \cite{Kan:1994kw} claimed that Theorem~\ref{teo:Kan} would apply not only to skew product transformations like \eqref{eq:Kanoriginal} but also to an open set of perturbations preserving the boundary; however, he offered a very brief discussion of the complications at the end of the paper. This is proven formally in \cite{IKS08}.

Note that condition \ref{F2} is $C^r$-open in the set ${\rm End}^r(M,\partial M)$ of $C^r$-local diffeomorphisms on $M$ preserving the boundary, $r\geq 1$. It follows from the classical normal hyperbolicity theory \cite{HPS1977}  that small perturbations of the Kan-like maps satisfy \ref{F3} (see \cite{GS2019, IKS08}). Moreover, condition \ref{F5} is also open: this can be verified directly for Kan's original example \eqref{eq:Kanoriginal} if we replace the first coordinate by any linear expanding map on the torus, where $\md$ is the (unique) measure maximizing the entropy. If $E$ is a $C^r$-perturbation of a linear expanding map on the torus, $r\geq1$, then $E$ has a measure maximizing the entropy $\nu_E$ which is close (in the weak* topology) to $\md$. Therefore, if \eqref{eq:condK4mme} is negative for Lebesgue, then the integral remains negative when Lebesgue is replaced by $\nu_E$ (see \cite{VV2010} and the discussion in subsection~\ref{ssec:condnegLyap} for more details). Therefore, we have the following result for perturbations of  Kan-like maps when we consider measures maximizing the entropy.

\begin{maintheorem}\label{mainteo:C} Let $K:M\to M$ be a Kan-like map satisfying \ref{K1}-\ref{K3}. Let $\nu_0$ be the unique measure of the maximal entropy of $E$. Assume that
$$
\int\log |\partial_t \varphi (\theta,j)|\: d\nu_0(\theta)<0, \quad j=0,1;
$$
Then, there exists a $C^r$-neighborhood $\mathcal U\subseteq {\rm End}^r(M,\partial M)$, with $K\in \mathcal U$ such that for every $F\in\mathcal U$ there exist exactly three measures maximizing the entropy of $F$, namely, $\mu_0^F,\mu^F,\mu_1^F$, all of which are hyperbolic such that they verify the following:

\begin{enumerate}
\item $\mu_0^F$ and $\mu_1^F$ have negative central Lyapunov exponents, they are supported on the boundaries, and there is a reference measure $m^F$ defined on $M$ such that each basin of $\mu_j$ has a positive weight with respect to $m^F$ and the union $\mathcal B(\mu_0^F)\cup\mathcal B(\mu^F_1)$ covers $m^F$-a.e. the whole manifold $M$. Moreover, their basins are intermingled with respect to $m^F$. 
\item $\mu^F$ has a positive central Lyapunov exponent and it is approximated by ergodic measures supported on periodic orbits of $F$ in the interior of $M$.  Futhermore, 
$$\pi_* \mu_0^F=\pi_* \mu^F=\pi_* \mu_1^F=\pi_* m^F.$$
Moreover, if $K$ is  $2$-partially hyperbolic\footnote{See subsection~\ref{ssec:foliaciones} for a precise definition of $r$-partially hyperbolic, $r\geq 1$.}, then  $\mu^F$ is fully supported on $M$.
\item $\mu_j^F\to\mu_j^K$, $\mu^F\to\mu^K$, in the weak* topology when $F\to K$.

\end{enumerate}

\end{maintheorem}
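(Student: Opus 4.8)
The plan is to reduce the statement to Theorem~\ref{mainteo:F} and the counting behind Corollary~\ref{maincor:B}, after checking that \ref{F1}--\ref{F5} are robust and that the resulting measures are exactly the maximal entropy ones. \emph{Step 1 (robustness of the hypotheses).} First I would exhibit a $C^r$-neighborhood $\mathcal U$ of $K$ on which \ref{F1}--\ref{F5} hold. Any $F\in{\rm End}^r(M,\partial M)$ close to $K$ has the form $F(\theta,t)=(E^F(\theta,t),\varphi^F(\theta,t))$ with $\varphi^F(\cdot,0)\equiv 0$ and $\varphi^F(\cdot,1)\equiv 1$ by connectedness, so \ref{F1} holds; \ref{F2} is $C^r$-open; \ref{F3} follows from normal hyperbolicity \cite{HPS1977} (see \cite{GS2019,IKS08}); and \ref{F4} persists since it only involves hyperbolic periodic points of $E(\cdot,0)$ and of the fiber maps at $p,q$, which vary continuously keeping their index. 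The boundary maps $E^F_j=E^F(\cdot,j)$ stay $C^r$-close to $E$, hence expanding, with unique measure of maximal entropy $\nu^F_j$ of constant Jacobian $\deg E$ (so $\supp\nu^F_j=\Td$ and $J_{\nu^F_j}E^F_j$ has bounded distortion), and $\nu^F_j\to\nu_0$ weakly$^*$ as $F\to K$ by continuity of the maximal entropy measure of an expanding map. Since on $\Td\times\{j\}$ one has $\log|DF|E^c|=\log|\partial_t\varphi^F(\cdot,j)|$, depending continuously on $F$, the hypothesis $\int\log|\partial_t\varphi(\cdot,j)|\,d\nu_0<0$ persists as $\int\log|\partial_t\varphi^F(\cdot,j)|\,d\nu^F_j<0$ on a possibly smaller $\mathcal U$ (cf. \cite{VV2010} and Subsection~\ref{ssec:condnegLyap}), which is \ref{F5}; relation \eqref{eq:imagenmedintro} is automatic. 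Thus Theorem~\ref{mainteo:F} applies on $\mathcal U$ with $\nu_j=\nu^F_j$, producing the reference measure $m^F$ (with $(\pi_j)_*m^F=\nu^F_j$ and center conditionals equivalent to Lebesgue), the boundary measures $\mu^F_j=\nu^F_j\times\delta_j$ and the interior measure $\mu^F$, and with them all of item (1) together with the periodic approximation, the $\pi_*$-relations and the full support under $2$-partial hyperbolicity of item (2) --- leaving only the hyperbolicity of $\mu^F$ and the weak$^*$ convergences.

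\emph{Step 2 (exactly three ergodic maximal entropy measures).} Exactly as in the proof of Corollary~\ref{maincor:B}: $E^F_j$ is a topological factor of $F$ via the continuous surjection $\pi_j$, and $F$ maps each leaf of $\mathcal F^c$ monotonically onto an arc of a leaf, so the fiber entropy of $F$ over $\pi_j$ vanishes and the Bowen--Ledrappier--Walters inequality gives $h_\mu(F)=h_{(\pi_j)_*\mu}(E^F_j)$ for every $F$-invariant $\mu$; in particular $\htop(F)=\log\deg E$, which is locally constant. Hence any ergodic maximal entropy measure $\mu$ of $F$ satisfies $h_{(\pi_j)_*\mu}(E^F_j)=\log\deg E$, so $(\pi_j)_*\mu=\nu^F_j$ by uniqueness of the maximal entropy measure of an expanding map, and then $\mu\in\{\mu^F_0,\mu^F_1,\mu^F\}$ by Theorem~\ref{mainteo:F}; conversely each of these three has entropy $\log\deg E=\htop(F)$.

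\emph{Step 3 (hyperbolicity and the limit $F\to K$; the main obstacle).} For the boundary measures, $\lambda^c(\mu^F_j)=\int\log|\partial_t\varphi^F(\cdot,j)|\,d\nu^F_j\to\int\log|\partial_t\varphi(\cdot,j)|\,d\nu_0<0$ and $\mu^F_j=\nu^F_j\times\delta_j\to\nu_0\times\delta_j=\mu^K_j$, so these are hyperbolic on $\mathcal U$ and converge as required. For $\mu^F$ I would argue by accumulation: if $F_n\to K$ and $\mu^*$ is a weak$^*$ limit of $(\mu^{F_n})$, then $\mu^*$ is $K$-invariant and, using $\pi^{F_n}_j\to\pi^K_j$ uniformly and $\nu^{F_n}_j\to\nu_0$, it satisfies $(\pi^K_j)_*\mu^*=\nu_0$; by Theorem~\ref{mainteo:KE1} part \ref{TeoAii} it is therefore a convex combination $\mu^*=a\mu^K_0+b\mu^K_1+c\mu^K$. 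Since the central Jacobian never vanishes, $\log|DF_n|E^c|\to\log|DK|E^c|$ uniformly, so $\lambda^c(\mu^*)=\lim_n\lambda^c(\mu^{F_n})\ge0$ (Theorem~\ref{mainteo:F}); as $\lambda^c(\mu^K_j)<0$ and $\lambda^c(\mu^K)>0$ (Corollary~\ref{maincor:B}) this already forces $c>0$. To get $a=b=0$ I would use the separatrix $\sigma_F$ of Theorem~\ref{mainteo:F}, for which $\mu^F(\Td\times[0,\delta))=\nu^F_j(\{\sigma_F<\delta\})$ and $\mu^F(\Td\times(1-\delta,1])=\nu^F_j(\{\sigma_F>1-\delta\})$, and reduce to proving the \emph{no escape of mass} bound
\[
\lim_{\delta\to0}\ \sup_{F\in\mathcal U}\ \nu^F_j\bigl(\{\sigma_F<\delta\}\cup\{\sigma_F>1-\delta\}\bigr)=0 .
\]
The point is that, up to uniform constants, $\{\sigma_F<\delta\}$ is the set of $\theta$ whose forward $E^F_j$-orbit makes some partial product $\prod_{0\le i<n}\partial_t\varphi^F((E^F_j)^i\theta,j)$ exceed $\delta^{-1}$; by a maximal inequality this set has $\nu^F_j$-measure $\le C\delta^{s}$ with $C,s>0$ \emph{independent of} $F\in\mathcal U$, because the cocycle has uniformly negative drift (\ref{F5} being robust) and $\nu^F_j$ has uniform Gibbs constants (constant Jacobian $\deg E$ and uniform expansion). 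Granting this, $\mu^{F_n}\to\mu^K$, hence $\lambda^c(\mu^F)=\int\log|DF|E^c|\,d\mu^F\to\int\log|DK|E^c|\,d\mu^K=\lambda^c(\mu^K)>0$, so after shrinking $\mathcal U$ one gets $\lambda^c(\mu^F)>0$, completing (1)--(3). I expect this uniform no-escape-of-mass estimate --- exactly the place where the merely nonnegative center exponent of Theorem~\ref{mainteo:F} is upgraded to strictly positive --- to be the main obstacle, since it requires the uniform hyperbolicity near the heteroclinic cycle and the uniform Gibbs property of the $\nu^F_j$ to persist in the limit $F\to K$.
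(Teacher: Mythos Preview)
Your overall strategy coincides with the paper's: establish the openness of \ref{F1}--\ref{F5} (the paper packages this as Proposition~\ref{prop:condabiertas} plus statistical stability of the boundary MME, exactly your Step~1), apply Theorem~\ref{mainteo:F} to obtain $\mu_0^F,\mu_1^F,\mu^F$ and the reference measure $m^F$, and then identify these as precisely the ergodic measures of maximal entropy via the zero-fiber-entropy/Ledrappier--Walters argument (your Step~2; the paper does this only for skew products in Section~\ref{ssec:mmecaracterization} and leaves the adaptation to the fibered case implicit, so your explicit treatment using monotonicity on center leaves is a genuine improvement in rigor).

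The interesting discrepancy is Step~3. The paper's proof of $\mu^{F_n}\to\mu^K$ is the single line ``$\pi_j\eta=\nu$, hence $\eta=\mu^K$'', with no further justification. You correctly observe that $\pi_*\eta=\nu_0$ only gives $\eta\in\mathrm{conv}\{\mu_0^K,\mu_1^K,\mu^K\}$, and that $\lambda^c(\eta)\ge 0$ only forces a nonzero $\mu^K$-component; ruling out mass escape to the boundaries requires an additional argument. The paper does not supply one. Your proposed uniform no-escape-of-mass estimate via a maximal inequality (uniform negative drift of the center cocycle plus uniform Gibbs constants for $\nu_j^F$) is a natural way to close this gap; it is plausible and the ingredients you list are the right ones, though of course the details need to be written out. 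So the ``main obstacle'' you identify is not an obstacle that the paper overcomes by a different route---it is a point the paper simply elides, and on which your sketch is more careful than the published argument.
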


This work is organized as follows. In Section~\ref{sec:preliminares}, we establish the basic notions and the set of necessary tools that we use throughout this work. Section~\ref{sec:mmei} is dedicated to the proof of Theorem~\ref{mainteo:KE1} part \ref{TeoAi}, and Section~\ref{sec:mmeii} is dedicated to the proof of Theorem~\ref{mainteo:KE1} part \ref{TeoAii}. A key step in the proof of the last part is to establish Lemma~\ref{le:klemma}, which is the application of the invariance principle that we developed in Section~\ref{sec:Klemma}. Finally, the highlights of the proof of Theorem~\ref{mainteo:F} are developed in Section~\ref{sec:ProofThF}. The proof of Theorem~\ref{mainteo:C} is given in Section~\ref{secc:applications}.


\section{Preliminaries}\label{sec:preliminares} 
\subsection{Topological pressure and equilibrium states}\label{ssec:entropy}

First, recall the definition of topological entropy. Let $(X,\dist)$ be a compact metric space and let $f:X\to X$ be a uniformly continuous map. For $x\in X$, $n\geq 1$ and $\epsilon>0$, the \textit{dynamical ball} is the set defined by
$$B(x,n,{\epsilon})=\big\{y\in X: \max_{0\leq i\leq n} \textrm{dist}(f^ix,f^iy)<\epsilon\big\}.$$
Fix a  compact set $Y \subseteq X$. A subset $\mathcal{E}\subseteq Y$ is called $(n,\epsilon)$-\emph{separated} if for $x\neq y\in \mathcal{E}$ there exists $1\leq i\leq n-1$ such that $\textrm{dist}(f^ix,f^iy)>\epsilon$. In other words, the points of $ \mathcal{E}$ are separated from each other by dynamical balls of length $n$ and radius $\epsilon$.

We denote by $s(n,\epsilon,Y)$ the cardinal of the maximum $(n,\epsilon)$-separated set in $Y$. The \textit{topological entropy of $f$ with respect to $Y$} is the limit
\begin{equation}\label{eq:topent}
\htop(f,Y):=\lim_{\epsilon\rightarrow 0}\limsup_{n\rightarrow\infty} \frac{1}{n}\log s(n,\epsilon,Y).
\end{equation}
Note that the limit \eqref{eq:topent} always exists (see \cite{Wal}). We often write $\htop(f)$ instead of $\htop(f,X)$.

Let $\Phi :X \to \mathbb{R}$ be a continuous function. We denote by 
$$S(\Phi,n,\epsilon)=\sup \lbrace \sum_{x \in \mathcal{E}} e^{\Phi_n(x)}: \mathcal{E} \text{ is an } (n,\epsilon)-\text{separated set for } M \rbrace ,$$ 
where $\Phi_n= \sum_{i=0} ^{n-1} \Phi \circ f^i$. The \textit{topological pressure} of $f$ with respect to the potential $\Phi$ is the limit 
\begin{equation}\label{eq:pressure}
    \Ptop (f,\Phi)=\lim_{\epsilon \to 0} \limsup_{n} \frac{1}{n}\log S(\Phi,n,\epsilon).
\end{equation}
Note that the limit \eqref{eq:pressure} always exists (see \cite[Section 9.1]{Wal}). An interesting particular case is to consider the potential $\Phi=0$, and then the pressure $\Ptop(f,0)=\htop(f)$.

Let $X$ and $Y$ be compact metric spaces, and let $f :X \to X$, $g: Y \to Y$ be continuous maps. Let us suppose that the maps $f$ and $g$ are semiconjugated by the map $\pi$: that is, $\pi: X\to Y$ is a surjective continuous map such that $\pi\circ f = g\circ \pi$. Then, Bowen \cite[Theorem 17]{B1971} has shown that 
\begin{equation*}\label{eq:Bowen}
\htop(f)\leq \htop(g)+\sup_{y\in Y}\htop(f,\pi^{-1}(y)).
\end{equation*}
Furthermore, Carvalho and Perez \cite[Proposition 5.1]{CP2020} proved that for every potential $\phi:Y \to \mathbb{R}$, we have
\begin{equation}\label{eq:CarvalhoPerez}
    \Ptop(f, \phi \circ \pi) \leq \Ptop (g,\phi ) + \sup \left\{ \int \htop(f, \pi^{-1}(y)) d\nu :\nu\in M^1(g) \right\},
\end{equation} 
where $M^1(f)$ denotes the set of $f$-invariant Borel probability measures on $Y$. We denote by $M^{erg}(f)$ the subset of the ergodic ones. 

Let $\mu\in M^1(f)$. Given $x\in X$, $n \geq 1$ and $\epsilon >0$, we introduce the following values, 
$${\rm h}_\mu^+(f,\epsilon, x)= \limsup_n - \frac{1}{n}\log \mu (B(x, n, \varepsilon)), $$
$${\rm h}_\mu^-(f,\epsilon, x)= \liminf_n - \frac{1}{n}\log \mu (B(x, n, \varepsilon)). $$
The Brin-Katok Theorem \cite{BK1983} ensures that for $\mu$-a.e. $x \in X$, the following limits exist and 
$$ {\rm h}_\mu(f,x): = \lim_{\epsilon \to 0} {\rm h}_\mu^+(f, \epsilon, x) = \lim_{\epsilon \to 0} {\rm h}_\mu^-(f, \epsilon, x).$$ 
Moreover, the function ${\rm h}_\mu(f,\cdot)$ is integrable and $f$-invariant. Therefore, we can define the \textit{measure-theoretic (metric) entropy} of $f$ with respect to the invariant measure $\mu$ as
$${\rm h}_\mu (f) := \int {\rm h}_\mu (f,x) d \mu (x).$$
Let $f:X \to X$ and $g:Y\to Y$ be continuous maps and suppose that they are semiconjugated by $\pi:X\to Y$. For $\mu\in M^1(f)$, we define the \textit{push forward} $\pi_*\mu\in M^1(g)$ by
$$\pi_*\mu(A):=\mu(\pi^{-1}(A)),$$
for every Borelean set $A\subseteq Y$. It follows directly from the definitions above that $\pi_*\mu$ is a $g$-invariant probability measure and
$${\rm h}_{\pi_*\mu}(g)\leq{\rm h}_\mu(f).$$
Similar to the Bowen formula, the Ledrappier-Walters formula \cite{LW1977} relates the entropies of two semiconjugated applications. More precisely,
\begin{equation}\label{eq:LW}
\sup\big\{h_\mu(f):\mu\in M^1(f), \pi_*\mu=\nu\big\}=h_\nu(g)+\int_Y \htop(f,\pi^{-1}(y))\,d\nu(y).
\end{equation}
The metric entropy and topological entropy are related through the variational principle \cite[Theorem 10.1]{OV2016}
\begin{equation}\label{eq:VP}
\htop (f)= \sup \big\{{\rm h}_\mu(f) \, : \, \mu \in M^1(f)\big\}.
\end{equation}
We state that an $f$-invariant measure $\mu$ \textit{maximizes the entropy} if its metric entropy realizes the supremum in \eqref{eq:VP}: that is, 
$$\htop (f)={\rm h}_\mu(f) .$$
Analogously, we can discuss the variational principle for pressure, \cite[Theorem 10.4.1]{OV2016}  
\begin{equation*}\label{eq:VP2}
\Ptop (f, \Phi)= \sup \big\{{\rm h}_\mu(f) + \int \Phi d\mu \, : \, \mu \in M^1(f)\big\}.
\end{equation*}
We state that an $f$-invariant measure $\mu$ is an \textit{equilibrium state} for the continuous potential $\Phi$ if 
$$\Ptop (f, \Phi)={\rm h}_\mu(f) +\int \Phi d\mu.$$
We denote by $M_\Phi(f)$ the set of ergodic equilibrium states for $\Phi$.  In particular, $M_\Phi^{erg}(f)$ denotes the set of ergodic $f$-invariant probability measures maximizing the entropy.

\subsection{Disintegration}\label{ssec:disintegration}
Throughout this section we will assume that $X$ is a separable, complete metric space and that $\mu$ is a probability measure defined on the Borelean subsets of $X$. 

\noindent Let $\mathcal{P}$ be a partition of $X$ in measurable subsets. We denote by $P: X\to \mathcal{P}$ the natural projection that associates to every $x\in X$ the element of the $P(x)$ containing $x$. In $\mathcal{P}$, we induce a measurable structure as follows. A subset $\mathcal{Q}\subseteq\mathcal{P}$ is measurable if $P^{-1}(\mathcal{Q})\subseteq X$ is Borelean. Now, we define the quotient measure $\hmu$ by
$$\hmu(\mathcal{Q})=\mu(P^{-1}(\mathcal{Q})),\quad \mbox{ for every } \mathcal{Q}\subseteq\mathcal{P} \mbox{ measurable }.$$
A \textit{disintegration} of $\mu$ relative to $\mathcal{P}$ is a family $\{ \mu_P \: : \: P \in \mathcal{P}\}$ of probability measures on $X$ satisfying
\begin{itemize}
\item[a)] $\mu_P (P)=1$ for $\hmu$-almost every $P \in \mathcal{P}$,
\item[b)] For every measurable subset $E \subseteq X$, the map $\mathcal{P} \to \mathbb{R}$ defined by $ P \to \mu_P(E)$ is measurable,
\item[c)]$\mu(E) = \int \mu_P(E) d \hmu (P)$, for every measurable subset $E \subseteq X$.
\end{itemize}

The family of probability measures $\{ \mu_P \: : \: P \in \mathcal{P}\}$ is called a family of \textit{conditional measures} of $\mu$ with respect to the partition $\mathcal{P}$.

Recall that a family $\mathcal{P}$ of Borelean subsets of $X$ is a \textit{measurable partition} if there exists a subset $X_0 \subseteq X$ of full measure and an increasing sequence $\mathcal{P}_1 \prec \mathcal{P}_2 \prec \cdots \prec \mathcal{P}_n \prec \cdots$ of countable partitions of $X_0$ (by a Borelean subset of $X_0$) such that 
$$\mathcal{P}=\bigvee_{n=1}^\infty \mathcal{P}_n.$$
\noindent Rokhlin's Theorem \cite{R1949} (see also \cite[Theorem 5.1.11]{OV2016}) establishes that if $\mathcal{P}$ is a measurable partition, then there exists the disintegration of $\mu$ with respect to $\mathcal{P}$.

A consequence of Rokhlin's Theorem is the ergodic decomposition theorem \cite[Theorem 5.1.3]{OV2016}: if $f:X \to X$ is measurable and $\mu$ is an $f$-invariant probability, then there exists a measurable subset $X_0 \subset X$, $\mu(X_0)=1$, a measurable partition $\mathcal{P}$ of $X_0$ and a family of conditional measures $\{ \mu_P \ : \ P \in \mathcal{P}\}$ such that it is a disintegration of $\mu$, where the conditional measures $\mu_P$ are $f$-invariant and ergodic. 


\subsection{Lyapunov exponents}\label{ssec:Lyapunov} 
Let $M$ be a compact Riemannian manifold of dimension $d$ and let $f:M\to M$ be a map of class $C^1$. Oseledets' Theorem \cite[Theorem 4.2]{V2014}  establishes that if $\mu$ is an $f$-invariant probability measure; then, for almost every $x\in M$, there exist $1\leq k=k(x)\leq d$ real numbers
$$\lambda_1(x)>\lambda_2(x)>\dots>\lambda_{k}(x),$$
\noindent and a decreasing sequence of subspaces
$$T_xM=V^1_x\supsetneq V^2_x\supsetneq\dots\supsetneq V^k_x\supsetneq \{0\}$$
\noindent such that for every $1\leq j\leq k$, the maps $x\to k(x)$, $x\to \lambda_j(x)$ and $x\to V^j_x$ are measurable and invariant (by $f$ or $Df$ as appropriate) and
$$\lambda_j(x)= \lim_{n \to \infty}\frac{1}{n} \log\Vert Df^n(x)v \Vert, \mbox{for every } v\in V^i_x\setminus V^{i+1}_x.$$
The numbers $\lambda_j(x)$ are called the \textit{Lyapunov exponents} of $f$ on $x\in M$. If $\mu$ is ergodic, then the values of $k(x)$ and $\lambda_j(x)$ as well as the dimensions of the spaces $V^j_x$ are constant a.e.

Let $\mu$ be $f$-invariant and ergodic. We denote by $\mathcal{R}(\mu)$ the full measure set provided by Oseledets' Theorem. For every $x \in \mathcal{R}(\mu)$, define
$$E^s_x = \bigcup_{\lambda_i(x) < 0 } V^{(i)}(x),$$
\noindent and
$$s:= \# \big\{\lambda_i(x)<0 : \, 1 \leq i \leq k(x) \big\}.$$
\noindent (Since $\mu$ is ergodic, we have that $s(x)=s$ is constant). Suppose that $s\geq 1$. Pesin's Theory \cite[Theorem V.6.5]{QXZ2009} guarantees that there exists $\epsilon(x)>0$ such that the set
$$
W^s_\epsilon(x):=\big\{ y \in M \: : \: \limsup_{n \to \infty} \frac{1}{n} \log\dist(f^n(y), f^n(x))<0,\,
\sup_{n \geq 0}\dist(f^n(y), f^n(x))\leq \epsilon (x)\big\}
$$
\noindent is an embedded disc on $M$ of dimension $\dim E^s_x$ that is the same class of differentiability as $ f $ and that satisfies $T_xW^s_\epsilon(x)=E^s_x$, and the set
$$W^s(x):= \big\{ y \in M \: : \: \limsup_{n \to \infty} \frac{1}{n} \log \dist(f^n(y), f^n(x))<0 \big\},$$
satisfies $f(W^s(x))=W^s(f(x))$ and
$$W^s(x)=\bigcup_{n\geq 0}f^{-n}(W^s_\epsilon(f^n(x)).$$
The set $W^s_\epsilon(x)$ is called the \textit{Pesin local stable manifold} at the point $x$, and the set $W^s(x)$ is called the \textit{Pesin's stable manifold} of $x$.


\subsection{Expanding maps}\label{ssec:expandingmaps}
For future reference, we summarize some of the main properties for the dynamics of the expanding map $E$ invoked in this work. The interested reader can find more details of the theory in \cite[chapters 11-12] {OV2016}. Along this subsection, $M$ will denote a compact connected Riemannian manifold.

Recall that a $C^1$-transformation $ f: M \to M $ defined on a compact connected Riemannian manifold $M$ is \textit{expanding} if there are $\lambda> 1$ and a metric on $ M $ such that $ \|Df (x) v \| \geq \lambda \| v \|,$ for all $ x \in M $ and all $ v \in T_x M $. It is clear from the definition that any map sufficiently close to an expanding one, relative to the $C^1$-topology, is still expanding.

We are interested in the case in which $M=\Td$ and $f=E$. A classical result of Michael Shub \cite{S1969} asserts that every expanding map on the torus $\Td$ is topologically conjugated to an expanding linear endomorphism: that is, there exists some $d\times d$ matrix $A$ with integer coefficients and a determinant different from zero such that all the eigenvalues $\lambda_1,...,\lambda_d$ of $A$ have modulo larger than 1. Therefore, $E$ conjugates to $E_A$ the endomorphisms induced by $A$ in $\Td=\mathbb R^d/\mathbb Z^d$. 

In particular, there exists an integer $k\geq 1$ such that every $\theta\in\Td$ has exactly $k$ preimages $\theta_1,\dots,\theta_k$. We call the integer $k\geq2$ the \textit{degree} of $E$. 

For an expanding local diffeomorphism $E$ on $\Td$, there exists $r_0>0$ such that the restriction of $E$ to each ball $B(\theta,r_0)$ of radius $r_0$ is injective, its image contains the closure of $B(E(\theta),r_0)$ and
\begin{equation}\label{eq:expandineq1}
\textrm{dist}(E(\theta_1),E(\theta_2))\geq\lambda\, \dist(\theta_1,\theta_2), \qquad\mbox{ for every }\theta_1,\theta_2\in B(\theta,r_0).
\end{equation}
Thus, the restriction to $B(\theta,r_0)\cap E^{-1}(B(E(\theta),r_0))$ is a homeomorphism onto $B(E(\theta),r_0)$. We denote its inverse by $E^{-1}_\theta :B(E(\theta),r_0)\to B(\theta,r_0)$ and call it the \textit{inverse branch} of $E$ at $\theta$. It is clear that $E^{-1}_\theta(E (\theta)) = \theta$ and $E\circ E^{-1}_\theta=id$. Condition \eqref{eq:expandineq1} implies that $E^{-1}_\theta$ is a $\lambda^{-1}$-contraction: 
\begin{equation*}\label{eq:expandineq2}
\dist(E^{-1}_\theta(\tilde\theta_1),E^{-1}_\theta(\tilde\theta_2))\leq\lambda^{-1}\dist(\tilde\theta_1,\tilde\theta_2) \mbox{ for every } \tilde\theta_1,\tilde\theta_2\in B(E(\theta),r_0).
\end{equation*}
More generally, for any $n \geq 1$, we call the \textit{inverse branch} of $E^n$ at $\theta$ the composition
$$E^{-n}_\theta =E^{-1}_\theta\circ E^{-1}_{E(\theta)}\circ\cdots\circ E^{-1}_{E^{n-1}(\theta)} :B(E^n(\theta),r_0)\to B(\theta,r_0)$$
of the inverse branches of $E$ at the iterates of $\theta$. Observe that
$E^{-n}_\theta(E^n (\theta)) = \theta$ and $E^n\circ E^{-n}_\theta=id$. 

In particular, for every $\theta\in \Td$, $n\geq 0$ and every $0<\epsilon\leq r_0$, we have
\begin{equation*}\label{eq:expandineq3}
E^n(B(\theta,n+1,\epsilon))=B(E^n(\theta),\epsilon).
\end{equation*}
As a consequence, any $C^1$ expanding map on a compact manifold is \textit{expansive}, with $r_0$ being the constant of expansivity: If $\dist(E^n(\theta_1),E^n(\theta_2))\leq r_0$ for every $n\geq 0$, then $\theta_1=\theta_2$.

 Recall that an application $f:M \to M$ is \textit{topologically exact} if given any nonempty open set $U \subset M$, there exist $N \geq 1$ such that $f^N(U)=M$. In particular, every expanding map is topologically exact. 

The next lemma will be useful for our purpose and can be found in \cite[Lemma 4.3]{BV2000}. In our case, we need to adapt the original statement for the Lebesgue measure to any Borel probability measure $\eta$. In the next lemma, given any disc $\gamma\subseteq M$, $N_\gamma\geq1$ denotes the minimal integer such that $E^{N_\gamma}(\gamma)=M$.

\begin{lema}\label{le:baciatotal1} Let $E:M\to M$ be an expanding map and $\eta$ any $E$-invariant Borel probability measure defined on $M$. Denote by $r_0>0$ the radius of injectivity for $E$. Let $0<r <r_0$ be fixed. Then, for any disc $\gamma\subset M$ of diameter less than $2r_0$ having a boundary of zero $\eta$-measure and for any $n\geq N_\gamma$, there exist open sets $V_i\subseteq W_i\subseteq \gamma$, $i=1,\dots j(n)$, such that
\begin{enumerate}
\item[(i)] the $ V_i$ are pairwise disjoint; 
\item[(ii)] $\eta(\cup_{i=1}^{j(n)}W_i)$ converges to $\eta(\gamma)$ as $n\to\infty$;
\item[(iii)] each $E^n(V_i)$, $i=1,\dots j(n)$, is a ball of radius $r$ and
\item[(iv)] each $E^n(W_i)$, $i=1,\dots j(n)$, is a ball of radius $2r$.
\end{enumerate}
\end{lema}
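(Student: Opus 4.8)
\textbf{Proof proposal for Lemma~\ref{le:baciatotal1}.}

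The plan is to reduce the statement to the original version in \cite[Lemma 4.3]{BV2000}, where the reference measure is Lebesgue, by building the sets $V_i\subseteq W_i$ in a purely topological/metric way and then checking that the convergence in (ii) survives the replacement of Lebesgue by the arbitrary $E$-invariant Borel probability $\eta$. First I would recall the construction: since $E$ is expanding with radius of injectivity $r_0$, for $n\geq N_\gamma$ we have $E^n(\gamma)=M$, and we can pull back a fixed finite cover of $M$ by balls of radius $r$ (respectively $2r$) under the inverse branches $E^{-n}_\theta$ defined on balls of radius $\leq r_0$. Concretely, cover $M$ by finitely many balls $B(x_\alpha, r)$ whose doubles $B(x_\alpha,2r)$ still have radius below $r_0$; for each $\alpha$ and each inverse branch that maps $B(x_\alpha,2r)$ into $\gamma$, set $W_i = E^{-n}_{\bullet}(B(x_\alpha,2r))$ and $V_i = E^{-n}_{\bullet}(B(x_\alpha,r))$. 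By choosing the $x_\alpha$ so that the balls $B(x_\alpha,r)$ are pairwise disjoint on the relevant scale (a standard packing argument, refining the cover if necessary), the preimages $V_i$ are pairwise disjoint, giving (i), while (iii) and (iv) are immediate from $E^n\circ E^{-n}_\bullet = \mathrm{id}$. The hypothesis that $\partial\gamma$ has zero $\eta$-measure is used exactly as in the original proof to discard the inverse branches that only partially cover; this is where one needs that the boundary contributes nothing.

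The only genuinely new point is (ii). Here I would argue as follows. Because the $B(x_\alpha,r)$ can be taken to form, up to an $\eta$-null set, a partition of $M$ (choosing radii generically so that all the finitely many sphere boundaries are $\eta$-null, using $E$-invariance of $\eta$ only through the pushforward identity $\eta = E^n_*\eta$), the union $\bigcup_i V_i$ of the ``full'' preimages satisfies, up to $\eta$-null sets,
\begin{equation*}
\gamma \cap (\text{points of } \gamma \text{ whose branch is defined on the full double ball}) \subseteq \bigcup_{i=1}^{j(n)} W_i \subseteq \gamma.
\end{equation*}
As $n\to\infty$, the inverse branches contract by $\lambda^{-n}$, so the ``bad'' points of $\gamma$ — those lying within distance $\sim 2r\lambda^{-n}$ of $\partial\gamma$, i.e. whose $E^n$-image falls too close to the edge of $E^n(\gamma)$ for a doubled ball to fit inside — form a shrinking neighborhood of $\partial\gamma$. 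Since $\eta(\partial\gamma)=0$, the $\eta$-measure of these bad points tends to $0$ by continuity of the measure from above, and therefore $\eta\big(\bigcup_i W_i\big)\to\eta(\gamma)$. I expect \textbf{this last estimate to be the main obstacle}: in the Lebesgue case one controls the bad set by its small volume directly, whereas for a general $\eta$ one must instead express the bad set as a decreasing intersection of sets converging to a subset of $\partial\gamma$ and invoke $\eta(\partial\gamma)=0$; care is needed because $\eta$ need not have any regularity (no density, no doubling), so the argument must be phrased entirely through set inclusions and the pushforward relation $\eta(A) = \eta(E^{-n}E^n A)$ valid on sets where $E^n$ is injective, rather than through any quantitative distortion bound.

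Finally I would remark that $j(n)$, the number of branches, is automatically finite because $E$ has finite degree $k$, so each point of $M$ has $k^n$ preimages and $j(n)\leq k^n$; and that bounded distortion of a Jacobian is \emph{not} needed for this lemma (it enters only later in the proof of Theorem~\ref{mainteo:KE1}(c)), so the statement holds for any $E$-invariant Borel probability $\eta$ whatsoever, which is exactly the generality we need when $\eta=\nu$ is an arbitrary ergodic measure on $\Td$.
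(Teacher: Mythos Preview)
Your approach is essentially the same as the paper's: take a maximal disjoint family of balls $B(\theta_l,r)$ in $M$ (so the doubles $B(\theta_l,2r)$ cover), pull back by the inverse branches landing in $\gamma$ to produce the $V_i\subseteq W_i$, and for (ii) observe that $\gamma\setminus\bigcup_i W_i$ is contained in the $C\lambda^{-n}$-neighborhood of $\partial\gamma$, which decreases to $\partial\gamma$; since $\eta(\partial\gamma)=0$, continuity from above finishes.

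Two remarks. First, you overcomplicate (ii): there is no need to perturb radii so that sphere boundaries are $\eta$-null, nor to invoke any pushforward identity like $\eta(A)=\eta(E^{-n}E^nA)$ (which, incidentally, is false in general even where $E^n$ is injective, because $E^{-n}E^nA\supsetneq A$ picks up the other preimages). The paper's argument for (ii) is purely set-theoretic---the inclusion of the complement in a shrinking tube around $\partial\gamma$---and uses nothing about $\eta$ beyond $\eta(\partial\gamma)=0$ and countable additivity. Second, your claim that the $B(x_\alpha,r)$ can be taken to partition $M$ up to a null set is unnecessary and not what is done; a maximal \emph{disjoint} packing (so that the doubled balls cover) is all that is needed, exactly as you describe in your construction paragraph.
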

\proof We proceed as in \cite[Lemma 4.3]{BV2000}. Given any $n\geq N_\gamma$, let $B(\theta_l, r)$, $l=1,\dots, l_0$, be a maximal family of disjoint balls of radius $r>0$ inside of $M$. This means that for any $z \in M$, the ball $B(z, r)$ intersects $B(\theta_l, r)$ for some $l = 1,\dots, l_0$. In particular, the family $B(\theta_l, 2r)$ covers the whole $M$. For each $l=1,\dots, l_0$ and for each $\tilde\theta_{i,l}\in E^{-n}(\theta_l)\cap \gamma$, we denote $E^{-n}_{\tilde\theta_{i,l}}$ as the inverse branch such that $E^{n}_{\tilde\theta_{i,l}}(\tilde\theta_{i,l})=\theta_{l}$, and we define
$$V_{i,l}=E^{-n}_{\tilde\theta_{i,l}}(B(\theta_l,r))\quad \mbox{ and } \quad W_{i,l}=E^{-n}_{\tilde\theta_{i,l}}(B(\theta_l,2r)).$$
To prove item (i), we fix $l$ and take $i_1\neq i_2$, then $V_{i_1,l}\cap V_{i_2,l}= \emptyset$, because they are images of different inverse branches. Now, if we fix $i$ and take $l_1\neq l_2$, then $V_{i,l_1}\cap V_{i,l_2}= \emptyset$ because $B(\theta_{l_1},r)\cap B(\theta_{l_2},r)=\emptyset$. 
Items (iii) and (iv) are obtained by construction.

We are left to prove part (ii) of the statement. Indeed, note that the union of the $W_i$ contains the set of points whose distance to the boundary of $\gamma$ is larger than $\lambda^{-n}r$, and since the boundary of $\gamma$ has zero $\eta$-measure, then the measure of the complement of this set goes to zero as $n$ approaches infinity. 

\endproof

Let $\eta$ be a probability measure on $M$ which is not necessarily invariant under $f$. A measurable function $J_\eta f : M\to[0, \infty)$ is a \textit{Jacobian} of $f$ with respect to $\eta$ if the restriction of $J_\eta f$ to any measurable subset $A$ of an invertibility domain of $f$ is integrable with respect to $\eta$ and satisfies
$$\eta(f(A))=\int_A J_\eta f\,d\eta.$$
We say that the Jacobian of $f$ with respect to $\eta$ has \textit{bounded distortion} if there exists $C>0$ such that for every integer $n\geq 1$, every $x\in M$ and every $y\in B(x,n+1,r_0)$,
\begin{equation}\label{eq:boundeddistortion}
\frac{1}{C}\leq \frac{J_{\eta}f^{n}(x)}{J_{\eta}f^{n}(y)}\leq C.
\end{equation}
Let us note that a standard argument shows that if $f$ is an expanding map and the Jacobian is continuous H\"older, then it has bounded distortion.

Let $\eta$ be a Borel probability measure on $M$ and let $A\subseteq M$ be a measurable set. Recall that a point $z\in M$ is a $\eta$-\textit{density point} of the set $A$ if
$$\lim_{\epsilon\to 0}\frac{\eta\big(A\cap B(z,\epsilon))}{\eta(B(z,\epsilon)\big)}=1.$$
When $\eta$ is other than Lebesgue, it follows from \cite[Section 2.9]{F1969} (see also \cite[section 4]{LY1985}) 
that $\eta$-almost every point in $A$ is a $\eta$-density point. 

\begin{lema}\label{le:baciatotal2} Let $E:M\to M$ be an expanding map and let $\nu$ be any $E$-invariant Borel probability measure having a Jacobian with bounded distortion.
Let $A\subseteq \mathbb{T}^d$, $\nu (A)>0$. Then, for every $0<r<r_0$, there exists a sequence of balls of radius $r$, $\sigma_n\subseteq M$, $n\geq 1$, such that
$$\lim_{n\to \infty}\dfrac{\nu\left( \sigma_n \cap E^n(A)\right)}{\nu\left(\sigma_n\right)}=1.$$
\end{lema}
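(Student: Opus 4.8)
The plan is to transfer density‑point information for the Borel measure $\nu$ through Lemma~\ref{le:baciatotal1}, keeping distortion under control via \eqref{eq:boundeddistortion}. Fix $0<r<r_0$ and pick a $\nu$‑density point $z\in A$; this is available since $\nu(A)>0$ and, for a Borel measure, $\nu$‑almost every point of $A$ is a density point (as recalled just before the statement). The obstruction to a one‑line argument is that the natural pull‑backs $E^{-n}_{z}\big(B(E^{n}(z),r)\big)$ are geometrically distorted regions, not round balls, while "density point" is a ball‑based notion; so I will not push $z$ forward directly, but instead work with genuine balls $\gamma=B(z,\epsilon)$, with $\epsilon$ small and $\nu(\partial\gamma)=0$, and feed them into Lemma~\ref{le:baciatotal1}.

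Let $C$ be the distortion constant of \eqref{eq:boundeddistortion}, and let $m_{0}$ be the (purely geometric, $n$‑independent) maximal multiplicity of a cover of $\Td$ by balls of radius $r$ whose concentric balls of radius $r/2$ are pairwise disjoint. Given $\eta>0$, choose $\epsilon$ so small that $\nu(\partial\gamma)=0$ and $\nu(\gamma\setminus A)/\nu(\gamma)<\eta/(4C^{2}m_{0})$ (possible because $z$ is a density point and all but countably many $\epsilon$ give a $\nu$‑null boundary sphere). Apply Lemma~\ref{le:baciatotal1} to $\gamma$ with its radius parameter equal to $r/2$: for every $n\ge N_{\gamma}$ we obtain pairwise disjoint open sets $V_{i}\subseteq W_{i}\subseteq\gamma$, $i=1,\dots,j(n)$, with $\nu\big(\bigcup_{i}W_{i}\big)\to\nu(\gamma)$ and with each $E^{n}(W_{i})$ a ball of radius $2\cdot(r/2)=r$. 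Fix $n$ large enough that $\nu\big(\bigcup_{i}W_{i}\big)\ge\tfrac12\nu(\gamma)$. A point of $W_{i}$ maps under $E^{n}$ into the corresponding ball of radius $r$, and since the concentric balls of radius $r/2$ are disjoint, the family $\{W_{i}\}$ has multiplicity at most $m_{0}$; hence $\sum_{i}\nu(W_{i}\setminus A)\le m_{0}\,\nu(\gamma\setminus A)$, while $\sum_{i}\nu(W_{i})\ge\nu\big(\bigcup_{i}W_{i}\big)\ge\tfrac12\nu(\gamma)$. Combined with the choice of $\epsilon$, a weighted pigeonhole argument yields an index $i^{*}$ with $\nu(W_{i^{*}})>0$ and $\nu(W_{i^{*}}\setminus A)/\nu(W_{i^{*}})<\eta/(2C^{2})$.

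Set $\sigma_{n}:=E^{n}(W_{i^{*}})$, a ball of radius $r$. Because $E^{n}$ is injective on $W_{i^{*}}$ (it is an inverse branch's image), any $w\in\sigma_{n}\setminus E^{n}(A)$ has its unique $W_{i^{*}}$‑preimage outside $A$, so $\sigma_{n}\setminus E^{n}(A)\subseteq E^{n}(W_{i^{*}}\setminus A)$. Since $\nu(E(S))\ge\nu(S)$ for all measurable $S$ (from $E$‑invariance), we have $J_{\nu}E^{n}\ge1$, so $\nu(\sigma_{n})=\int_{W_{i^{*}}}J_{\nu}E^{n}\,d\nu\ge\nu(W_{i^{*}})>0$; moreover $W_{i^{*}}$ lies in the dynamical ball $B(\tilde\theta_{i^{*}},n+1,r_{0})$ around its center $\tilde\theta_{i^{*}}$, so comparing the values of $J_{\nu}E^{n}$ on $W_{i^{*}}$ through $\tilde\theta_{i^{*}}$ gives, by \eqref{eq:boundeddistortion}, $\sup_{W_{i^{*}}}J_{\nu}E^{n}\le C^{2}\inf_{W_{i^{*}}}J_{\nu}E^{n}$. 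Using the Jacobian identity $\nu(E^{n}(S))=\int_{S}J_{\nu}E^{n}\,d\nu$ for $S\subseteq W_{i^{*}}$, we obtain
\[
\frac{\nu(\sigma_{n}\setminus E^{n}(A))}{\nu(\sigma_{n})}\ \le\ \frac{\int_{W_{i^{*}}\setminus A}J_{\nu}E^{n}\,d\nu}{\int_{W_{i^{*}}}J_{\nu}E^{n}\,d\nu}\ \le\ C^{2}\,\frac{\nu(W_{i^{*}}\setminus A)}{\nu(W_{i^{*}})}\ <\ \eta ,
\]
hence $\nu(\sigma_{n}\cap E^{n}(A))/\nu(\sigma_{n})>1-\eta$ for every $n$ beyond an explicit threshold $N(\eta)\ge N_{\gamma}$ (namely once $\nu(\bigcup_i W_i)\ge\tfrac12\nu(\gamma)$).

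Finally I run the previous construction with $\eta=1/k$, $k\ge1$, getting thresholds $N(1/k)$ which may be taken strictly increasing; for $N(1/k)\le n<N(1/(k+1))$ I let $\sigma_{n}$ be the ball just produced, and for $n<N(1)$ any ball of radius $r$. Then $\nu(\sigma_{n}\cap E^{n}(A))/\nu(\sigma_{n})>1-1/k$ whenever $n\ge N(1/k)$, and since this ratio never exceeds $1$, the limit equals $1$. The one genuinely delicate point is the one flagged at the outset — density of $A$ at a point is a ball‑based notion, but $E^{n}$ distorts balls — and it is dissolved by the choice of radius $r/2$ in Lemma~\ref{le:baciatotal1}: the sets $W_{i}$, which are the ones guaranteed to exhaust $\gamma$ in $\nu$‑measure, map exactly onto balls of radius $r$, so the distortion estimate only ever compares $W_{i^{*}}$ with itself and never calls on a doubling property of $\nu$ or mixing of $(E,\nu)$.
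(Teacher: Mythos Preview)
Your proof is correct and follows the paper's overall architecture---pick a $\nu$-density point, feed a small ball into Lemma~\ref{le:baciatotal1}, select a good piece by pigeonhole, and push forward using bounded distortion---but you make a different (and arguably cleaner) choice at the selection step. The paper applies Lemma~\ref{le:baciatotal1} with radius parameter $r$, works with the pairwise disjoint sets $V_{i}$, and must then prove a separate \emph{Claim} that the $V_{i}$'s cover a uniformly positive fraction $\tau$ of the shrinking ball; that claim compares $\nu(V_{i})$ to $\nu(W_{i})$ via distortion and crucially invokes the uniform bounds $C^{-}_{r}\le \nu(B(\theta,r))\le C^{+}_{r}$ of \eqref{eq:cotabola}, which require $\supp\nu=\Td$. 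You instead apply Lemma~\ref{le:baciatotal1} with radius parameter $r/2$, so that the exhausting family $\{W_{i}\}$ already maps onto balls of radius $r$; you then replace disjointness by a purely geometric multiplicity bound $m_{0}\le 3^{d}$ (from the Lebesgue volume of $\Td$), which lets the pigeonhole go through without ever touching $\nu(B(\theta,r))$ or assuming full support. So your route sidesteps the paper's Claim entirely and matches the lemma as stated, whereas the paper's proof tacitly uses an extra hypothesis.

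One small wording issue: the assertion ``$\nu(E(S))\ge\nu(S)$ for all measurable $S$'' is not true in general (take $S$ meeting several sheets of the cover). What is true, and what you actually use, is that for $S$ contained in a domain of injectivity one has $S\subseteq E^{-1}(E(S))$ and hence $\nu(S)\le\nu(E^{-1}(E(S)))=\nu(E(S))$ by invariance; equivalently $J_{\nu}E\ge 1$ $\nu$-a.e., so $\nu(\sigma_{n})\ge\nu(W_{i^{*}})>0$. With that correction your argument stands.
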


\proof Letting $\theta_0\in A$ be a $\nu$-density point of $A$, then there exists a decreasing sequence of balls $\omega_m$ around $\theta_0$ with a radius approaching 0, such that the relative measure of $A$ in $\omega_m$ approaches 1 when $m\to \infty$: that is, 
\begin{equation}\label{eq:densidad}
\lim_{m\to \infty} \dfrac{\nu(\omega_m\cap A)}{\nu(\omega_m)}=1.
\end{equation}
For each sufficiently large and fixed $m$, we choose $n=n(m)\geq N_\gamma$ such that Lemma~\ref{le:baciatotal1} can be applied to $\gamma=\omega_m$. We may assume that $n(m)\to \infty$ as $m\to\infty$. Let $V^m_{i,l}$ and $W^m_{i,l}$ and $i=1,\dots, j(m)$, $l=1,\dots,l_0$ be the open sets obtained from such a lemma. 

\noindent \textbf{Claim:} There exists a constant $0<\tau\leq 1$, independent of $m\geq 1$, such that 
\begin{equation}\label{eq:claim}
\nu\left(\bigcup_{l=1}^{l_0}\bigcup_{i=1}^{j(m)}V^m_{i,l}\right)\geq \tau\,\nu(\omega_m),\quad \mbox{ for every } m\geq 1,
\end{equation}
that is, the union of the $V^m_{i,l}$s covers a fixed fraction of $\omega_m$ for every $m$. 

Recall that $V^m_{i,l}$ and $W^m_{i,l}$, $i=1,\dots, j(m)$, $l=1,\dots,l_0$ were chosen satisfying
$$E^{n(m)}(V^m_{i,l})=B(\theta^m_l,r)\mbox{ and } E^{n(m)}(W^m_{i,l})=B(\theta^m_l,2r),$$
for certain $\theta^m_l\in \Td$, $l=1,\dots, l_0$. Equivalently, if $\tilde\theta^m_{i.l}\in V^m_{i,l}$ is such that $E^{n(m)}(\tilde\theta^m_{i,l})=\theta^m_l$, then
$$V^m_{i,l}=B(\tilde\theta^m_{i,l},n(m),r)\mbox{ and } W^m_{i,l}=B(\tilde\theta^m_{i,l},n(m),2r).$$
In particular, since $J_\nu f$ has bounded distortion, we obtain
\begin{equation*}\label{eq:proporcionV}
\frac{1}{CJ_\nu E^n(\tilde\theta^m_{i,l})} \nu\left(B(\theta^m_l,r)\right)\leq \nu\left(V^m_{i,l}\right)\leq \frac{C}{J_\nu E^n(\tilde\theta^m_{i,l})}\nu\left(B(\theta^m_l,r)\right),
\end{equation*}
analogously for $W^m_{i,l}$.
Then, 
\begin{eqnarray}
\dfrac{\nu\left(\bigcup_{l=1}^{l_0}\bigcup_{i=1}^{j(m)}V^m_{i,l}\right)}{\nu\left(\bigcup_{l=1}^{l_0}\bigcup_{i=1}^{j(m)}W^m_{i,l}\right)}
&\geq&
\dfrac{\sum_{l=1}^{l_0}\sum_{i=1}^{j(m)} \nu\left(V^m_{i,l}\right)}{ \sum_{l=1}^{l_0}\sum_{i=1}^{j(m)} \nu\left(W^m_{i,l}\right)}\\
&\geq&
\dfrac{\sum_{l=1}^{l_0}\sum_{i=1}^{j(m)} (C J_{\nu}E^n(\tilde\theta^m_{i,l}))^{-1}\,\nu\left( B(\theta^m_l,r)\right)}{\sum_{l=1}^{l_0}\sum_{i=1}^{j(m)} C(J_{\nu}E^n(\tilde\theta^m_{i,l}))^{-1}\,\nu\left( B(\theta^m_l,2r)\right)}\nonumber\\
&=&
\frac{1}{C^2}\dfrac{\sum_{l=1}^{l_0}\nu\left( B(\theta^m_l,r)\right)}{\sum_{l=1}^{l_0} \nu\left( B(\theta^m_l,2r)\right)}.\label{eq:cuociente1}
\end{eqnarray}
On the other hand, based on the fact that $\nu$ is fully supported and from the compactness of $\mathbb{T}^d$, it is straightforward to prove that for all $r> 0$, there exists $C^+_r,C^-_r>0$ such that 
\begin{equation}\label{eq:cotabola}
C^-_r\leq\nu (B (\theta, r))\leq C^+_r
\end{equation}
for all $\theta \in \mathbb{T}^d$.  
Now, including \eqref{eq:cotabola} in \eqref{eq:cuociente1}, we obtain
$$\dfrac{\sum_{l=1}^{l_0} \nu\left( B(\theta^m_l,r)\right)}{\sum_{l=1}^{l_0} \nu\left( B(\theta^m_l,2r)\right)}
\geq\frac{C^-_r}{C^2\,C^+_{2r}}.
$$
Taking $\tau=C^-_r/C^2\, C^+_{2r}$, the claim is proven.

Finally, Lemma~\ref{le:baciatotal1} part (ii) completes the proof of the claim. Indeed, since the $V^m_{i,l}$ are pairwise disjoint, it follows from the \eqref{eq:claim} above combined with \eqref{eq:densidad} that we can choose some $i_m\in\{1,\dots,j(m)\}$ and $l_m\in{1,\dots,l_0}$ such that the sequence $V^m_{i_m,l_m}$ satisfies
\begin{equation}\label{eq:bolas}
\lim_{m\to \infty} \dfrac{\nu(V^m_{i_m,l_m}\cap A)}{\nu(V^m_{i_m,l_m})}=1.
\end{equation}
We take $\sigma_{n(m)}=E^{n(m)}(V^m_{i_m,l_m})$, and again using \eqref{eq:boundeddistortion}, the injectivity of $E^{n(m)}$ in $V^m_{i_m,l_m}$ and \eqref{eq:bolas}, we have
$$\dfrac{\nu\big(\sigma_{n(m)}\cap E^{n(m)}(A)\big)}{\nu\big(\sigma_{n(m)}\big)}
=
\dfrac{\nu\big(E^{n(m)}(V^m_{i_m,l_m}\cap A)\big)}{\nu\big(E^{n(m)}(V^m_{i_m,l_m})\big)} 
=\dfrac{ k^{n(m)} \nu\big(V^m_{i_m,l_m}\cap A)\big)}{k^{n(m)} \nu \big(V^m_{i_m,l_m}\big)}\to 1,$$
and therefore, the proof concludes. \endproof

As an immediate consequence of Shub's theorem, $\htop(E)=\log k$ and there exists a unique measure of maximal entropy $\nu_0$ for $E$: the push forward of the Lebesgue measure by the topological conjugacy. Nevertheless, equilibrium states for a large class of potentials can be obtained from the following classical theorem due to David Ruelle (see \cite[Theorem 12.1]{OV2016}). 

\begin{teo}[Ruelle]\label{teo:Ruelle} Let $E : M\to M$ be an expanding map and $\Phi : M \to\mathbb R$ be a H\"older function. Then, there exists a unique equilibrium state $\nu_\Phi$ for $\Phi$. Moreover, the measure $\nu_\Phi$ is exact and is supported on the entire $M$ and $J_{\nu_\Phi} E$ is H\"older continuous.
\end{teo}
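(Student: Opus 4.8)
The plan is to realize $\nu_\Phi$ through the Ruelle--Perron--Frobenius (transfer) operator and then read off each assertion from the resulting spectral picture. Define $\mathcal{L}=\mathcal{L}_\Phi\colon C^0(M,\mathbb{R})\to C^0(M,\mathbb{R})$ by
\[
(\mathcal{L}\psi)(x)=\sum_{E(y)=x}e^{\Phi(y)}\psi(y),
\]
a finite sum because $E$ has degree $k$. The first step is a distortion estimate: since each inverse branch $E^{-n}_\theta$ is a $\lambda^{-n}$-contraction (subsection~\ref{ssec:expandingmaps}) and $\Phi$ is $\alpha$-H\"older, one obtains a constant $D>0$, independent of $n$, with
\[
\bigl|\Phi_n(E^{-n}_\theta(x))-\Phi_n(E^{-n}_\theta(x'))\bigr|\le D\,\dist(x,x')^{\alpha}
\]
for every $n$, $\theta$ and $x,x'$ in a fixed small ball. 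This uniform control of Birkhoff sums along inverse branches is the technical heart of the argument, and it is precisely where H\"older regularity of $\Phi$ (as opposed to mere continuity) enters.

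Next I would extract the eigendata. A Schauder--Tychonoff fixed point argument applied to the map $\eta\mapsto \mathcal{L}^{*}\eta/(\mathcal{L}^{*}\eta)(M)$ on the compact convex set of Borel probabilities on $M$ produces a probability $\nu$ and a number $\lambda>0$ with $\mathcal{L}^{*}\nu=\lambda\nu$; topological exactness of $E$ forces $\supp\nu=M$. The distortion estimate shows that the normalized iterates $\lambda^{-n}\mathcal{L}^{n}\mathbf{1}$ are uniformly bounded above and below and have a modulus of continuity independent of $n$; an Arzel\`a--Ascoli extraction together with $\mathcal{L}^{*}\nu=\lambda\nu$ (to fix the $\nu$-average) then gives uniform convergence of the whole sequence to a strictly positive $\alpha$-H\"older function $h$ with $\mathcal{L}h=\lambda h$ and $\int h\,d\nu=1$. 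Put $\nu_\Phi:=h\,\nu$. From the identity $\mathcal{L}\bigl((\psi\circ E)\,h\bigr)=\lambda\,\psi\,h$ one checks $\int \psi\circ E\,d\nu_\Phi=\int\psi\,d\nu_\Phi$, so $\nu_\Phi$ is $E$-invariant; moreover $J_\nu E=\lambda e^{-\Phi}$ and hence $J_{\nu_\Phi}E(y)=\lambda e^{-\Phi(y)}\,h(E(y))/h(y)$, which is $\alpha$-H\"older because $h$ is H\"older and bounded away from $0$ (and $\supp\nu_\Phi=M$ since $h>0$).

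It remains to identify $\lambda$ and to settle equilibrium, uniqueness and exactness. Rokhlin's formula $h_{\nu_\Phi}(E)=\int\log J_{\nu_\Phi}E\,d\nu_\Phi$ combined with $E$-invariance gives $h_{\nu_\Phi}(E)+\int\Phi\,d\nu_\Phi=\log\lambda$; estimating the sums $S(\Phi,n,\epsilon)$ defining $\Ptop(E,\Phi)$ against $\mathcal{L}^{n}\mathbf{1}$ (via a cover of $M$ by dynamical balls) gives $\Ptop(E,\Phi)=\log\lambda$, so $\nu_\Phi$ is an equilibrium state. For uniqueness, any ergodic equilibrium state $\mu'$ must realize equality in Rokhlin's inequality, which forces $J_{\mu'}E=\lambda e^{-\Phi}\,(g\circ E)/g$ for a positive $g$; this makes $\mu'$ absolutely continuous with respect to $\nu$ with a density fixed (up to scale) by $\mathcal{L}$, so $\mu'=\nu_\Phi$ by the uniqueness of $h$. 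Finally, the uniform convergence $\lambda^{-n}\mathcal{L}^{n}\psi\to h\int\psi\,d\nu$ translates into triviality of the tail $\sigma$-algebra $\bigcap_{n\ge0}E^{-n}\mathcal{B}$, i.e.\ exactness of $\nu_\Phi$.

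The step I expect to be the genuine obstacle is the second one: proving the uniform convergence of the normalized transfer operator together with the strict positivity and H\"older regularity of the limit $h$ (equivalently, a spectral gap for $\mathcal{L}$ on a suitable H\"older space). Once this is in place, invariance, the identification $\log\lambda=\Ptop(E,\Phi)$, uniqueness and exactness are comparatively routine. Since the statement is classical, in the paper one simply invokes \cite[Theorem~12.1]{OV2016}; the outline above is the structure I would reconstruct.
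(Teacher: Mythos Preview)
Your proposal is correct and, as you yourself anticipate in the last paragraph, the paper does not prove this statement at all: it is stated as a classical result attributed to Ruelle and accompanied only by the reference \cite[Theorem~12.1]{OV2016}. The transfer-operator sketch you give is the standard route to that theorem and is accurate in its broad strokes, so there is nothing to compare or correct.
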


In the particular case in which $f$ is $C^{1+\alpha}$, $\alpha>0$, the equilibrium state of the potential $\Phi = -\log |\det Df|$ coincides with the absolutely continuous invariant measure. In particular, it is the unique physical measure of $f$. The measure of maximal entropy is obtained when considering the potential $\Phi=0$ in Theorem~\ref{teo:Ruelle}.

The following result is deduced from the proof of Theorem~\ref{teo:Ruelle} and is useful for our purpose because it provides us with a large set of $E$-invariant measures with bounded Jacobian.

\begin{lema}[Corollary 12.1.17 \cite{OV2016}] Let $E : M\to M$ be anexpanding map and $\Phi : M \to\mathbb R$ be a H\"older function. There exists $C>0$ such that for every equilibrium state $\nu_{\Phi}$ for $\Phi$, every $n\geq 1$, every $x\in M$ and every $y\in B(x,n+1,r_0)$,
$$\frac{1}{C}\leq \frac{J_{\nu_{\Phi}}E^{n}(x)}{J_{\nu_{\Phi}}E^{n}(y)}\leq C.$$
\end{lema}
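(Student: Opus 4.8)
The plan is to use the standard expanding-distortion argument: express $\log J_{\nu_\Phi}E^{n}$ as a Birkhoff sum of the observable $\psi:=\log J_{\nu_\Phi}E$, and bound the difference $\sum_{k=0}^{n-1}\big(\psi(E^{k}x)-\psi(E^{k}y)\big)$ by a convergent geometric series whose ratio is the contraction rate of the inverse branches of $E$. The only nontrivial ingredient, the H\"older regularity of $\psi$, is exactly what Ruelle's Theorem~\ref{teo:Ruelle} supplies, so once that is in hand the estimate is routine.

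First I would record the chain rule for Jacobians: for any Borel probability $\eta$ on $M$ admitting a Jacobian, the change of variables $\int_{E(A)}g\,d\eta=\int_{A}(g\circ E)\,J_\eta E\,d\eta$ on invertibility domains, iterated, gives $J_\eta E^{n}=\prod_{k=0}^{n-1}J_\eta E\circ E^{k}$ $\eta$-almost everywhere; since $\nu_\Phi$ is fully supported and $J_{\nu_\Phi}E$ is continuous by Theorem~\ref{teo:Ruelle}, this identity holds everywhere, so $\log J_{\nu_\Phi}E^{n}=\sum_{k=0}^{n-1}\psi\circ E^{k}$. Second, by Theorem~\ref{teo:Ruelle} the Jacobian $J_{\nu_\Phi}E$ is H\"older, hence $\psi$ is $\alpha$-H\"older for some $\alpha\in(0,1]$ and constant $H>0$; concretely $J_{\nu_\Phi}E=e^{\Ptop(E,\Phi)}\,(h\circ E)\,h^{-1}\,e^{-\Phi}$, where $h>0$ is the H\"older eigenfunction of the Ruelle--Perron--Frobenius transfer operator of $\Phi$, so $\alpha$ and $H$ depend only on $E$ and on the H\"older data of $\Phi$, and not on the chosen equilibrium state (of which there is at most one). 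Third, if $y\in B(x,n+1,r_{0})$ then $\dist(E^{i}x,E^{i}y)<r_{0}$ for all $0\le i\le n$, so for each $k$ the point $E^{k}y$ lies in the domain of the inverse branch of $E^{n-k}$ through $E^{k}x$, and iterating the $\lambda$-expansion of $E$ on $r_0$-balls yields $\dist(E^{k}x,E^{k}y)\le\lambda^{-(n-k)}\dist(E^{n}x,E^{n}y)\le\lambda^{-(n-k)}r_{0}$. Combining,
$$\left|\log\frac{J_{\nu_\Phi}E^{n}(x)}{J_{\nu_\Phi}E^{n}(y)}\right|\le\sum_{k=0}^{n-1}\big|\psi(E^{k}x)-\psi(E^{k}y)\big|\le H\,r_{0}^{\alpha}\sum_{j\ge1}\lambda^{-j\alpha}=\frac{H\,r_{0}^{\alpha}}{\lambda^{\alpha}-1},$$
so one takes $C:=\exp\!\big(H\,r_{0}^{\alpha}/(\lambda^{\alpha}-1)\big)$, which is independent of $n$, $x$ and $y$ (and of $\nu_\Phi$).

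I do not expect a genuine obstacle: all the analysis is already packaged inside Theorem~\ref{teo:Ruelle}, namely the H\"older continuity of $J_{\nu_\Phi}E$ (equivalently, of the RPF eigenfunction $h$), which itself rests on the expanding dynamics together with the H\"older hypothesis on $\Phi$. The only mildly delicate point is keeping the H\"older constant $H$ of $\psi$, hence the distortion constant $C$, uniform over a prescribed H\"older class of potentials if one wants the statement in that generality; for a single fixed H\"older potential this is automatic from uniqueness of the equilibrium state.
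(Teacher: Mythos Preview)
Your argument is correct and is precisely the standard bounded-distortion computation for expanding maps with H\"older Jacobian. The paper does not supply its own proof of this lemma: it is stated as a citation (Corollary~12.1.17 of \cite{OV2016}) with the remark that it ``is deduced from the proof of Theorem~\ref{teo:Ruelle}'', so there is nothing to compare beyond noting that your write-up unpacks exactly that deduction.
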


We recall that for any expanding map $E$ defined on the connected manifold $M$ (as in our case), the periodic points are dense in $M$ (see \cite[Corollary 11.2.16.]{OV2016}, and every invariant measure can be approximated by measures supported on periodic points. Furthermore, the topological entropy coincides with the growth rate of the number of periodic points. See \cite[Proposition 11.3.2,Theorem 11.3.4]{OV2016}.

\begin{lema}\label{le:periodicpoints1}
Let $E : M \to M$ be an expanding map. Then,
$$\lim_{n\to \infty} \frac1n \log \# \mathrm{Fix} (E^n) = \htop (E).$$
Moreover, every probability measure $\mu$ invariant under $E$ can be approximated in the weak* topology by invariant probability measures supported on periodic orbits.
\end{lema}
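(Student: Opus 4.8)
The plan is to prove the two assertions in turn: for the exponential growth rate of $\#\mathrm{Fix}(E^n)$ I would get an upper bound from expansivity and then pin down the exact value via Shub's theorem, and for the approximation of invariant measures by periodic ones I would run a closing argument built on the inverse branches of $E^n$ on dynamical balls. \emph{Growth of periodic points.} Since $E$ is expanding it is expansive with expansivity constant $r_0$ (Subsection~\ref{ssec:expandingmaps}). If $z_1\neq z_2$ both lie in $\mathrm{Fix}(E^n)$ and $\dist(E^iz_1,E^iz_2)\le\epsilon$ for all $1\le i\le n-1$ with $\epsilon\le r_0$, then, applying this to $Ez_1,Ez_2$ and using $n$-periodicity, $\dist(E^i(Ez_1),E^i(Ez_2))\le r_0$ for every $i\ge0$, whence $Ez_1=Ez_2$ and, iterating $E^{n-1}$, $z_1=z_2$. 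Thus any subset of $\mathrm{Fix}(E^n)$ is $(n,\epsilon)$-separated, so $\#\mathrm{Fix}(E^n)\le s(n,\epsilon,M)$; in particular $\mathrm{Fix}(E^n)$ is finite and $\limsup_n\frac1n\log\#\mathrm{Fix}(E^n)\le\htop(E)$. For the matching lower bound I would invoke Shub's theorem: there is a topological conjugacy between $E$ and a linear expanding endomorphism $E_A$ of $\Td$, and a conjugacy carries $\mathrm{Fix}(E^n)$ bijectively onto $\mathrm{Fix}(E_A^n)=\{x\in\Td:(A^n-I)x\in\mathbb{Z}^d\}$, a set of cardinality $|\det(A^n-I)|=\prod_{i=1}^d|\lambda_i^n-1|$, where $\lambda_1,\dots,\lambda_d$ are the eigenvalues of $A$. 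Since $|\lambda_i|>1$ for all $i$, $\frac1n\log|\det(A^n-I)|\to\sum_i\log|\lambda_i|=\log|\det A|=\log k=\htop(E)$, so the limit exists and equals $\htop(E)$.

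\emph{Approximation by periodic measures.} I would prove this first for ergodic $\mu$ and then pass to the general case. Fix $\mu\in M^{erg}(E)$ and $0<\epsilon<r_0$. By Birkhoff's theorem, for $\mu$-a.e.\ $x$ the empirical measures $\mu_{x,n}=\frac1n\sum_{k=0}^{n-1}\delta_{E^kx}$ converge weak$^*$ to $\mu$; moreover $\mu(B(x,\delta))>0$ for every $\delta>0$ for $\mu$-a.e.\ $x$, so along such an $x$ one has $\dist(E^kx,x)<\delta$ for infinitely many $k$. Hence, given $\delta>0$ small, we may choose $n$ arbitrarily large with $\dist(E^nx,x)<\delta$ and $\mu_{x,n}$ as weak$^*$-close to $\mu$ as we wish. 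Now $E^n$ restricted to the dynamical ball $B(x,n+1,\epsilon)$ is injective with image $B(E^nx,\epsilon)$, so it has an inverse branch $g\colon B(E^nx,\epsilon)\to B(x,n+1,\epsilon)$, a $\lambda^{-n}$-contraction with $g(E^nx)=x$. If $\delta<\epsilon$ then $x$ lies in the domain of $g$, and a routine estimate shows $g$ maps the closed ball $\bar B(x,\rho)$, with $\rho=\lambda^{-n}\delta/(1-\lambda^{-n})$, into itself once $\delta$ is small enough that $\rho+\delta<\epsilon$. By the contraction principle $g$ has a unique fixed point $z\in\bar B(x,\rho)$; then $E^nz=z$, and since $z=g(z)\in B(x,n+1,\epsilon)$ we get $\dist(E^kz,E^kx)<\epsilon$ for $0\le k\le n$. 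The period $N$ of $z$ divides $n$, so the orbit measure of $z$ equals $\frac1n\sum_{k=0}^{n-1}\delta_{E^kz}$, which is within $\epsilon$ of $\mu_{x,n}$ when tested against $1$-Lipschitz functions, hence weak$^*$-close to $\mu$. Letting $\epsilon\to0$ gives periodic-orbit measures converging to $\mu$. For general $\mu\in M^1(E)$, use the ergodic decomposition (Subsection~\ref{ssec:disintegration}): approximate $\mu$ by a finite, rationally weighted convex combination of ergodic measures and then each component by a periodic-orbit measure as above; the result is an $E$-invariant probability measure supported on finitely many periodic orbits, arbitrarily weak$^*$-close to $\mu$.

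The step I expect to be delicate is the closing construction above: one must produce an \emph{honest} periodic point shadowing a generic orbit segment and, crucially, whose period \emph{divides} the segment length $n$, so that its orbit measure is literally close to $\mu_{x,n}$. This is where the noninvertibility of $E$ must be handled carefully; the right object is the inverse branch of $E^n$ on the dynamical ball $B(x,n+1,\epsilon)$, which is genuinely injective, and the expansion estimates of Subsection~\ref{ssec:expandingmaps} are what keep $\rho$ under control so the contraction sends a ball into itself. Both statements also appear, with essentially this proof, as \cite[Proposition 11.3.2 and Theorem 11.3.4]{OV2016}.
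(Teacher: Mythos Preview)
The paper does not supply a proof of this lemma; it records the result as a standard fact and refers to \cite[Proposition~11.3.2, Theorem~11.3.4]{OV2016}, which you also cite at the end. Your argument is correct and follows essentially the route taken in that reference: expansivity gives the upper bound on $\#\mathrm{Fix}(E^n)$, and the closing construction via the contracting inverse branch $E^{-n}_x$ on the dynamical ball $B(x,n+1,\epsilon)$ is exactly the mechanism behind the approximation by periodic measures.

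The one point worth flagging is that your derivation of the exact growth rate via Shub's conjugacy to a linear endomorphism $E_A$ and the formula $\#\mathrm{Fix}(E_A^n)=|\det(A^n-I)|$ is specific to $M=\Td$, whereas the lemma is stated for expanding maps on an arbitrary compact connected manifold. In that generality the lower bound is obtained by a direct branch-counting argument (each of the $(\deg E)^n$ inverse branches of $E^n$ restricted to a suitable ball is a contraction of that ball into itself, hence produces a fixed point, and distinct branches give distinct fixed points), rather than through a linear model. Since every application of the lemma in the paper is on $\Td$, your version is entirely adequate for its purposes.
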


At the end, we recall that in the case of the  uniformly expanding  maps, we have a certain \textit{statistical stability} of the  measure maximizing the entropy.

\begin{lema}\label{le:statest}
Let $E\::\Td\to\Td$ be a $C^r$ expanding map, $r\geq 1$. Let $\nu_0$ be the unique measure maximizing the entropy of $E$. Let $E_n:\Td\to\Td$ be a sequence of expanding maps, $E_n\to E$, as $n\to \infty$ in the $C^2$-topology. Let us denote by $\nu_n$ the sequence of measures maximizing the entropy of $E_n$, $n\geq1$. Then, $\nu_n\to \nu_0$ as $n\to\infty$ in the weak* topology. 
\end{lema}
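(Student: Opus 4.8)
The plan is to combine upper semicontinuity of the metric entropy along the sequence with the variational principle \eqref{eq:VP} and the uniqueness of the measure of maximal entropy of an expanding map. Since the expanding condition is $C^1$-open and the associated radius of injectivity/expansivity can be chosen uniformly for $C^1$-close maps (Subsection~\ref{ssec:expandingmaps}), for all large $n$ the maps $E_n$ are expanding with a common expansivity constant $r_0>0$; being $C^0$-close to $E$, each $E_n$ is homotopic to $E$ and hence has the same degree $k$, so by the consequence of Shub's theorem recalled above $\htop(E_n)=\log k=\htop(E)=:h$ for all large $n$, and consequently $h_{\nu_n}(E_n)=h$. As $M^1(\Td)$ is weak* compact and metrizable, it suffices to show that every accumulation point of $(\nu_n)$ equals $\nu_0$. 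Fix a subsequence $\nu_{n_j}\to\nu_\infty$. For $\phi\in C^0(\Td)$ we have $\phi\circ E_{n_j}\to\phi\circ E$ uniformly, so passing to the limit in $\int\phi\circ E_{n_j}\,d\nu_{n_j}=\int\phi\,d\nu_{n_j}$ shows that $\nu_\infty$ is $E$-invariant.

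The core step is to prove $\limsup_j h_{\nu_{n_j}}(E_{n_j})\le h_{\nu_\infty}(E)$. Choose a finite partition $\mathcal{Q}$ of $\Td$ with $\mathrm{diam}\,\mathcal{Q}<r_0$ and $\nu_\infty(\partial\mathcal{Q})=0$ (pick the boundary spheres among a fixed ball cover so as to avoid the countably many with positive $\nu_\infty$-measure). A partition whose diameter is below the expansivity constant is a one-sided generator, so by the Kolmogorov--Sinai generator theorem $h_\mu(g)=h_\mu(g,\mathcal{Q})\le\frac1N H_\mu\big(\bigvee_{i=0}^{N-1}g^{-i}\mathcal{Q}\big)$ for every $N\ge1$, every $g$-invariant $\mu$, and every $g$ in the family consisting of $E$ and of the $E_n$ with $n$ large (here $h_\mu(g,\mathcal{Q})$ and $H_\mu(\cdot)$ denote the entropy of $g$ relative to $\mathcal{Q}$ and the Shannon entropy of a partition). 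Apply this with $g=E_{n_j}$, $\mu=\nu_{n_j}$, then fix $N$ and let $j\to\infty$. Since $\nu_\infty$ is $E$-invariant and $\nu_\infty(\partial\mathcal{Q})=0$, the set $\bigcup_{i=0}^{N-1}E^{-i}(\partial\mathcal{Q})$ is $\nu_\infty$-null, so each atom $A_w^{(\infty)}=\bigcap_{i<N}E^{-i}(Q_{w_i})$ of $\bigvee_{i<N}E^{-i}\mathcal{Q}$ is a $\nu_\infty$-continuity set; and because $E_{n_j}^{i}\to E^{i}$ uniformly for each fixed $i$, the symmetric difference between $A_w^{(\infty)}$ and the corresponding atom $A_w^{(j)}=\bigcap_{i<N}E_{n_j}^{-i}(Q_{w_i})$ of $\bigvee_{i<N}E_{n_j}^{-i}\mathcal{Q}$ is contained in a shrinking neighborhood of $\bigcup_{i<N}E^{-i}(\partial\mathcal{Q})$, whose $\nu_{n_j}$-measure tends to $0$ (apply the portmanteau theorem to suitable closed sets and use the $\nu_\infty$-nullity of $\bigcup_{i<N}E^{-i}(\partial\mathcal{Q})$). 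Hence $\nu_{n_j}(A_w^{(j)})\to\nu_\infty(A_w^{(\infty)})$ for each of the finitely many words $w$, so $\frac1N H_{\nu_{n_j}}\big(\bigvee_{i<N}E_{n_j}^{-i}\mathcal{Q}\big)\to\frac1N H_{\nu_\infty}\big(\bigvee_{i<N}E^{-i}\mathcal{Q}\big)$; letting $N\to\infty$ afterwards yields the inequality.

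Putting the pieces together, $h_{\nu_\infty}(E)\ge\limsup_j h_{\nu_{n_j}}(E_{n_j})=h=\htop(E)$, so the variational principle \eqref{eq:VP} forces $h_{\nu_\infty}(E)=\htop(E)$; that is, $\nu_\infty$ is a measure of maximal entropy for $E$, and by uniqueness (Shub's theorem, or Theorem~\ref{teo:Ruelle} with $\Phi=0$) $\nu_\infty=\nu_0$. Since every weak* accumulation point of $(\nu_n)$ equals $\nu_0$, compactness gives $\nu_n\to\nu_0$. The main obstacle is the convergence of the finite-refinement entropies in the second paragraph: one cannot directly invoke the portmanteau theorem for a fixed continuity set because the partitions $\bigvee_{i<N}E_{n_j}^{-i}\mathcal{Q}$ move with $j$, and the remedy is the symmetric-difference estimate reducing everything to the $\nu_{n_j}$-mass of small neighborhoods of the $\nu_\infty$-null set $\bigcup_{i<N}E^{-i}(\partial\mathcal{Q})$. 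Alternatively, one could exploit that $\nu_0$ is the push-forward of Lebesgue measure under the topological conjugacy of $E$ with its linear model together with the continuous dependence of that conjugacy on $E$ in the $C^0$ topology, but the upper-semicontinuity route is more transparent; we also remark that only $C^1$ (indeed $C^0$) convergence of $E_n\to E$ is used.
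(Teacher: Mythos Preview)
Your argument is correct. The upper-semicontinuity step is handled carefully: you use a common expansivity constant to make a fixed small-diameter partition a one-sided generator for all the maps, then reduce the convergence of the finite-refinement entropies to control of the $\nu_{n_j}$-mass of shrinking neighborhoods of $\bigcup_{i<N}E^{-i}(\partial\mathcal Q)$, which follows from the portmanteau theorem applied to closed sets together with the $\nu_\infty$-nullity of that boundary set. Combined with $\htop(E_n)=\htop(E)=\log k$ (via homotopy invariance of degree and Shub's conjugacy), the variational principle, and the uniqueness of the measure of maximal entropy, this forces every accumulation point of $(\nu_n)$ to be $\nu_0$.

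The paper, however, does not give a direct proof of Lemma~\ref{le:statest}. It simply remarks that the lemma is a special case of a general statistical-stability theorem for equilibrium states of non-uniformly expanding maps (Theorem~\ref{teo:StabstatVV}, i.e.\ Theorem~D of \cite{VV2010}), obtained by taking the potential $\phi\equiv 0$. So your approach is genuinely different: it is a self-contained and more elementary argument that exploits only expansiveness and the Kolmogorov--Sinai generator theorem, rather than appealing to the full machinery of \cite{VV2010}. The upshot is that your route makes the lemma independent of the thermodynamic-formalism black box and, as you note, shows that $C^1$-convergence (to guarantee a uniform expansivity constant) plus $C^0$-convergence (for the partition argument) already suffice; the paper's citation, on the other hand, has the advantage of covering equilibrium states for H\"older potentials in one stroke, which is exactly what is used later in Subsection~\ref{ssec:condnegLyap}.
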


 In \cite{VV2010}  we can find a more general statement of statistical stability for non-uniformly expanding local diffeomorphism and equilibrium states. Specifically, the generalization of statistical stability for equilibrium states is shown.  Let $\mathcal{F}$ be a family of local homeomorphisms with Lipschitz inverse and $\mathcal W$  be some family of continuous potentials $\phi$. A pair $(f, \phi)\in \mathcal{F} \times \mathcal{W}$ is \emph{statistically stable} (relative to $\mathcal{F} \times \mathcal{W}$) if, for any sequence $f_n \in\mathcal{F}$ converging to $f$ in the uniform topology, with its Lipschitz constants, $L_n$ converging to a $L$ in the uniform topology and $\phi_n\in \mathcal{W}$ converging to $\phi$ in the uniform topology, and for any choice of an equilibrium state $\mu_n$ of $f_n$ for $\phi_n$, every weak* accumulation point of the sequence $(\mu_n)_{n\geq1}$ is an equilibrium state of $f$ for $\phi$. In particular, when the equilibrium state is unique, statistical stability means that it depends continuously on the data $(f, \phi)$.
 
 In our setting, we can write it as follows.

\begin{teo}[Theorem D. \cite{VV2010}]\label{teo:StabstatVV}
Let $E\: : \:\Td\to\Td$ be an expanding map  and $\phi : M\to \mathbb{R}$ is a H\"older continuous potential such that 
\begin{equation}\label{eq:boundvar}
\sup \phi - \inf \phi < \htop (f).
\end{equation}

Then the pair $(E,\phi)$ is statistically stable.
\end{teo}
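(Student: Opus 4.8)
The plan is to obtain this as a soft consequence of Ruelle's Theorem~\ref{teo:Ruelle}, combined with the variational principle, upper semicontinuity of the entropy, and continuity of the topological pressure --- this is exactly the scheme of \cite{VV2010}, and in the uniformly expanding setting at hand each ingredient simplifies considerably. By Theorem~\ref{teo:Ruelle} the equilibrium state of an expanding map for a H\"older potential is \emph{unique}; thus statistical stability of $(E,\phi)$ reduces to the following assertion: whenever $E_n\to E$ and $\phi_n\to\phi$ in the uniform topology (with the Lipschitz constants of the inverse branches of $E_n$ converging, and all the $\phi_n$ H\"older and satisfying the bound \eqref{eq:boundvar}), the unique equilibrium state $\mu_n$ of $E_n$ for $\phi_n$ converges, in the weak* topology, to the equilibrium state $\mu_{E,\phi}$ of $(E,\phi)$.

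Two preliminary facts carry the argument. First, for $n$ large the maps $E_n$ are uniformly expanding: they share an expansion factor $\lambda>1$ and a radius of injectivity $r_0$ as in \eqref{eq:expandineq1} --- robustness of expansion together with the assumed control on the Lipschitz constants of the inverse branches. Consequently the family $\{E_n\}$ is uniformly expansive with expansivity constant $r_0$, and a Misiurewicz-type argument yields the \textbf{upper semicontinuity of the entropy}: the map $(g,\mu)\mapsto h_\mu(g)$ is upper semicontinuous on pairs with $g$ in this family and $\mu\in M^1(g)$, with respect to uniform convergence of $g$ and weak* convergence of $\mu$. Here one uses a finite partition that is simultaneously generating for all the $E_n$ (possible because of the common expansivity constant) and whose boundary has zero measure for all the $\mu_n$ and their accumulation points. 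Second, $(g,\psi)\mapsto\Ptop(g,\psi)$ is jointly continuous on this family: the number $e^{\Ptop(g,\psi)}$ is the leading eigenvalue of the transfer operator $\mathcal L_{g,\psi}$, and uniform Lasota--Yorke estimates over $\{E_n\}$ give a uniform spectral gap, hence continuous dependence of the leading eigendata on $(g,\psi)$.

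We then conclude by compactness and uniqueness. Let $\mu_\infty$ be any weak* accumulation point of $(\mu_n)$, say $\mu_{n_k}\to\mu_\infty$; since $E_{n_k}\to E$ uniformly, $\mu_\infty$ is $E$-invariant. Combining, in order, upper semicontinuity of the entropy, the convergence $\int\phi_{n_k}\,d\mu_{n_k}\to\int\phi\,d\mu_\infty$ (uniform convergence of potentials plus weak* convergence of measures), the fact that $\mu_{n_k}$ is an equilibrium state of $(E_{n_k},\phi_{n_k})$, and continuity of the pressure, we get
\begin{equation*}
h_{\mu_\infty}(E)+\int\phi\,d\mu_\infty\;\geq\;\limsup_{k\to\infty}\Big(h_{\mu_{n_k}}(E_{n_k})+\int\phi_{n_k}\,d\mu_{n_k}\Big)\;=\;\limsup_{k\to\infty}\Ptop(E_{n_k},\phi_{n_k})\;=\;\Ptop(E,\phi).
\end{equation*}
The reverse inequality is the variational principle, so $\mu_\infty$ is an equilibrium state of $(E,\phi)$; by the uniqueness clause of Theorem~\ref{teo:Ruelle}, $\mu_\infty=\mu_{E,\phi}$. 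As $M^1(E)$ is compact and metrizable and every subsequential weak* limit of $(\mu_n)$ coincides with $\mu_{E,\phi}$, the whole sequence converges to $\mu_{E,\phi}$, which is exactly statistical stability.

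The step we expect to be the crux is the \emph{uniform} upper semicontinuity of the entropy as the dynamics varies --- concretely, producing finite generating partitions valid for the whole family $\{E_n\}$ with boundaries of measure zero for all the measures involved. In the genuinely non-uniformly expanding framework of \cite{VV2010} this is where hypothesis \eqref{eq:boundvar} does essential work (it forces the equilibrium states to be non-uniformly expanding, with uniform estimates via hyperbolic times), whereas in our uniformly expanding situation the common expansivity constant $r_0$ from \eqref{eq:expandineq1} makes it routine and \eqref{eq:boundvar} is retained only so that the statement sits literally within \cite[Theorem D]{VV2010}. A cleaner alternative that bypasses semicontinuity entirely is the transfer-operator route: realize all $\mathcal L_{E_n,\phi_n}$ on one common H\"older Banach space, exploit uniform Lasota--Yorke inequalities for a uniform spectral gap, deduce that the leading eigenfunction $h_n$ and eigenmeasure $\nu_n$ depend continuously on $(E_n,\phi_n)$, and conclude $\mu_n=h_n\nu_n\to h\,\nu=\mu_{E,\phi}$; its cost is setting up a Banach space adapted to the varying maps, which is the real technical overhead.
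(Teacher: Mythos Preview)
The paper does not prove this statement at all: Theorem~\ref{teo:StabstatVV} is quoted verbatim as Theorem~D of \cite{VV2010} and used as a black box, so there is no ``paper's own proof'' to compare against.

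That said, your argument is correct and is essentially the standard route in the uniformly expanding case. The chain
\[
h_{\mu_\infty}(E)+\int\phi\,d\mu_\infty\;\geq\;\limsup_k\Big(h_{\mu_{n_k}}(E_{n_k})+\int\phi_{n_k}\,d\mu_{n_k}\Big)\;=\;\limsup_k\Ptop(E_{n_k},\phi_{n_k})\;=\;\Ptop(E,\phi)
\]
is valid once you have (i) joint upper semicontinuity of $(g,\mu)\mapsto h_\mu(g)$ and (ii) continuity of $(g,\psi)\mapsto\Ptop(g,\psi)$, and both are unproblematic here because the $E_n$ share a uniform expansivity constant $r_0$. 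Your observation that \eqref{eq:boundvar} is superfluous in the uniformly expanding regime is also correct: it is needed in \cite{VV2010} to force the equilibrium states into the non-uniformly expanding regime with uniform hyperbolic-time estimates, whereas here uniform expansion does that job for free. One small point of care in your write-up: when you build a common generating partition with null-boundary for all limit measures, you should argue this for an \emph{arbitrary} accumulation point $\mu_\infty$ (choose the partition after fixing the subsequence, or use that at most countably many radii can give positive boundary measure), rather than claim a single partition works for all measures simultaneously.
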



Note that if we fix $E$ and $\phi\equiv 0$ the null potential, the hypotheses of the theorem are satisfied 
and we recover the Lemma~\ref{le:statest}.



\subsection{Partial hyperbolic endomorphisms}\label{ssec:hiperbparcial}

Let $f:M\to M$ be a local diffeomorphism on $M$. Consider a splitting of the tangent bundle $TM= E\oplus F$. This splitting is not assumed to be  invariant by $Df$.
The \textit{cone field of width} $a>0$ \textit{around} $E$ is the family
$C_a(E)=\{C_a(E,x)\::\: x\in E\}$, where
$$C_a(E,x)=\{v^E+v^F\in E\oplus F\::\: \frac{\|v^F\|}{\|v^E\|}\leq a\}.$$
The family $C_a(E)$ is an \textit{unstable cone field} if there exists $\lambda>1$ such that

\begin{enumerate}
    \item[{[CF1]}] $C_a(E)$ is $Df$-forward invariant: For every $x\in M$,
    $$ Df(x)C_a(E,x)\subseteq C_{ a/\lambda}(E,f(x)).$$
    \item[{[CF2]}] $Df$ uniformly expands the vector inside the cone: For every $x\in M$, $v\in C_a(E)$, and $n\geq 1$, 
    $$\|Df^n(x)v\|\geq \lambda^n \|v\|.$$
\end{enumerate}

Note that $F$ can be chosen dynamically as defined by
$$F(x)=\bigcap_{n\geq 1}T_xM\setminus [(Df^n(x))^{-1}C_a(E,f^n(x))],$$
resulting in a $Df$-invariant subbundle. Furthermore, we have as consequence of the definition that the rate of growth of vectors in $F$ by the derivative is dominated by the rate of growth of vectors in $E$. Hence, up to a change of the metric, we can assume that for every $x\in M$ and $v^F\in F$ 
$$\|Df(x)v^F\|\leq\frac{\lambda}2\|v^F\|.$$
A local diffeomorphism $f\::\: M\to M$ is  \textit{partially hyperbolic} if there exists a Riemannian metric on $M$, there are constants $a>0$ and $\lambda>1$ and there is a splitting $TM=E^u\oplus E^c$ such that $C_a(E^u)$ is an unstable cone field of width $a>0$ around $E^u$ with rate of expansion $\lambda>1$ and $E^c$ is the $Df$-invariant subbundle defined as above.
    
 It follows from the definition that partial hyperbolicity is a $C^r$-open property, $r\geq 1$. 
 
We now precisely state condition \ref{F2}: 
 
 \begin{itemize}
 \item[\referencia{F2}{[F2]}] There exists a Riemannian metric on $M$, there are constants $a>0$ and $\lambda>1$ and there is a splitting $TM=E^u\oplus E^c$, where $E^u=\mathbb{R}^d$ such that $C_a(E^u)$ is an unstable cone field of width $a>0$ around $E^u$ with rate of expansion $\lambda>1$ and $E^c$ is the $DF$-invariant subbundle defined as above.
 \end{itemize}

\subsection{Normally hyperbolic center foliation}\label{ssec:foliaciones}
A \textit{foliation} $\mathcal F$ of a manifold $M$ is a division of $M$ into disjoint submanifolds called leaves of the foliation with the following properties.
\begin{itemize}
\item Each leaf is connected, although it need not be a closed subset of the manifold.
\item The leaves all have the same dimension.
\item For each point $p\in M$ there exists a neighborhood $V_p$ of $p$, a homeomorphism $\xi:D^c \times D^{m-c}\to V_p$ such that
$$\phi(D^c\times\{y\})\subseteq \mathcal F(\xi(0, y)).$$
\end{itemize}
$D^c$ is the open $c$-dimensional disc, $m$ is the dimension of the manifold $M$, and $c$ is the dimension of the foliation. Such a $\phi$ is called a \textit{foliation box} and $\phi(D^c\times\{ y\})$ is called a \textit{plaque} of the foliation. A foliation $\mathcal F$ on $M$ is called \textit{regular} if $\partial_1\phi$ exists, is nonsingular, and depends continuously on $p$. For a regular foliation $\mathcal F$ of dimension $c$, consider finitely many foliation boxes $\{\phi\::\:D^c\times D^{m-c}\to M\}$ such that $\phi(\frac12 D^c\times \frac12D^{m-c})$ covers $M$. The family $\mathcal P=\{\phi(D^c\times \{y\})\}$ of plaques is then called a \textit{plaquation} of $\mathcal F$.

Let $f:M\to M$ be a partially hyperbolic local diffeomorphism on $M$. The foliation $\mathcal F$ is $f$-\textit{invariant} if $f$ permutes its leaves. That is,
$$f\circ\pi=\pi\circ f$$
where $\pi$ projects the point $p\in M$ to the leaf $\mathcal F(p)$ containing it.

It is known that in general, the center distribution $E^c$ does not necessarily integrate to a foliation. Even if $E^c$ is integrable, in general, the foliation $\mathcal F^c$ does not share regularity properties such as absolute continuity or H\"older continuity. The natural question of the existence and  regularity of the center foliation is tackled in the classical monograph of M. W. Hirsch, C. C. Pugh and M. Shub \cite{HPS1977}. For a review of the recent advances in answering this question we refer the reader to \cite{PSW2012}.

A central result in \cite{HPS1977} establishes that $r$-normal hyperbolicity and plaque expansiveness are sufficient conditions to obtain existence and some regularity of center foliations in the precise sense described below.

A regular $f$-invariant foliation $\mathcal F$ with $C^r$ leaves, is called \textit{$r$-normally hyperbolic} with $r\geq 1$, if there is a $Df$-invariant splitting $TM=E^u\oplus E^c\oplus E^s$, $E^c=T\mathcal F$, such that for any $p\in M$,
$$ \Vert D f_p|E^s\Vert < 1<\Vert (Df_p|E^u)^{-1}\Vert^{-1},$$
and 
$$ \Vert D f_p|E^s\Vert < \Vert Df_p|E^c\Vert ^r<\Vert (D f_p|E^u)^{-1}\Vert^{-1}. $$

 A $C^r$-partially hyperbolic map $f$ is called \textit{$r$-partially hyperbolic} if the central bundle $E^c$ integrates into a foliation $\mathcal F^c$ which is $r$-normally hyperbolic.

An $f$-invariant regular foliation $\mathcal F$ is called \textit{plaque expansive} if there exist a plaquation $\mathcal P$ and a $\delta>0$ such that any two $\delta$-pseudo orbits of $f$ which respect the plaquation $\mathcal P$ and $\delta$-shadow each other belong to the same plaque of $\mathcal P$, where the $\delta$-pseudo orbit $\{x_i\}$ is said to respect $\mathcal P$ if $f(x_i)$ and $x_{i+1}$ belong to the same plaque of $\mathcal P$. An $f$-invariant normal hyperbolic foliation $\mathcal F$ exhibits plaque expansivity if the foliation $\mathcal F$ is of class $C^1$ \cite{HPS1977}, or if the leaves of $\mathcal F$ have uniformly bounded leaf volume \cite{C2011}. Nevertheless, the fundamental question of whether normal hyperbolicity implies plaque expansivity in general remains open.

A \textit{leaf conjugacy} from an $f$-invariant foliation $\mathcal F_f$ to a $g$-invariant foliation $\mathcal F_g$ is a homeomorphism $h : M \to M$ sending $\mathcal F$-leaves to $\mathcal G$-leaves and
$$h(f(\mathcal F_f(p)))=g(\mathcal F_g(h(p))).$$
The result in \cite{HPS1977} about foliation stability establishes that if $\mathcal F_f$ is $r$-normally hyperbolic and plaque expansive with respect to an $f$, then for each $g$ $C^r$-close to $f$, there exists a unique $g$-invariant foliation $\mathcal F_g$ near $\mathcal F_f$. The foliation $\mathcal F_g$ is normally hyperbolic, plaque expansive with respect to $g$ and $\mathcal F_f$ is conjugate to $\mathcal F_g$ by a homeomorphism $h$ such that $h(f(x))$ and $g(h(x))$ belong to the same leaf of $\mathcal F_g$.
Moreover, every leaf of $\mathcal F_g$ is $C^r$ smooth and $C^r$-close to the leaf of $\mathcal F_f$.

We can now precisely state hypothesis \ref{F3}. Recall that we consider $F: \Td\times I\to \Td\times I$, being a $C^r$-partial hyperbolic local diffeomorphism preserving the boundary, $r\geq 1$, defined by
$$ F(\theta,t)=(E(\theta,t),\varphi(\theta, t)),$$
We assume that $F$ satisfies: 
\begin{itemize}
\item[\referencia{F3}{[F3]}] There exists a unique $F$-invariant foliation $\mathcal F^c_F$ tangent to $E^c$ such that
\begin{itemize}
\item[\referencia{F3i}{[F3.i]}] For every $p\in M$, the leaves $\mathcal F^c_F(p)$ are curves of uniformly bounded length $C^1$-close of $\{\theta\}\times I$,
\item[\referencia{F3ii}{[F3.ii]}] the map $p\to \mathcal F^c(p)$ is H\"older continuous.
\end{itemize}
\end{itemize}

Note that \ref{F3i} implies that $\mathcal F^c$ is normally hyperbolic and plaque expansive. If $K$ is a Kan-like map defined by \eqref{eq:Kanmap}, then $K$ satisfies \ref{F3}. In fact, the one-dimensional subbundle $E^c=\{0\}\times\mathbb R$ integrates uniquely in the $K$-invariant foliation $\mathcal{F}^c_K=\{I_\theta=\{\theta\}\times I\::\:\theta\in\Td\}$. It follows from \ref{K2} that $\mathcal F^c_K$ is normally hyperbolic and plaque expansive. We can therefore formulate the following perturbation result which is useful for our purpose.



\begin{teo}[\cite{HPS1977,IN2012,PSW2012}]\label{teo:foliaciones}
There exists a $C^1$-open set $\mathcal U\subseteq {\rm End}^r(M,\partial M)$ containing $K$ such that every $F\in \mathcal U$ satisfies:
\begin{enumerate}
\item There exists a unique $F$-invariant foliation $\mathcal F^c_F$ near $\mathcal F^c_K$ satisfying \ref{F3}. 
\item There exists a H\"older continuous leaf conjugacy between $(K,\mathcal F_K^c)$, $(F,\mathcal F_F^c)$.
\item The holonomy map along the leaves of $\mathcal F_F^c$ is H\"older continuous.
\end{enumerate}
Moreover,
\begin{enumerate}
\item[(4)] if $K$ is $r$-partially hyperbolic, then the leaves of $\mathcal F^c_F$ are $C^r$ and they are $C^r$-close to the leaves of $\mathcal F^c_K$.
\end{enumerate}
\end{teo}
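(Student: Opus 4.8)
The plan is to read Theorem~\ref{teo:foliaciones} as an application of the normal hyperbolicity and foliation stability machinery of \cite{HPS1977} to the pair $(K,\mathcal F^c_K)$, supplemented by the H\"older refinements of \cite{IN2012,PSW2012}. The first step has essentially been carried out in the discussion preceding the theorem: for a Kan-like map the center bundle $E^c=\{0\}\times\mathbb R$ integrates uniquely to the $C^\infty$, $K$-invariant foliation $\mathcal F^c_K=\{I_\theta=\{\theta\}\times I:\theta\in\Td\}$ (indeed $K(\theta,t)=(E(\theta),\varphi(\theta,t))$ sends $I_\theta$ onto $I_{E(\theta)}$), and by \ref{K2} this foliation is normally hyperbolic — in fact normally expanding, since there is no stable bundle, so it is $1$-normally hyperbolic — while, being a $C^1$ foliation whose leaves have uniformly bounded length, it is plaque expansive by \cite{HPS1977} (or \cite{C2011}). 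Note also that $\partial M$ is $K$-invariant, so every object produced below automatically respects the two boundary tori.

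Next I would invoke the foliation stability theorem. Condition \ref{F2} is $C^1$-open in ${\rm End}^r(M,\partial M)$, so there is a $C^1$-open $\mathcal U\ni K$ consisting of partially hyperbolic boundary-preserving maps; shrinking $\mathcal U$ if necessary, the stability theorem of \cite{HPS1977} applies to every $F\in\mathcal U$ and yields a unique $F$-invariant foliation $\mathcal F^c_F$ near $\mathcal F^c_K$ which is normally hyperbolic, plaque expansive, and whose $C^1$ leaves are $C^1$-close to the $I_\theta$, hence of uniformly bounded length — this is \ref{F3i}. The same theorem furnishes a leaf conjugacy $h:M\to M$ with $h\big(K(\mathcal F^c_K(p))\big)=F\big(\mathcal F^c_F(h(p))\big)$.

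To obtain the H\"older statements I would upgrade the purely continuous conclusions of \cite{HPS1977}. The leaves of $\mathcal F^c_F$ arise as the fixed section of a fiberwise-contracting graph transform over the expanding dynamics that $F$ induces on the leaf space $\cong\Td$; the definite gap between the central rate and the transverse expansion $\lambda$ guaranteed by \ref{K2} (after passing to iterates, and to an adapted metric if needed) makes this operator a contraction in a H\"older $C^0$ norm, so $p\mapsto\mathcal F^c_F(p)$ is H\"older, which is \ref{F3ii} and, with \ref{F3i} from the previous step, completes \ref{F3} and hence item (1); applying the same estimate to the conjugating section shows $h$ is H\"older, giving item (2); and item (3) follows because the center holonomy between nearby local center leaves is assembled from this H\"older identification of leaves. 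The precise statements are those of \cite{PSW2012} (and, in the skew-product setting nearest to ours, \cite{IN2012}). Finally, if $K$ is $r$-partially hyperbolic then $\mathcal F^c_K$ is by definition $r$-normally hyperbolic, so the $C^r$ section theorem of \cite{HPS1977} gives that the leaves of $\mathcal F^c_F$ are $C^r$ and $C^r$-close to those of $\mathcal F^c_K$, which is item (4).

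The main obstacle is this last upgrade: the classical HPS theorem delivers only $C^0$ objects, and extracting a H\"older modulus for the leaf conjugacy and — what is really needed downstream — for the center holonomy requires the quantitative graph-transform (bunching) estimates, with \ref{K2} in the role of the domination inequality that makes them work; this is precisely where \cite{IN2012,PSW2012} enter. A secondary, purely bookkeeping point is to check that $\mathcal U$ can be taken inside ${\rm End}^r(M,\partial M)$ and that uniqueness of $\mathcal F^c_F$ near $\mathcal F^c_K$ persists in the boundary-preserving class, which is immediate since both boundary tori are $F$-invariant and the foliation is pinned down in a neighborhood of $\partial M$.
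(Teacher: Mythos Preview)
Your proposal is correct and matches the paper's approach: the paper does not give an independent proof of Theorem~\ref{teo:foliaciones} but states it as a direct consequence of the cited references \cite{HPS1977,IN2012,PSW2012}, after verifying in the preceding paragraphs that $\mathcal F^c_K$ is normally hyperbolic and plaque expansive. Your sketch spells out in more detail than the paper how each ingredient (foliation stability from \cite{HPS1977}, the H\"older upgrade from \cite{IN2012,PSW2012}, and the $C^r$ section theorem for item (4)) enters, which is entirely in line with the paper's intent.
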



\section{Measures supported on the boundaries}\label{sec:mmei} 
This section is devoted to proving  Theorem~\ref{mainteo:KE1}, part~\ref{TeoAi}.  We include it as an independent proposition for future reference.

We assume throughout the whole section that $K$ is a Kan-like map. Let $\nu$ be an invariant measure for the expanding map $E$ and fix $m=\nu\times \m_I$ as a reference measure on the cylinder $M=\Td\times I$. We denote by $\pi:M\to \Td$ the natural projection defined by $\pi(\theta, t)=\theta$, $\theta\in\Td$.

\begin{prop}\label{teo:medidasnegativas}
Assume $K$ is a Kan-like map and suppose that $\nu$ is any ergodic $E$-invariant probability measure on $\Td$ satisfying condition \eqref{eq:condK4im}: 
$$\int\log |\partial_t \varphi (\theta,j)|\: d\nu(\theta)<0, \quad j=0,1.$$
The measures $\mu_j=\nu \times \delta_j$ are then $K$-invariant and ergodic, have negative central Lyapunov exponents, are supported on $\Td \times \{j\}$, and satisfy the following properties:
    \begin{itemize}
        \item[(a)] The measures $\mu_j$ are observable measures with respect to $m$.
    
        \item[(b)] If $\supp \nu =\Td$, then the basins $\mathcal{B}(\mu_j)$ are intermingled with respect to $m$.

        \item[(c)] If $E$ has a Jacobian with respect to $\nu$ with bounded distortion, then the union of the basins $\mathcal{B}(\mu_0)\cup \mathcal{B}(\mu_1)$ covers $m$-a.e. the manifold $M$.
        
        \item[(d)] There is no other invariant measure $\mu$ such that $\pi_*\mu=\nu$ with a negative center Lyapunov exponent. 
    \end{itemize}

\end{prop}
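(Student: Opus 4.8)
The plan is to establish the four items in the natural order, leveraging the product structure $m=\nu\times\m_I$ and the fact that the center bundle is one-dimensional.

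\textbf{Setup and the first claims.} First I would verify that $\mu_j=\nu\times\delta_j$ is $K$-invariant: this is immediate from $K(\theta,j)=(E(\theta),j)$ (hypothesis \ref{K1}) and $E$-invariance of $\nu$. Ergodicity of $\mu_j$ follows because $K$ restricted to $\Td\times\{j\}$ is conjugate (via $\pi$) to $E$, and $\nu$ is ergodic. The central Lyapunov exponent of $\mu_j$ is $\int\log|\partial_t\varphi(\theta,j)|\,d\nu(\theta)$, which is negative by \eqref{eq:condK4im}; since $E$ is expanding, the unstable exponents are positive, so $\mu_j$ is hyperbolic with one-dimensional stable direction transverse to the boundary. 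This gives the preamble of the proposition.

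\textbf{Item (a): observability.} Because $\mu_j$ has a negative central exponent, Pesin theory (subsection~\ref{ssec:Lyapunov}) gives, for $\mu_j$-a.e.\ $x$, a local stable manifold $W^s_\epsilon(x)$ of dimension $1$, which by the product/skew-product structure must be a vertical segment $\{\theta\}\times J$ inside the fiber. A Hopf-type argument then shows: for $\nu$-a.e.\ $\theta$, a positive-$\m_I$-measure set of $t$ in the fiber over $\theta$ lies in $W^s$ of a generic point of $\mu_j$, hence in $\mathcal B(\mu_j)$; integrating against $\nu$ and using Fubini for $m=\nu\times\m_I$ gives $m(\mathcal B(\mu_j))>0$. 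Concretely, near the boundary $\Td\times\{j\}$ the fiber map $\varphi(\theta,\cdot)$ contracts toward $j$ on a definite neighborhood (by continuity of the negative exponent along $\nu$-generic orbits and a Pliss-type argument), so an interval of positive length in each generic fiber is attracted to the boundary and shares the forward Birkhoff average of $\mu_j$.

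\textbf{Item (b): intermingling.} Assume $\supp\nu=\Td$. Given an open $U\subseteq M$, shrink it to a box $\hat U\times(c,d)$ with $\nu(\hat U)>0$. Since $E$ is topologically exact (subsection~\ref{ssec:expandingmaps}), some iterate $E^N(\hat U')=\Td$ for an open $\hat U'\subseteq\hat U$; pushing forward and using that $\mathcal B(\mu_j)$ is $K$-saturated, together with the fact (from item (a)) that $\mathcal B(\mu_j)$ meets $m$-a.e.\ fiber in positive $\m_I$-measure, one transports a positive-$m$ chunk of $\mathcal B(\mu_j)$ back into $U$. The key point is that the accessibility/mixing needed is supplied here purely by topological exactness of $E$ plus the product reference measure, avoiding absolutely continuous holonomies. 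I expect this transport argument — making the ``pull back a density point of $\mathcal B(\mu_j)$ into $U$'' precise with the measure $m$ rather than Lebesgue — to be the main technical obstacle, and I would handle it using Lemma~\ref{le:baciatotal2} (density points for $\nu$ via Besicovitch, bounded distortion not yet needed here, only full support).

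\textbf{Item (c): full coverage.} Now assume $J_\nu E$ has bounded distortion. Let $Z=M\setminus(\mathcal B(\mu_0)\cup\mathcal B(\mu_1))$ and suppose $m(Z)>0$. Write $A=\pi(Z')$ for a suitable positive-$\nu$-measure subset; apply Lemma~\ref{le:baciatotal2} to get balls $\sigma_n$ of fixed radius $r$ with $\nu(\sigma_n\cap E^n(A))/\nu(\sigma_n)\to1$. Over such a ball, using bounded distortion of the Jacobian and the product structure, the set $K^n(Z)$ occupies an $m$-proportion of $\sigma_n\times I$ arbitrarily close to $1$ on some definite vertical sub-band. But by item (a)'s mechanism, on any fiber over a $\nu$-generic $\theta$ a definite proportion of the interval $I$ already lies in $\mathcal B(\mu_0)\cup\mathcal B(\mu_1)$, and this basin is $K$-invariant; combining these two facts on the same ball $\sigma_n$ forces a contradiction for large $n$. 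Hence $m(Z)=0$.

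\textbf{Item (d): uniqueness.} Suppose $\mu$ is $K$-invariant with $\pi_*\mu=\nu$ and negative central exponent; we may assume $\mu$ ergodic by ergodic decomposition (each component still projects to $\nu$ by ergodicity of $\nu$, and a.e.\ component has negative central exponent by the integral formula). By Pesin theory $\mu$-a.e.\ point has a one-dimensional vertical stable manifold. The disintegration of $\mu$ along fibers $\{\mu_\theta\}$ is then $K$-equivariant, and the negative central exponent means the fiber dynamics $\varphi(\theta,\cdot)$ contracts $\mu_\theta$ toward a single point for $\nu$-a.e.\ $\theta$; invariance forces that point to be a fixed point of $\varphi(p,\cdot)$ over the fixed point $p$, and by \ref{K3} the only attracting ones are $t=0$ (over $p$) and $t=1$ (over $q$). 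Propagating along $E$-orbits (density of preimages, or using that $\nu$ is ergodic and the ``selected endpoint'' function is measurable and $E$-invariant hence a.e.\ constant) shows the section is constantly $0$ or constantly $1$, i.e.\ $\mu=\mu_0$ or $\mu=\mu_1$. This last propagation step — ruling out a nonconstant measurable choice of boundary component along a typical orbit — is the delicate part, and condition \ref{K3} (the heteroclinic cycle structure at $p$ and $q$) is exactly what makes the only consistent invariant choices the two constant ones.
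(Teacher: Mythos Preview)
Your setup and item (a) follow the paper closely and are correct. However, there are genuine gaps in items (b) and (d), and item (c) inherits the problem from (b).

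\textbf{Item (b): you never use \ref{K3}.} The paper's proof (Lemma~\ref{le:baciasintermingled}) does not rely only on topological exactness of $E$. The key step is that, once a forward iterate $\gamma_t^n$ of a horizontal disc wraps around $\Td$, it transversely meets the \emph{special fibers} $\{p\}\times I$ and $\{q\}\times I$; by \ref{K3} these fibers are (except for one endpoint) the stable manifolds $W^s(p,0)$ and $W^s(q,1)$, so further iteration drives part of $\gamma_t^n$ into each box $B_j(\epsilon)$. Your argument, by contrast, only says $E^N(\hat U')=\Td$ and that $\mathcal B(\mu_j)$ meets $m$-a.e.\ fiber in positive $\m_I$-measure. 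But $K^N(\hat U'\times(c,d))$ is a thin graph over $\Td$ sitting at some \emph{height}; knowing that $\mathcal B(\mu_j)$ occupies a positive (non-uniform) portion of each fiber gives no reason for the graph to hit it. The heteroclinic cycle \ref{K3} is precisely what forces the graph toward \emph{both} boundaries. (Incidentally, you invoke Lemma~\ref{le:baciatotal2} here, but that lemma already assumes bounded distortion, which item (b) does not.)

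\textbf{Item (c).} The paper's argument (Lemmas~\ref{le:curvasc1} and~\ref{le:baciasestodo}) again rests on the mechanism of (b): any $C^1$ graph $\sigma$ over a disc intersects $\mathcal B(\mu_0)\cup\mathcal B(\mu_1)$ in a set of \emph{definite} projected $\nu$-measure $\delta(\sigma)$, and this bound is stable under $C^1$-perturbation of the graph. That uniformity (for a converging sequence of iterated graphs) is what contradicts the density-point blow-up. Your version asserts instead a ``definite proportion of $I$'' in the basins fiberwise, but item (a) only gives a \emph{positive} (a priori non-uniform) length per fiber, so the contradiction does not close.

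\textbf{Item (d): the paper's proof is one line via (c); your direct argument is flawed.} The paper (Lemma~\ref{le:characterization2}) simply observes that if $\lambda^c(\mu)<0$ then $\mu$-a.e.\ point has a vertical stable interval contained in $\mathcal B(\mu)$, so $m(\mathcal B(\mu))>0$; since by (c) $\mathcal B(\mu_0)\cup\mathcal B(\mu_1)$ has full $m$-measure and basins are disjoint, $\mu\in\{\mu_0,\mu_1\}$. Your argument that the conditional measures $\mu_\theta$ are Dirac masses at a section $s(\theta)$, and that ``invariance forces that point to be a fixed point of $\varphi(p,\cdot)$'', does not follow: the section $s$ is $E$-equivariant, not $E$-invariant, and $p,q$ typically have $\nu$-measure zero so \ref{K3} alone does not pin down $s$. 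The ``propagation along $E$-orbits'' you describe would at best recover the separating map $\sigma$ of Section~\ref{sec:mmeii}, not force $s\equiv 0$ or $s\equiv 1$.
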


The proof of the Proposition above hinges on a series of partial results proved below. We summarize the proof in the Subsection~\ref{ssec:mmei}.

\subsection{$\mu_0$ and $\mu_1$ have negative Lyapunov exponents}\label{ssec:Lyapunov2} 

Let $\eta$ be an ergodic $K$-invariant probability measure. The \textit{central Lyapunov exponent} of $K$ with respect to $\eta$ is
\begin{equation}\label{eq:deflyapucentral}
\lambda^c(K,\eta):=\int_M\log\|DK(\theta,t)(0,1)\|\,d\eta(\theta,t).
\end{equation}
For simplicity, from now on we write $\lambda^c(\eta)$ instead of $\lambda^c(K,\eta)$. 

\begin{lema}\label{le:expnentenegativo} Let $\nu$ be an ergodic $E$-invariant probability measure such that
$$ \int_{\Td}\log|\partial_t \varphi (\theta,j)|\,d\nu(\theta)<0, \quad j=0,1.$$
Then, the probability $\mu_j = \nu \times \delta_j $ is $ K $-invariant, ergodic and has a center Lyapunov exponent 
$$ \lambda^ c(\mu_j) <0.$$
\end{lema}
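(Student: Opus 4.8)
The plan is to first establish that $\mu_j = \nu \times \delta_j$ is $K$-invariant and ergodic, and then to compute its central Lyapunov exponent directly from the definition \eqref{eq:deflyapucentral}. For invariance, note that because $K$ preserves the boundary by \ref{K1}, the restriction of $K$ to $\Td \times \{j\}$ is exactly the map $(\theta,j) \mapsto (E(\theta),j)$, which under the identification $\Td \times \{j\} \cong \Td$ is just $E$. Since $\nu$ is $E$-invariant, $\mu_j$ is $K$-invariant; and since $\nu$ is $E$-ergodic and the conjugacy between $K|_{\Td\times\{j\}}$ and $E$ is a measurable (indeed topological) isomorphism carrying $\mu_j$ to $\nu$, ergodicity of $\mu_j$ follows immediately.

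Next I would compute $\lambda^c(\mu_j)$. At a boundary point $(\theta,j)$ the derivative $DK(\theta,j)$ acts on the central direction $(0,1)$ by $\partial_t \varphi(\theta,j)$ — this is because $K(\theta,t) = (E(\theta),\varphi(\theta,t))$, so in the block form of $DK$ the entry sending the $t$-direction to the $t$-direction is $\partial_t\varphi(\theta,t)$, and at $t=j$ the off-diagonal term $\partial_\theta(\text{second coordinate})$ does not affect $\|DK(\theta,j)(0,1)\|$ in the product metric (the image of $(0,1)$ is $(0,\partial_t\varphi(\theta,j))$). Hence
\begin{equation*}
\lambda^c(\mu_j) = \int_M \log\|DK(\theta,t)(0,1)\|\,d\mu_j(\theta,t) = \int_{\Td}\log|\partial_t\varphi(\theta,j)|\,d\nu(\theta),
\end{equation*}
which is negative by hypothesis. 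If the chosen metric is not literally the product metric one only needs that the two metrics are uniformly equivalent, so the logarithms differ by a bounded quantity whose contribution vanishes after dividing by $n$ in the Birkhoff/Oseledets limit; alternatively, one invokes that $\lambda^c$ as defined by the integral \eqref{eq:deflyapucentral} is metric-independent because it equals the Oseledets exponent $\lim \frac1n\log\|DK^n(\theta,j)(0,1)\|$ for $\mu_j$-a.e.\ point.

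The argument is essentially routine; the only point requiring a little care is the identification of the action of $DK$ on the central subbundle along the boundary with multiplication by $\partial_t\varphi(\theta,j)$, i.e.\ checking that the non-invariance of the splitting $T M = \mathbb{R}^d \oplus \mathbb{R}$ away from the boundary does not interfere — but on the boundary itself $\{0\}\times\mathbb{R}$ is genuinely $DK$-invariant (since $\varphi(\cdot,j)\equiv j$ forces $\partial_\theta\varphi(\theta,j)=0$... wait, that is the derivative of the $t$-coordinate in the $\theta$ direction, which is indeed $0$ because $\varphi(\theta,j)=j$ for all $\theta$), so $E^c$ restricted to the boundary is invariant and the computation above is exact, not merely asymptotic. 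So I do not anticipate a serious obstacle here; the content of the lemma is really the observation that \eqref{eq:condK4im} is precisely the statement that $\lambda^c(\mu_j)<0$.
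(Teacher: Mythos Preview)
Your proposal is correct and follows essentially the same approach as the paper: verify invariance and ergodicity via the identification of $K|_{\Td\times\{j\}}$ with $E$, then compute $\lambda^c(\mu_j)$ by writing down $DK(\theta,j)$ in block form and observing that $DK(\theta,j)(0,1)=(0,\partial_t\varphi(\theta,j))$. Your additional remarks about metric-independence and the vanishing of $\partial_\theta\varphi(\theta,j)$ are correct but not needed---the paper simply notes that $E^c(\theta,j)=\{0\}\times\mathbb{R}$ is invariant and reads off the norm directly.
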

\proof It is immediately clear that $\mu_j=\nu\times \delta_j$ is $ K $-invariant, ergodic and supported on the boundary $\Td\times\{j\}$. From \eqref{eq:deflyapucentral}, we have
\begin{eqnarray*}
\lambda^ c(\mu_j)&=&\int_M\log\|DK(\theta,t)(0,1)\|\,d (\nu\times \delta_j)(\theta,t)\\
&=&\int_{\Td}\log\|DK(\theta,j)(0,1)\|\,d\nu(\theta).
\end{eqnarray*}
We notice that 
$$DK(\theta,j)=\left(\begin{array}{ll}
\partial_\theta E(\theta)& 0\\
\partial_\theta \varphi (\theta,j) & \partial_t \varphi (\theta,j)
\end{array}
\right)$$
\noindent the central space $E^c(\theta,j)=\{0\}\times\mathbb{R}$ is invariant, and we also have 
$$\log\Vert DK(\theta,j)(0,1)\Vert=\log|\partial_t \varphi (\theta,j)|.$$
and then,
$$ \lambda^c(\mu_j)=\int_{\Td}\log|\partial_t \varphi (\theta,j)|\,d\nu(\theta)<0.$$
\endproof

It follows from Lemma~\ref{le:expnentenegativo} that $\nu$-almost every point in the boundary has a Pesin stable manifold.

\begin{lema}\label{lem:intervalos} Let $\nu$ be an ergodic $E$-invariant probability measure such that
$$\int_{\Td}\log|\partial_t \varphi (\theta,j)|\,d\nu(\theta)<0.$$
Then, for $\nu$-almost every $\theta\in\Td$, the Pesin stable manifold in $(\theta,j)$ is an interval inside $I_\theta=\{\theta\}\times I$. 
\end{lema}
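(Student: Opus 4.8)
The statement to be proved is Lemma~\ref{lem:intervalos}: for $\nu$-a.e.\ $\theta$, the Pesin stable manifold $W^s(\theta,j)$ is an interval inside the fiber $I_\theta=\{\theta\}\times I$. The plan is to combine the structure of the Pesin local stable manifold with the invariance of the center foliation $\mathcal F^c_K=\{I_\theta\}$ under $K$.

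\emph{Step 1: identify the local stable manifold with a subinterval of the fiber.} By Lemma~\ref{le:expnentenegativo} the measure $\mu_j=\nu\times\delta_j$ has a unique Lyapunov exponent in the central direction, namely $\lambda^c(\mu_j)<0$, while the exponent(s) in the unstable direction $E^u=\mathbb R^d$ are positive (since $E$ is expanding). Hence for $\mu_j$-a.e.\ point the stable subspace is exactly $E^c(\theta,j)=\{0\}\times\mathbb R$, which is one-dimensional and tangent to the fiber $I_\theta$. Pesin's theory (as recalled in Subsection~\ref{ssec:Lyapunov}) then provides, for $\nu$-a.e.\ $\theta$, an embedded $C^r$ disc $W^s_{\epsilon(\theta)}(\theta,j)$ of dimension $1$ with $T_{(\theta,j)}W^s_{\epsilon(\theta)}(\theta,j)=\{0\}\times\mathbb R$. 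First I would argue that this local disc is contained in $I_\theta$: since $\mathcal F^c_K$ is $K$-invariant with $E$ expanding on the base, two points in different fibers $I_{\theta_1}$, $I_{\theta_2}$ have forward iterates whose base coordinates separate exponentially, hence cannot lie on the same local stable manifold; so $W^s_{\epsilon(\theta)}(\theta,j)\subseteq I_\theta$, and being a connected $C^1$ curve in the interval $I_\theta$ it is a subinterval containing $(\theta,j)$. (Alternatively, one uses that the fiber $I_\theta$ through $(\theta,j)$ is the local center-stable set and that a one-dimensional Pesin stable manifold tangent to the fiber direction must follow the fiber.)

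\emph{Step 2: pass from local to global.} The global Pesin stable manifold is $W^s(\theta,j)=\bigcup_{n\geq 0}K^{-n}\big(W^s_\epsilon(K^n(\theta,j))\big)$, where $K^n(\theta,j)=(E^n(\theta),j)$ stays on the boundary $\Td\times\{j\}$ (invariant by \ref{K1}). Each $W^s_\epsilon(E^n(\theta),j)$ is, by Step 1, an interval inside $I_{E^n(\theta)}$ containing $(E^n(\theta),j)$. The relevant inverse branch $K^{-n}$ restricted to the fiber $I_{E^n(\theta)}$ maps it homeomorphically onto $I_\theta$ (this is exactly the statement that $K$ permutes the fibers $I_\theta$, i.e.\ $\pi\circ K=E\circ\pi$, together with $K$ being a local diffeomorphism preserving the boundary), so the pullback of an interval containing the base point is again an interval in $I_\theta$ containing $(\theta,j)$. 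Therefore $W^s(\theta,j)$ is an increasing union of nested intervals in $I_\theta$ all containing $(\theta,j)$, hence an interval inside $I_\theta$. This holds for $\nu$-a.e.\ $\theta$, which is what is claimed.

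\emph{Main obstacle.} The only delicate point is Step~1: rigorously pinning down that the one-dimensional Pesin local stable manifold is literally contained in the fiber $I_\theta$, rather than merely tangent to it at the base point. This requires using \ref{K2} (partial hyperbolicity: $|\partial_t\varphi|<\tfrac12\|DE\|$) to guarantee that the central/fiber direction is genuinely dominated by the expanding direction, so that any point off the fiber $I_\theta$ has base coordinate escaping under forward iteration faster than the contraction along the center, forcing $\dist(K^n y, K^n(\theta,j))\not\to 0$; combined with the uniqueness of the invariant center foliation $\mathcal F^c_K$ (each leaf $I_\theta$ being exactly $\{\theta\}\times I$), this identifies $W^s_\epsilon(\theta,j)$ as a subset of $I_\theta$. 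Everything else is bookkeeping with the fibered structure and the definition of $W^s$ recalled in Subsection~\ref{ssec:Lyapunov}.
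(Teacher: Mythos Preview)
Your proposal is correct, but the paper takes a different (and somewhat more elementary) route for the key point of connectedness.

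You argue that $W^s(\theta,j)$ is an interval by writing it as $\bigcup_{n\geq 0}K^{-n}\big(W^s_\epsilon(K^n(\theta,j))\big)$, observing that each local Pesin stable manifold is a subinterval of the fiber $I_{E^n(\theta)}$, and pulling back along the fiber diffeomorphism $\varphi_\theta^{-n}$ to get a union of subintervals of $I_\theta$ all containing $(\theta,j)$. (A small remark: these intervals need not be nested since $\epsilon(\cdot)$ varies, but since they all contain the common endpoint $(\theta,j)$ their union is still an interval, so your conclusion stands.)

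The paper instead exploits directly that each fiber map $\varphi_\theta:I_\theta\to I_{E(\theta)}$ is an orientation-preserving diffeomorphism of $[0,1]$ fixing the endpoints, hence strictly increasing; the same holds for every iterate $\varphi_n(t)=K^n(\theta,t)$. One then takes the maximal interval $[0,\sigma_0(\theta))\subseteq W^s(\theta,0)\cap I_\theta$ and argues by monotonicity: if some $t_0>\sigma_0(\theta)$ lay in $W^s(\theta,0)$, then for every $\sigma_0(\theta)<t<t_0$ one has $\varphi_n(\sigma_0(\theta))<\varphi_n(t)<\varphi_n(t_0)\to 0$, forcing $(\theta,t)\in W^s(\theta,0)$ and contradicting maximality of $\sigma_0(\theta)$.

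What each approach buys: the paper's monotonicity argument is self-contained and avoids the Pesin structural decomposition $W^s=\bigcup K^{-n}(W^s_\epsilon)$ entirely, using only the one-dimensional order structure of the fibers. Your argument, on the other hand, does not use one-dimensionality in an essential way beyond the Pesin disc being an arc, and would adapt more readily to settings where the fiber maps are not monotone (e.g., higher-dimensional fibers). Your explicit discussion of why $W^s(\theta,j)\subseteq I_\theta$ (via expansivity of $E$ on the base) is something the paper leaves implicit.
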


\proof Letting $\theta \in \Td$, notice that the map $\varphi_\theta : I_\theta \to I_{E(\theta)}$ satisfies \ref{K1} and is a diffeomorphism. Then, $\varphi_\theta$ is strictly increasing, as are all of its iterated $\varphi_n:I_\theta \to I_{E^n(\theta)}$ defined by 
$$\varphi_n(t):=K^n(\theta,t).$$ 
Assume that the stable manifold of $(\theta,0)$ contains the maximal interval $\{\theta\}\times[0,\sigma_0(\theta))$, $\sigma_0(\theta)\geq0$.
We will prove that $W^s((\theta,0))=\{\theta\}\times[0,\sigma_0(\theta))$, $\nu$-a.e. $\theta$. In fact, let $t_0>\sigma_0(\theta)$. 
Since the map $\varphi_n$ is increasing, we have for every $\sigma_0(\theta)<t<t_0$ 
$$\varphi_n(\theta,\xi_0(\theta)) < \varphi_n(\theta,t)< \varphi_n(\theta,t_0). $$
Therefore, if we assume that $(\theta,t_0)\in W^s((\theta,0))$, we contradict the fact that $\sigma_0(\theta) $ is the end of the maximal interval, and the result is obtained. Similarly, it is shown that $W^s((\theta,1))=\{\theta\}\times(\sigma_1(\theta),1]$, $\nu$-a.e. $\theta$.

\endproof


\subsection{$\mu_0$ and $\mu_1$ are observable measures }\label{ssec:obsmeasures2}
In this subsection, we assume that $\nu$ is an $E$-invariant probability measure on $\Td$ such that~\eqref{eq:condK4im} is satisfied. Now, we prove that $\mu_j$, $j=0,1$ is an observable measure with respect to the reference measure $m=\nu\times \m_I$.

\begin{lema}\label{le:existenfisicas} The measure $\mu_j=\nu\times\delta_j$, $j=0,1$, is observable with respect to the reference measure $m=\nu\times \m_I$. 
\end{lema}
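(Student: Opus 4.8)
The plan is a Hopf-type argument: the central exponent of $\mu_0=\nu\times\delta_0$ being negative forces $\nu$-a.e.\ boundary point $(\theta,0)$ to carry a nondegenerate Pesin stable interval inside its fiber $I_\theta$, each point of that interval inherits the $\mu_0$-genericity of $(\theta,0)$, and integrating against $m=\nu\times\m_I$ then yields $m(\mathcal B(\mu_0))>0$. The same reasoning handles $\mu_1$.

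First, since $\mu_0$ is $K$-invariant and ergodic (Lemma~\ref{le:expnentenegativo}), Birkhoff's ergodic theorem together with the separability of $C^0(M,\mathbb R)$ produces a Borel set $G_0\subseteq\Td$ of full $\nu$-measure such that for every $\theta\in G_0$,
$$\tfrac1n\sum_{k=0}^{n-1}\psi(K^k(\theta,0))\longrightarrow\int_M\psi\,d\mu_0\qquad\text{for all }\psi\in C^0(M,\mathbb R),$$
i.e.\ $(\theta,0)\in\mathcal B(\mu_0)$ for all $\theta\in G_0$. Next, by Lemma~\ref{le:expnentenegativo} and Lemma~\ref{lem:intervalos}, and because the unstable cone direction carries only positive exponents, Pesin's theory provides a Borel set $G_1\subseteq\Td$ of full $\nu$-measure and a measurable function $\sigma_0:G_1\to(0,1]$ with $W^s((\theta,0))=\{\theta\}\times[0,\sigma_0(\theta))$ and $\sigma_0(\theta)>0$ for every $\theta\in G_1$; positivity comes from the local stable manifold at $(\theta,0)$ being a nondegenerate embedded disc tangent to $E^c=\{0\}\times\mathbb R$, and expansiveness of $E$ confines it to $I_\theta$.

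Now fix $\theta\in G_0\cap G_1$ and $t\in[0,\sigma_0(\theta))$. By definition of the stable manifold, $\dist\big(K^n(\theta,t),K^n(\theta,0)\big)\to 0$, so for every uniformly continuous $\psi$ one has $\psi(K^n(\theta,t))-\psi(K^n(\theta,0))\to0$, hence the Ces\`aro averages along the two orbits share the same limit $\int\psi\,d\mu_0$. Thus $(\theta,t)\in\mathcal B(\mu_0)$, so $\bigcup_{\theta\in G_0\cap G_1}\{\theta\}\times[0,\sigma_0(\theta))\subseteq\mathcal B(\mu_0)$. Since $m=\nu\times\m_I$, Fubini's theorem gives
$$m(\mathcal B(\mu_0))\ \geq\ \int_{G_0\cap G_1}\sigma_0(\theta)\,d\nu(\theta)\ >\ 0,$$
because $\sigma_0>0$ $\nu$-a.e. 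Hence $\mu_0$ is observable with respect to $m$, and the identical argument with $W^s((\theta,1))=\{\theta\}\times(\sigma_1(\theta),1]$ handles $\mu_1$.

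The one genuinely delicate point is the positivity and measurability of $\theta\mapsto\sigma_0(\theta)$: one must invoke Pesin's local stable manifold theorem to know the local stable disc is nondegenerate and tangent to the center bundle (so it cuts out a definite subinterval of the fiber), use the measurable dependence of the Pesin data on the base point, and use expansiveness of $E$ to keep the stable set inside a single fiber. Everything else is a routine combination of Birkhoff's theorem, separability of $C^0(M,\mathbb R)$, the Hopf shadowing trick, and Fubini.
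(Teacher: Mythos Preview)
Your proof is correct and follows essentially the same Hopf-type argument as the paper: negative center exponent $\Rightarrow$ nondegenerate Pesin stable intervals in the fibers $\Rightarrow$ $s$-saturation of the basin $\Rightarrow$ positive $m$-measure via the product structure. The only cosmetic difference is that the paper fixes a small $\epsilon>0$, extracts a set $\Gamma_j\subseteq\Td$ of positive $\nu$-measure on which the local stable manifold has length at least $\epsilon$, and bounds $m(\mathcal B(\mu_j))\geq\nu(\Gamma_j)\cdot\epsilon$, whereas you integrate the full stable length $\sigma_0(\theta)$ directly; the paper's version has the advantage that the explicit ``boxes'' $B_j(\epsilon)=\Gamma_j\times[0,\epsilon]$ are reused in the subsequent intermingledness and covering arguments.
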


\proof We notice that $\Td\times\{j\}$ is a submanifold transversal to $E^c_{(\theta,0)}$ for every $\theta\in\Td$. Since $\nu$ is assumed to be ergodic and supported on $\Td$, then the ergodic measure $\mu_j$ has support on $\Td\times\{j\}$. Thus, there exists a $\nu$-full measure subset of $\theta\in\Td$ such that 
$(\theta,j)\in \mathcal{B}(\mu_j)$.

On the other hand, from \eqref{eq:condK4im}, we have the conclusions of Lemma~\ref{le:expnentenegativo} and Lemma~\ref{lem:intervalos}, and so $\mu_j$ has a negative center Lyapunov exponent and the Pesin stable manifold in $(\theta,j)$ is an interval inside $I_\theta=\{\theta\}\times I$ for $\nu$-almost every $\theta\in\Td$. Therefore, we conclude that
$$\mu_j\bigg(\big(\Td\times\{j\}\big)\cap\mathcal{B}(\mu_j)\cap\mathcal{R}(\mu_j)\bigg)=1, \quad j=0,1.$$
In particular, for $\epsilon>0$ which is fixed and small enough, there exists a set $\Gamma_j\subseteq \Td$ such that $\nu(\Gamma_j)>0$ and we have 

\begin{itemize}
\item  $\{\theta\}\times [0,\epsilon]\subseteq W^s(\theta,0)$ for every $\theta\in \Gamma_0$, and 
\item  $\{\theta\}\times [1-\epsilon,1]\subseteq W^s(\theta,1)$ for every $\theta\in \Gamma_1$. 
\end{itemize}

Moreover, 
$$\Gamma_j\times\{j\}\subseteq\mathcal{B}(\mu_j)\cap\mathcal{R}(\mu_j).$$ 
An interesting property of Pesin's stable manifolds is that if $x\in\mathcal{B}(\mu)$, then $W^s(x)\subseteq\mathcal{B}(\mu)$. In other words, $\mathcal{B}(\mu)$ is an \textit{$s$-saturated} set. 

Therefore, 
$$\Gamma_0\times [0,\epsilon] \subseteq \mathcal B(\mu_0)\quad\mbox{ and }\quad\Gamma_1\times[1-\epsilon,1] \subseteq \mathcal B(\mu_1).$$
\noindent Hence,
$$m(\mathcal B(\mu_j))\geq \nu(\Gamma_j)\cdot\epsilon>0,$$
\noindent where we conclude that $\mu_j$ is an observable measure with respect to $m$. \endproof

For future reference, we denote by:
\begin{equation}\label{eq:caja1bacia}
B_0(\epsilon):=\Gamma_0\times [0,\epsilon]\subseteq \mathcal B(\mu_0) \quad\mbox{ and }\quad m(B_0(\epsilon))=\nu_0(\Gamma_0)\cdot\epsilon>0.
\end{equation}
and
\begin{equation}\label{eq:caja2bacia}
B_1(\epsilon):=\Gamma_1\times [1-\epsilon,1]\subseteq \mathcal B(\mu_1) \quad\mbox{ and }\quad m(B_1(\epsilon))=\nu_0(\Gamma_1)\cdot\epsilon>0.
\end{equation}


\subsection{$\mu_0$ and $\mu_1$ are intermingled}\label{ssec:intermingledmeasures2}

 In this section, we prove that $\mu_0$ and $\mu_1$ are intermingled with respect to $m=\nu\times\m_I$: that is,
for every open set $U\subseteq M$, we have
$$ m(\mathcal{B}(\mu_0)\cap U)>0\quad\mbox{ and } \quad m(\mathcal{B}(\mu_1)\cap U)>0.$$
It follows from the definition above that it is necessary to require $\nu$ to be fully supported on $\Td$.

\begin{lema}\label{le:baciasintermingled} The basins of $\mu_0$ and $\mu_1$ are intermingled with respect to $m=\nu\times\m_I$.
\end{lema}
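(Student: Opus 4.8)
The plan is to follow Kan's original scheme (\cite[Chapter 11]{BDV}, \cite{BV2000}), exploiting the product form of $m=\nu\times\m_I$ and the North--South dynamics of the fibre maps at the two fixed points of $E$. Since the conclusion is local, it suffices to fix an arbitrary open box $V\times(a,b)\subseteq M$, with $V\subseteq\Td$ open and $0\le a<b\le1$, and to exhibit inside it a subset of positive $m$-measure contained in $\mathcal B(\mu_0)$; the statement for $\mathcal B(\mu_1)$ then follows by the symmetric argument, interchanging $0\leftrightarrow1$ and $p\leftrightarrow q$. Throughout I use that $\mathcal B(\mu_0)$ is totally $K$-invariant (a point and its image have the same time averages), that it is $s$-saturated, and that for $\nu$-a.e.\ $\theta$ one has $(\theta,0)\in\mathcal B(\mu_0)$ and, by Lemma~\ref{lem:intervalos}, $W^s((\theta,0))=\{\theta\}\times[0,\sigma_0(\theta))$ with $0<\sigma_0(\theta)\le1$; hence $\mathcal B(\mu_0)\supseteq\bigcup_\theta\{\theta\}\times[0,\sigma_0(\theta))$, and from $K(W^s((\theta,0)))=W^s((E\theta,0))$ one gets the equivariance $\varphi(\theta,\sigma_0(\theta))=\sigma_0(E\theta)$ $\nu$-a.e.

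The heart of the matter is a density statement near $p$: \emph{for every neighbourhood $W$ of $p$ and every $c<1$, $\nu(\{\psi\in W:\sigma_0(\psi)>c\})>0$}. To prove it, iterate backwards along the inverse branch $E^{-1}_p$ fixing $p$: the orbit $(E^{-k}_p\psi)_k$ converges to $p$, and since $t=1$ is a hyperbolic \emph{source} of $\varphi(p,\cdot)$ (condition \ref{K3}), the map $\varphi(p,\cdot)^{-1}$ is North--South with $t=1$ attracting, so the equivariance forces $\sigma_0(E^{-k}_p\psi)\to1$ for $\nu$-a.e.\ $\psi\in B(p,r_0)$. Thus, for $k$ large, $A:=\{\psi\in B(p,r_0):\sigma_0(E^{-k}_p\psi)>c\}$ has $\nu(A)>0$; since $E$ admits a Jacobian with respect to $\nu$, the inverse-branch image $E^{-k}_p(A)$ also has positive $\nu$-measure, and for $k$ large $E^{-k}_p(B(p,r_0))\subseteq W$, which gives the claim. (The dual statement near $q$, needed for $\mu_1$, follows verbatim using that $t=0$ is a source of $\varphi(q,\cdot)$.)

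The complementary ingredient is fibre collapse near $p$: since $t=0$ is a hyperbolic \emph{sink} of $\varphi(p,\cdot)$, for every $c<1$ there are $\rho>0$ and $\ell\ge1$ such that a base orbit dwelling in $B(p,\rho)$ for $\ell$ steps sends $[0,c]$ below $\epsilon$ in the fibre ($\epsilon,\Gamma_0$ as in \eqref{eq:caja1bacia}). Now combine. Let $m_0$ be a time with $E^{m_0}(B')=\Td$ for a small ball $B'\subseteq V$ (it exists, $E$ being topologically exact); then $c:=\sup_{\theta\in\Td}\varphi_{m_0}(\theta,b)<1$ is a fixed number, and crucially $m_0$ is determined by $V$ alone and does not depend on $\ell$, which removes any circular dependence. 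Choose $\rho,\ell$ for this $c$; using a preimage $\theta^*\in B'$ of $p$ and the corresponding inverse branch of $E^{m_0}$ restricted to a sufficiently small ball about $p$, produce an open set $\Theta_0\subseteq V$ with $\nu(\Theta_0)>0$ (by $\supp\nu=\Td$) such that for every $\theta\in\Theta_0$ the orbit $E^{m_0}\theta,\dots,E^{m_0+\ell-1}\theta$ stays in $B(p,\rho)$ --- whence $\varphi_{m_0+\ell}(\theta,t)\le\epsilon$ for all $t\in(a,b)$ --- and $E^{m_0+\ell}\theta$ lies in a fixed small neighbourhood $S^*$ of $p$. Applying the density statement with $W=S^*$ and $c=\epsilon$ yields $G^*:=\{\psi\in S^*:\sigma_0(\psi)>\epsilon\}\cap\mathcal B_E(\nu)$ with $\nu(G^*)>0$; pulling $G^*$ back through the inverse branch $E^{m_0+\ell}|_{\Theta_0}$ (again via the Jacobian) gives a subset $\Theta_1\subseteq\Theta_0$ with $\nu(\Theta_1)>0$ such that $E^{m_0+\ell}\theta\in G^*$ for all $\theta\in\Theta_1$. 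For such $\theta$ and $t\in(a,b)$, $K^{m_0+\ell}(\theta,t)\in\{E^{m_0+\ell}\theta\}\times[0,\sigma_0(E^{m_0+\ell}\theta))=W^s((E^{m_0+\ell}\theta,0))\subseteq\mathcal B(\mu_0)$, so $(\theta,t)\in\mathcal B(\mu_0)$ by total invariance. Hence $\Theta_1\times(a,b)\subseteq\mathcal B(\mu_0)$ has $m$-measure $\nu(\Theta_1)(b-a)>0$.

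I expect the main obstacle to be precisely the interplay just highlighted: one must transport a positive-$\nu$-measure set of base points into $V$ and then near $p$ while holding the centre coordinate in a \emph{fixed} compact subinterval $[0,c]\subset[0,1)$, so that a \emph{fixed} sojourn time $\ell$ near $p$ already collapses it below $\epsilon$ --- the hyperbolicity of both fibre fixed points of $\varphi(p,\cdot)$ (source at $t=1$, which also drives $\sigma_0\to1$ under backward iteration and forces the centre coordinate to leave every neighbourhood of $1$ at a uniform rate; sink at $t=0$, which produces the collapse) and the fact that the exactness time $m_0$ is fixed by $V$ alone are exactly what make the scheme non-circular. The only role of $\supp\nu=\Td$ is to give positive $\nu$-mass to open subsets of $V$, and the existence of a Jacobian of $E$ with respect to $\nu$ is used solely to pass positive measure through inverse branches; in particular no bounded-distortion hypothesis is needed for this statement.
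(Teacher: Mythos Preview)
Your route is genuinely different from the paper's. The paper argues geometrically and in one stroke: it fixes a horizontal disc $\gamma_t=\gamma\times\{t\}$, uses partial hyperbolicity to see that the iterates $\gamma_t^n=K^n(\gamma_t)$ grow while staying transverse to the centre direction, and observes that they must cross the full stable segments $W^s(p,0)=\{p\}\times[0,1)$ and $W^s(q,1)=\{q\}\times(0,1]$ provided by~\ref{K3}; hence for large $n$ a piece of $\gamma_t^n$ lies inside the strip $\Td\times[0,\epsilon]$ and meets $B_0(\epsilon)$ over a set of positive $\nu$-projection, and one pulls back to $\gamma_t$ using only that $\supp\nu=\Td$. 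You instead work with the separating function $\sigma_0$ and the North--South fibre dynamics at $p$, proving a local density statement (in every neighbourhood of $p$ one has $\sigma_0>c$ on a positive-$\nu$-measure set) via backward iteration along $E^{-1}_p$, and then combining it with a finite-time collapse estimate and topological exactness. Your scheme is more explicit about the mechanism and makes the role of each hypothesis very visible; the paper's argument is shorter and never needs to name~$\sigma_0$.

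There is, however, a real gap. At two places you transfer positive $\nu$-measure through a \emph{specific} inverse branch of $E$: once to get $\nu(E^{-k}_p(A))>0$ in the density claim, and once to produce $\Theta_1$ from $G^*$. You justify both by ``since $E$ admits a Jacobian with respect to $\nu$'', but the lemma (and item~(b) of Proposition~\ref{teo:medidasnegativas}) assumes only $\supp\nu=\Td$; an ergodic fully supported invariant measure for an expanding map need not have a Jacobian. Invariance yields $(E|_D)_*(\nu|_D)\le\nu$ on $E(D)$ for every invertibility domain $D$, hence $\nu$-null sets push forward to $\nu$-null sets along each branch, but \emph{not} the reverse absolute continuity you need to conclude $\nu(A)>0\Rightarrow\nu\big((E|_D)^{-1}A\big)>0$. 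The same issue already affects the well-posedness of $\sigma_0(E^{-k}_p\psi)$ for $\nu$-a.e.\ $\psi$. This is exactly the step the paper sidesteps by tracking \emph{open} subsets of $\gamma$ throughout (which carry positive $\nu$-mass simply because $\supp\nu=\Td$), never pulling back a merely measurable set through a prescribed branch. Under the extra hypothesis that $J_\nu E$ exists (as in items~(c)--(e) of Theorem~\ref{mainteo:KE1}) your argument goes through; as a proof of the lemma as stated it is incomplete.
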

 
\proof Take any disc $\gamma \subset \Td$  and $t \in (0,1)$. We denote $\gamma_t = \gamma \times \{ t\}$, the disc inside the open cylinder $\Td\times (0,1)$ transverse to the $t$-direction. The dynamics along the $\theta$-direction are given by the map $E$, so it is uniformly expanding. From \ref{K3}, we conclude that the map $K$ is partially hyperbolic: the derivative in the $t$-direction is dominated by the derivative in the $\theta$-direction. Then, the size of $\gamma_t$ grows exponentially rapidly and the angle between (the tangent space of) $\gamma_t$ and the $t$-direction is bounded far away from $0$ as the disc $\gamma_t$ is successively iterated. Then, up to a forward iteration $n\geq 1$ and denoting by $\gamma_t^n:=K^n(\gamma_t)$, we have that $\gamma_t^n$ crosses (transversely) the segments $W^s(p,0)=\{p\}\times [0,1)$ and $W^s(q,1)=\{q\}\times (0,1]$, and so the orbit of $\gamma_t$ accumulates both boundary components of the cylinder $\Td \times [0,1]$.

Given $\epsilon>0$, taking a sufficiently large forward iteration, we can assume that $\gamma_t^n$ transversely intersects the rectangles $B_j(\epsilon)$, $j=0,1$, defined by \eqref{eq:caja1bacia} and 
$$\nu\big(\pi(\gamma_t^n\cap B_j(\epsilon))\big)>0.$$ 
Note that $\gamma_t^n\cap B_j(\epsilon)$ and $\pi(\gamma_t^n\cap B_j(\epsilon))$ are subsets of $\mathcal B(\mu_j)$, and since the basin is an invariant set, we have 
$$K^{-n}\big(\gamma_t^n\cap B_j(\epsilon)\big)\cap \gamma_t \subseteq \mathcal B(\mu_j).$$
Moreover, since $\gamma_t$ is contained in a horizontal section in the product space, we have
$$\nu\Bigg(\pi \bigg(K^{-n}\big(\gamma_t^n\cap B_j(\epsilon)\big)\cap \gamma_t \bigg) \Bigg)=\nu\Bigg(E^{-n}\bigg(\pi(\gamma_t^n\cap B_j(\epsilon) \big)\bigg)\cap\gamma \Bigg)>0.$$
The inequality is obtained because $E^n$ is a local diffeomorphism and $\nu$ is fully supported. Indeed, $\pi(\gamma^n_t)$ has a non-empty interior, and since $E^n$ is a local diffeomorphism then $E^{-n}(\pi(\gamma^n_t))$ also has a non-empty interior. Since $\supp\nu=\Td$, we conclude that $\nu(E^{-n}(\pi(\gamma^n_t))\cap \gamma)>0$. 


Because the reference measure $m$ is a product measure and $t\in (0,1)$ was chosen arbitrarily, Fubini's theorem completes the argument.

\endproof


\subsection{The basins cover almost all}\label{ssec:basinsmeasures} 
In this subsection, we prove that the basins cover $m$-almost every point of the whole manifold, where $m=\nu \times \m_I$ is our reference measure. Precisely, for the proof of Lemma~\ref{le:baciatotal2} below, we require addition of the assumption that $K$ has a Jacobian with respect to $\nu$ with bounded distortion. Our proof is inspired by the proof of the same fact for physical measures of partially hyperbolic diffeomorphisms whose center direction is mostly contracting in \cite{BV2000}.

In the proof of Lemma~\ref{le:baciasintermingled} we prove that, given any disc $\gamma$ in $\Td$ and $t\in(0,1)$, the set $\gamma_t=\gamma \times \{t\}$ intersects each basin $\mathcal B(\mu_j)$, $j=0,1$, in a set $\gamma_t^{j}$ such that its projection $\pi(\gamma_t^j)$ has a positive $\nu$-measure. We observe this fact in a more general situation, as the following lemma asserts.  

\begin{lema}\label{le:curvasc1} Let $\sigma \subseteq M$ be the graph of a $C^1$ function $G:\gamma\to[0,1]$ defined on the closed disc $\gamma \subseteq\mathbb T^d$. Then, 

\begin{itemize}
\item[(i)] there exists $0<\delta\leq1$ such that
$$ \nu \bigg(\pi\big(\sigma\cap[\mathcal B(\mu_0)\cup\mathcal B(\mu_1)]\big)\bigg)\geq \delta. $$
Moreover,
\item[(ii)] there exists $\epsilon>0$ such that for every $\tilde G:\gamma \to[0,1]$ $\epsilon$ close to $G$ in the $C^1$ topology, its graph $\tilde\sigma$ satisfies
$$ \nu \bigg(\pi\big(\tilde\sigma\cap[\mathcal B(\mu_0)\cup\mathcal B(\mu_1)]\big)\bigg)\geq \delta/2. $$
\end{itemize}

\end{lema}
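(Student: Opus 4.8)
The plan is to run, for the graph $\sigma$, exactly the mechanism used in the proof of Lemma~\ref{le:baciasintermingled}: iterate $\sigma$ forward, let it flatten and spread in the base direction, trap a flat piece near one of the two boundaries, and recognise that piece as sitting inside a basin; part~(ii) is then a $C^1$-openness remark once the number of iterates has been fixed. First I would re-choose $\epsilon>0$: besides the properties guaranteeing \eqref{eq:caja1bacia} and \eqref{eq:caja2bacia}, pick $\epsilon$ small enough that the Pesin blocks $\Gamma_j$ (on which $\{\theta\}\times[0,\epsilon]\subseteq W^s(\theta,0)$, resp. $\{\theta\}\times[1-\epsilon,1]\subseteq W^s(\theta,1)$) satisfy $\nu(\Gamma_j)>1-\frac12 C^-_{2r}$, where $C^-_{2r}>0$ is the lower bound from \eqref{eq:cotabola} for the $\nu$-mass of balls of radius $2r$. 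This is possible because, by the Pesin theory invoked in Subsection~\ref{ssec:Lyapunov} together with the construction in Lemma~\ref{le:existenfisicas} and Lemma~\ref{lem:intervalos}, the set of $\theta$ for which the Pesin local stable manifold at $(\theta,j)$ has size at least $\epsilon$ has $\nu$-measure tending to $1$ as $\epsilon\to0$. Recall also that $\mathcal B(\mu_0)\cup\mathcal B(\mu_1)$ is $s$-saturated and invariant (so $K^{-n}(\mathcal B(\mu_j))=\mathcal B(\mu_j)$), and that $B_j(\epsilon)\subseteq\mathcal B(\mu_j)$.

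Now iterate $\sigma$ forward by $K$. Being the graph of a $C^1$ map over a disc $\gamma\subseteq\Td$, $\sigma$ is everywhere transverse to $E^c$ and its tangent spaces lie in the unstable cone field; by condition~\ref{K2} the forward iterates flatten (their tangent spaces approach $E^u$) while by condition~\ref{KE} their projections to $\Td$ expand, so, exactly as in Lemma~\ref{le:baciasintermingled}, the orbit of $\sigma$ crosses $W^s(p,0)$ and $W^s(q,1)$ transversally and accumulates on both boundary components $\Td\times\{0\}$ and $\Td\times\{1\}$. Fix a large $n$ (to be adjusted below) and apply Lemma~\ref{le:baciatotal1} to $\gamma$ with radius $r$ (shrinking $\gamma$ to a subdisc of diameter $<2r_0$ with $\partial\gamma$ of zero $\nu$-measure if necessary, which only makes the conclusion stronger): we obtain pairwise disjoint open sets $V_i\subseteq W_i\subseteq\gamma$ with $E^n(W_i)=B(\theta_i,2r)$. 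Since $\varphi(\theta,0)=0$ and $\varphi(\theta,1)=1$, each fibre map $\varphi_\theta$ is an increasing diffeomorphism of $I$, so $K^n$ is injective on each $W_i\times I$ and carries the portion of $\sigma$ over $W_i$ onto the graph $\sigma_i^n$ of a $C^1$ function over the whole ball $B(\theta_i,2r)$. Because the balls $B(\theta_l,2r)$ cover $\Td$ and the orbit of $\sigma$ accumulates on $\Td\times\{0\}$, for $n$ large there is an index $i_0$ for which $\sigma_{i_0}^n$ lies entirely inside the slab $\Td\times[0,\epsilon)$ over its ball $B:=B(\theta_{i_0},2r)$ (if the orbit accumulates first on the top boundary, run the symmetric argument with $\mu_1$, $\Gamma_1$ and $\Td\times(1-\epsilon,1]$). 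For every $\theta\in B\cap\Gamma_0$ the unique point of $\sigma_{i_0}^n$ over $\theta$ has fibre coordinate $<\epsilon$, hence lies in $\{\theta\}\times[0,\epsilon]\subseteq W^s(\theta,0)\subseteq\mathcal B(\mu_0)$; therefore $\pi(\sigma_{i_0}^n\cap\mathcal B(\mu_0))\supseteq B\cap\Gamma_0$, and by the choice of $\epsilon$ we get $\nu(B\cap\Gamma_0)\geq\nu(B)-(1-\nu(\Gamma_0))\geq\frac12 C^-_{2r}>0$. Finally pull back: $\mathcal B(\mu_0)$ is invariant and $E^n$ restricted to $W_{i_0}$ is a diffeomorphism onto $B$, so $\sigma\cap K^{-n}(\mathcal B(\mu_0))\cap(W_{i_0}\times I)\subseteq\mathcal B(\mu_0)$ projects onto $(E^n|_{W_{i_0}})^{-1}(B\cap\Gamma_0)$, which has positive $\nu$-measure because $\supp\nu=\Td$ (the same step as in Lemma~\ref{le:baciasintermingled}, where one only needs the pulled-back set to contain a subset of nonempty interior). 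Letting $\delta$ be this positive number proves~(i).

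For~(ii) I would keep the value of $n$ just produced. The whole construction — applying the fixed diffeomorphism $K^n$, locating the flat piece $\sigma_{i_0}^n$ inside the \emph{open} slab $\Td\times[0,\epsilon)$ and over the ball $B$, transversality to the fibres, and the pull-back by the fixed local diffeomorphism $E^n$ — is stable under $C^1$-small perturbations of the function whose graph is iterated, and the resulting projected $\nu$-measure depends continuously on it. Hence there is $\epsilon_0>0$ such that for every $\tilde G$ that is $\epsilon_0$-close to $G$ in the $C^1$ topology the corresponding $K^n(\tilde\sigma)$ still has a flat piece inside $\Td\times[0,\epsilon)$ over a ball of radius $2r$, and one obtains $\nu\big(\pi(\tilde\sigma\cap[\mathcal B(\mu_0)\cup\mathcal B(\mu_1)])\big)\geq\delta/2$.

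The step I expect to be the main obstacle is producing the low flat piece $\sigma_{i_0}^n$ with the two properties \emph{simultaneously}: that its fibre coordinates are genuinely below $\epsilon$ (so it lies in the trapping slab) and that it is a graph over a ball of a definite radius $2r$ (so that, $\nu(\Gamma_0)$ being close to $1$, it must meet $\Gamma_0$ in positive $\nu$-measure). This is precisely what forces one to combine the accumulation-on-the-boundary mechanism of Lemma~\ref{le:baciasintermingled} with the covering construction of Lemma~\ref{le:baciatotal1} and with the uniform two-sided bound \eqref{eq:cotabola} on the $\nu$-mass of small balls; it is also the only place where the freedom to shrink $\epsilon$ is used.
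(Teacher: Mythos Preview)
Your strategy can be made to work, but it takes a substantially longer route than the paper and two of your steps need tightening.

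The paper never iterates $\sigma$. It exploits monotonicity along the fibres: by Lemma~\ref{lem:intervalos} the Pesin stable manifold of $(\theta,0)$ is an interval $\{\theta\}\times[0,\sigma_0(\theta))$, so if $(\theta,t^*)\in\mathcal B(\mu_0)$ and $G(\theta)<t^*$ then automatically $(\theta,G(\theta))\in\mathcal B(\mu_0)$. One therefore picks (on a subdisc $\gamma^-\subseteq\gamma$ where $G<t^*<1$) the horizontal section $\gamma^-_{t^*}=\gamma^-\times\{t^*\}$, applies Lemma~\ref{le:baciasintermingled} to it to obtain $\delta=\nu\big(\pi(\gamma^-_{t^*}\cap\mathcal B(\mu_0))\big)>0$, and reads off $\pi(\sigma\cap\mathcal B(\mu_0))\supseteq\pi(\gamma^-_{t^*}\cap\mathcal B(\mu_0))$. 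Part~(ii) is then almost free: any $\tilde G$ that is $C^0$-close to $G$ still lies below $t^*$ on $\gamma^-$, and the \emph{same} set and the same $\delta$ work. Your approach, by contrast, has to manufacture a flat piece of definite horizontal size inside an $\epsilon$-slab and pull it back, which forces you through Lemma~\ref{le:baciatotal1}, the bound \eqref{eq:cotabola}, and a careful re-choice of $\epsilon$; none of this is needed once one sees the monotonicity shortcut.

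On the details: (a) the tangent spaces of the graph of an arbitrary $C^1$ function $G$ need not lie in the unstable cone --- that would require $\|DG\|\le a$. What is true is transversality to $E^c$, and the flattening you need only holds after enough iterations, by the domination in~\ref{K2}. You should first iterate until all branch slopes are below $\epsilon/(4r)$, and only then locate the low point; otherwise one low point does not force its entire $2r$-branch into the slab. (b) Your pull-back justification is incorrect as written: $(E^n|_{W_{i_0}})^{-1}(B\cap\Gamma_0)$ has no reason to contain an open set, since $\Gamma_0$ is only measurable, so ``$\supp\nu=\Td$'' alone does not give it positive $\nu$-measure. What is actually needed is that a single inverse branch of $E^n$ sends sets of positive $\nu$-measure to sets of positive $\nu$-measure, which amounts to $J_\nu E>0$ $\nu$-a.e. --- true, but a different argument from the one you invoke.
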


\proof Note that if $\sigma\subseteq \mathbb T^d\times \{1\}$, there is nothing to prove, since 
$$\nu\bigg(\pi\big(\sigma\cap[\mathcal B(\mu_0)\cup\mathcal B(\mu_1)]\big)\bigg)=\nu\bigg(\pi\big(\sigma \cap\mathcal B(\mu_1)\big)\bigg)=\nu\big(\pi(\sigma)\big)>0$$
and then it is sufficient to set $\delta=\nu\big(\pi(\sigma)\big)$.

Now, assume that there exists $t^*<1$ such that $G(\theta)<t^*$ for every $\theta\in \gamma$. We denote $\gamma_{t^*}=\gamma \times\{t^*\}$.  From the proof of Lemma~\ref{le:baciasintermingled}, we conclude that the set $\gamma_{t^*}^0=\gamma_{t^*}\cap\mathcal B(\mu_0)$ satisfies
$$\delta:=\nu\bigg(\pi\big(\gamma_{t^*}\cap\mathcal B(\mu_0)\big)\bigg)=\nu(\pi(\gamma_{t^*}^0))>0$$
Since $\pi(\gamma_{t^*}^0)$ is formed by $\theta\in\gamma$ such that $W^s(\theta,0)\supseteq \{\theta\}\times [0,t^*]$ and $G(\theta)<t^*$ for every $\theta \in\gamma$, then 
$$\pi(\gamma_{t^*}^0)\subseteq \pi\big(\sigma\cap\mathcal B(\mu_0)\big)$$
from which we obtain 
\begin{equation*}\label{eq:intersecbacia2}
\nu \bigg(\pi\big(\sigma\cap\mathcal B(\mu_0)\big)\bigg)\geq\nu(\pi(\gamma_{t^*}^0))=\delta>0.
\end{equation*}
Of course, it follows from the previous argument that the lower bound $\delta>0$ is the same for any graph of a $C^1$ function defined on $\sigma$ and bounded by $t^*$.

In the general case, we have that $\sigma$ is not completely contained in $ \mathbb T^d\times \{1\}$, but perhaps $\sigma \cap ( \mathbb T^d\times \{1\})\ne\emptyset$, i.e., $G$ is not a constant equal to 1. Then, by continuity, there exists a closed disc $\gamma^-\subseteq \gamma$ such that $ G(\theta)<t^*<1$ for every $\theta\in\gamma^-$. Denoting by $\sigma^-$ the graph of $G$ restricted to $\gamma^-$ and $\gamma^-_{t^*}=\gamma^- \times \{t^*\}$, we have $\pi(\sigma^-\cap\mathcal B(\mu_0))\supseteq \pi(\gamma^-_{t^*}\cap\mathcal B(\mu_0))$ and, as previously proven, we obtain that 
$$\nu\bigg(\pi\big(\sigma^-\cap\mathcal B(\mu_0)\big)\bigg)\geq\nu\bigg(\pi\big(\gamma^-_{t^*}\cap\mathcal B(\mu_0)\big)\bigg)>0.$$
It is sufficient to set $\delta=\nu\big(\pi\big(\gamma_{t^*}^{-}\cap\mathcal B(\mu_0)\big)\big)>0$ to conclude the proof of (i).

To prove (ii), we have the same cases. If $\sigma\subseteq \mathbb T^d\times \{1\}$, take $\epsilon>0$ small enough such that the set $\Gamma_1$ of points $\theta\in\gamma$, such that $(\theta,1)$ has a local stable manifold of size greater than $\epsilon$ and $\nu(\Gamma_1)>\delta/2$. If $\tilde\sigma$ is the graph of a $C^1$-function close to $\sigma$, then 
$$\pi(\tilde\sigma\cap B_1(\epsilon))=\pi(\sigma\cap B_1(\epsilon))$$
where $B_1(\epsilon)=\Gamma_1\times[1-\epsilon,1]$. Then,
$$\nu(\pi(\tilde\sigma\cap B_1(\epsilon))=\nu(\pi(\sigma\cap B_1(\epsilon)))=\nu(\Gamma_1)>\delta/2.$$
In the case that there exists $t^*<1$ such that $G(\theta)<t^*$ for every $\theta\in \gamma$, it is sufficient to take
$$\epsilon=\inf\{ t^*-G(\theta)\::\: \theta\in\gamma\}>0. $$
Then, for any $\tilde G:\gamma\subseteq\mathbb T^d\to[0,1]$ $\epsilon$-close from $G$ in the $C^1$-topology, its graph $\tilde\sigma$ satisfies
$\tilde G(\theta)<t^*$ for every $\theta\in \gamma$. Arguing as above, since the graph $\tilde\sigma$ is below the graph $\gamma_{t^*}=\gamma\times\{t^*\}$, we have
$$\nu\bigg(\pi\big(\tilde\sigma\cap\mathcal B(\mu_0)\big)\bigg)=\delta>0.$$
Finally, for the remaining case, we choose $\epsilon=\inf\{  t^*-G(\theta) \::\: \theta\in\gamma^-\}>0$, where $\gamma^-\subseteq \gamma$ is a closed disc such that $ G(\theta)<t^*<1$ for every $\theta\in\gamma^-$ defined before. Then, for any $\tilde G:\gamma\subseteq\mathbb T^d\to[0,1]$ $\epsilon$-close from $G$ in the $C^1$-topology, its graph $\tilde\sigma$ is below the graph $\gamma_{t^*}=\gamma\times\{t^*\}$ when $\tilde\sigma$ is restricted to $\gamma^-$. Then,
$$\nu\bigg(\pi\big(\tilde\sigma\cap\mathcal B (\mu_0)\big)\bigg)\geq \nu\bigg(\pi\big(\tilde\sigma\cap\mathcal B (\mu_0)\big)\cap \gamma_{t^*}\bigg)=\nu\bigg(\pi\big(\gamma_{t^*}^-\cap\mathcal B (\mu_0)\big)\bigg)=\delta>0$$
\endproof

\begin{lema}\label{le:baciasestodo} The set $\mathcal{B}(\mu_0)\cup\mathcal{B}(\mu_1)$ has full $m$-measure. 

\end{lema}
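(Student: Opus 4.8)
The strategy is the standard Bonatti--Viana argument adapted to the reference measure $m=\nu\times\m_I$, replacing the role of Lebesgue density points and absolutely continuous holonomies with $\nu$-density points (available via \cite[Section~2.9]{F1969}) and the bounded distortion of $J_\nu E$. Suppose by contradiction that the $m$-measurable set $Z=M\setminus(\mathcal B(\mu_0)\cup\mathcal B(\mu_1))$ has $m(Z)>0$. Since $m$ is a product measure, Fubini gives a positive-$\nu$-measure set of $\theta\in\Td$ whose fibers $\{\theta\}\times I$ meet $Z$ in positive $\m_I$-measure; more importantly, I want to produce a horizontal graph on which $Z$ occupies a definite proportion, then iterate it forward and contradict Lemma~\ref{le:curvasc1}. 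The cleanest route: by the (Besicovitch-based) density point statement applied to $m$ on $M$, $m$-a.e. point of $Z$ is an $m$-density point of $Z$; pick such a point $z_0=(\theta_0,t_0)$ in the interior. Around $z_0$ take small ``rectangles'' $R_\epsilon=B(\theta_0,\epsilon)\times(t_0-\epsilon,t_0+\epsilon)$ with $m(Z\cap R_\epsilon)/m(R_\epsilon)\to1$.

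Next, fix a horizontal disc $\gamma=B(\theta_0,\epsilon)$ and consider the family of graphs $\sigma_s=\gamma\times\{s\}$ for $s\in(t_0-\epsilon,t_0+\epsilon)$. By Fubini in the product measure $m$, for $\epsilon$ small there is a subinterval of $s$-values of near-full length such that $\nu(\pi(\sigma_s\cap Z))/\nu(\gamma)$ is close to $1$; fix one such $s$ and call the resulting horizontal disc $\sigma$. Now iterate $\sigma$ forward: exactly as in the proof of Lemma~\ref{le:baciasintermingled}, $K^n(\sigma)$ becomes, after finitely many steps, a union of almost-horizontal $C^1$ graphs whose tangent spaces stay in a narrow cone around the $\theta$-direction (partial hyperbolicity, \ref{K2}), and whose $\theta$-projections cover $\Td$ once $n\ge N_\gamma$. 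I then invoke Lemma~\ref{le:baciatotal2} (this is where bounded distortion of $J_\nu E$ enters): applied to the set $A=\pi(\sigma\cap Z)$ with $\nu(A)>0$, it produces balls $\sigma_n$ of radius $r$ with $\nu(\sigma_n\cap E^n(A))/\nu(\sigma_n)\to1$. Since $Z$ is $s$-saturated (the basins are $s$-saturated, hence so is their complement restricted to points with stable manifolds — here I use that $\nu$-a.e. point of each boundary has a one-sided stable interval, Lemma~\ref{lem:intervalos}, and that $Z$ misses both basins), the forward iterate $K^n(\sigma\cap Z)$ is still contained in $Z$ up to the part of the fiber already swallowed by a basin; projecting, $E^n(A)$ sits inside $\pi(\widehat\sigma^n\cap Z)$ for a graph $\widehat\sigma^n$ lying over $\sigma_n$.

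The contradiction then comes from Lemma~\ref{le:curvasc1}: that lemma gives a uniform $\delta>0$ such that \emph{every} $C^1$ graph $\tau$ over a disc satisfies $\nu(\pi(\tau\cap[\mathcal B(\mu_0)\cup\mathcal B(\mu_1)]))\ge\delta$, and this persists (with $\delta/2$) under $C^1$-small perturbations. But on the ball $\sigma_n$ we have arranged that the graph $\widehat\sigma^n$ has $\nu$-almost all of its projection inside $\pi(\widehat\sigma^n\cap Z)$, which is disjoint from $\pi(\widehat\sigma^n\cap[\mathcal B(\mu_0)\cup\mathcal B(\mu_1)])$; for $n$ large this forces $\nu(\pi(\widehat\sigma^n\cap[\mathcal B(\mu_0)\cup\mathcal B(\mu_1)]))<\delta/2$, contradicting Lemma~\ref{le:curvasc1}(i). (One must check the graphs $\widehat\sigma^n$ have uniformly bounded $C^1$ norm so Lemma~\ref{le:curvasc1} applies with a single $\delta$; this follows from the cone-invariance and uniform domination in \ref{K2}, exactly as in \cite{BV2000}.) Hence $m(Z)=0$.

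\textbf{Main obstacle.} The delicate point is bookkeeping the $s$-saturation of $Z$ under forward iteration: a point of $Z$ may have its $K$-iterate land in a fiber where a \emph{different} portion of the fiber belongs to a basin, so one cannot naively say $K^n(Z)\subseteq Z$. The resolution is that $Z$ is contained in the complement of the union of all local stable intervals $\{\theta\}\times[0,\sigma(\theta))\cup\{\theta\}\times(\sigma(\theta),1]$, and this complement (a measurable section, essentially the graph of $\sigma$) is genuinely $K$-invariant; so the argument should really be run with $\sigma$ chosen to track this invariant section and with Lemma~\ref{le:curvasc1} applied to graphs transverse to the center. Making the ``almost-horizontal graph after $n$ iterates'' statement precise — that $K^n(\sigma)$ decomposes into $C^1$ graphs of uniformly bounded slope over balls of definite radius, with $\nu$-proportions controlled by $J_\nu E^n$ via bounded distortion — is the technical heart, and it is exactly the place where the product structure and bounded distortion replace the absolute continuity of holonomies used in the diffeomorphism case.
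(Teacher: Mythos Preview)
Your approach is essentially the paper's: assume $m(Z)>0$, use Fubini to find a horizontal slice $\gamma_t$ with $\nu(\pi(\gamma_t\cap Z))>0$, apply Lemma~\ref{le:baciatotal2} to blow up via $E^n$, and contradict Lemma~\ref{le:curvasc1}. Two points deserve correction, however.

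First, what you flag as the ``main obstacle'' is not an obstacle at all. The set $Z$ is fully $K$-invariant simply because each basin $\mathcal B(\mu_j)$ is fully $K$-invariant: $x\in\mathcal B(\mu_j)$ if and only if $K(x)\in\mathcal B(\mu_j)$, since the Birkhoff averages of $x$ and $K(x)$ agree. Hence $K^n(\gamma_t\cap Z)\subseteq Z$ holds directly, and together with the semiconjugacy $\pi\circ K=E\circ\pi$ one gets $\pi(\sigma_t^n)\cap E^n(\pi(\gamma_t\cap Z))=\pi(\sigma_t^n\cap Z)$, where $\sigma_t^n=K^n(\gamma_n\times\{t\})$ is the graph over the ball $\sigma_n$. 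There is no need to invoke $s$-saturation, invariant sections, or any delicate bookkeeping; your proposed ``resolution'' through the graph of the separating map $\sigma$ is circular anyway, since that map is only constructed \emph{after} the present lemma (see Section~\ref{ssec:separatingmap}). Relatedly, your initial step of choosing an $m$-density point and a slice where $Z$ has near-full proportion is unnecessary: Lemma~\ref{le:baciatotal2} already contains the density-point argument, so one only needs $\nu(\pi(\gamma_t\cap Z))>0$.

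Second, the uniformity issue for $\delta$ in Lemma~\ref{le:curvasc1} is not handled by ``uniform $C^1$ bounds give a single $\delta$'': as stated, $\delta$ depends on the specific graph. The paper resolves this by Arzel\`a--Ascoli: the graphs $\sigma_t^{n}$ over a fixed ball have uniformly bounded $C^1$ norm (cone invariance), so a subsequence converges in $C^1$ to a limit graph $\sigma_t^\infty$; one then applies part~(i) of Lemma~\ref{le:curvasc1} to $\sigma_t^\infty$ to get $\delta>0$, and part~(ii) to conclude that $\nu(\pi(\sigma_t^{n_k}\cap[\mathcal B(\mu_0)\cup\mathcal B(\mu_1)]))\ge\delta/2$ for large $k$, contradicting the limit $\nu(\pi(\sigma_t^{n}\cap Z))/\nu(\sigma_n)\to1$.
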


\proof Let $Z=M\setminus (\mathcal{B}(\mu_0)\cup\mathcal{B}(\mu_1))$. Assume that $m(Z)>0$. Then, by Fubini's Theorem, we can find $\gamma_t=\gamma\times \{t\}$, where $\gamma$ is a closed disc in $\Td$ of radius less than $r_0$ and $t\in(0,1)$, such that $\pi(\gamma_t\cap Z)$ has positive $\nu$-measure. Then, for every $0<L<r_0$, it follows from Lemma~\ref{le:baciatotal2} that there exists a sequence of balls $\sigma_n\subseteq\Td$, $n\geq 1$, of radius $L$ such that
$$\lim_{n\to \infty}\dfrac{\nu\left( \sigma_n \cap E^n(\pi(\gamma_t\cap Z))\right)}{\nu\left(\sigma_n\right)}=1.$$
Recall that by construction, for each $n\geq 1$ there is an open set $\gamma_n\subseteq B(\theta_0,L)\subseteq \gamma$ (where $\theta_0$ is a $\nu$-density point of $\pi(\gamma_t\cap Z)$) such that $E^n(\gamma_n)=\sigma_n$. Since $\gamma_n\times\{t\}\subseteq \gamma_t$, we denote $\sigma^n_t=K^n(\gamma_n\times\{t\})$. Of course, we have that $\pi(\sigma^n_t)=\sigma_n$. With the notation above, we have
\begin{equation}\label{eq:dembaciatodo1}
\lim_{n\to \infty}\dfrac{\nu\left( \pi(\sigma^n_t) \cap E^n(\pi(\gamma_t\cap Z))\right)}{\nu\left(\sigma_n\right)}=1.
\end{equation}
Since $Z$ is $K$-invariant and $\pi$ is a semiconjugate, then for every $n\geq 1$, we have 
$$E^n\big(\pi(\gamma_t\cap Z)\big)=\pi\big(K^n(\gamma_t\cap Z)\big)=\pi\big(K^n(\gamma_t)\cap Z\big)$$
and since $\sigma^n_t\subseteq K^n(\gamma_t)$, then
$$\pi\big(\sigma^n_t\big) \cap E^n\big(\pi(\gamma_t\cap Z)\big)=\pi(\sigma^n_t\cap Z).$$
Therefore, the limit \eqref{eq:dembaciatodo1} implies that
\begin{equation}\label{eq:dembaciatodo2}
\lim_{n\to\infty}\nu\bigg(\pi\big(\sigma^n_t\cap [\mathcal B_0\cup\mathcal B_1]\big)\bigg)=0.
\end{equation}
On the other hand, note that for every $n\geq 1$, $\sigma^n_t$ is the graph of a $C^1$ function defined on $\sigma_n$. By Arzela-Ascoli, there exists a subsequence $\sigma_{n_k}$ converging to some disc $\sigma_\infty$ and the graphs $\sigma^{n_k}_t$ converge to the $C^1$-graph $\sigma^\infty_t$. Reducing the domains if necessary, we can assume that for every large $k\geq 1$, $\sigma_{n_k}=\sigma_\infty$. 

By Lemma~\ref{le:curvasc1}, there exists $\delta=\delta(\sigma_\infty)>0$ such that
$$ \nu\bigg(\pi\big(\sigma^\infty_t\cap[\mathcal B(\mu_0)\cup\mathcal B(\mu_1)]\big)\bigg)\geq \delta. $$
Moreover, the same lemma states that there exists $\epsilon>0$ such that for every sufficiently large $k\geq 1$, $\sigma_{n_k}$ is $\epsilon$-close of $\sigma_\infty$ in the $C^1$-topology, 
\begin{equation*}\label{eq:demlema2.10.1}
\nu\bigg(\pi\big(\sigma^{n_k}_t\cap[\mathcal B(\mu_0)\cup\mathcal B(\mu_1)]\big)\bigg)\geq \delta/2.
\end{equation*}
contradicting \eqref{eq:dembaciatodo2}.
\endproof

\subsection{Proof of Theorem~\ref{mainteo:KE1} part \ref{TeoAi} }\label{ssec:mmei} 

Consider the invariant probability measures $\mu_j=\nu\times \delta_j$, $j=0,1$. It is immediately clear that they are invariant and ergodic, respectively supported on the torus $\Td\times\{j\}$. Lemma~\ref{le:expnentenegativo} implies that each $\mu_j$ has negative center Lyapunov exponents. From Lemma~\ref{le:existenfisicas}, it follows that they are also observable measures with respect to $m$. Their supports are the whole $\Td\times\{j\}$, respectively, if $\nu$ is assumed to be fully supported on $\Td$. As a consequence, their basins are intermingled by Lemma~\ref{le:baciasintermingled}. Lemma~\ref{le:baciasestodo} implies that the union of their basins covers up to an $m$ measure of points of the cylinder $M$. Finally, it remains to be proven that there is no other ergodic $K$-invariant measure $\mu$ such that $\pi_*\mu=\nu$ with a negative center Lyapunov exponent.
\begin{lema}\label{le:characterization2} 
If $\mu\in M^{erg}(K)$ is such that $ \pi_* \mu = \nu$ and $\mu\notin
\{ \mu_0,\mu_1\}$, then $\lambda^c(K,\mu)\geq 0$.
\end{lema}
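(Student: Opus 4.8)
The plan is to argue by contradiction: assume $\lambda^c(K,\mu)<0$ and deduce $\mu\in\{\mu_0,\mu_1\}$. First, a reduction. Each boundary torus $\Td\times\{j\}$ is $K$-invariant, so if $\mu$ charges one of them then ergodicity gives $\mu(\Td\times\{j\})=1$; since $\pi$ conjugates $K|_{\Td\times\{j\}}$ with $E$, this forces $\mu=\nu\times\delta_j=\mu_j$ and we are done. Hence I may assume $\mu$ is carried by the open solid torus $\Td\times(0,1)$.

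Because $E^u$ always expands, the hypothesis $\lambda^c(\mu)<0$ makes $\mu$ hyperbolic with $E^s=E^c$, so (exactly as in Lemma~\ref{lem:intervalos}, using that each $\varphi_\theta$ is an increasing diffeomorphism of $I$ fixing $0$ and $1$) for $\mu$-a.e.\ $x=(\theta,t)$ the Pesin stable set $W^s(x)$ is a non-degenerate open subinterval of $I_\theta$ about $x$. Since $\mathcal B(\mu)$ has full $\mu$-measure (Birkhoff) and is $s$-saturated, $W^s(x)\subseteq\mathcal B(\mu)$ for $\mu$-a.e.\ $x$; as basins of distinct ergodic measures are disjoint, $\mathcal B(\mu)\cap(\mathcal B(\mu_0)\cup\mathcal B(\mu_1))=\emptyset$. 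Applying Lemma~\ref{lem:intervalos} to $\nu$ — and also to $\mu_0,\mu_1$, whose central exponents are negative by \eqref{eq:condK4im} — there are $K$-invariant measurable graphs $\theta\mapsto\sigma_0(\theta)\le\sigma_1(\theta)$ with $\{\theta\}\times[0,\sigma_0(\theta))=W^s(\theta,0)\subseteq\mathcal B(\mu_0)$, $\{\theta\}\times(\sigma_1(\theta),1]=W^s(\theta,1)\subseteq\mathcal B(\mu_1)$, and $0<\sigma_0(\theta)\le\sigma_1(\theta)<1$ for $\nu$-a.e.\ $\theta$. Consequently $W^s(x)\subseteq\{\theta\}\times(\sigma_0(\theta),\sigma_1(\theta))$ for $\mu$-a.e.\ $x$; in particular $\sigma_0<\sigma_1$ on a set of positive $\nu$-measure, which is $E$-invariant (the $\varphi_\theta$ are increasing) and hence of full $\nu$-measure.

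Disintegrate $\mu=\int\mu_\theta\,d\nu(\theta)$ along the fibers; equivariance of the conditional measures over the factor $\pi$ gives $(\varphi_\theta)_*\mu_\theta=\mu_{E\theta}$ for $\nu$-a.e.\ $\theta$, whence $\supp\mu_\theta\subseteq[\sigma_0(\theta),\sigma_1(\theta)]$. The graph of $\sigma_0$ carries the measure $\mu':=(\mathrm{id}\times\sigma_0)_*\nu$, which is $K$-invariant and ergodic (the map $\theta\mapsto(\theta,\sigma_0(\theta))$ is a measurable conjugacy of $(E,\nu)$ onto $K$ restricted to that graph), has $\pi_*\mu'=\nu$, and satisfies $\mu'\notin\{\mu_0,\mu_1\}$ since $0<\sigma_0<1$ $\nu$-a.e. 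The crucial observation is that $\lambda^c(\mu')\ge0$: were it negative, $\mu'$-a.e.\ point $(\theta,\sigma_0(\theta))$ would carry a non-degenerate open Pesin stable interval, which necessarily contains points of height $<\sigma_0(\theta)$, i.e.\ points of $W^s(\theta,0)\subseteq\mathcal B(\mu_0)$; since that interval lies in $\mathcal B(\mu')$ and $\mu'\ne\mu_0$, this is impossible. Symmetrically $\lambda^c\big((\mathrm{id}\times\sigma_1)_*\nu\big)\ge0$. In particular, if $\mu$ charges the graph of $\sigma_0$ or of $\sigma_1$, then by ergodicity $\mu$ coincides with $\mu'$ or with $(\mathrm{id}\times\sigma_1)_*\nu$, giving $\lambda^c(\mu)\ge0$, a contradiction.

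It remains to treat the case where $\mu$ is carried by the open buffer region $\{(\theta,t):\sigma_0(\theta)<t<\sigma_1(\theta)\}$, and this is the main obstacle. The natural plan is to iterate the construction: set $\ell(\theta)=\min\supp\mu_\theta$ and $u(\theta)=\max\supp\mu_\theta$, two $K$-invariant graphs with $\sigma_0\le\ell\le u\le\sigma_1$, and perform the measurable fiberwise affine change of variables carrying $[\ell(\theta),u(\theta)]$ onto $[0,1]$; this turns the invariant region $\bigcup_\theta\{\theta\}\times[\ell(\theta),u(\theta)]$ into a boundary-preserving skew product over $(E,\nu)$ for which $\mu$ is again an interior ergodic measure with the same center exponent, the rescaling contributing only the $\nu$-coboundary $\log\frac{u(E\theta)-\ell(E\theta)}{u(\theta)-\ell(\theta)}$; one then expects, after finitely many such reductions, to reach a graph measure and contradict the observation of the previous paragraph. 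Making this termination rigorous is where the difficulty lies: one must show that the exponential contraction along the Pesin stable intervals forces them to shrink and the envelope functions to be sufficiently regular, which requires distortion control for the one-dimensional fiber dynamics along stable intervals for a merely $C^1$ map $K$ — precisely the technical input that replaces the H\"older hypothesis used in Part~\ref{TeoAii}.
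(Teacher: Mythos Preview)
Your argument is incomplete --- you say so yourself in the last paragraph --- and the gap is real: the ``iterated envelope'' scheme you outline has no a priori reason to terminate, and the distortion control you invoke at the end is neither stated nor proved anywhere. But more importantly, you have overlooked the one-line conclusion that the paper actually uses, and which makes the entire second half of your proposal unnecessary.

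You correctly establish that if $\lambda^c(\mu)<0$ then for $\mu$-a.e.\ $x=(\theta,t)$ the Pesin stable set $W^s(x)$ is a nondegenerate interval in $I_\theta$ contained in $\mathcal B(\mu)$. Since $\pi_*\mu=\nu$, choosing $\epsilon>0$ so that the set of $x$ with $|W^s(x)|\ge\epsilon$ has positive $\mu$-measure, the union of these intervals has positive $m=\nu\times\m_I$-measure; hence $m(\mathcal B(\mu))>0$. Now simply invoke Lemma~\ref{le:baciasestodo}: $\mathcal B(\mu_0)\cup\mathcal B(\mu_1)$ has full $m$-measure. Since basins of distinct ergodic measures are disjoint and $m(\mathcal B(\mu))>0$, you get $\mu\in\{\mu_0,\mu_1\}$ immediately. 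That is the paper's entire proof. No graph measures, no buffer regions, no rescaling, no iteration. Note that this uses the bounded-distortion hypothesis on $J_\nu E$ through Lemma~\ref{le:baciasestodo}; the lemma is placed in Subsection~\ref{ssec:mmei} precisely because it sits on top of that result.
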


\proof 
If $\lambda^c(K,\mu)<0$, then $\nu$-almost every $\theta\in\Td$, there is a Pesin stable manifold that contains an interval $W^s(\theta)$ inside $I_\theta$. We can assume $W^s(\theta)$ is contained in the basin of $\mu$, and so $\mu$ has a basin with positive $m$-measure on $M$. It follows that $\mu=\mu_0$ or $\mu=\mu_1$ because the basins are disjoint and since by Lemma~\ref{le:baciasestodo}, their union covers $m$-almost every point of the whole manifold $M$.

\endproof

Therefore, we have proven the first statement in Theorem~\ref{mainteo:KE1}.


\section{Central Measure}\label{sec:mmeii}
Throughout this section we assume $\nu\in M^{erg}(E)$ such that $J_\nu E$ is H\"older continuous. We are interested in the set $M^{erg}_\nu(K)$ of measures $\mu\in M^{erg}(K)$ such that $\pi_*\mu=\nu$. 


\subsection{Separating map and separating measure}\label{ssec:separatingmap} 
Since $\mu_j=\nu\times\delta_j\in M^{erg}_\nu(K)$ and they have central Lyapunov exponent $\lambda^c(\mu_j)<0$, we can define a \textit{separating map} $\sigma:\Td\to M$ as following the ideas of Bonifant and Milnor \cite{BM2008} and Rodr\'iguez-Herz, Rodr\'iguez-Herz, Tahzibi, and Ures \cite{RRTU2012}. Denote by $\sigma_j$ a measurable map $\sigma_j: \Td \to I$ such that $\sigma_j(\theta)$ is the boundary of the connected component of the Pesin stable manifold of $(\theta,j)$. Each map $\sigma_j$ is well-defined for every $\theta\in\Td$ and they are measurable. Note that from Lemma~\ref{lem:intervalos}, $\sigma_j(\theta)\ne j$ for $\nu$-almost every $\theta\in \Td$. Moreover,

\begin{lema}\label{le:uniquenessseparating} $\sigma_0(\theta)=\sigma_1(\theta)$ for $\nu$-almost every point $\theta\in\Td$. 
\end{lema}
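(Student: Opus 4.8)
## Proof proposal

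The plan is to show that the two measurable functions $\sigma_0$ and $\sigma_1$ must coincide $\nu$-a.e. by a combination of monotonicity of the fiber dynamics and the fact that (by Proposition~\ref{teo:medidasnegativas}) the basins $\mathcal{B}(\mu_0)$ and $\mathcal{B}(\mu_1)$ together fill up $m$-almost every point of $M$. First I would observe that on the fiber $I_\theta=\{\theta\}\times I$, for $\nu$-a.e.\ $\theta$, the maximal stable interval of $(\theta,0)$ is $[0,\sigma_0(\theta))$ and the maximal stable interval of $(\theta,1)$ is $(\sigma_1(\theta),1]$; these are disjoint since no point can be in both Pesin stable manifolds (the forward orbits converge to different ends of the fiber, which are distinct invariant sets). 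Hence $\sigma_0(\theta)\le\sigma_1(\theta)$ for $\nu$-a.e.\ $\theta$, and the claim is equivalent to showing the set $A=\{\theta:\sigma_0(\theta)<\sigma_1(\theta)\}$ has $\nu$-measure zero.

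Suppose for contradiction $\nu(A)>0$. For $\theta\in A$ there is a nondegenerate gap interval $G_\theta=(\sigma_0(\theta),\sigma_1(\theta))\subseteq I$, and every point $(\theta,t)$ with $t\in G_\theta$ lies outside both $\mathcal{B}(\mu_0)$ and $\mathcal{B}(\mu_1)$ (a point in $\mathcal B(\mu_0)$ lies in some $W^s_\epsilon$ of an Oseledets-regular point with negative exponent, hence, via the $s$-saturation argument already used in Lemma~\ref{le:existenfisicas}, the corresponding interval extends only up to $\sigma_0(\theta)$; symmetrically for $\mu_1$). By restricting $A$ to a positive-$\nu$-measure subset on which $\sigma_1(\theta)-\sigma_0(\theta)\ge\kappa>0$ for a fixed $\kappa$ and on which $\sigma_0,\sigma_1$ are both, say, within $\kappa/4$ of continuous functions (Lusin), one produces a set $Z\subseteq M$ of the form $\{(\theta,t):\theta\in A',\ \sigma_0(\theta)<t<\sigma_1(\theta)\}$ with $m(Z)>0$ that is contained in $M\setminus(\mathcal B(\mu_0)\cup\mathcal B(\mu_1))$. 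This directly contradicts Lemma~\ref{le:baciasestodo}, which asserts that $\mathcal B(\mu_0)\cup\mathcal B(\mu_1)$ has full $m$-measure (noting that the hypothesis of this section, $J_\nu E$ H\"older continuous, implies bounded distortion, so Lemma~\ref{le:baciasestodo} applies).

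I expect the main obstacle to be the careful verification that the ``gap'' region $Z$ genuinely avoids both basins and is $m$-measurable with positive measure; the subtlety is that $\mathcal{B}(\mu_j)$ contains more than just the union of Pesin local stable manifolds through boundary points — it is merely $s$-saturated. One must argue that if $(\theta,t)\in\mathcal B(\mu_0)$, then its entire Pesin stable manifold lies in $\mathcal B(\mu_0)$, and by Lemma~\ref{lem:intervalos} (applied along the fiber, using monotonicity of $\varphi_n$) this manifold, being an interval in $I_\theta$ whose forward orbit converges to the $t=0$ end, must be contained in $[0,\sigma_0(\theta))$; this rules out $t>\sigma_0(\theta)$ lying in $\mathcal B(\mu_0)$, and symmetrically for $\mu_1$. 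Once this fiberwise characterization of the basins is in hand, the measurability of $Z$ follows from measurability of $\sigma_0,\sigma_1$, its positive measure follows from $\nu(A)>0$ by Fubini, and the contradiction with Lemma~\ref{le:baciasestodo} closes the argument.
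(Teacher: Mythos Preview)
Your proposal is correct and follows essentially the same approach as the paper: assume $\sigma_0(\theta)<\sigma_1(\theta)$ on a set of positive $\nu$-measure, observe that the resulting fiberwise gap intervals assemble into a set of positive $m$-measure disjoint from $\mathcal{B}(\mu_0)\cup\mathcal{B}(\mu_1)$, and contradict Lemma~\ref{le:baciasestodo}. The paper's proof is a terse two-line version of exactly this argument; your Lusin refinement is unnecessary (Fubini already gives positive $m$-measure once the gap has positive length on a positive-$\nu$ set), and your careful discussion of why a gap point cannot lie in either basin is a point the paper simply asserts via Lemma~\ref{lem:intervalos}.
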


\proof Suppose that $\sigma_0(\theta)<\sigma_1(\theta)$ for a positive $\nu$-measure set of points $\theta\in\Td$. Then, Lemma~\ref{lem:intervalos} implies the existence of an open interval $J_\theta\subset I_\theta$ such that $\mathcal{B}(\mu_0)\cap J_\theta=\emptyset$ and $\mathcal{B}(\mu_1)\cap J_\theta=\emptyset$. This contradicts the conclusion of Lemma~\ref{le:baciasestodo}. \endproof

Therefore, we define $\sigma:\Td\to I$ as $\sigma(\theta)=\sigma_j(\theta)$, which is well defined for $\nu$-almost every $\theta\in\Td$. It is necessary to emphasize the following:

\begin{lema}\label{le:notcontinuous} 
The separating map $\sigma:\Td\to M$ is measurable but is not continuous.
\end{lema}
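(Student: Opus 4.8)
The plan is to argue by contradiction: suppose that $\sigma$ admits a continuous representative on a full-measure subset of $\Td$, and show this forces the existence of a large set of points lying in neither basin, contradicting Lemma~\ref{le:baciasestodo}. The intuition is that $\sigma$ records where the local stable manifolds from the two boundaries ``collide''; if this collision locus varied continuously, it would itself be an almost-invariant curve with positive central exponent whose backward iterates could not shrink, trapping positive mass of the reference measure $m=\nu\times\m_I$ away from $\mathcal B(\mu_0)\cup\mathcal B(\mu_1)$.

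First I would make the contradiction hypothesis precise. If $\sigma$ agrees $\nu$-a.e.\ with a continuous map $g:\Td\to I$, then by \ref{K1} and the definition of $\sigma$ we have $0<g(\theta)<1$ (the values $0$ and $1$ are excluded by Lemma~\ref{lem:intervalos}, and by continuity and compactness the range is bounded away from the boundary, say $g(\theta)\in[c,1-c]$ for some $c>0$). Next I would exploit the dynamical meaning of $\sigma$: since the graph of $\sigma$ separates $W^s(\theta,0)$ from $W^s(\theta,1)$ fiberwise and both stable manifolds are forward-invariant under $K$, the graph $\Sigma=\{(\theta,\sigma(\theta)):\theta\in\Td\}$ is (mod $\nu$) $K$-invariant; that is, $K(\Sigma)=\Sigma$ up to a $\nu$-null set, because $K(\theta,\sigma(\theta))$ must again be a boundary point of the complementary stable intervals at $E(\theta)$. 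Using the continuous representative $g$ together with continuity of $K$ and density of the orbit structure of the expanding map $E$ (periodic points are dense, $E$ is topologically exact), the relation $\varphi(\theta,g(\theta))=g(E(\theta))$ would then hold for \emph{every} $\theta\in\Td$, so $\Sigma=\mathrm{graph}(g)$ is a genuine continuous $K$-invariant section.

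Now I would derive the contradiction. A continuous $K$-invariant section $\Sigma$ disjoint from both boundaries carries an ergodic $K$-invariant measure $\mu$ with $\pi_*\mu=\nu$ and $\mu\notin\{\mu_0,\mu_1\}$; moreover $\Sigma$ is a normally attracting or repelling graph inside the center foliation, but since it is the common boundary of the two stable-manifold intervals it must be \emph{repelling} from within the center fibers. Concretely, for $\nu$-a.e.\ $\theta$ and any fiber point $t$ strictly between $g(\theta)$ and the nearer boundary value, the forward orbit of $(\theta,t)$ is pushed toward that boundary (this is exactly Lemma~\ref{lem:intervalos}), so $g(\theta)$ is a fiberwise repeller and $\lambda^c(K,\mu)\ge 0$ — indeed $>0$ generically. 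Here is the key point: take a closed disc $\gamma\subset\Td$ and the horizontal slice $\gamma\times\{t_0\}$ with $t_0=\min g - \text{(small)}$, so the slice lies strictly below $\Sigma$; by the argument in Lemma~\ref{le:baciasintermingled} its forward iterates grow and accumulate on both boundary components, yet they can never cross $\Sigma$ (invariance of $\Sigma$ and of the stable intervals), which is no contradiction by itself — but a symmetric slice just \emph{above} $\Sigma$ near the other boundary does the same, and continuity of $g$ forces a whole open neighborhood of $\Sigma$ in $M$ to be swept by the dynamics without ever entering either stable region at a definite scale, so that $m$-positively many points lie outside $\mathcal B(\mu_0)\cup\mathcal B(\mu_1)$, contradicting Lemma~\ref{le:baciasestodo}.

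The main obstacle I anticipate is the bridge from ``$\sigma$ has a continuous version $\nu$-a.e.'' to ``$\sigma$ is \emph{genuinely} a continuous invariant section on all of $\Td$''. Since $\nu$ need only be ergodic and may have small support a priori (though later items assume $\supp\nu=\Td$), one must be careful: if $\supp\nu=\Td$ this is routine by density of orbits and continuity of $\varphi$; in the general case one should instead argue directly that a continuous a.e.\ representative, combined with the fact that $\mathcal B(\mu_0)\cup\mathcal B(\mu_1)$ has full $m$-measure, already produces a positive-$m$-measure ``gap'' set $\{(\theta,t):g(\theta)\ \text{between}\ t\ \text{and neither stable interval reaches}\ t\}$. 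I would therefore phrase the cleanest proof as: assume $\sigma$ continuous $m_\Td$-a.e., use uniform continuity to get, on a disc $\gamma$ with $\nu(\gamma)>0$, a uniform lower bound $\eta>0$ on the distance from $\mathrm{graph}(\sigma|_\gamma)$ to both boundaries, conclude via Lemma~\ref{lem:intervalos} and the invariance of stable intervals that the set $\{\theta\in\gamma:\ \{\theta\}\times(\sigma(\theta)-\eta/2,\sigma(\theta)+\eta/2)\ \text{meets neither}\ \mathcal B(\mu_0)\ \text{nor}\ \mathcal B(\mu_1)\}$ has positive $\nu$-measure, and hence by Fubini $m$ of the non-basin set is positive — contradicting Lemma~\ref{le:baciasestodo}. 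This keeps the argument short and avoids needing the invariance-principle machinery of the later sections.
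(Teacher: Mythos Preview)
Your argument has a genuine gap. You aim to contradict Lemma~\ref{le:baciasestodo} by producing a set of positive $m$-measure outside both basins, but no such set exists even when $\sigma$ is continuous. By the very definition of $\sigma$ (via Lemma~\ref{le:uniquenessseparating}), for $\nu$-a.e.\ $\theta$ the fiber decomposes as $W^s(\theta,0)=\{\theta\}\times[0,\sigma(\theta))$ and $W^s(\theta,1)=\{\theta\}\times(\sigma(\theta),1]$; the only point of $I_\theta$ outside both basins is $(\theta,\sigma(\theta))$ itself. Hence any interval $(\sigma(\theta)-\eta/2,\sigma(\theta)+\eta/2)$ meets \emph{both} basins, and the complement of $\mathcal B(\mu_0)\cup\mathcal B(\mu_1)$ is just the graph of $\sigma$, which has $m$-measure zero by Fubini regardless of whether $\sigma$ is continuous. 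So Lemma~\ref{le:baciasestodo} is not violated and your contradiction never materializes.

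The paper's proof uses the other lemma, Lemma~\ref{le:baciasintermingled}, and is a one-liner: if $\sigma$ is continuous, its graph $\Sigma$ separates $M$ into two open sets, with $\mathcal B(\mu_0)$ (up to $\nu$-null fibers) below $\Sigma$ and $\mathcal B(\mu_1)$ above. Any open $U$ lying entirely below $\Sigma$ then satisfies $m(U\cap\mathcal B(\mu_1))=0$, contradicting intermingledness. You actually brushed against this idea when you noted that forward iterates ``can never cross $\Sigma$'' and then said this was ``no contradiction by itself'' --- but it \emph{is} the contradiction, once you invoke intermingling rather than full-measure coverage.
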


\proof If the map $\sigma$ is continuous, then the set $\{(\theta, \sigma(\theta) )\::\: \theta\in\Td\}$ separates the basins $\mathcal{B}(\mu_0)$ and $\mathcal{B}(\mu_1)$, and thus they cannot be intermingled.
\endproof

We point out that in the arguments above, we are invoking Lemma~\ref{le:baciasintermingled}, which requires $\supp \nu=\Td$ and Lemma~\ref{le:baciasestodo}, where the assumption of bounded distortion of $J_\nu E$  is required. Of course, the hypothesis that $J_\nu E$ is H\"older continuous implies that both conditions are satisfied.

Now, we will describe the conditional measures associated with Rokhlin decomposition for an ergodic measure projecting on $\nu$. It follows from Rokhlin's disintegration theorem that there exists a family of conditional probabilities $\{\mu_\theta\::\:\theta\in\Td\}$ with respect to the measurable partition $\{I_\theta\::\:\theta\in\Td\}$ such that
$$\int_M\Phi\,d\mu=\int_{\Td}\int_{I_\theta}\Phi\,d\mu_\theta\,d\nu(\theta),$$
\noindent for every $\Phi:M\to \mathbb{R}$ continuous. Note that in this case the quotient measure $\hat\mu$ coincides with $\nu$. 

Moreover, it follows from the definition of $\sigma:\Td\to I$ that, if $\mu\ne\mu_j$, then for $\nu$-almost every $\theta\in \Td$, $\mu_\theta=\delta_{(\theta,\sigma(\theta))}$ and $\supp\mu_\theta=\{(\theta,\sigma(\theta))\}$. In summary, as an immediate consequence of the existence of the separating map $\sigma:\Td\to I$ and the uniqueness of the Rokhlin decomposition, we have the following

\begin{lema}\label{le:separatingmeas} There exists a unique $\mu\in M^{erg}_\nu(K)$ such that $\mu\ne\mu_j$, $j=0,1$. It is defined by 
$$\int_M\Phi\,d\mu=\int_{\Td}\Phi(\theta,\sigma(\theta))\,d\nu(\theta)$$
for every $\Phi:M\to \mathbb{R}$ continuous.
\end{lema}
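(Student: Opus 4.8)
The plan is to split the statement into an \emph{existence} part --- the displayed formula really does define a measure $\mu\in M^{erg}_\nu(K)$ with $\mu\ne\mu_j$ --- and a \emph{uniqueness} part --- any such measure must coincide with it. The existence part is a direct verification; the uniqueness part is where Rokhlin disintegration, together with Lemmas~\ref{lem:intervalos}, \ref{le:baciasestodo} and \ref{le:uniquenessseparating}, does the work.

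\emph{Existence.} First I would note that $\Phi\mapsto\int_{\Td}\Phi(\theta,\sigma(\theta))\,d\nu(\theta)$ is a positive, normalized linear functional on $C^0(M,\mathbb{R})$, since the integrand is bounded and measurable because $\theta\mapsto(\theta,\sigma(\theta))$ is measurable (Lemma~\ref{le:notcontinuous}). By the Riesz representation theorem this functional is integration against a Borel probability $\mu$, automatically concentrated on the graph of $\sigma$, and $\pi_*\mu=\nu$ is immediate from $\pi(\theta,\sigma(\theta))=\theta$. The crux of this part is to show that the graph of $\sigma$ is $K$-invariant modulo $\nu$, i.e.
\begin{equation*}
\varphi(\theta,\sigma(\theta))=\sigma(E(\theta))\qquad\text{for }\nu\text{-a.e. }\theta .
\end{equation*}
This I would deduce from Lemma~\ref{lem:intervalos} and the relation $K(W^s(x))=W^s(K(x))$: for $\nu$-a.e. $\theta$ one has $W^s(\theta,0)=\{\theta\}\times[0,\sigma(\theta))$, and since \ref{K1} makes $\varphi(\theta,\cdot)\colon I_\theta\to I_{E(\theta)}$ an increasing homeomorphism fixing the endpoint $0$, its image $\{E(\theta)\}\times[0,\varphi(\theta,\sigma(\theta)))$ must coincide with $W^s(E(\theta),0)=\{E(\theta)\}\times[0,\sigma(E(\theta)))$; the $E$-invariance of $\nu$ lets me take $\theta$ and $E(\theta)$ simultaneously in the full-measure set of Lemma~\ref{lem:intervalos}. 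Granting this identity, $K$-invariance of $\mu$ follows by a one-line change of variables using the $E$-invariance of $\nu$, and ergodicity of $\mu$ follows because $\theta\mapsto(\theta,\sigma(\theta))$ is a measurable isomorphism of $(\Td,\nu,E)$ onto $(\operatorname{graph}\sigma,\mu,K)$ with measurable inverse $\pi$. Finally $\mu\ne\mu_j$ since $\mu(\Td\times\{j\})=\nu(\{\sigma=j\})=0$ by Lemma~\ref{lem:intervalos}, while $\mu_j(\Td\times\{j\})=1$.

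\emph{Uniqueness.} Let $\mu'\in M^{erg}_\nu(K)$ with $\mu'\notin\{\mu_0,\mu_1\}$. Since each $\mathcal B(\mu_j)$ is $K$-invariant, ergodicity gives $\mu'(\mathcal B(\mu_j))\in\{0,1\}$, and the value $1$ is impossible: it would force the Birkhoff averages of a continuous function to converge $\mu'$-a.e. simultaneously to $\int\cdot\,d\mu'$ and to $\int\cdot\,d\mu_j$, hence $\mu'=\mu_j$. So $\mu'(\mathcal B(\mu_0)\cup\mathcal B(\mu_1))=0$. Disintegrating $\mu'$ over the measurable partition $\{I_\theta\}$, whose quotient measure is $\pi_*\mu'=\nu$, into conditionals $\mu'_\theta$, the identity $0=\int\mu'_\theta(\mathcal B(\mu_0)\cup\mathcal B(\mu_1))\,d\nu(\theta)$ forces $\mu'_\theta$ to be concentrated on $I_\theta\setminus(\mathcal B(\mu_0)\cup\mathcal B(\mu_1))$ for $\nu$-a.e. $\theta$; by Lemmas~\ref{lem:intervalos} and \ref{le:uniquenessseparating} together with the $s$-saturation of basins, $\{\theta\}\times[0,\sigma(\theta))\subseteq\mathcal B(\mu_0)$ and $\{\theta\}\times(\sigma(\theta),1]\subseteq\mathcal B(\mu_1)$, so that remaining set is the single point $(\theta,\sigma(\theta))$. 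Hence $\mu'_\theta=\delta_{(\theta,\sigma(\theta))}$ for $\nu$-a.e. $\theta$, and feeding this into $\int\Phi\,d\mu'=\int_{\Td}\int_{I_\theta}\Phi\,d\mu'_\theta\,d\nu(\theta)$ recovers the displayed formula; uniqueness of the Rokhlin decomposition then gives $\mu'=\mu$.

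I expect the only genuinely delicate step to be the invariance identity $\varphi(\theta,\sigma(\theta))=\sigma(E(\theta))$ underpinning the $K$-invariance of $\mu$: it is where the merely $\nu$-a.e.\ defined, discontinuous map $\sigma$ must be made compatible with the dynamics, and one has to argue via monotonicity of the fiber maps and the permutation of Pesin stable leaves by the noninvertible $K$, keeping every set equality valid only modulo $\nu$ on an $E$-invariant full-measure set. The rest --- the Riesz construction, the transfer of ergodicity, and the disintegration bookkeeping --- is routine once the earlier lemmas are available.
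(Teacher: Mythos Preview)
Your proposal is correct and follows the same line as the paper: uniqueness via Rokhlin disintegration over the fibers $\{I_\theta\}$, using that for $\nu$-a.e.\ $\theta$ the only point of $I_\theta$ outside $\mathcal B(\mu_0)\cup\mathcal B(\mu_1)$ is $(\theta,\sigma(\theta))$, so any ergodic $\mu'\ne\mu_j$ projecting to $\nu$ must have conditionals $\delta_{(\theta,\sigma(\theta))}$. The paper states this very tersely (``an immediate consequence of the existence of the separating map~$\sigma$ and the uniqueness of the Rokhlin decomposition''), whereas you supply the details it omits --- in particular the explicit verification of $K$-invariance via $\varphi(\theta,\sigma(\theta))=\sigma(E(\theta))$ and the transfer of ergodicity through the measurable conjugacy $\theta\mapsto(\theta,\sigma(\theta))$ --- so your write-up is in fact more complete than the original.
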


Note that the measure $\mu$ defined above is the push forward, under the section, $\theta\mapsto(\theta,\sigma(\theta))$, of the measure $\nu$. If we denote
$$\Sigma=\{(\theta,\sigma(\theta))\::\:\theta\in\Td\},$$
then $\Sigma$ is a $K$-invariant set. By the definition of $\mu$, for any Borelean set $A\subseteq M$,
$$\mu(A)=\nu(\pi(A\cap \Sigma))=\pi_*\nu(A\cap \Sigma).$$


\subsection{No measures having zero center exponent}\label{ssec:zerocenterexponent}

In this subsection, we prove that if $\mu\in M^{erg}_\nu(K)$ and $\mu\ne\mu_j$, then $\lambda^c(\mu)>0$.
From Lemma~\ref{le:characterization2}, we have that if $\mu\in M^{erg}_\nu(K)$, $\mu\ne\mu_j$, then $\lambda^c(\mu)\geq 0$. In the particular case $\lambda^c(\mu)=0$, we can state more about this decomposition.

\begin{mainlemma}\label{le:klemma}
Let $\nu\in M^{erg}(E)$ such that $J_\nu E$ is H\"older continuous and suppose that $\mu\in M^{erg}_\nu(K)$. Let $\{\mu_\theta\::\:\theta\in\Td\}$ be the family of conditional measures with respect to the partition $\{I_\theta=\{\theta\}\times I\::\:\theta\in \Td\}$. If $\lambda^c(\mu)=0$, then the map $\Td\ni\theta\to\mu_\theta\in M^1(I_\theta)$ is continuous.
\end{mainlemma}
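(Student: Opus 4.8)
# Proof Proposal for Lemma~\ref{le:klemma}

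\textbf{The plan} is to recognize this statement as an instance of the Avila--Viana invariance principle, adapted to the noninvertible (endomorphism) setting. The setting is a skew product $K(\theta,t) = (E(\theta),\varphi(\theta,t))$ over the expanding base $(E,\nu)$, with a fiber cocycle given by the interval diffeomorphisms $\varphi_\theta = \varphi(\theta,\cdot)$. The measure $\mu$ projects to $\nu$ and has conditional measures $\{\mu_\theta\}$ along the fibers $I_\theta$. The hypothesis $\lambda^c(\mu)=0$ says precisely that the fiberwise Lyapunov exponent of this cocycle vanishes. The invariance principle then asserts that the disintegration $\theta \mapsto \mu_\theta$ is invariant under the (un)stable holonomies of the base, and since $E$ is expanding with H\"older Jacobian, this will force continuity.

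\textbf{First I would} pass to the natural extension (inverse limit) $\hat E : \hat{\mathbb{T}}^d \to \hat{\mathbb{T}}^d$ of the expanding map $E$, lifting $\nu$ to the unique $\hat E$-invariant measure $\hat\nu$ projecting to it, and correspondingly lift $\mu$ to a $\hat K$-invariant measure $\hat\mu$ on $\hat{\mathbb{T}}^d \times I$ with disintegration $\{\hat\mu_{\hat\theta}\}$. On the inverse limit the dynamics becomes invertible and the expanding map $E$ plays the role of the "unstable" direction with well-defined contracting inverse branches $E_\theta^{-n}$; these inverse branches give stable holonomies for the skew product. The key analytic input — and here is where H\"older continuity of $J_\nu E$ enters — is that $\hat\nu$ has a local product structure: its conditional measures along unstable sets (i.e., along the "present" coordinate) are equivalent to each other under holonomy with H\"older Jacobians, which is exactly the bounded-distortion/Gibbs property furnished by Ruelle's theorem and the lemmas of Subsection~\ref{ssec:expandingmaps}.

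\textbf{Next I would} run the invariance-principle argument: because $\lambda^c(\mu)=0$, the contraction along fibers is subexponential, so the fiber cocycle neither expands nor contracts on average. One shows that the function $\hat\theta \mapsto \hat\mu_{\hat\theta}$, viewed as a measurable map into the space of probability measures on $I$ (compact, metrizable), is a bounded martingale-type object invariant under the dynamics; iterating and using that the inverse branches of $E$ contract while the fiber maps do not expand exponentially, one deduces that $\hat\mu_{\hat\theta}$ depends only on the "future" coordinate, i.e. factors through the projection $\hat{\mathbb{T}}^d \to \mathbb{T}^d$, giving a well-defined $\theta \mapsto \mu_\theta$ that is moreover invariant under all inverse-branch holonomies. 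A map $\mathbb{T}^d \to M^1(I)$ that is measurable, essentially bounded, and invariant under the holonomy pseudo-group of an expanding map with H\"older Gibbs measure must coincide a.e. with a continuous map: this is the standard "measurable + holonomy-invariant $\Rightarrow$ continuous" upgrade, proved by a Lusin-type argument combined with the fact that holonomies connect a positive-measure set to a full neighborhood (topological exactness of $E$, already recorded in Subsection~\ref{ssec:expandingmaps}).

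\textbf{The main obstacle} I anticipate is the careful formulation of the invariance principle in the noninvertible case — there is no genuine cocycle over an invertible base unless one builds the natural extension, and one must check that the fiber maps $\varphi_\theta$, which are merely monotone $C^1$ diffeomorphisms of $I$ rather than affine or projective maps, still satisfy the integrability and continuity hypotheses needed (boundedness of $\log\|DK|E^c\|$, which follows from $K \in C^1$ on the compact $M$, and the non-expansion estimate along fibers). This is precisely the point the authors flag as developed "in Section~\ref{sec:Klemma}", so I would isolate the general noninvertible invariance principle as a standalone statement there and here simply verify its hypotheses: $\log|\partial_t\varphi|$ is continuous hence bounded, $J_\nu E$ is H\"older by assumption giving the Gibbs/bounded-distortion property, and $\lambda^c(\mu)=0$ is the vanishing-exponent hypothesis. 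Granting that, continuity of $\theta\mapsto\mu_\theta$ drops out.
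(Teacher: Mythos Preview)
Your proposal is correct and follows essentially the same route as the paper: pass to the natural extension $\hat E$ to obtain an invertible base, lift $\mu$ to $\hat\mu$ with the same (zero) fiber exponent, use H\"older continuity of $J_\nu E$ to get local product structure of $\hat\nu$, and then invoke the Avila--Viana invariance principle for smooth cocycles to conclude that the lifted disintegration $\hat\theta\mapsto\hat\mu_{\hat\theta}$ is $s$-/$u$-invariant and continuous. The only cosmetic difference is in the descent step: you argue that $s$-invariance (with trivial stable holonomy, since the cocycle is locally constant) forces $\hat\mu_{\hat\theta}$ to factor through the projection $\hat\pi$, whereas the paper instead records an explicit integral relation between $\mu_\theta$ and the $\hat\mu_{\hat\theta}$ over the fiber $\hat\pi^{-1}(\theta)$ and reads off continuity of $\mu_\theta$ from that formula; both arguments are equivalent and yours is arguably the cleaner way to say it.
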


This lemma follows from Avila-Viana's invariance principle \cite{AV2010}, and we proved it in Section~\ref{sec:Klemma}. For now, we use the lemma above to prove that there are no measures projecting on $\nu$ having a zero center Lyapunov exponent.

\begin{lema}\label{le:mmenonzero} Assume $\nu\in M^{erg}(E)$ has a H\"older continuous Jacobian. Let $\mu\in M^{erg}_\nu(K)$ be a measure such that $\mu \not \in \{ \mu_1, \mu_2\}$. Then, $\lambda^c(\mu)>0$.
\end{lema}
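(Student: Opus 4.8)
The plan is to argue by contradiction: suppose $\mu \in M^{erg}_\nu(K)$ with $\mu \ne \mu_0, \mu_1$ and $\lambda^c(\mu) = 0$. By Lemma~\ref{le:characterization2} we already know $\lambda^c(\mu) \geq 0$, so ruling out the equality case suffices. The key input will be Lemma~\ref{le:klemma}: under the hypothesis $\lambda^c(\mu)=0$ together with the H\"older continuity of $J_\nu E$, the disintegration map $\theta \mapsto \mu_\theta \in M^1(I_\theta)$ is \emph{continuous} on all of $\Td$ (not merely measurable). On the other hand, from Lemma~\ref{le:separatingmeas} we know that the \emph{only} ergodic measure in $M^{erg}_\nu(K)$ other than $\mu_0,\mu_1$ is the one whose conditional measures are $\mu_\theta = \delta_{(\theta,\sigma(\theta))}$ for $\nu$-a.e.\ $\theta$, where $\sigma:\Td \to I$ is the separating map. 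So $\mu$ must be this measure, and its conditionals are Dirac masses supported on the graph of $\sigma$.

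The contradiction then comes from comparing these two facts. If $\theta \mapsto \mu_\theta = \delta_{(\theta,\sigma(\theta))}$ is continuous in the weak* topology, then $\theta \mapsto \sigma(\theta)$ is continuous: indeed, weak* convergence $\delta_{(\theta_n,\sigma(\theta_n))} \to \delta_{(\theta,\sigma(\theta))}$ forces $(\theta_n,\sigma(\theta_n)) \to (\theta,\sigma(\theta))$, and since this holds for \emph{every} $\theta$ (the map is continuous everywhere by Lemma~\ref{le:klemma}, not just $\nu$-a.e.), the map $\sigma$ admits a continuous representative on a set of full $\nu$-measure, hence $\sigma$ itself is continuous (or agrees $\nu$-a.e.\ with a continuous function, which is all that matters). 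But Lemma~\ref{le:notcontinuous} asserts precisely that $\sigma$ is \emph{not} continuous — its continuity would produce a continuous graph separating the intermingled basins $\mathcal{B}(\mu_0)$ and $\mathcal{B}(\mu_1)$, contradicting Lemma~\ref{le:baciasintermingled} (which applies since $\supp\nu = \Td$, guaranteed by the H\"older Jacobian hypothesis via Theorem~\ref{teo:Ruelle}). This contradiction shows $\lambda^c(\mu) = 0$ is impossible, so $\lambda^c(\mu) > 0$.

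One point that needs care is whether Lemma~\ref{le:klemma} really yields continuity everywhere rather than merely $\nu$-a.e., and whether that forces $\sigma$ to coincide $\nu$-a.e.\ with a genuinely continuous function on $\Td$; this is the delicate bridge between the measure-theoretic statement of Lemma~\ref{le:klemma} and the topological statement of Lemma~\ref{le:notcontinuous}. I expect the main obstacle to be bookkeeping here: one must verify that the set where $\mu_\theta = \delta_{(\theta,\sigma(\theta))}$ (full $\nu$-measure) intersected with the set where the continuous representative of $\theta \mapsto \mu_\theta$ equals a Dirac mass is still rich enough — using $\supp\nu = \Td$ — to conclude that $\sigma$ extends continuously, contradicting Lemma~\ref{le:notcontinuous}. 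The heavy lifting is all in Lemma~\ref{le:klemma} (the invariance principle), which is deferred to Section~\ref{sec:Klemma}; the present lemma is then a short deduction combining it with Lemmas~\ref{le:notcontinuous}, \ref{le:separatingmeas}, and \ref{le:characterization2}.
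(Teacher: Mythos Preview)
Your proposal is correct and follows essentially the same route as the paper: assume $\lambda^c(\mu)=0$, invoke Lemma~\ref{le:klemma} to get continuity of $\theta\mapsto\mu_\theta=\delta_{(\theta,\sigma(\theta))}$, deduce continuity of $\sigma$, and contradict Lemma~\ref{le:notcontinuous}. The paper's proof is a one-line version of exactly this argument, and your concern about the $\nu$-a.e.\ versus everywhere continuity is handled by $\supp\nu=\Td$ (which you correctly note follows from the H\"older Jacobian hypothesis).
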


\proof  If $\lambda^c(\mu)=0$, then   Lemma~\ref{le:klemma} implies that $\sigma$ should be continuous contradicting Lemma~\ref{le:notcontinuous}.
\endproof


\subsection{Approximation by periodic points}\label{ssec:positivecase}

It is known in the context of expanding transformations on compact manifolds that any invariant probability measure $ \nu $ can be approximated in the weak* topology by invariant probability measures supported on periodic orbits (see Lemma~\ref{le:periodicpoints1}). Following the previous idea, we will establish the existence of a sequence of invariant probability measures supported at periodic points belonging to the interior of the cylinder approximating the separating measure $\mu$.

\begin{lema}\label{le:existence}
There exists a sequence of $K$-invariant probability measures supported on periodic points in the interior of $M$ converging to $\mu$ in the weak* topology. 
\end{lema}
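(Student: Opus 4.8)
The plan is to produce periodic orbits of $K$ inside the open cylinder whose associated invariant measures converge weak* to $\mu$, using the separating map $\sigma$ and the fact that $\mu = \pi_*\nu$ via the section $\theta \mapsto (\theta,\sigma(\theta))$. The starting point is Lemma~\ref{le:periodicpoints1}: the measure $\nu \in M^{erg}(E)$ is a weak* limit of measures $\nu_n$ supported on periodic orbits of $E$. Fix a periodic point $\theta_n \in \mathrm{Fix}(E^{k_n})$ carrying (part of) $\nu_n$. The fiber $I_{\theta_n}$ is $K^{k_n}$-invariant, and $\varphi_{\theta_n}^{(k_n)} := \pi_2 \circ K^{k_n}|_{I_{\theta_n}} : [0,1] \to [0,1]$ is an increasing $C^1$ diffeomorphism of the interval fixing the endpoints $0$ and $1$. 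An increasing circle-interval map fixing both endpoints must have a fixed point in the interior; moreover, by the structure coming from \ref{K3} and from the fact that $\mu \ne \mu_j$, I expect $\varphi_{\theta_n}^{(k_n)}$ to have a fixed point $t_n \in (0,1)$, which by Lemma~\ref{lem:intervalos} (applied along the periodic fiber, where stable manifolds are honest intervals) is exactly the boundary point $\sigma(\theta_n)$ between the two basins. Thus $(\theta_n, t_n)$ is a periodic point of $K$ in the interior of $M$, and its orbit under $K$ projects (under $\pi$) onto the $E$-orbit of $\theta_n$.

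The key geometric step is to show that the $K$-periodic measure $\eta_n$ equidistributed on the orbit of $(\theta_n,t_n)$ is weak* close to $\pi_*\nu_n$ pushed along the section $\sigma$, and hence, since $\nu_n \to \nu$ forces $\pi_*(\sigma_*\nu_n) \to \pi_*(\sigma_*\nu) = \mu$ along continuity points of $\sigma$, that a subsequence of $\eta_n$ converges to $\mu$. The honest content here is controlling the fiber coordinate: I would argue that for a periodic point $\theta_n$ in a small ball where $\nu_n$ concentrates near a $\nu$-density point at which $\sigma$ is approximately continuous (using measurability of $\sigma$ and Lusin's theorem to find a compact set of large $\nu$-measure on which $\sigma$ is continuous), the interior fixed point $t_n$ of $\varphi_{\theta_n}^{(k_n)}$ is forced to be within $\varepsilon$ of $\sigma(\theta_n)$. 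The mechanism is that the basins $\mathcal{B}(\mu_0)$ and $\mathcal{B}(\mu_1)$ are $s$-saturated and $K$-invariant, so points of $I_{\theta_n}$ below $\sigma(\theta_n)$ are attracted to the $t=0$ end and points above to the $t=1$ end under iteration; any interior fixed point of the return map must sit at the interface, i.e. at $\sigma(\theta_n)$ (and there is exactly one such point for the return map associated to a generic periodic fiber). Averaging over the $E$-orbit and taking $n \to \infty$ then gives $\eta_n \to \mu$.

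The main obstacle, and where I would spend the most care, is the existence and localization of the interior periodic point: a priori $\varphi_{\theta_n}^{(k_n)}$ could have many interior fixed points, or its interior fixed set could fail to approximate $\sigma(\theta_n)$ if $\theta_n$ is badly chosen. I would handle this by choosing $\theta_n$ not arbitrarily but inside the support structure dictated by the separating set $\Sigma = \{(\theta,\sigma(\theta))\}$: since $\Sigma$ is $K$-invariant and $\mu(\Sigma) = 1$, and periodic points of $E$ are dense, I would use a shadowing/closing argument along $\Sigma$ — approximate a long $\nu$-generic orbit segment of $(\theta,\sigma(\theta))$ by a genuine periodic $E$-orbit $\theta_n$, and show the corresponding fiber fixed point of the return map must shadow the $\sigma$-values, exploiting the uniform contraction of $K$ on each basin side (domination from \ref{K2}) to trap the fixed point near $\sigma(\theta_n)$. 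A secondary technical point is to ensure the periodic measures live strictly in the \emph{interior}, i.e. $t_n \in (0,1)$ with $t_n$ bounded away from $0$ and $1$ along the subsequence realizing the limit; this follows because $\mu \ne \mu_0,\mu_1$ means $\mu$ gives zero mass to neighborhoods of the boundaries is false in general, but $\sigma(\theta) \in (0,1)$ $\nu$-a.e. and on a compact set of large measure $\sigma$ is bounded away from $\{0,1\}$, which suffices after restricting to that set and using that $\eta_n$ inherits this localization.
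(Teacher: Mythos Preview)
Your proposal has the right skeleton (lift periodic orbits of $E$ to periodic orbits of $K$ via fixed points of the fiber return map) but contains two genuine gaps, both of which the paper handles by a much shorter and cleaner route.

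\textbf{Existence of the interior fixed point.} The sentence ``an increasing interval map fixing both endpoints must have a fixed point in the interior'' is false (take $t\mapsto t^2$). Your fallback to ``the structure coming from \ref{K3} and $\mu\neq\mu_j$'' does not give a mechanism. The paper's argument is this: since $\delta\theta_n\to\nu$ weakly and $\theta\mapsto\log|\partial_t\varphi(\theta,j)|$ is continuous, condition~\eqref{eq:condK4im} forces
\[
\frac{1}{n}\log\bigl|(\varphi_n)'(j)\bigr|=\int\log|\partial_t\varphi(\theta,j)|\,d\delta\theta_n<0,\qquad j=0,1,
\]
for all large $n$. Hence both endpoints are attracting for the return map $\varphi_n$, which immediately forces an interior fixed point $t_n\in(0,1)$ with $|\varphi_n'(t_n)|\geq 1$. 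This is the step you are missing, and it uses the hypothesis~\eqref{eq:condK4im} in an essential way.

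\textbf{Convergence to $\mu$.} Your Lusin/shadowing plan to pin down the fiber coordinate near $\sigma(\theta_n)$ is where most of the difficulty would live, and as written it is not a proof: $\sigma$ is only defined $\nu$-a.e., the periodic $\theta_n$ need not fall in your Lusin set, and continuity of $\sigma$ on a large set says nothing about the location of fixed points of $\varphi_n$ over the specific point $\theta_n$. The paper bypasses all of this. From $|\varphi_n'(t_n)|\geq 1$ one gets $\lambda^c(\mu_n)\geq 0$ for the periodic measure $\mu_n$ supported on the $K$-orbit of $(\theta_n,t_n)$. If $\tilde\mu$ is any ergodic component of an accumulation point of $(\mu_n)$, then $\pi_*\tilde\mu=\lim\pi_*\mu_n=\lim\delta\theta_n=\nu$ and $\lambda^c(\tilde\mu)=\lim\lambda^c(\mu_n)\geq 0$, so $\tilde\mu\neq\mu_0,\mu_1$. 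Now Lemma~\ref{le:separatingmeas} says there is a \emph{unique} ergodic $K$-invariant measure projecting to $\nu$ other than $\mu_0,\mu_1$, namely $\mu$; hence $\tilde\mu=\mu$. No control on the actual value of $t_n$ is needed.

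In short: replace your geometric tracking of $t_n$ by the two soft facts (i) the boundary exponents are negative along $\delta\theta_n$ for large $n$, giving an interior fixed point with nonnegative fiber derivative, and (ii) the uniqueness characterization of $\mu$ in $M^{erg}_\nu(K)\setminus\{\mu_0,\mu_1\}$.
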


\proof 
From Lemma~\ref{le:periodicpoints1}, $\nu$ can be approximated in the weak* topology by invariant probability measures supported in periodic points defined by
$$\delta\theta_n:=\frac1n\sum_{j=0}^{n-1} \delta_{E^j(\theta_n)},$$
\noindent where $\theta_n\in\Td$ is a periodic point of period $n$. That means that for every $\epsilon>0$, there exists an integer $N\geq 1$ such that for every $n\geq N$, there exists $\theta_n \in \Td$, a periodic point of period $n$ such that 
$$\left|\int_{\Td} \psi\, d\delta\theta_n-\int_{\Td} \psi\,d\nu(\theta)\right|<\epsilon,$$
\noindent for every $\psi:\Td\to\mathbb{R}$ continuous function. Using the inequality above for 
$$\psi(\theta)=\log|\partial_t \varphi (\theta,j)|, \quad j=0,1;$$
\noindent and passing to a subsequence if it is necessary, we conclude from \eqref{eq:condK4mme} that for every sufficiently large $n\geq 1$
$$\lambda^c(\delta\theta_n\times\delta_j)=\int_{\Td} \log|\partial_t \varphi (\theta,j)| d\,\delta\theta_n<0.$$
For every $\theta\in\Td$, let us denote $I_\theta=\{\theta\}\times I$. Now, since $\theta_n \in \Td$ is a periodic point of period $n$, then the map $\varphi_n:I_{\theta_n}\to I_{\theta_n}$ defined by
$$\varphi_n(t):=K^n(\theta_n,t)$$
\noindent fixes the extremes, and by the chain rule, we have
$$\int_{\Td} \log|\partial_t \varphi (\theta,j)| d\delta\theta_n = \frac1n \sum_{k=0}^{n-1} \log |\partial_t \varphi_k (j)| = \frac1n \log \prod_{k=0}^{n-1} |\varphi'_k(j)|= \frac1n\log |\varphi_n' (j)| < 0.$$
Thus, $|\varphi'_n(j)|<1$, $j=0,1$.  This implies that there exists $t_n\in(0,1)$, a fixed point of $\varphi_n$ and $|\varphi_n'(\theta_n,t_n)|\geq 1$. Therefore, $(\theta_n,t_n)$ is a periodic point of $K$ of period $n$ belonging to the interior of $M$. Moreover, by the definition of the separating map $\sigma$, we have $t_n=\sigma(\theta_n)$.

Finally, consider the sequence of $K$-invariant measures supported on the periodic point $(\theta_n,\sigma(\theta_n))$ defined by
\begin{equation*}\label{eq:permeas}
\mu_n=\frac1n\sum_{j=0}^{n-1} \delta_{K^j(\theta_n,\sigma(\theta_n))}.
\end{equation*}
Let $\tilde\mu$ be an ergodic component of any accumulation point of the periodic measures $\mu_n$. Thus,
$\delta\theta_n=\pi_*\tilde\mu_n\to\pi_*\tilde\mu$ as $n\to\infty$, and since we assume $\delta\theta_n\to\nu$, then $\pi_*\tilde\mu=\nu$. 
Moreover,
$$\lambda^c(\tilde\mu)=\int \log|\partial_t \varphi (\theta,t)| d\tilde\mu=\lim_{n\to\infty} \int \log|\partial_t \varphi (\theta,t)| d\mu_n=\lim_{n\to \infty}\lambda^c(\mu_n)\geq 0.$$
Then $\tilde\mu\in M_\nu^{erg}(M)$ and $\tilde\mu\ne\mu_j$, so from Lemma~\ref{le:separatingmeas} we conclude that $\tilde\mu=\mu$. 
\endproof


\subsection{Measures with full support}\label{ssec:mfull}
Now we prove that the measure $\mu$ obtained in the previous sections is fully supported. We will need the following result due to Gan and Shi
 \cite{GS2019}, which implies that Kan's map is robustly transitive. 

\begin{lema}{\cite[Theorem 2]{GS2019}}\label{le:transitive}
Letting $K$ be a $C^2$ Kan-like map satisfying \ref{K1}, \ref{K2} and \ref{K3}, then for any nonempty open subset $U\subseteq M$, we have 
\begin{equation}\label{eq:robtrans}
    {\rm Int} (M)\subseteq\bigcup_{n\geq 0} K^n(U).
\end{equation}
Moreover, if $K$ is $2$-partially hyperbolic, there exists an $C^2$ open neighborhood $\mathcal U$ of partially hyperbolic endomorphisms preserving the boundary, such that every $F\in \mathcal U$ satisfies the property \eqref{eq:robtrans}.
\end{lema}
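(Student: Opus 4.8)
The plan is to reduce the statement to a density property of the backward orbits of interior points, and then to establish that density from the expansion of $E$ together with the heteroclinic cycle \ref{K3}.

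\emph{Reduction.} Put $\Omega=\bigcup_{n\ge 0}K^n(U)$. Since $K$ is a local diffeomorphism it is an open map, so $\Omega$ is open, and plainly $K(\Omega)\subseteq\Omega$; hence $Z:=M\setminus\Omega$ is closed and satisfies $K^{-1}(Z)\subseteq Z$, so $K^{-n}(z)\subseteq Z$ for every $z\in Z$ and $n\ge 0$. Therefore it suffices to prove
\begin{equation*}
\text{for every } z\in{\rm Int}(M)\ \text{the set}\ \bigcup_{n\ge 0}K^{-n}(z)\ \text{is dense in } M ,
\end{equation*}
because then no interior point can lie in $Z$: a dense subset of the closed set $Z$ would force $Z=M$, contradicting $\emptyset\ne U\subseteq\Omega$.

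\emph{Structure of backward orbits.} Fix $z=(\theta_*,t_*)$ with $t_*\in(0,1)$. A base path $\theta_*=\theta^{(0)}\leftarrow\theta^{(1)}\leftarrow\cdots\leftarrow\theta^{(n)}$, meaning $E(\theta^{(j+1)})=\theta^{(j)}$, is obtained by choosing inverse branches of $E$; since $E$ is expanding, such paths can be made to reach any prescribed nonempty open subset of $\Td$, and $\bigcup_nE^{-n}(\theta_*)$ is dense in $\Td$. By \ref{K1} and $\partial_t\varphi\neq 0$, every fiber map $\varphi(\theta,\cdot):[0,1]\to[0,1]$ is an increasing diffeomorphism fixing $0$ and $1$, so each base path determines a unique $t^{(n)}\in(0,1)$ with $\varphi(\theta^{(n-1)},\cdot)\circ\cdots\circ\varphi(\theta^{(0)},\cdot)\big(t^{(n)}\big)=t_*$; the preimages of $z$ under $K^n$ are exactly the points $(\theta^{(n)},t^{(n)})$. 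Thus the claim amounts to showing that these points are dense in $\Td\times(0,1)$.

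\emph{Steering the fiber coordinate.} The base coordinate being already controllable, the heart is to drive $t^{(n)}$. By \ref{K3}, while the base path stays in a small neighborhood of $q$ the backward fiber dynamics is close to $\varphi(q,\cdot)^{-1}$, for which $0$ is a source and $1$ is a sink, so $t$ is pushed monotonically toward $0$; symmetrically, lingering near $p$ pushes $t$ monotonically toward $1$. Given a target box $V\times J$ with $V\subseteq\Td$ open and $J\subseteq(0,1)$ an open interval, I would concatenate: a bounded-length navigation from $\theta_*$ into a small ball around $q$ (or $p$); a long stretch there to bring $t$ below $\inf J$ (resp.\ above $\sup J$); alternations between the two balls, using monotonicity and the intermediate value theorem to land $t$ inside $J$; and a final bounded-length navigation bringing $\theta^{(n)}$ into $V$. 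The step that must be controlled is the amount by which the navigation and the $p\leftrightarrow q$ transitions move the fiber coordinate: this follows from uniform bounds on $|\partial_t\varphi|$ and $|\partial_t\varphi|^{-1}$ on compact subsets of the open fibers (bounded distortion of the one-dimensional fiber cocycle away from the boundary), together with the hyperbolicity of the four boundary fixed points. Converting the qualitative ``drift toward $0$ / toward $1$'' into this quantitative, navigation-stable steering statement is the main obstacle.

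\emph{Robustness.} If $K$ is $2$-partially hyperbolic, take $\mathcal U$ a $C^2$-neighborhood of $K$ in ${\rm End}^r(M,\partial M)$. By Theorem~\ref{teo:foliaciones} every $F\in\mathcal U$ is partially hyperbolic, preserves the boundary, and admits a normally hyperbolic center foliation $\mathcal F^c_F$ whose leaves are $C^2$ curves $C^1$-close to $\{\theta\}\times I$; being $C^1$-close to $E$, the two boundary expanding maps of $F$ have fixed points $p_F,q_F$ with the same local fiber behavior as in \ref{K3}. Rerunning the argument above with the leaves of $\mathcal F^c_F$ playing the role of the vertical fibers and $E_0$ the role of the base map yields ${\rm Int}(M)\subseteq\bigcup_{n\ge0}F^n(U)$ for every nonempty open $U$.
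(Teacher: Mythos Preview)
Your reduction to density of backward orbits is correct and clean, and the decomposition into base-path control plus fiber steering is the right picture. The gap is in the steering step, and it is exactly the place you flag as ``the main obstacle'' without resolving it.

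The difficulty is that the intermediate value theorem does not apply: backward base paths are built from a \emph{discrete} set of inverse branches of $E$, so the achievable fiber values after $n$ steps form a countable set, not a continuum. Concretely, in Sternberg linearized coordinates near $(p,0)$ and $(q,1)$ the backward fiber dynamics acts, to first order, by multiplication by $e^{-\lambda^c(p)}$ and $e^{-\lambda^c(q)}$ respectively, so after $k$ loops near $p$ and $l$ loops near $q$ the net scaling is approximately $e^{-(k\lambda^c(p)+l\lambda^c(q))}$. Whether the resulting set of reachable fiber heights is dense is therefore an \emph{arithmetic} question about $\lambda^c(p)$ and $\lambda^c(q)$: if these are rationally dependent one may only reach a lattice of values inside a fundamental domain of $\varphi_p$, and your monotonicity/IVT argument cannot fill the gaps. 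This is precisely why Gan and Shi split the proof into two cases: when there exist integers $k,l>0$ with $0<|k\lambda^c(p)+l\lambda^c(q)|<\varepsilon$ the fundamental domain is eventually covered, and when this fails they run a separate coboundary argument showing the rational-dependence scenario contradicts \ref{K3}. Both parts rely on Sternberg's $C^2$ linearization theorem, which is the reason for the $C^2$ hypothesis you never use. Your sketch, as written, would go through verbatim for a $C^1$ Kan-like map with rationally dependent central exponents at $p$ and $q$, and in that regime the conclusion is not known; so the argument cannot be complete.

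For the robustness clause your outline is fine at the level of structure, but it inherits the same missing ingredient: one still needs the Sternberg/arithmetic machinery on the perturbed fiber maps, which is again why the paper assumes $2$-partial hyperbolicity (so that the center leaves of $F$ are $C^2$).
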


The original statement in \cite{GS2019} is formulated in a slightly less general setting than the one that we are considering.
Gan and Shi consider $C^2$-Kan maps $K\::\:\mathbb S^1\times I\to \mathbb S^1\times I$ defined by
$$K(\theta, t)=(d\cdot\theta,\varphi(\theta, t)), \quad |d|\geq 2.$$
Lemma~\ref{le:transitive} is the cornerstone of the work \cite{GS2019}, so we review briefly the proof in order to show why it works if we are considering any uniform expanding map $E$ defined on $\Td$ instead of the affine map on $\mathbb S^1$.

The proof includes two steps. In the first one, it is assumed that the center Lyapunov exponent of the fixed point $p,q\in \Td$ provided by \ref{K3} has ``bad'' rational dependence. More precisely, it is assumed that there exists $\varepsilon=\varepsilon(U)>0$, and that there exist integers $k,l>0$, satisfying \begin{equation}\label{eq:badrelation}
0<|k\cdot \lambda^c(p)+l\cdot \lambda^c(q)|<\varepsilon.
\end{equation}
This then allows us to conclude \eqref{eq:robtrans}. We point out that the classical Kan example \eqref{eq:Kanoriginal} satisfies the relation \eqref{eq:badrelation} above.

The core of the proof of the first step is to reduce the problem to 1-dimensional dynamics in the central direction. More precisely, we begin with an open set $U\subseteq \textrm{int} (M)$. Note that every open set $U\subseteq \textrm{int}(M)$ eventually intersects $W^s((p,0)$ and $W^s(q,1)$ up to some iteration. If $U$ (or some future iteration) intersects $W^s(p,0)$ in an open interval $J_p$ containing a fundamental domain for the dynamics $\varphi_p\::\:I\to I$, then due to the partial hyperbolicity (see \ref{K2}) of $K$ and the existence of a heteroclinic cycle relating the boundaries by the dynamics on $W^s(p,0)$ and $W^s(q,1)$ (see \ref{K3}), we can conclude \eqref{eq:robtrans}. The same conclusion can be deduced if $U$ (or some future iteration) intersects $W^s(q,1)$ in an open interval $J_q$ containing a fundamental domain for the dynamic $\varphi_q$. If $\cup_{n\geq 1}\varphi^n_p(J_p)$ does not contain a fundamental domain, we can try to link $J_p$ with an interval $J_q$ by the unstable holonomy (in the lifting), iterate $J_q$ by $\varphi_q$, and return back to $W^s(p,0)$ again by the unstable holonomy to an interval $J^1_p$, and we consider
$$\left(\cup_{n\geq 1}\varphi^n_p(J_p)\right)\cup \left(\cup_{n\geq 1}\varphi^n_p(J^1_p)\right).$$
If the set above contains a fundamental domain, we are finished. Else, we can continue as before. The algebraic condition \eqref{eq:badrelation} prevents the existence of points in $\{p\}\times (0,1)$, which are not covered with this enumerable process. Of course, Sternberg's linearization theorem applied to the central dynamics $\varphi_p$ and $\varphi_q$ plays a key role for this argument. All of the above construction is independent of whether the manifold at the base is $\S$ or $\Td$, but the key fact is that the dynamic is uniformly expanded. Therefore, following the ideas in \cite{GS2019}, it is straightforward to extend this case to our setting.  

In the second step, the authors present the case when the center Lyapunov exponents of $p$ and $q$ have a rational dependence, meaning that they do not satisfy \eqref{eq:badrelation}. To do this, the following dichotomy is proven: 

\begin{enumerate}
\item Either there exists a sequence of periodic points whose central Lyapunov exponents are asymptotically rationally independent of $\lambda^c(q)$ and that the maps in those fibers are of the type South-North; or
\item there is an explicit expression (coboundary) for the Lyapunov exponent of any point in $\Td\times \{0\}$ from the Lyapunov exponent of $(q,0)$.
\end{enumerate}
Gan and Shi noted that if we evaluate the fixed point $(p,0)$ in the expression given in the second part of the dichotomy, we obtain a contradiction with \ref{K3}. Therefore, (1) must be true and then we can apply step 1. 

Note again that this does not depend on the base manifold being $\S$. The last part of the arguments allows the conclusion that the relation \eqref{eq:robtrans} holds for every $C^2$ boundary preserving map which is sufficiently $C^1$-close to $K$.

\begin{lema}\label{le:soportetotalsep}
$\supp \mu=M$
\end{lema}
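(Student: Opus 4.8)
The plan is to verify directly that $\mu(\mathcal{O})>0$ for every nonempty open set $\mathcal{O}\subseteq M$, which is exactly the assertion $\supp\mu=M$; since $\mathrm{Int}(M)$ is open and dense, it is enough to treat $\mathcal{O}\subseteq\mathrm{Int}(M)$. The ingredients I would use are: the description of $\mu$ as the push-forward of $\nu$ under the section $\theta\mapsto(\theta,\sigma(\theta))$ from Lemma~\ref{le:separatingmeas}, so that $\mu(\mathcal{O})=\nu(\{\theta:(\theta,\sigma(\theta))\in\mathcal{O}\})$; the $K$-invariance of $\Sigma=\{(\theta,\sigma(\theta))\}$, i.e. the equivariance $\sigma(E\theta)=\varphi(\theta,\sigma(\theta))$ for $\nu$-a.e. $\theta$; the fact that $\sigma(\theta)\in(0,1)$ for $\nu$-a.e. $\theta$ (Lemma~\ref{lem:intervalos}), so $\mu(\partial M)=0$ and $\mu(\mathrm{Int}(M))=1$; and that $J_\nu E$ is H\"older, hence bounded with bounded distortion and, splitting $\Td$ into finitely many injectivity domains of $E$ and integrating the Jacobian, nonsingular: $E$ and every $E^n$ send $\nu$-null sets to $\nu$-null sets.

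First I would fix, once and for all, a suitably generic base point. Let $\Td_*$ be the set of $\theta$ for which $\sigma$ is defined with $\sigma(\theta)\in(0,1)$, the equivariance holds along the whole forward orbit of $\theta$, $(\theta,\sigma(\theta))\in\supp\mu$, and every backward iterate $E^{-n}(\theta)$, $n\ge0$, again has all these properties. Each clause cuts out a full-$\nu$-measure set, and the last clause equals $\bigcap_{n\ge0}\bigl(\Td\setminus E^n(\Td\setminus G)\bigr)$ (where $G$ is the good set of the first three clauses), of full measure because $E$ preserves $\nu$-null sets; so $\nu(\Td_*)=1$ and I may pick $\theta_*\in\Td_*$ and set $x_*=(\theta_*,\sigma(\theta_*))\in\supp\mu\cap\mathrm{Int}(M)\cap\Sigma$. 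Since $\varphi(\theta,\cdot)$ is a homeomorphism of $[0,1]$, $K$ restricts to a $k$-to-$1$ covering of $\mathrm{Int}(M)$ with $k=\deg E$, and all $K$-preimages of an interior point are interior; combined with the equivariance at the (good) preimages of $\theta_*$ one gets, for every $N\ge0$, that $K^{-N}(x_*)$ is exactly $\{(\eta,\sigma(\eta)):\eta\in E^{-N}(\theta_*)\}$, the $k^N$ points of $\Sigma$ over $\theta_*$, which exhaust all $k^N$ preimages of $x_*$.

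Now I would fix a nonempty open $\mathcal{O}\subseteq\mathrm{Int}(M)$ and run the following pull-back. By Lemma~\ref{le:transitive} (this is the step requiring $K\in C^2$), $\mathrm{Int}(M)\subseteq\bigcup_{n\ge0}K^n(\mathcal{O})$, so $x_*\in K^N(\mathcal{O})$ for some $N$; choosing $w\in\mathcal{O}$ with $K^N(w)=x_*$, the previous paragraph forces $w=(\eta_0,\sigma(\eta_0))$ with $\eta_0\in E^{-N}(\theta_*)$, i.e. $(\eta_0,\sigma(\eta_0))\in\mathcal{O}$. Let $E^{-N}_{\eta_0}$ be the inverse branch of $E^N$ with $E^{-N}_{\eta_0}(\theta_*)=\eta_0$; for $\delta<r_0$ it is defined on $B(\theta_*,r_0)$ and is a $\lambda^{-N}$-contraction. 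Since $x_*\in\supp\mu$, the set $A_\delta:=\{\theta:(\theta,\sigma(\theta))\in B(x_*,\delta)\}$ has $\nu(A_\delta)>0$ and $A_\delta\subseteq B(\theta_*,\delta)$, so $S_\delta:=E^{-N}_{\eta_0}(A_\delta)\subseteq B(\eta_0,\lambda^{-N}\delta)$ satisfies, from $E^N(S_\delta)=A_\delta$ and the boundedness of $J_\nu E^N$, the estimate $\nu(S_\delta)\ge\nu(A_\delta)/\sup_{\Td}J_\nu E^N>0$. Finally I would check that for $\delta$ small enough $(\theta,\sigma(\theta))\in\mathcal{O}$ for $\nu$-a.e. $\theta\in S_\delta$, which gives $\mu(\mathcal{O})\ge\nu(S_\delta)>0$ and closes the argument: for such $\theta$, writing $\varphi^N_\theta$ for the $I$-component of $K^N(\theta,\cdot)$, the equivariance gives $\sigma(\theta)=(\varphi^N_\theta)^{-1}(\sigma(E^N\theta))$ and $\sigma(\eta_0)=(\varphi^N_{\eta_0})^{-1}(\sigma(\theta_*))$; as $\delta\to0$ one has $\theta\to\eta_0$ and $\sigma(E^N\theta)\to\sigma(\theta_*)$ (both controlled by $\delta$), and since $N$ is fixed and $\varphi$ is $C^1$ the map $(\theta,s)\mapsto(\varphi^N_\theta)^{-1}(s)$ is jointly continuous, so $\sigma(\theta)\to\sigma(\eta_0)$; hence $(\theta,\sigma(\theta))$ enters any preassigned neighbourhood of $(\eta_0,\sigma(\eta_0))\in\mathcal{O}$ once $\delta$ is small, in particular $\mathcal{O}$ itself.

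The geometric content is thus just the transitivity from Lemma~\ref{le:transitive} together with the graph structure of $\mu$, and the estimates are routine; the part I expect to require real care --- the main obstacle --- is the measure-theoretic bookkeeping guaranteeing that the generic point $x_*$ and the equivariance of $\sigma$ survive passage to \emph{every} backward branch of $E$ (this is exactly where the H\"older Jacobian enters, through nonsingularity of $E$), together with the verification that $K$ restricts to a covering of $\mathrm{Int}(M)$ of degree $\deg E$ so that the full $K^N$-preimage of a point of $\Sigma$ again lies on $\Sigma$. Everything else reduces to the continuity of a single finite composition of the $C^1$ fibre maps.
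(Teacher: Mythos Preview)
Your proof is correct, but it takes a different and more laborious route than the paper. The paper argues by contradiction and \emph{forward} propagation: it shows that for any open $V\subseteq\mathrm{Int}(M)$ with $\mu(V)=0$ one has $\mu(K(V))=0$, simply by writing $\mu(K(V))=\nu\bigl(\pi(K(V)\cap\Sigma)\bigr)=\nu\bigl(E(\pi(V\cap\Sigma))\bigr)=\int_{\pi(V\cap\Sigma)}J_\nu E\,d\nu$ and using that $\nu(\pi(V\cap\Sigma))=\mu(V)=0$; inducting, $\mu(K^n(U))=0$ for every $n$, so by Lemma~\ref{le:transitive} $\mu(\mathrm{Int}(M))=0$, a contradiction. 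This bypasses entirely your fixed generic point $x_*$, the verification that equivariance of $\sigma$ survives along \emph{every} backward branch, and the final continuity estimate on $(\varphi^N_\theta)^{-1}$.

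What you do differently is run the argument \emph{backwards} from a point of $\supp\mu$ into $\mathcal{O}$, which forces you to control all $K^N$-preimages of a single point and to check that the graph $\Sigma$ is pulled back to itself along each inverse branch. This is where the real work in your write-up lies (and you flag it correctly). Your approach buys a more explicit, quantitative conclusion---you actually exhibit a positive-$\nu$-measure set of base points whose graph points land in $\mathcal{O}$---but the paper's forward argument achieves the same endpoint in two lines, using only the formula $\mu(A)=\nu(\pi(A\cap\Sigma))$ and one application of the Jacobian. Both proofs rest on exactly the same two pillars: Lemma~\ref{le:transitive} and the nonsingularity of $E$ coming from the H\"older Jacobian.
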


\begin{proof} First we prove that for every nonempty open subset $V\subseteq \mbox{Int(M)}$, if $\mu(V)=0$, then $\mu(K(V))=0$. 

Indeed, suppose by the contrary that for any nonempty open subset $V\subseteq \mbox{Int}(M)$, $\mu(K(V))>0$. Therefore,
\begin{eqnarray*}
0&<&\mu(K(V))=\nu_*\pi(K(V)\cap\Sigma)\\
&=&\nu(\pi\circ K(V\cap \Sigma))\\
&=&\nu(E\circ\pi(V\cap \Sigma))\\ 
&=&\int_{\pi(V\cap\Sigma)}J_\nu E\,d\nu.
\end{eqnarray*}
Hence,
$$0<\nu(\pi(V\cap\Sigma))=\mu(V).$$
Now, suppose that there exists a nonempty open subset $U\subseteq M$ such that $\mu(U)=0$. By the Lemma~\ref{le:transitive}, we have that $\cup_{n\leq 0} K^n(U)=\mbox{Int}(M)$, then by the previous claim we have that $\mu(\mbox{Int}(M))=0$: this contradicts the construction of $\mu$.

\end{proof}


\subsection{Proof of Theorem~\ref{mainteo:KE1} part \ref{TeoAii}}\label{ssec:mmeii}
Finally, we summarize the proof of Theorem~\ref{mainteo:KE1} part \ref{TeoAii}. More precisely, we have proven that

\begin{prop}\label{teo:medidapositiva}
Assume $K$ is a Kan-like map and suppose that $\nu$ is any ergodic $E$-invariant probability measure with $\supp \nu=\Td$ and that $J_\nu E$ is H\"older continuous. Assume that condition \eqref{eq:condK4im}:
$$\int\log |\partial_t \varphi (\theta,j)|\: d\nu(\theta)<0, \quad j=0,1$$
is verified. Then:

\begin{enumerate}
        \item[(i)] There is a measurable, not continuous map $\sigma: \Td \to I$ such that for $\nu$-a.e. $\theta\in \Td$ 
        $$W^s(\theta, 0)=\{\theta\} \times [0, \sigma(\theta)),\quad\mbox{ and }\quad W^s(\theta, 1)=\{\theta\} \times (\sigma(\theta), 1].$$ 
        
       \item[(ii)] There exists a unique ergodic $K$-invariant probability measure $\mu$ such that $\pi_*\mu=\nu$ and $\mu\ne\mu_j$, $j=0,1$. Such measure $\mu$ is defined by
$$\int_M\Phi\,d\mu=\int_{\Td}\Phi(\theta,\sigma(\theta))\,d\nu(\theta),$$
for every $\Phi:M\to \mathbb{R}$ continuous.

        \item[(iii)] Moreover, $\mu$ has a positive central Lyapunov exponent, it is approximated by measures supported along periodic orbits in the interior of $M$, and $\supp\mu=M$.
    \end{enumerate}
\end{prop}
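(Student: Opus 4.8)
The plan is to assemble the lemmas of this section around the three items of the proposition. For item (i): from \eqref{eq:condK4im}, Lemma~\ref{le:expnentenegativo} gives that $\mu_j=\nu\times\delta_j$ is ergodic with $\lambda^c(\mu_j)<0$, and Lemma~\ref{lem:intervalos} then yields, for $\nu$-a.e.\ $\theta$, Pesin stable manifolds of the form $\{\theta\}\times[0,\sigma_0(\theta))$ and $\{\theta\}\times(\sigma_1(\theta),1]$, where the free endpoints $\sigma_j(\theta)$ depend measurably on $\theta$. Since $J_\nu E$ is H\"older it has bounded distortion, so Lemma~\ref{le:baciasestodo} applies and $\mathcal B(\mu_0)\cup\mathcal B(\mu_1)$ has full $m$-measure; if $\sigma_0<\sigma_1$ on a positive-$\nu$ set there would be an open subinterval of some fibre $I_\theta$ meeting neither basin, a contradiction — this is Lemma~\ref{le:uniquenessseparating} — so I would set $\sigma:=\sigma_0=\sigma_1$ $\nu$-a.e. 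Non-continuity of $\sigma$ is Lemma~\ref{le:notcontinuous}: a continuous $\sigma$ would produce a continuous graph separating $\mathcal B(\mu_0)$ from $\mathcal B(\mu_1)$, contradicting that they are intermingled (Lemma~\ref{le:baciasintermingled}, where $\supp\nu=\Td$ is used).

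For item (ii): I would disintegrate an arbitrary $\mu\in M^{erg}_\nu(K)$ over the measurable partition $\{I_\theta\}$, whose quotient measure is $\nu$. If $\mu\ne\mu_j$, then for $\nu$-a.e.\ $\theta$ the conditional $\mu_\theta$ cannot charge $\{\theta\}\times[0,\sigma(\theta))\subseteq\mathcal B(\mu_0)$ nor $\{\theta\}\times(\sigma(\theta),1]\subseteq\mathcal B(\mu_1)$, since otherwise $\mu$ would assign positive mass to some $\mathcal B(\mu_j)$ and, being ergodic, would equal $\mu_j$; hence $\mu_\theta=\delta_{(\theta,\sigma(\theta))}$, and uniqueness of the Rokhlin decomposition forces $\mu$ to be the push-forward of $\nu$ under the section $\theta\mapsto(\theta,\sigma(\theta))$. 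Conversely this formula does define a $K$-invariant ergodic probability because $\Sigma=\{(\theta,\sigma(\theta))\}$ is $K$-invariant and $\pi|_\Sigma$ intertwines $K|_\Sigma$ with $(E,\nu)$; this is Lemma~\ref{le:separatingmeas}.

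For item (iii): positivity of $\lambda^c(\mu)$ follows from Lemma~\ref{le:characterization2} (which gives $\lambda^c(\mu)\ge 0$) together with Lemma~\ref{le:klemma}, since $\lambda^c(\mu)=0$ would make $\theta\mapsto\mu_\theta=\delta_{(\theta,\sigma(\theta))}$ continuous and hence $\sigma$ continuous, contradicting item (i) — this combination is Lemma~\ref{le:mmenonzero}. Approximation by interior periodic measures is Lemma~\ref{le:existence}: one approximates $\nu$ weakly by orbit measures $\delta\theta_n$ on $E$-periodic points, uses \eqref{eq:condK4im} and the chain rule to get $|\varphi_n'(j)|<1$ for $j=0,1$ with $\varphi_n(t)=K^n(\theta_n,t)$, extracts by the intermediate value theorem an interior fixed point $t_n=\sigma(\theta_n)$ of $\varphi_n$, and observes that every ergodic accumulation point of the orbit measures of $(\theta_n,\sigma(\theta_n))$ projects to $\nu$ with nonnegative center exponent, hence equals $\mu$ by item (ii). Finally $\supp\mu=M$ is Lemma~\ref{le:soportetotalsep}: from $\mu(A)=\nu(\pi(A\cap\Sigma))$ and $\nu(E(B))=\int_B J_\nu E\,d\nu$ one checks $\mu(V)=0\Rightarrow\mu(K(V))=0$ for open $V\subseteq{\rm Int}(M)$, and the Gan--Shi robust transitivity $\bigcup_{n\ge 0}K^n(U)\supseteq{\rm Int}(M)$ (Lemma~\ref{le:transitive}, which is where $C^2$ enters) then turns $\mu(U)=0$ for a nonempty open $U$ into $\mu({\rm Int}(M))=0$, which is absurd.

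The main obstacle is Lemma~\ref{le:klemma}: proving that a vanishing center exponent forces the disintegration $\theta\mapsto\mu_\theta$ to be continuous is not mere bookkeeping — it requires adapting the Avila--Viana invariance principle to the noninvertible map $K$, building an $su$-invariant family on the inverse limit, and it is precisely there that H\"older continuity of $J_\nu E$ (hence of the relevant cocycle) is essential. The remaining steps are applications of results already established in this section.
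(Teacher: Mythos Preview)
Your proposal is correct and follows essentially the same approach as the paper: the paper's own proof of this proposition is nothing more than a pointer to the lemmas of Subsection~\ref{ssec:separatingmap} for items (i)--(ii), and to Lemma~\ref{le:mmenonzero}, Lemma~\ref{le:existence}, and Lemma~\ref{le:soportetotalsep} for the three parts of item (iii). Your write-up is in fact more detailed than the paper's, and you correctly identify Lemma~\ref{le:klemma} (the invariance principle on the inverse limit, requiring the H\"older Jacobian) as the only nontrivial ingredient, as well as the $C^2$ hypothesis entering through Lemma~\ref{le:transitive} for $\supp\mu=M$.
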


\proof  The first and second items summarize the results obtained in  Subsection~\ref{ssec:separatingmap}. Item iii part (a) is proved in Lemma~\ref{le:mmenonzero}. Lemma~\ref{le:existence} provides a proof of item (iii) part (b). Finally, item (iii) part (c) corresponds to Lemma~\ref{le:soportetotalsep}.
\endproof

With this proposition, we complete the proof of Theorem~\ref{mainteo:KE1}.


\section{Invariance principle for inverse limits}\label{sec:Klemma}

This section is devoted to the proof of Lemma~\ref{le:klemma}: for such a purpose, we use Avila-Viana's invariance principle \cite{AV2010} applied to the natural extension of the map $K$ to its inverse limit.
The presentation of this section closely follows the presentations developed in \cite{QXZ2009,VY2019}. 

\subsection{Inverse Limits}\label{ssec:inverselim}

Define $\hS$ to be the space of all sequences $\htheta=(\theta_{-n})_{n\geq 0}$ such that $\theta_i \in \Td$ and $E(\theta_{-n}) = \theta_{-n+1}$ for all $n\geq 1$. In $\hS$, consider the metric 
$$\dist_{\hS}(\halpha,\hbeta)=\sum_{n\geq 0}\lambda^n\dist_{\Td}(\alpha_{-n},\beta_{-n}),$$
where $0<\lambda<1$, $\halpha=(\alpha_{-n})_{n\geq 0}$, $\hbeta=(\beta_{-n})_{n\geq0}$. It is straightforward from the definition that $\hS$ is a compact metric space. 

The \emph{natural extension} of $E$ is the map $\hE:\hS\to\hS$ defined by
$$\hE(\dots,\theta_{-n},\dots,\theta_{-1},\theta_0)=(\dots\theta_{-n},\dots,\theta_{-1},\theta_0, E(\theta_0)).$$ 
Then, $\hE$ is a hyperbolic homeomorphism and satisfies $\hpi \circ \hE =E \circ \hpi$, where $\hpi :\hS\to \Td$ is the canonical projection to the zeroth term. 

Every point $\halpha\in\hS$ has defined (locally) stable and (locally) unstable sets: for any $\epsilon>0$ and with $\halpha\in\hS$ fixed, we denote them by 
\begin{eqnarray*}
W_\epsilon^s(\halpha)&=&\{\hbeta\in\hS\::\: \dist_{\hS}(\hE^n(\halpha),\hE^n(\hbeta))\leq\epsilon \mbox{ for all }n \geq 0\}, \mbox{ and }\\
W_\epsilon^u(\halpha)&=&\{\hbeta\in\hS\::\: \dist_{\hS}(\hE^n(\halpha),\hE^n(\hbeta))\leq\epsilon \mbox{ for all }n \leq 0\}.
\end{eqnarray*}
Then, the \textit{stable} and \textit{unstable} sets of $\halpha$ are given by
$$W^s(\halpha)=\bigcup_{n\geq 0} \hE^{-n}(W^s_\epsilon(\hE^n(\halpha))\quad\mbox{ and }\quad W^u(\halpha)=\bigcup_{n\geq 0} \hE^{n}(W^u_\epsilon(\hE^{-n}(\halpha)).$$
Assuming $\epsilon>0$ is small enough, $W_\epsilon^s(\htheta)$ coincides with the fiber $\hpi^{-1}(\theta_0)$ and $\hpi$ maps $W^u_\epsilon (\htheta)$ homeomorphically to
$D_{\htheta} = \hpi( W^u_\epsilon ( \htheta ) ),$
with 
$$B(\theta_0, 9\epsilon/10)\subset D_{\htheta}\subset B(\theta_0, \epsilon).$$
Moreover, each $\hat{B}_{\htheta}= \hpi^{-1}(D_{\htheta})$ may be identified with the product (box)
\begin{equation}\label{eq:caja}
\hat{B}_{\htheta}:= D_{\htheta} \times \hat\pi^{-1}(\theta_0) \approx W^u_\epsilon(\htheta) \times W^s_\epsilon( \htheta ).
\end{equation}
through a homeomorphism, so that $\hpi$ becomes the projection to the first coordinate.

It is well known \cite[Proposition I.3.1]{QXZ2009} that for every $E$-invariant probability measure $\nu$, there exists a unique $\hE$-invariant probability measure $\hat{\nu}$ such that $\hpi_*\hat{\nu}=\nu$. Moreover, if $\nu$ has a H\"older continuous Jacobian for $E$, then $\hnu$ has local product structure \cite{B1975}, \cite[Section 2.2]{BV2004}: the restriction of $\hnu$ to each box $\hat{B}_{\htheta}$ may be written as
\begin{equation}\label{eq:prodloc}
\hnu|\hat{B}_{\htheta} = \xi(\hnu^u \times\hnu^s ),
\end{equation}
where $\xi:\hat{B}_{\htheta}\to (0,\infty)$ is a continuous function, $\hnu^u=\nu|D_{\htheta}$ and $\hnu^s$ is a probability measure on $W^s_\epsilon(\htheta)$ equivalent to the Bernoulli measure associated with the shift map defined in the space of the sequence of $k$-symbols, $k=\deg E$. This means that $\hnu | \hat{B}_{\htheta}$ is equivalent to a product $\hnu^u \times \hnu^s$, with $\xi$ as the Radon-Nikodym density.

\subsection{Smooth cocycle}\label{ssec:cocycles}

A \textit{smooth cocycle} over a hyperbolic homeomorphism is a cocycle that acts by diffeomorphisms on the fibers. For more details, see \cite[Secction 2.1]{AV2010}.

From subsection~\ref{ssec:inverselim}, we can induce a natural invertible smooth cocycle $\hK:\hS\times I \to\hS\times I$, defined by 
$$\hK(\htheta,t)=(\hE(\htheta),\varphi(\theta_0,t)).$$ 
Notice that the second coordinate of $\hK$ depends only on $\theta_0$, and then $\hK$ is a \textit{locally constant skew product} (see \cite[Section 5.1]{V2014}).

Again, we have $(\hpi\times Id) \circ \hK =K \circ (\hpi\times Id)$, and for every $K$-invariant probability measure $\eta$, there exists a unique $\hK$-invariant probability measure $\hat\eta$ such that $(\hpi\times Id)_*\hat\eta=\eta$. 

Let $\hat{p}:\hS\times I\to I$ be the projection $\hat{p}(\htheta,t)=t$. For $n\in\mathbb{Z}$, we denote by $\hK^n_{\htheta}=I_{\htheta}\to I_{\hE^n(\htheta)}$ the map defined by
$$\hK^n_{\htheta}(t):=\hat{p}\circ \hK^n(\htheta,t).$$
The \textit{Lyapunov exponent} of $\hK$ at a point $(\htheta,t)\in \hS\times I$ is defined by
$$\lambda(\hK,\htheta,t)= \lim_{n\to+\infty} \frac1n \log|D\hK^n_{\htheta} (t)|.$$
If $p:\mathbb{T}^d\times I\to I$ is the projection $p(\theta,t)=t$, then we have
$$\hK^n_{\htheta}(t)=\hat{p}\circ \hK^n(\htheta,t)=p\circ K^n(\theta_0,t)=K^n_{\theta_0}(t)$$
and so
$$\lambda(\hK,\htheta,t)=\lambda^c(K, \theta_0,t),$$
where $\lambda^c(K, \theta_0,t)$ is a center Lyapunov exponent of $K$ for the point $(\theta_0,t)$. 

If $\hat\eta$ is an $\hK$-invariant probability measure, then the \textit{integrated Lyapunov exponent} of $\hK$ with respect to $\hat\eta$ is 
$$ \lambda(\hat{K},\hat\eta)=\int \lambda(\hK,\htheta,t)\,d\hat\eta.$$
Obviously, if $\hat\eta$ is ergodic, then $\hat\eta$-almost every point $(\hat \theta,t)\in\hS\times I$,
$$\lambda(\hK,\htheta,t)= \lambda(\hat{K},\hat\eta) .$$
In particular, if $\eta\in M^1(K)$ and $\hat\eta\in M^1(\hat{K})$ are such that $(\hpi\times Id)_*\hat\eta=\eta$, then 
$$\lambda(\hat{K},\hat\eta)=\int \lambda(\hK,\htheta,t)\,d\hat\eta=\int \lambda^c(K,\theta,t)\,d\eta=\lambda^c(K, \eta).$$
In summary, as a direct consequence of the discussion above, we have the following lemma.

\begin{lema}\label{le:expolyapigual} If $\mu\in M^{erg}(K)$ is such that $\lambda^c(K, \mu)=0$, then there exists a unique $\hmu\in M^{erg}(\hK)$ such that $(\hpi \times Id)_*\hmu=\mu$ and $\lambda(\hat{K},\hat\mu)=0$.
\end{lema}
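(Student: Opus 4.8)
The plan is to deduce the statement directly from the material already assembled in Subsection~\ref{ssec:cocycles}, the only genuinely new point being that ergodicity of $\mu$ propagates to its lift on the inverse limit.

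First I would invoke what is recorded in Subsection~\ref{ssec:cocycles}: the semiconjugacy $(\hpi\times\mathrm{Id})\circ\hK=K\circ(\hpi\times\mathrm{Id})$, together with the fact that every $K$-invariant probability $\eta$ has a \emph{unique} lift $\hat\eta\in M^1(\hK)$ with $(\hpi\times\mathrm{Id})_*\hat\eta=\eta$. Applied to $\mu$, this produces a unique $\hmu\in M^1(\hK)$ projecting onto $\mu$; since $M^{erg}(\hK)\subseteq M^1(\hK)$, as soon as we know $\hmu$ is ergodic it is automatically the unique element of $M^{erg}(\hK)$ projecting onto $\mu$. Moreover, by the identity $\lambda(\hK,\hat\eta)=\lambda^c(K,\eta)$ established in the same subsection, $\lambda(\hK,\hmu)=\lambda^c(K,\mu)=0$. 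So everything reduces to proving that $\hmu$ is ergodic.

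For the ergodicity I would run the standard inverse-limit argument. Let $\mathcal A$ be the Borel $\sigma$-algebra of $M$ and set $\hat{\mathcal A}_n:=\hK^{n}(\hpi\times\mathrm{Id})^{-1}(\mathcal A)$ for $n\ge0$. Since a point of $\hS\times I$ is determined by its backward coordinates $(\hpi\times\mathrm{Id})\circ\hK^{-j}$, $j\ge0$, and the coordinate at level $0$ is a measurable function of the one at level $-n$ (via $K^{n}$), the $\hat{\mathcal A}_n$ increase to the Borel $\sigma$-algebra of $\hS\times I$. Now take any $\hK$-invariant $\hPhi\in L^2(\hmu)$. Combining invariance with the change-of-variables rule for conditional expectations under the measure-preserving invertible map $\hK$ gives
$$\mathbb E_{\hmu}\!\big[\hPhi\mid\hat{\mathcal A}_n\big]=\mathbb E_{\hmu}\!\big[\hPhi\mid(\hpi\times\mathrm{Id})^{-1}\mathcal A\big]\circ\hK^{-n}\qquad\text{for every }n.$$
As $n\to\infty$ the left-hand side converges in $L^2(\hmu)$ to $\hPhi$ (martingale convergence); applying the Koopman isometry $U_{\hK}^{n}$, the quantity $U_{\hK}^{n}\,\mathbb E_{\hmu}[\hPhi\mid\hat{\mathcal A}_n]=\mathbb E_{\hmu}[\hPhi\mid(\hpi\times\mathrm{Id})^{-1}\mathcal A]$ is independent of $n$ while, because $U_{\hK}^{n}\hPhi=\hPhi$, it also converges to $\hPhi$; hence $\hPhi=\mathbb E_{\hmu}[\hPhi\mid(\hpi\times\mathrm{Id})^{-1}\mathcal A]$, i.e.\ $\hPhi=\phi\circ(\hpi\times\mathrm{Id})$ for some measurable $\phi$ on $M$.

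It then remains to descend: from $\hPhi\circ\hK=\hPhi$ and the semiconjugacy one obtains $\phi\circ K=\phi$ $\mu$-a.e. (using $(\hpi\times\mathrm{Id})_*\hmu=\mu$), and ergodicity of $\mu$ forces $\phi$, hence $\hPhi$, to be constant $\hmu$-a.e. This shows $\hmu\in M^{erg}(\hK)$ and finishes the proof. The main (and really the only non-mechanical) obstacle is the factorization $\hPhi=\phi\circ(\hpi\times\mathrm{Id})$ of invariant functions through the zeroth projection — precisely the place where one uses that $(\hS\times I,\hK,\hpi\times\mathrm{Id})$ is the natural extension of $(M,K)$; note that no H\"older hypothesis on $J_\nu E$ is needed here, that assumption entering only later, through the local product structure of $\hnu$ in Lemma~\ref{le:klemma}.
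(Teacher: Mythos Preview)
Your proposal is correct and follows the same route as the paper: the lemma is stated there as a ``direct consequence of the discussion above'' in Subsection~\ref{ssec:cocycles}, namely the uniqueness of the lift $\hmu$ to the natural extension together with the pointwise identity $\lambda(\hK,\htheta,t)=\lambda^c(K,\theta_0,t)$. The only thing you add is a self-contained proof that ergodicity of $\mu$ lifts to $\hmu$, which the paper implicitly takes as part of the standard inverse-limit theory (cf.\ the reference to \cite[Proposition I.3.1]{QXZ2009}); your martingale/descent argument is a correct way to justify that step.
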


\subsection{Disintegration revisited}\label{ssec:desintrev}

The goal of this section is to study the relations in the disintegration associated with the invariant measure of $K$ and $\hK$.

\begin{lema}\label{le:relmedcond} Let $\mu\in M^1(K)$ and $\hmu\in M^1(\hK)$ be such that 
\begin{equation}\label{eq:4.2.0}
(\hpi\times Id)_*\hmu=\mu.
\end{equation}
Let $\{\hmu_{\htheta}\::\:\htheta\in\hS\}$ and $\hnu$, respectively, be the family of conditional measures and the quotient measure of $\hmu$ with respect to the measurable partition $\{I_{\htheta}:=\{\htheta\}\times I\::\:\htheta\in\hS\}$. Let $\{\mu_{\theta}\::\:\theta\in\Td\}$ and $\nu$, respectively, be the family of conditional measures and the quotient measure of $\mu$ with respect to the measurable partition $\{I_{\theta}:=\{\theta\}\times I\::\:\theta\in\Td\}$.
Then, for every continuous function $\psi\in C^0(\Td\times I,\mathbb{R})$, we have
\begin{equation*}\label{eq:4.2}
\int_{I_{\theta}}\psi(\theta,t)\,d\mu_\theta(t)=\int_{\hpi^{-1}(\theta)}\xi(\htheta)\int_{I_{\htheta}}\psi(\theta,t)\,d\hmu_{\htheta}(t)\,d\hnu^s(\htheta).
\end{equation*}
\end{lema}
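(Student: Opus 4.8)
The plan is to derive this identity by pure measure theory, using the transitivity of Rokhlin's disintegration for two nested measurable partitions of $\hS\times I$ together with the local product structure \eqref{eq:prodloc} of $\hnu$; the only dynamical input, namely that $\hnu$ splits locally as $\xi\,(\hnu^u\times\hnu^s)$ with a continuous density, is precisely where the hypothesis that $J_\nu E$ is H\"older continuous enters (via \cite{B1975,BV2004}). First I would introduce on $\hS\times I$ the measurable partition $\hat{\mathcal P}=\{\hpi^{-1}(\theta)\times I\::\:\theta\in\Td\}$, which is the $(\hpi\times Id)$-preimage of the partition $\{I_\theta\::\:\theta\in\Td\}$ of $\Td\times I$ and which is coarser than $\hat{\mathcal R}=\{I_{\htheta}\::\:\htheta\in\hS\}$. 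Since $(\hpi\times Id)_*\hmu=\mu$ by \eqref{eq:4.2.0}, the map $\hpi\times Id$ sends atoms of $\hat{\mathcal P}$ bijectively (mod $0$) onto the atoms $I_\theta$, the quotient measure of $\hmu$ relative to $\hat{\mathcal P}$ is $\nu$ (via $\hpi$), and the conditional measures $\{\hmu^{\mathcal P}_\theta\}$ of $\hmu$ along $\hat{\mathcal P}$ push forward to the conditionals of $\mu$, i.e. $(\hpi\times Id)_*\hmu^{\mathcal P}_\theta=\mu_\theta$ for $\nu$-a.e. $\theta$. This is the standard compatibility of Rokhlin disintegration with pushforwards, applied to $\hpi\times Id$.

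Next I would invoke transitivity of disintegration: refining $\hat{\mathcal P}$ to $\hat{\mathcal R}$ inside each atom, the conditionals of $\hmu^{\mathcal P}_\theta$ along $\{I_{\htheta}\::\:\htheta\in\hpi^{-1}(\theta)\}$ are exactly the $\hmu_{\htheta}$, and the associated quotient measure is the conditional $\hnu_\theta$ of $\hnu$ along the partition $\{\hpi^{-1}(\theta)\::\:\theta\in\Td\}$ of $\hS$. Using $\hpi(\htheta)=\theta$ on the atom, this yields, for every continuous $\psi$,
$$\int_{I_\theta}\psi(\theta,t)\,d\mu_\theta(t)=\int_{\hpi^{-1}(\theta)}\left(\int_{I_{\htheta}}\psi(\theta,t)\,d\hmu_{\htheta}(t)\right)d\hnu_\theta(\htheta),$$
so the whole lemma reduces to identifying $\hnu_\theta$.

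For that last step I would fix some $\htheta_0$ over $\theta$ and work inside the box $\hat B_{\htheta_0}$, where \eqref{eq:prodloc} gives $\hnu=\xi\,(\hnu^u\times\hnu^s)$ with $\hnu^u=\nu|D_{\htheta_0}$ and $\hnu^s$ a probability on $W^s_\epsilon(\htheta_0)=\hpi^{-1}(\theta)$. Disintegrating this product expression along the stable fibers $\{\hpi^{-1}(\theta')\}$ and using uniqueness of the disintegration (and transitivity against the global $\hnu$) forces $\hnu_\theta$ to be the restriction of $\xi\,d\hnu^s$ to $\hpi^{-1}(\theta)$ normalized by $c(\theta)=\int_{\hpi^{-1}(\theta)}\xi\,d\hnu^s$. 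Since $\hnu(\hat B_{\htheta_0})=\nu(D_{\htheta_0})=\int_{D_{\htheta_0}}c(\theta')\,d\nu(\theta')$ for every such box and these sets generate the Borel $\sigma$-algebra, one gets $c\equiv1$ $\nu$-a.e. (equivalently, one absorbs $c$ into the normalization of $\xi$ fixed in \eqref{eq:prodloc}), so that $d\hnu_\theta(\htheta)=\xi(\htheta)\,d\hnu^s(\htheta)$; plugging this into the displayed identity gives the formula in the statement.

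I expect the main obstacle to be bookkeeping rather than conceptual: checking that $\hat{\mathcal P}$ is a genuine measurable partition, that transitivity of the Rokhlin disintegration applies at the level of these explicit partitions, that the quotient of $\hmu$ along $\hat{\mathcal P}$ really is $\nu$ and the quotient of $\hmu^{\mathcal P}_\theta$ along $\hat{\mathcal R}$ really is $\hnu_\theta$, and finally pinning down the fiberwise normalization of $\xi$ so that no spurious constant $c(\theta)$ survives. All of this is routine once the local product structure \eqref{eq:prodloc} with a \emph{continuous} density and the existence and uniqueness of $\hmu$ over $\mu$ are taken as given.
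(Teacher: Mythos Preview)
Your proposal is correct and follows essentially the same route as the paper: both arguments compute the $\nu$-a.e.\ fiber identity by disintegrating $\hmu$ first along $\{I_{\htheta}\}$ and then using the local product structure \eqref{eq:prodloc} of $\hnu$ to split the $\hS$-integral into a $\nu$-integral over $\Td$ times a $\hnu^s$-integral over $\hpi^{-1}(\theta)$, and finally compare with the disintegration of $\mu$ along $\{I_\theta\}$. Your version is a bit more explicit in introducing the intermediate partition $\hat{\mathcal P}=\{\hpi^{-1}(\theta)\times I\}$, invoking transitivity of Rokhlin's disintegration, and tracking the normalization $c(\theta)=\int_{\hpi^{-1}(\theta)}\xi\,d\hnu^s$ (which the paper leaves implicit); a slightly cleaner way to get $c\equiv 1$ is to use directly that $\hpi_*\hnu=\nu$, i.e.\ $\int_{\hat B}f\circ\hpi\,d\hnu=\int_D f\,d\nu$ for all continuous $f$, rather than only testing on the boxes.
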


\proof The relation \eqref{eq:4.2.0} above is equivalent to 
\begin{equation}\label{eq:4.2.1}
\int_{\Td\times I}\psi(\theta,t)\,d\mu= \int_{\hS\times I}\psi(\pi(\htheta),t)\,d\hmu,
\end{equation}
for every $\psi\in C^0(\Td\times I,\mathbb{R})$. By the disintegration of $\hmu$ with respect to the partition $\{I_{\htheta}\::\:\htheta\in\hS\}$, we have that
\begin{equation}\label{eq:4.2.2}
\int_{\hS\times I}\psi(\pi(\htheta),t)\,d\hmu= \int_{\hS}\int_{I_{\htheta}}\psi(\pi(\htheta),t)\,d\hmu_{\htheta}\,d\hnu(\htheta),
\end{equation}
where $\hnu$ is the unique $\hat{E}$-invariant probability measure supported on $\hat{S}$ such that $\hat{p}_{1_*}\hmu=\hnu$ (here, $\hat{p}_1:\hS\times I\to\hS$ denotes the projection in the first coordinate $\hat{p}_1(\htheta,t)=\htheta$).

Assume that $\psi$ is supported on $\hat{B}\times I$, where $\hat{B}$ is a box defined as in \eqref{eq:caja}. By combining \eqref{eq:prodloc} and \eqref{eq:4.2.2} and taking into account that $\hnu^u=\nu|D$, where $D\subseteq\Td$ is a disc such that $\hat{B}=\hpi^{-1}(D)$, we obtain
\begin{equation}\label{eq:4.2.3}
\int_{\hat{B}\times I}\psi(\pi(\htheta),t)\,d\hmu= \int_{\hat{B}}\xi(\htheta)\int_{I_{\htheta}}\psi(\pi(\htheta),t)\,d\hmu_{\htheta}\,d\hnu^s(\htheta)\,d\nu(\theta),
\end{equation}
with $\theta_0=\theta$.
On the other hand, from \eqref{eq:4.2.1} and the disintegration of $\mu$, we have
\begin{equation}\label{eq:4.2.4}
\int_{\hS\times I}\psi(\pi(\htheta),t)\,d\hmu=\int_{\Td\times I}\psi(\theta,t)\,d\mu=\int_{\Td}\int_{I_{\theta}}\psi(\theta,t)\,d\mu_\theta(t)\,d\nu(\theta).
\end{equation}
thus, from \eqref{eq:4.2.3} and \eqref{eq:4.2.4} we obtain that
$$
\int_{I_{\theta}}\psi(\theta,t)\,d\mu_\theta(t)=\int_{\hpi^{-1}(\theta)}\xi(\htheta)\int_{I_{\htheta}}\psi(\theta,t)\,d\hmu_{\htheta}(t)\,d\hnu^s(\htheta),
$$
which is the relation we sought.
\endproof


\subsection{Invariance Principle}\label{ssec:invprin}

In this section, we will present the invariance principle of Avila-Viana, formulated in the context of the invertible smooth cocycle $\hK$ over $\hE$.

An \textit{s-holonomy} for $\hK$ is a family $h^s$ of H\"older homeomorphisms $h^s_{\halpha,\hbeta}: I_{\halpha}\to I_{\hbeta}$ defined for every $\hbeta\in W^s(\halpha)$, satisfying

\begin{itemize}
\item[{(sh1)}] $h^s_{\halpha,\hgamma} \circ h^s_{\hgamma,\hbeta} =h^s_{\halpha,\hbeta}$ and $h^s_{\htheta,\htheta} =id$.
\item[{(sh2)}] $\hK_{\hbeta} \circ h^s_{\halpha,\hbeta} = h^s_{\hE(\halpha),\hE(\hbeta)} \circ \hK_{\halpha}.$

\item[{(sh3)}] $(\halpha,\hbeta,t)\to h^s_{\halpha,\hbeta} (t)$ is continuous. 
\end{itemize}

In the last condition, $(\halpha,\hbeta)$ varies with respect to the space of pairs of points in the same local stable set. A disintegration $\{\hmu_{\htheta} : \htheta \in \hS\}$ of the $\hK$-invariant probability measure $\hmu$ is \textit{s-invariant} if for each $\hbeta \in W^s(\halpha)$, we have
$$(h^s_{\halpha,\hbeta})_*\hmu_{\halpha} =\hmu_{\hbeta},\qquad \mbox{ for every } \hbeta \in W^s(\halpha)$$ 
with $\halpha$ and $\hbeta$ in support of the projection $\hnu=\hat{p}_{1_*}\hmu$. 

By replacing $\hE$ and $\hK$ by their inverses, one obtains dual notions of $u$-holonomy, $h^u $ and $u$-invariant disintegration. Since $\hK$ is locally constant along the stable direction, then $\hK$ admits a trivial stable holonomy \cite[Section 10.6]{V2014}:
$$h^s_{\halpha,\hbeta}=Id,$$
for every $\halpha,\hbeta$ in the same stable set. On the other hand, the existence of the unstable holonomy is provided by the holonomy associated with the unstable foliation. 

Then, the invariance principle of Avila-Viana \cite[Theorem D]{AV2010} can be reformulated in our setting as follows.

\begin{teo}[Invariance Principle]\label{teo:princinv}

Assume that $\hat K$ admits an s-holonomy and a u-holonomy. 
Let $\hmu$ be a $\hK$-invariant probability measure such that $\hat{p}_{1_*}\hmu=\hnu\in M^1(\hE)$ admits a local product structure. Assume that $\lambda(\hat{K},\hat\mu)= 0$. Then, $\hmu$ admits a disintegration $\{\hmu_{\htheta} : \htheta \in \hS\}$ that is $s$-invariant and $u$-invariant, and their conditional probability measures $\hmu_{\htheta}$ vary continuously with $\htheta$ on the support of $\hat\nu$.
\end{teo}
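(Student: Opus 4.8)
The plan is to read Theorem~\ref{teo:princinv} off Avila--Viana's invariance principle \cite[Theorem D]{AV2010} applied to the cocycle $\hK$ over $\hE$, so the task is really to check that the abstract hypotheses of that theorem are met by the data $(\hE,\hK,\hnu,\hmu)$. First I would observe that the base is admissible: $\hE:\hS\to\hS$ is the natural extension of an expanding map, hence an expansive homeomorphism with local product structure whose local stable sets are the fibres $\hpi^{-1}(\theta_0)$ and whose local unstable sets project homeomorphically onto discs of $\Td$ (this is exactly the structure set up in Subsection~\ref{ssec:inverselim}, and the boxes $\hat{B}_{\htheta}$ of \eqref{eq:caja} are the product charts required in \cite{AV2010}); moreover the local product structure of $\hnu$ is already part of the hypothesis, realised concretely by the Gibbs-type factorisation \eqref{eq:prodloc} with continuous density $\xi$, which is where the H\"older regularity of $J_\nu E$ enters. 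Next I would check that $\hK$ is an admissible smooth cocycle: the fibre maps $\hK_{\htheta}=\varphi(\theta_0,\cdot)$ are $C^r$ ($r\ge1$) diffeomorphisms of the compact $1$-manifold $I$ fixing the endpoints and depending only on $\theta_0$, so $\hK$ is a locally constant skew product, and since $\varphi$ is $C^r$ on the compact $M$ the functions $\log\|D\hK_{\htheta}^{\pm1}\|$ are bounded, making the integrability requirements automatic; the $s$-holonomy and the $u$-holonomy are supplied by the hypothesis of the theorem (the $s$-holonomy being the trivial one, as noted just above the statement).

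The substantive point is to match the Lyapunov hypothesis. In \cite{AV2010} the invariance principle is phrased through the extremal Lyapunov exponents $\lambda^{\pm}$ of the cocycle, but here the fibre $I$ is one-dimensional, so at every point the derivative cocycle is a positive scalar, the norm and the conorm of $D\hK^n_{\htheta}$ coincide, and the pointwise exponent $\lambda(\hK,\htheta,t)$ is a single number; hence
$$\lambda^{+}(\hK,\hmu)=\lambda^{-}(\hK,\hmu)=\int\lambda(\hK,\htheta,t)\,d\hmu=\lambda(\hK,\hmu),$$
so the single assumption $\lambda(\hK,\hmu)=0$ is precisely what is needed to run \cite[Theorem D]{AV2010}. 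That theorem then yields a disintegration $\{\hmu_{\htheta}:\htheta\in\hS\}$ of $\hmu$ that is simultaneously $s$-invariant and $u$-invariant, with $\htheta\mapsto\hmu_{\htheta}$ continuous on $\supp\hnu$ --- which is the assertion of Theorem~\ref{teo:princinv}. I note that the cleanest version of this matching assumes $\hmu$ ergodic (so that $\lambda(\hK,\htheta,t)$ is $\hmu$-a.e.\ equal to $\lambda(\hK,\hmu)$); this costs nothing in the applications, since in Lemma~\ref{le:klemma} the measure $\hmu$ produced by Lemma~\ref{le:expolyapigual} is ergodic, and the general case is recovered by passing to the ergodic decomposition of $\hmu$ over $\hnu$.

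The step I expect to be the main obstacle is exactly this bookkeeping: making the transcription from \cite{AV2010} watertight. Two points deserve attention. First, \cite{AV2010} is written for cocycles acting on a boundaryless compact fibre, whereas ours acts on $I=[0,1]$; this is handled either by restricting to the $\hK$-invariant interior $\hS\times(0,1)$ or by doubling $I$ to $\S$ and extending each $\varphi(\theta_0,\cdot)$ symmetrically across the endpoints, neither of which alters the conclusion on $\supp\hnu$. Second, the continuity (as opposed to mere measurability) of $\htheta\mapsto\hmu_{\htheta}$ on $\supp\hnu$ rests on the continuity of the product density $\xi$ in \eqref{eq:prodloc} and the H\"older dependence of the local laminations of $\hE$, both classical consequences of $J_\nu E$ being H\"older continuous (see \cite{B1975,BV2004}), so no new input is required there. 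A self-contained alternative --- reproducing the Ledrappier-type martingale argument of \cite{AV2010} directly for $\hK$, extracting the $s$- and $u$-invariances from the vanishing of $\lambda(\hK,\hmu)$ --- is available but much longer and would use nothing about $\hK$ beyond the four ingredients above.
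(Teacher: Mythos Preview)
Your proposal is correct and matches the paper's approach: the paper does not give an independent proof of Theorem~\ref{teo:princinv} but simply states it as a reformulation of \cite[Theorem~D]{AV2010} in the present setting, and your write-up carries out precisely the bookkeeping needed to justify that reformulation (hyperbolic base with local product structure, smooth one-dimensional cocycle with bounded $\log$-derivative, existence of $s$- and $u$-holonomies, and the collapse of $\lambda^{\pm}$ to the single exponent $\lambda(\hK,\hmu)$). Your remarks on ergodicity and on the fibre $I$ having boundary are reasonable caveats, though the paper does not flag them explicitly.
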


With this last tool, we can conclude the proof of Lemma~\ref{le:klemma}.

\proof[{Proof of Lemma~\ref{le:klemma}}] Let $\mu\in M^{erg}_0(K)$ such that $\lambda^c(K,\mu)=0$. Let $\hmu$ be the unique $\hK$-invariant probability measure such that
$$(\hpi\times Id)_*\hat{\mu}=\mu.$$ 
Then, $\hat{p}_{1_*}\hmu=\hnu\in M^1(\hE)$ admits a local product structure. Moreover, it follows from Lemma~\ref{le:expolyapigual} that $\lambda(\hat{K},\hat\mu)=0$. From the invariance principle, we conclude that the disintegration $\{\hmu_{\htheta} : \htheta \in \hS\}$ (which is unique by Rokhlin's Theorem) is $s$-invariant and $u$-invariant and the conditional probabilities $\hmu_{\htheta}$ vary continuously with $\htheta$ on the support of $\hat\nu$. However, it follows from Lemma~\ref{le:relmedcond} that there exist $\xi:\hS\to \mathbb{R}$ continuous, such that for every function $\psi\in C^0(\S\times I,\mathbb{R})$, we have
$$\int_{I_{\theta}}\psi(\theta,t)\,d\mu_\theta(t)=\int_{\hpi^{-1}(\theta)}\xi(\htheta)\int_{I_{\htheta}}\psi(\theta,t)\,d\hmu_{\htheta}(t)\,d\hnu^s(\htheta).$$
From the relation above, it follows that the conditional probability measures $\{\mu_{\theta}\::\:\theta\in\Td\}$ of $\mu$ with respect to the partition $\{I_\theta\::\:\theta\in\Td\}$ vary continuously with $\theta$ in the support of $\nu$, as we sought to prove.
\endproof


\section{Fibered maps}\label{sec:ProofThF}
We assume along the whole section that $F:M\to M$ is a $C^r$-local diffeomorphism, $r\geq 1$,  defined by
$$F(\theta,t)=(E(\theta,t), \varphi(\theta,t)).$$
Assume that $F$ satisfies \ref{F1}-\ref{F5}. 

Recall, we denote by $E_j:\Td\to\Td$ the expanding map defined by $E_j(\theta)=E(\theta, j)$, $j=0,1$. To simplify the notation, we also denote by $E_j$ the map $E_j:\Td\times\{j\}\to\Td\times\{j\}$ defined by $E_j(\theta,j)=(E(\theta,j),j)$. Therefore, the subindex $j=0,1$ indicates in which torus $\Td\times\{j\}$ we are working. Note that we have a natural identification among the (ergodic) $E_j$-invariant probability measures $\nu_j$ on $\Td$ and the (ergodic) $F$-invariant probability measures $\mu_j$ supported on the boundary $\Td\times\{j\}$ given by $\mu_j=\nu_j\times \delta_j$. 

Due to the existence of central foliation by \ref{F3}, there is a \textit{natural projection along the central leaves} $\pi_j:M\to \Td$ defined by $\pi_j(\theta,t) \in\Td$ such that $\mathcal F^c(\theta,t)=\mathcal F^c(\pi_j(\theta,t),j)$. In this section, we denote $j,j^*\in\{0,1\}$ such that $j+j^*=1$. 

Moreover, for $t,s\in I$, we can define the \textit{holonomy map} $\pi_{t,s}:\Td\times\{t\}\to\Td\times\{s\}$ as follows: If $\mathcal{F}^c(\theta, t)$ is the central curve passing by $(\theta, t)$, then $\pi_{t,s}(\theta,t)=(\hat{\theta},s)$, where $(\hat{\theta},s)$ is the unique point belonging to the intersection $\mathcal{F}^c(\theta,t)\cap (\Td\times\{s\})$. Condition \ref{F3} implies that $\pi_{t,s}$ is a bi-H\"older homeomorphism with $(\pi_{t,s})^{-1}=\pi_{s,t}$. 

In particular, the map $\pi_{0,1}\::\:\Td\times\{0\}\to\Td\times\{1\}$ is a topological conjugation among the expanding map $E_0$ defined on the bottom torus and the expanding map $E_1$ defined on the top. Hence, there is correspondence between the invariant measures of both expanding maps. A measure supported on the bottom boundary (resp. the top boundary) is fixed, and we will denote it with a Greek letter and subindex it by zero (resp. one). The same Greek letter with a different subindex will indicate that one is the image of the other according to the appropriate reversion of the holonomy: More precisely,
\begin{equation}\label{eq:imagenmed}
 \mu_j=(\pi_{j,j^*})^*\mu_{j^*}   
\end{equation}
Given a probability measure $\nu$ supported on the torus $\Td$, we define the probability measure $m_\nu^j$ on the cylinder $M$ induced by the central foliation as
\begin{equation}\label{eq:refmeas2}
    m_\nu^j(A)=\int (\pi_{t,j})^*\nu(A)\,d\,\m_{I}(t), \quad \mbox{for every }A\subseteq M \mbox{ measurable.}
\end{equation}
It follows from the definition that if $\nu_j$ is an $E_j$-invariant measure, and $\mu_j=\nu_j\times\delta_j$ satisfies \eqref{eq:imagenmed}, then
\eqref{eq:refmeas2} does not depend on $j=0,1$. 


\subsection{Measures supported on the boundaries.} We proceed as in the proof of Theorem~\ref{mainteo:KE1} to prove the following partial result. 

\begin{prop}\label{prop:medidasnegativasparaF}
Let $F:M\to M$ be a $C^r$-local diffeomorphism, $r\geq 1$, satisfying \ref{F1}-\ref{F5}. 
Assume $\mu_j=\nu_j\times\delta_j$, satisfying the relation \eqref{eq:imagenmed}, and let $m$ be the reference measure defined by \eqref{eq:refmeas2} with $\nu=\nu_j$. Then, 
    \begin{itemize}
        \item[(a)] the measures $\mu_j$, $j=0,1$, are observable with respect to $m$.
        \item[(b)] If $\supp \nu=\Td$, then the basins of $\mu_j$ are intermingled with respect to $m$.     
        \item[(c)] If in addition $E_j$ has a Jacobian with respect to $\nu_j$ with bounded distortion, then the union of their basins $\mathcal{B}(\mu_0)\cup \mathcal{B}(\mu_1)$ covers $m$-a.e. the whole manifold $M$.
        \item[(d)] There is no other $F$-invariant measure $\mu$ on $M$ such that $\pi_*\mu=\nu$ with a negative center Lyapunov exponent. 
\end{itemize}

\end{prop}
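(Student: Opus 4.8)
The plan is to follow Section~\ref{sec:mmei} step by step, substituting the trivial vertical foliation $\{I_\theta\}$ and the product measure $\nu\times\m_I$ by the center foliation $\mathcal F^c$ and the measure $m=m^j_{\nu_j}$ of \eqref{eq:refmeas2}. The starting point is a flow-box coordinate: let $\Phi:\Td\times I\to M$ send $(\theta,t)$ to the point of $\mathcal F^c(\theta,j)$ at height $t$, i.e.\ $\Phi(\theta,t)=\pi_{j,t}(\theta,j)$. Then $\Phi$ is a homeomorphism with $p\circ\Phi=\mathrm{pr}_2$ (where $p(\theta,t)=t$) and $\pi_j\circ\Phi=\mathrm{pr}_1$, and, directly from \eqref{eq:refmeas2}, $\Phi_*(\nu_j\times\m_I)=m$. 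In particular $m$ disintegrates along $\mathcal F^c$ with conditionals equivalent to Lebesgue on each leaf (the leaves being $C^1$ and the height a $C^1$ parameter along them), $(\pi_j)_*m=\nu_j$, and, by \eqref{eq:imagenmed}, $m$ does not depend on $j$. Since \ref{F3ii} only gives that $\Phi$ is H\"older, I will use $\Phi$ \emph{solely} as a bookkeeping device for measures, and never apply it, nor the center holonomy, as a smooth map in the $C^1$ arguments below; all such arguments live in the genuine smooth structure of $M$, where $F$ is a $C^1$ local diffeomorphism and the slices $\Td\times\{t\}$ are flat. One also notes the semiconjugacy $\pi_j\circ F=E_j\circ\pi_j$ on the boundary torus, which follows because $F$ permutes center leaves and preserves $\Td\times\{j\}$.

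By \ref{F5} the boundary measures have negative center exponent, $\lambda^c(\mu_j)=\int\log|DF|E^c|\,d\mu_j<0$, the analogue of Lemma~\ref{le:expnentenegativo}; hence $\mu_j$-a.e.\ point of $\Td\times\{j\}$ has a nontrivial Pesin stable manifold, and since on the boundary $E^c=T\mathcal F^c$ is the only contracted direction while $C_a(E^u)$ is uniformly expanded (\ref{F2}), this stable manifold is a nontrivial sub-arc of $\mathcal F^c(\theta,j)$ emanating from $(\theta,j)$; being $s$-saturated, $\mathcal B(\mu_j)$ contains it. This produces $\Gamma_j\subseteq\Td$ with $\nu_j(\Gamma_j)>0$ and $\epsilon>0$ such that the analogue $B_j(\epsilon)$ of the rectangles of \eqref{eq:caja1bacia} — now the union of the sub-arcs of length $\epsilon$ of $\mathcal F^c(\theta,j)$, $\theta\in\Gamma_j$ — lies in $\mathcal B(\mu_j)$; by the disintegration of $m$, $m(B_j(\epsilon))>0$, which is (a). For (b), fix a nonempty open $U\subseteq M$ and a $C^1$ horizontal disc $\gamma\times\{t_0\}\subseteq U$, $t_0\in(0,1)$. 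Since $C_a(E^u)$ is a forward-invariant, uniformly expanding cone around $E^u=\mathbb R^d$ dominating $E^c$, the discs $F^n(\gamma\times\{t_0\})$ are $C^1$ graphs over discs in $\Td$ that grow and whose tangent spaces stay uniformly off $E^c$; by \ref{F4} the stable manifolds $W^s((p,0))$ and $W^s((q,1))$ of the cycle fixed points are near-vertical center-leaf arcs joining the two boundaries, so for $n$ large $F^n(\gamma\times\{t_0\})$ crosses both $B_0(\epsilon)$ and $B_1(\epsilon)$ transversely in sets whose $\pi_j$-projection has positive $\nu_j$-measure, exactly as in Lemma~\ref{le:baciasintermingled}. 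Pulling back by $F^n$ and using that $F$ is a local diffeomorphism and $\supp\nu_j=\Td$, together with the slice structure $\Phi_*(\nu_j\times\m_I)=m$ and Fubini over $t_0$, gives $m(\mathcal B(\mu_j)\cap U)>0$.

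For (c) I transcribe Lemmas~\ref{le:curvasc1} and \ref{le:baciasestodo}. Bounded distortion of $J_{\nu_j}E_j$ is exactly what makes Lemma~\ref{le:baciatotal2} available for $E=E_j$, which is expanding. Assume $m(Z)>0$ for $Z=M\setminus(\mathcal B(\mu_0)\cup\mathcal B(\mu_1))$. From $\Phi_*(\nu_j\times\m_I)=m$ and Fubini there are a height $t^*\in(0,1)$ and $A\subseteq\Td$, $\nu_j(A)>0$, such that for $\theta\in A$ the height-$t^*$ point of $\mathcal F^c(\theta,j)$ lies in $Z$. Apply Lemma~\ref{le:baciatotal2} to $A$ and $E_j$ to get balls $\sigma_n\subseteq\Td$ and, from its proof, open pieces $V^m$ of the base with $E_j^{\,n}(V^m)=\sigma_n$ and relative $\nu_j$-mass of $A$ tending to $1$; transport $V^m$ to the corresponding open piece of the \emph{flat} slice $\Td\times\{t^*\}$ and push forward by $F^n$: by partial hyperbolicity the result $\sigma^n$ is a $C^1$ graph over $\sigma_n$, and since $Z$ is $F$-invariant and $\pi_j$ semiconjugates, $\nu_j\big(\pi_j(\sigma^n\cap[\mathcal B(\mu_0)\cup\mathcal B(\mu_1)])\big)\to 0$. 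By Arzel\`a--Ascoli (shrinking domains) a subsequence of the $\sigma^n$ converges in $C^1$ to a graph $\sigma^\infty$; the fibered version of Lemma~\ref{le:curvasc1} yields $\nu_j\big(\pi_j(\sigma^\infty\cap[\mathcal B(\mu_0)\cup\mathcal B(\mu_1)])\big)\geq\delta>0$ and, by its $C^1$-stability clause, $\geq\delta/2$ along the subsequence — a contradiction, which is (c). Finally (d) is Lemma~\ref{le:characterization2} verbatim: an ergodic $\mu\ne\mu_j$ with $(\pi_j)_*\mu=\nu_j$ and $\lambda^c(\mu)<0$ would, for $\nu_j$-a.e.\ $\theta$, carry a Pesin stable center-leaf arc inside $\mathcal B(\mu)$, so $m(\mathcal B(\mu))>0$; as the basins are pairwise disjoint this contradicts (c).

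The main obstacle is the fibered version of Lemma~\ref{le:curvasc1}: because $\{\theta\}\times I$ is no longer a center leaf, one cannot deduce ``a $C^1$ graph lying below height $t^*$ sits in the stable manifold of the boundary point'' from the height coordinate alone. The argument must be carried along the \emph{curved} leaves: if a $C^1$ graph meets the center leaf through a boundary point $(\theta,0)$ at a height below where the stable arc $W^s((\theta,0))$ ends, then by monotonicity of $\varphi$ along that leaf (\ref{F4}) the intersection point lies in $W^s((\theta,0))\subseteq\mathcal B(\mu_0)$; one then must check that the set of such leaves has positive $\nu_0$-measure, which is where the center-holonomy reparametrizations of discs of $\Td$ — continuous and open, but only H\"older — have to be controlled. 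The organizing principle that keeps the proof honest is to perform every $C^1$ and Arzel\`a--Ascoli estimate inside the smooth structure of $M$ and to use $\pi_j$ and $\Phi$ exclusively to translate between these estimates and $\nu_j$-measures, via $(\pi_j)_*m=\nu_j$ and $\pi_j\circ F=E_j\circ\pi_j$.
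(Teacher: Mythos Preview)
Your proposal is correct and follows essentially the same route as the paper's own proof: both adapt Section~\ref{sec:mmei} by replacing the trivial vertical foliation with $\mathcal F^c$, using the semiconjugacy $\pi_j\circ F=E_j\circ\pi_j$ together with Lemma~\ref{le:baciatotal2} for the density-point/distortion step, and invoking Arzel\`a--Ascoli on $F^n$-iterates of flat horizontal discs to reach the contradiction in~(c). Your explicit flow-box coordinate $\Phi$ and the care you take in distinguishing ``transport via $\pi_j$'' from the naive lift $V^m\times\{t^*\}$ in fact make the argument cleaner than the paper's sketch (which writes ``$\gamma_n\subseteq\gamma$'' and ``$F^n(\gamma_n\times\{t\})$'' somewhat loosely), and your discussion of the obstacle in the fibered Lemma~\ref{le:curvasc1} is more explicit than the paper's ``straightforward adaptation''.
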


The ideas of the proof are essentially the same as Proposition~\ref{teo:medidasnegativas} before. In the following, we summarize the highlight of the proof by pointing out the main differences and how we outline the difficulties appearing in this setting.

Recall that the \textit{center Lyapunov exponent} of $\mu_j$ is 
$$\lambda^c(\mu_j)=\int\log |DF|E^c|\: d\,\mu_j,$$
and condition \ref{F5} is equivalent to having $\lambda^c(\mu_j)<0$ for $j=0,1$. As a consequence of the Pesin theory, for $\nu_j$-almost every $\theta\in\Td$, the stable manifold $W^s(\theta,j)$ on $(\theta,j)$ is a curve inside the leaf $\mathcal{F}^c(\theta,j)$ (just as in Lemma~\ref{lem:intervalos}). 

\subsubsection{$\mu_0$ and $\mu_1$ are observable measures with respect to the reference measure $m$.} 

The proof continues as in Lemma~\ref{le:existenfisicas}.  
 
Our assumption that $\mathbb{R}^d\times\{0\}\subseteq \mathcal{C}^u(\theta,t)$ in \ref{F2} implies that $\Td\times\{0\}$ is a submanifold transverse to $E^c_{(\theta,0)}$ for every $\theta\in\Td$. Since $\nu_j$ is assumed to be an ergodic $E_j$-invariant measure on $\Td$, then the ergodic $F$-invariant measure $\mu_j$ has support contained in $\Td\times\{j\}$. Just as in Lemma~\ref{le:existenfisicas} we conclude that
$$\mu_j((\Td\times\{j\})\cap\mathcal{B}(\mu_j)\cap\mathcal{R}(\mu_j))=1, \quad j=0,1.$$

In particular, for fixed and sufficiently small $\epsilon>0$, there exists a set $\Gamma_j\subseteq \Td$ such that $\nu_j(\Gamma_j)>0$ and for every $\theta\in \Gamma_j$, the Pesin stable manifold $W_\epsilon^s(\theta,j)\subseteq \mathcal F^c(\theta,j)$ is a $C^1$-curve (compare with Lemma~\ref{lem:intervalos}). We define the ``rectangle'' 
$$B_j(\epsilon):=\bigcup_{\theta\in\Gamma_j}W_\epsilon^s(\theta,j),\quad j=0,1.$$
Note that $W_\epsilon^s(\theta,j)\subseteq \mathcal B(\mu_j)$ and so $B_j(\epsilon)\subseteq B(\mu_j)$. 
By \ref{F3i}, for each leaf $\mathcal F^c(\theta, j)$ the center foliation is $C^1$-close enough to $\{\theta\}\times I$, and therefore the lengths of $W_\epsilon^s(\theta,j)$, $\theta\in\Gamma_j$ are comparable with a segment of length greater than $\epsilon$, and from the definition of $m$ (see \eqref{eq:refmeas2}) we have
$$m(\mathcal B(\mu_j))\geq m ( B_j(\epsilon))\geq \nu_j(\Gamma_j)\cdot\epsilon>0.$$

\subsubsection{$\mu_0$ and $\mu_1$ are intermingled with respect to $m$.} We now proceed as in Lemma~\ref{le:baciasintermingled}.

Take any disc $\gamma \subset \Td$ and $t \in (0,1)$. We denote $\gamma_t = \gamma \times \{ t\}$, the disc inside the open cylinder $\Td\times (0,1)$ transverse to the $E^c$-direction. From \ref{F2}, we have that the map $F$ is partially hyperbolic: the derivative in the $E^c$-direction is dominated by the derivative in the $\mathcal{C}^u$-cone field. Then, the size of $\gamma_t$ grows exponentially fast and the angle between (the tangent space of) $\gamma_t$ and the $E^c$-direction is bounded far away from $0$ as the disc $\gamma_t$ is successively iterated. Then, up to taking some forward iteration $n$ denoted by $\gamma_t^n:=F^n(\gamma_t)$, we have that $\gamma_t^n$ crosses (transversely) the invariant central curves $\mathcal F^c(p,0)$ and $\mathcal F^c(q,1)$. Since \ref{F4} determines the dynamics on such invariant curves as in the Kan-like maps, the orbit of $\gamma_t$ accumulates on both boundary components of the cylinder $\Td \times [0,1]$.

Given $\epsilon>0$, and taking a sufficiently large forward iterate, we can assume that $\gamma_t^n$ transversely intersects the rectangle $B_j(\epsilon)$ defined above and $\nu(\pi_j(\gamma_t^n\cap B_j(\epsilon)))>0$. 
Again, $\gamma_t^n\cap B_j(\epsilon)$ and $\pi_j(\gamma_t^n\cap B_j(\epsilon))$ are subsets of the $F$-invariant set $\mathcal B(\mu_j)$, and so $F^{-n}(\gamma_t^n\cap B_j(\epsilon))\cap \gamma_t \subseteq \mathcal B(\mu_j)$ and 
\begin{eqnarray*}
(\pi_{t,j})^*\nu_j\Bigg(\mathcal B(\mu_j)\cap \gamma_t \bigg) &\geq& \nu_j\Bigg(\pi_j\bigg(F^{-n}\big(\gamma_t^n\cap B_j(\epsilon)\big)\cap \gamma_t \bigg) \Bigg)\\
&=&\nu_j\Bigg(E^{-n}_j\bigg(\pi_j(\gamma_t^n\cap B_j(\epsilon) \big)\bigg)\cap\gamma \Bigg)>0.\end{eqnarray*}
The last inequality is obtained again because $\nu$ is fully supported. The definition of the reference measure \eqref{eq:refmeas2} completes the argument.

\subsubsection{The basins cover almost all.}

Finally, we proceed to prove that the set $\mathcal{B}(\mu_0)\cup\mathcal{B}(\mu_1)$ has a full $m$-measure. 

Let $Z=M\setminus (\mathcal{B}(\mu_0)\cup\mathcal{B}(\mu_1))$. Assume that $m(Z)>0$. Then, by the definition of $m$ \eqref{eq:refmeas2}, we can find that $\gamma_t=\gamma\times \{t\}$, where $\gamma$ is a closed disc in $\Td$ of radius less than $r_0$ and $t\in(0,1)$, such that 
$\nu_j(\pi_j(\gamma_t\cap Z))>0$. Arguing as in the proof of Lemma~\ref{le:baciasestodo}, taking a $\nu_j$-density point of $\pi_j(\gamma_t\cap Z)$, we can find a sequence of discs $\gamma_n\subseteq \gamma$ such that $E^n_j(\gamma_n)=:\sigma_n$. We may assume that the discs $\sigma_n$ have a radius bounded from below up to a subsequence and slightly reducing the size of $\sigma_n$ as $\sigma_n=\sigma_\infty$. The graph $\sigma^n_t:=F^n(\gamma_n\times\{t\})$ converges to the $C^1$-graph $\sigma^\infty_t$.

The magic occurs because $\pi_j$ is a semi-conjugation between $F$ and $E_j$. On the one hand, since $\theta$ is a density point with respect to $\nu_j$ of $\pi_j(\gamma_t\cap Z)$  and the disc $\gamma_n$ is small and close to $\theta$; thus, we obtain 
$$\nu_j\bigg(\pi_j\big(\sigma^\infty_t\cap [\mathcal{B}(\mu_0)\cup \mathcal{B}(\mu_1)] \big)\bigg)\approx0.$$
On the other hand, up to a straightforward adaptation of Lemma~\ref{le:curvasc1}, there exists a $\delta>0$ such that for any $n$ which is large enough, 
$$\nu_j\bigg(\pi_j\big(\sigma^n_t\cap [\mathcal B(\mu_0)\cup\mathcal B(\mu_1)]\big)\bigg)\geq\delta>0.$$
Therefore, we obtain a contradiction.

Finally, part (d) follows directly from the arguments used in the proof of Lemma~\ref{le:characterization2}. Assume $\mu\in M^{erg}(K)$ is such that $(\pi_j)_* \mu = \nu_j$ and $\mu\notin\{ \mu_0,\mu_1\}$. Then, given $\epsilon>0$ there exists a set $\Gamma_\epsilon\subseteq M$, such that for every $(\theta,t) \in\Gamma_\epsilon$ the $W^s(\theta,t)$ has length greater than $\epsilon$. Moreover, since $W^s(\theta,t)\subseteq \mathcal F^c(\theta, t)$, we have $\nu_j(\pi_j(\Gamma_\epsilon))=\mu(\Gamma_\epsilon)>0$. Therefore, the set 
$$\bigcup_{(\theta,t)\in\Gamma_\epsilon}W^s(\theta,t)\subseteq\mathcal B(\mu)$$ 
has a positive $m$-measure on $M$. It follows that $\mu=\mu_0$ or $\mu=\mu_1$ because the basins are disjoint and their union covers $m$-almost every point of the whole manifold $M$.

\endproof

Therefore, we have proven the Proposition~\ref{prop:medidasnegativasparaF}.


\subsection{Measure with non-negative Lyapunov exponents}\label{ssec:mmeiiF}

Now we present the highlight of the proof of the following statement.

\begin{prop}\label{prop:medidapositivaF}
Let $F:M\to M$ be  a $C^r$-local diffeomorphism, $r\geq 1$, satisfying \ref{F1}-\ref{F5}. 
Suppose that $\nu_j$ is any ergodic $E_j$-invariant probability measure with $\supp \nu_j=\Td$ and $J_{\nu_{j}} E_j$ has bounded distortion. Assume $\mu_j=\nu_j\times\delta_j$ satisfy the relation \eqref{eq:imagenmed}. Let $m$ be the reference measure defined by \eqref{eq:refmeas2} with $\nu=\nu_j$. 

Then:
\begin{enumerate}
        \item[(i)]       There are measurable, not continuous maps $\sigma_j: \Td \to M$, $\sigma_j(\theta)=(\zeta_j(\theta),\varsigma_j(\theta))$, such that for $\nu_j$-a.e. $\theta\in \Td$       
\begin{eqnarray*}
W^s((\theta, 0))&=&\{(\omega,t)\in\mathcal F^c(\theta,0)\::\:   t\in [0, \varsigma_0(\theta))\},\quad\mbox{ and }\\
W^s(\pi_{0,1}(\theta, 0))&=&\{(\omega,t)\in\mathcal F^c(\theta,0)\::\:   t\in (\varsigma_0(\theta), 1]\}.
\end{eqnarray*}       
We also have
\begin{eqnarray*}
W^s((\theta, 1))&=&\{(\omega,t)\in\mathcal F^c(\theta,1)\::\:   t\in (\varsigma_1(\theta), 1]\},\quad\mbox{ and }\\
W^s(\pi_{1,0}(\theta, 1))&=&\{(\omega,t)\in\mathcal F^c(\theta,1)\::\:   t\in [0, \varsigma_1(\theta))\}
\end{eqnarray*}       
Moreover,       
\begin{equation}\label{eq:uniqsepogen}
\sigma_j(\theta)=\sigma_{j^*}(\theta^*)\end{equation}     
where $\theta^*\in\Td$ is such that $\pi_{j,j*}(\theta,j)=(\theta^*,j^*)$. 

\item[(ii)] There exists a unique ergodic $F$-invariant probability measure $\mu$ such that $(\pi_j)_*\mu=\nu_j$ and $\mu\ne\mu_j$, $j=0,1$. Such measure $\mu$ is defined by
$$\int_M\Phi\,d\mu=\int_{\Td}\Phi(\sigma_j(\theta))\,d\nu_j(\theta),$$
for every $\Phi:M\to \mathbb{R}$ continuous.
        \item[(iii)] Moreover, $\mu$ has a nonnegative central Lyapunov exponent, is approximated by measures supported along periodic orbits in the interior of $M$. If  the leaves of $\mathcal F^c$ are $C^2$, then  $\supp\mu=M$.      
    \end{enumerate}
\end{prop}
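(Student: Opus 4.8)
The plan is to transcribe the proofs of Section~\ref{sec:mmeii} (and the pieces of Section~\ref{sec:mmei} invoked there) into the foliated setting, replacing the product partition $\{I_\theta\}$ by the measurable partition into center leaves $\{\mathcal F^c(\theta,j)\}$ and the vertical projection $\pi$ by the holonomy projections $\pi_j$. The admissible inputs are Proposition~\ref{prop:medidasnegativasparaF} — the $\mu_j$ have negative center exponent, so Pesin stable manifolds exist along center leaves; their basins are intermingled with respect to $m$ and cover $m$-a.e.\ $M$; and no other ergodic measure projecting to $\nu_j$ has negative center exponent — together with the regularity of $\mathcal F^c$ from \ref{F3} and Theorem~\ref{teo:foliaciones}, and the Gan--Shi transitivity Lemma~\ref{le:transitive}. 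Throughout, all measure-geometric estimates must be phrased via the reference measure $m$ of \eqref{eq:refmeas2}, since $\mathcal F^c$ is only H\"older.

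\emph{Item (i).} By \ref{F5} and Pesin theory, for $\nu_j$-a.e.\ $\theta$ the stable manifold $W^s(\theta,j)$ is a non-degenerate subcurve of $\mathcal F^c(\theta,j)$. Using \ref{F3i}, each such leaf is a $C^1$-graph over $I$ ($C^1$-close to $\{\theta\}\times I$), so we may parametrize it monotonically by the $I$-coordinate; then $W^s(\theta,j)$ occupies one end $\{t<\varsigma_j(\theta)\}$, and the stable manifold of the other endpoint $\pi_{j,j^*}(\theta,j)$ (which lies on the \emph{same} leaf) occupies the complementary end $\{t>\varsigma_j'(\theta)\}$. Since $\mathcal B(\mu_0),\mathcal B(\mu_1)$ are disjoint and their union is $m$-full, the argument of Lemmas~\ref{le:uniquenessseparating}--\ref{lem:intervalos} gives $\varsigma_j=\varsigma_j'$ a.e., which defines $\sigma_j(\theta)=(\zeta_j(\theta),\varsigma_j(\theta))$ as the unique point of $\mathcal F^c(\theta,j)$ outside $\mathcal B(\mu_0)\cup\mathcal B(\mu_1)$; relation \eqref{eq:uniqsepogen} is then immediate because the separating point is intrinsic to the leaf. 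If $\sigma_j$ were continuous, $\{\sigma_j(\theta):\theta\in\Td\}$ would be a topological section separating the two basins, contradicting Proposition~\ref{prop:medidasnegativasparaF}(b); so $\sigma_j$ is measurable but not continuous.

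\emph{Item (ii).} Given $\mu\in M^{erg}(F)$ with $(\pi_j)_*\mu=\nu_j$ and $\mu\ne\mu_j$, disintegrate $\mu$ over the partition into center leaves (quotient measure $\nu_j$). If the conditional on a positive-$\nu_j$-measure set of leaves charged $W^s(\theta,0)$ (resp.\ $W^s(\theta,1)$), then $\mu$ would charge $\mathcal B(\mu_0)$ (resp.\ $\mathcal B(\mu_1)$), impossible because basins of distinct ergodic measures are disjoint while $\mu(\mathcal B(\mu))=1$; hence by item (i) the conditional equals $\delta_{\sigma_j(\theta)}$ for $\nu_j$-a.e.\ $\theta$, giving both uniqueness and the stated formula. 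Existence follows by checking that the push-forward of $\nu_j$ under $\theta\mapsto\sigma_j(\theta)$ (well defined independently of $j$ by \eqref{eq:uniqsepogen}) is $F$-invariant and ergodic, since $\theta\mapsto\sigma_j(\theta)$ intertwines $E_j$ with $F$ restricted to the invariant set $\Sigma=\{\sigma_j(\theta)\}$. For item (iii): (a) Proposition~\ref{prop:medidasnegativasparaF}(d) forces $\lambda^c(\mu)\ge 0$ (we cannot get strict positivity, since there is no inverse-limit cocycle and Lemma~\ref{le:klemma} is unavailable). (b) As in Lemma~\ref{le:existence}, approximate $\nu_j$ weak-$*$ by periodic measures $\delta\theta_n$ of $E_j$; condition \ref{F5} at boundary $j$ yields $\tfrac1n\log|\varphi_n'(j)|<0$ for large $n$, and transporting $\delta\theta_n$ by the conjugacy $\pi_{j,j^*}$ and using \ref{F5} at boundary $j^*$ yields $\tfrac1n\log|\varphi_n'(j^*)|<0$ as well; since $\mathcal F^c(\theta_n,j)$ is $F^n$-invariant, the return map $F^n$ on it is an increasing interval map with both endpoints attracting, hence has an interior fixed point $t_n=\varsigma_j(\theta_n)$, which is non-attracting, so $(\theta_n,t_n)\in\mathrm{Int}(M)$ is a period-$n$ point of $F$; the periodic measures on its orbit subconverge to a measure whose ergodic components project to $\nu_j$ with $\lambda^c\ge 0$, hence equal $\mu$ by item (ii). (c) As in Lemma~\ref{le:soportetotalsep}, from $\mu(A)=\nu_j(\pi_j(A\cap\Sigma))$, the invariance $F(\Sigma)=\Sigma$, the intertwining $\pi_j\circ F=E_j\circ\pi_j$ and bounded distortion of $J_{\nu_j}E_j$, one gets $\mu(V)=0\Rightarrow\mu(F(V))=0$ for open $V\subseteq\mathrm{Int}(M)$; combined with $\bigcup_{n\ge0}F^n(U)\supseteq\mathrm{Int}(M)$ for every open $U$ (Lemma~\ref{le:transitive}, valid in the $C^2$ / $C^2$-center-leaves regime) and $\mu(\mathrm{Int}(M))=1$, this forces $\supp\mu=M$.

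\emph{Expected main obstacle.} The delicate step is (iii)(b): one must control the center dynamics at \emph{both} ends of each periodic center leaf — which is exactly where the holonomy conjugacy between $E_0$ and $E_1$ and condition \ref{F5} on \emph{both} boundaries are indispensable — and then identify the limiting measure, for which the uniqueness of item (ii) is used. A genuine, and here unavoidable, limitation is that without an inverse-limit cocycle the invariance principle does not apply, so $\lambda^c(\mu)\ge 0$ cannot be upgraded to $\lambda^c(\mu)>0$; the remaining difficulty is purely technical bookkeeping, namely carrying the density-point and bounded-distortion arguments through the reference measure $m$ of \eqref{eq:refmeas2} because $\mathcal F^c$ varies only H\"older-continuously rather than forming a true product.
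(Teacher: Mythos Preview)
Your proposal is correct and follows essentially the same approach as the paper's own proof: both transcribe the skew-product arguments of Sections~\ref{sec:mmei}--\ref{sec:mmeii} to the foliated setting via the center-leaf partition and the holonomy projections $\pi_j$, using Proposition~\ref{prop:medidasnegativasparaF} as the substitute for the earlier lemmas and the holonomy conjugacy $\pi_{j,j^*}$ to control both endpoints in the periodic-approximation step. You are also right that the invariance principle is unavailable here, so $\lambda^c(\mu)\geq 0$ cannot be upgraded to strict positivity; the paper makes exactly this observation at the end of its proof.
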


\proof Since $\mu_j=\nu_j\times\delta_j$ have central Lyapunov exponent $\lambda^c(\mu_j)<0$, we can define separating map as before. Denote by $\sigma_j(\theta)=(\zeta_j(\theta),\varsigma_j(\theta))$  the boundary of the connected component of the Pesin stable manifold of $(\theta,j)$. As before, such stable manifold is a curve inside $\mathcal F^c(\theta,j)$ with extremal point $(\zeta_j(\theta),\varsigma_j(\theta))\in \mathcal F^c(\theta,j)$.  Hence,
\begin{eqnarray*}
W^s((\theta, 0))&=&\{(\omega,t)\in\mathcal F^c(\theta,0)\::\:   t\in [0, \varsigma_0(\theta))\},\quad\mbox{ and }\\
W^s((\theta,1))&=&\{(\omega,t)\in\mathcal F^c(\theta,1)\::\:   t\in (\varsigma_1(\theta), 1]\}.
\end{eqnarray*}
Therefore, we can define the separating map $\sigma_j: \Td  \to M$ as above. Note that $\varsigma_j(\theta)\ne j$ for $\nu_j$-almost every $\theta\in \Td$. Moreover,  we can prove as in Lemma~\ref{le:uniquenessseparating} that for $\nu_j$-almost every point $\theta\in\Td$, 
$$\sigma_j(\theta)=\sigma_{j^*}(\theta^*),$$ 
where $\theta^*\in\Td$ is such that $\pi_{j,j*}(\theta,j)=(\theta^*,j^*)$.

Note that due to \ref{F3ii}, the map $\zeta_j:\Td\to \Td$ is continuous. Nevertheless, the map $\varsigma_j:\Td\to I$ is measurable but is not continuous. Otherwise, the continuous curve $\{\sigma_j(\theta) \::\: \theta\in\Td\}$  separates the intermingled basins $\mathcal{B}(\mu_0)$ and $\mathcal{B}(\mu_1)$.
Therefore, we have item (i). 

To prove item (ii), we proceed as above by invoking the Rokhlin's disintegration theorem: there exists a family of conditional probabilities $\{\mu_{(\theta,j)}\::\:\theta\in\Td\}$ with respect to the measurable partition $\{\mathcal F^c(\theta,j)\::\:\theta\in\Td\}$, $j=0,1$, such that
\begin{equation}\label{eq:defmuF}
\int_M\Phi\,d\mu=\int_{\Td}\int_{\mathcal F^c(\theta,j)}\Phi\,d\mu_{(\theta,j)}\,d\nu_j(\theta),
\end{equation}
\noindent for every $\Phi:M\to \mathbb{R}$ continuous. Note that in this case the quotient measure $\hat\mu$ coincides with $\nu_j$. It follows from the definition of the map $\sigma_j$ that, if $\mu\ne\mu_j$, then for $\nu_j$-almost every $\theta\in \Td$, $\mu_{(\theta,j)}=\delta_{\sigma_j(\theta)}$ and $\supp\mu_{(\theta,j)}=\{\sigma_j(\theta)\}$. By the  uniqueness of the Rokhlin decomposition and \eqref{eq:uniqsepogen}, we have that the measure defined by \eqref{eq:defmuF} is the unique ergodic $F$-invariant probability  such that $\mu\ne\mu_j$, $j=0,1$ and $(\pi_j)^*\mu=\nu_j$.

Proposition~\ref{prop:medidasnegativasparaF} item (d) implies that $\lambda^c(\mu)\geq 0$. 

The approximation in the weak* topology of $\mu$ by probability measures supported on periodic points in the interior of $M$ follows from the approximation of $\nu_j$ by probability measures supported on periodic points on $\Td\times\{j\}$. With fixed $j=0,1$, if $\theta\in \Td$ is  a periodic point for $E_j$, then $\theta^*=\pi_{j^*}(\theta,j)$ is a periodic point for $E_{j^*}$ having the same period and the measures
$$\delta\theta:=\frac1n\sum_{k=0}^{n-1} \delta_{E^k_j(\theta)},\quad\mbox{and}\quad \delta\theta^*:=\frac1n\sum_{k=0}^{n-1} \delta_{E^k_{j^*}(\theta^*)}$$
satisfy $(\pi_{j^*})^*\delta\theta=\delta\theta^*$. Therefore,  we can proceed as in  Lemma~\ref{le:existence} and obtain a sequence $\theta_n\in\Td$ of periodic points on $\Td$ such that $\delta\theta_n$ converges to $\nu_j$ in the weak* topology. Moreover, up to a subsequence, $\lambda^c(\delta\theta_n\times\delta_j)<0$ for both $j=0$ and $j=1$. Again we can find a periodic orbit $(\omega_n,t_n)\in\mathcal F^c(\theta_n,j)$ with $(\omega_n,t_n)=\sigma_j(\theta_n)$. Finally, consider $\tilde\mu$ any ergodic component of any accumulation point of the periodic measures 
$$ \mu_n=\frac1n\sum_{k=0}^{n-1} \delta_{K^k(\sigma_j(\theta_n))}.$$
Then $\lambda^c(\tilde\mu)\geq 0$ and $\pi_*\tilde\mu=\nu$, and by the uniqueness of measure $\tilde\mu=\mu$.

Finally, the proof of $\supp\mu=\Td$ is straightforward if we assume that $F$ and the leaves of $\mathcal{F}^c$ are $C^2$, and therefore we complete the proof of Theorem~\ref{mainteo:F}.
We point out that in this general case we do not discard the case $\lambda^c(\mu)=0$. Recall that from Subsection~\ref{ssec:zerocenterexponent}, to discard the null central Lyapunov exponent we invoke the invariance principle (Lemma~\ref{le:klemma} and Theorem~\ref{teo:princinv}) to contradict the intermingledness of the basins. Applying the invariance principle in this general case is harder because neither $ F $ nor its extension to the inverse limit have the structure of the skew product.


\section{Applications}\label{secc:applications}

\subsection{Perturbations of Kan-like maps}\label{ssec:perturbations}

Let us denote by ${\rm End}^r(M,\partial M)$ the set of local diffeomorphisms preserving the boundary provided with the $C^r$-topology, $r\geq 1$. 

\begin{prop}\label{prop:condabiertas}
Let $K:M\to M$ be a Kan-like map defined by \ref{K1}-\ref{K3}. Then, there exists a $C^r$-neighborhood $\mathcal U\subseteq {\rm End}^r(M,\partial M)$, with $K\in \mathcal U$ such that every $F\in\mathcal U$ satisfies \ref{F1}-\ref{F4}. 

\end{prop}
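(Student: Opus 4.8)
\proof
The plan is to produce $\mathcal U$ as a finite intersection of $C^1$-open (hence $C^r$-open) neighbourhoods of $K$ inside ${\rm End}^r(M,\partial M)$, dealing with \ref{F1}--\ref{F4} in turn. Condition \ref{F1} costs nothing: every $F$ we consider already lies in ${\rm End}^r(M,\partial M)$ and hence preserves the boundary. Condition \ref{F2} holds for $K$ by \ref{K2} (with $E^u=\mathbb R^d\times\{0\}$ and the associated unstable cone field), and partial hyperbolicity is $C^1$-open, so there is a neighbourhood $\mathcal U_2\ni K$ on which \ref{F2} holds. For \ref{F3} recall that the center bundle $E^c=\{0\}\times\mathbb R$ of $K$ integrates to $\mathcal F^c_K=\{\{\theta\}\times I:\theta\in\Td\}$, which is normally hyperbolic by \ref{K2} and, being $C^1$, plaque expansive; Theorem~\ref{teo:foliaciones} then yields a $C^1$-neighbourhood $\mathcal U_3\ni K$ on which every $F$ admits a unique $F$-invariant center foliation $\mathcal F^c_F$ satisfying \ref{F3i}--\ref{F3ii}, whose leaves are $C^1$-close to the fibers $\{\theta\}\times I$ and vary $C^1$-continuously with $F$.

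It remains to arrange \ref{F4} on a smaller neighbourhood, which is the only delicate point. On $\mathcal U_2$ the boundary map $E^F_0:\Td\to\Td$, $E^F_0(\theta)=E^F(\theta,0)$, is an expanding map, so all of its fixed points are hyperbolic repellers; hence the two distinct fixed points $p,q$ of $E^K_0$ persist, for $F$ in a $C^1$-neighbourhood of $K$, as distinct hyperbolic fixed points $p_F,q_F$ of $E^F_0$ depending continuously on $F$. Since $\varphi^F(\cdot,0)\equiv 0$ by \ref{F1}, the points $(p_F,0)$ and $(q_F,0)$ are $F$-fixed, so (on $\mathcal U_2\cap\mathcal U_3$) the center leaves $\mathcal F^c_F(p_F,0)$ and $\mathcal F^c_F(q_F,0)$ are $F$-invariant $C^1$ curves joining the two boundary components and $C^1$-close to $\{p\}\times I$ and $\{q\}\times I$. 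Parametrising each leaf by its monotone $t$-coordinate, the restriction of $F$ induces maps $\varphi^c_{p_F},\varphi^c_{q_F}:[0,1]\to[0,1]$ with $\varphi^c_\bullet(0)=0$ and $\varphi^c_\bullet(1)=1$ (the endpoints lie on the $F$-invariant boundary components), which agree with $\varphi(p,\cdot)$ and $\varphi(q,\cdot)$ when $F=K$ as required by \ref{F4}; since $F$, the fixed points and the leaves all vary $C^1$-continuously, so do $\varphi^c_{p_F}$ and $\varphi^c_{q_F}$.

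Finally I would promote the fixed-point structure from $K$ to nearby $F$ by a three-zone argument; take $\varphi^c_{p_F}$, the case of $\varphi^c_{q_F}$ being symmetric. Put $g^F(t)=\varphi^c_{p_F}(t)-t$, so $g^F(0)=g^F(1)=0$. For $F=K$, \ref{K3} says the only zeros of $g^K$ are $0$ (a hyperbolic sink, $0<(\varphi^c_{p_K})'(0)<1$) and $1$ (a hyperbolic source, $(\varphi^c_{p_K})'(1)>1$), and since $\varphi(p,\cdot)$ is an increasing diffeomorphism of $[0,1]$ this forces $g^K<0$ on $(0,1)$. Choose $\epsilon>0$ so small that $(\varphi^c_{p_K})'<1$ on $[0,\epsilon]$ and $(\varphi^c_{p_K})'>1$ on $[1-\epsilon,1]$. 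On the compact middle zone $[\epsilon,1-\epsilon]$ we have $g^K\le -c_0<0$, a condition stable under $C^0$-perturbation, so $g^F<0$ there for $F$ in a $C^1$-neighbourhood of $K$; on $[0,\epsilon]$ we have $(\varphi^c_{p_F})'<1$ by $C^1$-closeness, so $g^F$ is strictly decreasing with $g^F(0)=0$, whence $g^F<0$ on $(0,\epsilon]$; symmetrically $(\varphi^c_{p_F})'>1$ on $[1-\epsilon,1]$ makes $g^F$ strictly increasing with $g^F(1)=0$, whence $g^F<0$ on $[1-\epsilon,1)$. Therefore $\varphi^c_{p_F}$ has exactly the two fixed points $0$ and $1$, and by persistence of hyperbolicity these are a hyperbolic sink and a hyperbolic source; applying the same computation to $\varphi^c_{q_F}$ (where now $g^F>0$ on $(0,1)$, with a source at $0$ and a sink at $1$) gives the clause of \ref{F4} at $q_F$. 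Intersecting the finitely many $C^1$-open neighbourhoods produced above yields $\mathcal U$. The main obstacle is exactly the ``exactly two fixed points'' clause of \ref{F4}: the number of fixed points of a one-dimensional map is not $C^r$-open in general, and what saves it here is the interplay between hyperbolicity of the two endpoints --- which confines the dynamics on a collar of fixed size --- and the definite sign of $g^K$ on the complementary compact interval.
\endproof
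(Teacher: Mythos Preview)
Your proof is correct and follows essentially the same route as the paper's: \ref{F1} is immediate from the ambient space, \ref{F2} from openness of partial hyperbolicity, \ref{F3} from the HPS/normal hyperbolicity machinery (Theorem~\ref{teo:foliaciones}), and \ref{F4} from persistence of the hyperbolic fixed points $p,q$ together with invariance of the center leaves through them. The only notable difference is that you supply an explicit three-zone argument for the ``exactly two fixed points'' clause of \ref{F4}, whereas the paper simply asserts that the South--North dynamics on $\mathcal F^c(p_0,0)$ and $\mathcal F^c(q_0,0)$ persist after shrinking $\mathcal U$; your version makes transparent why this clause --- the only non-open part of the statement --- is nonetheless $C^1$-stable, at the cost of a slightly longer proof.
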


\proof Let us quickly check why each one of the following statements are verified: 

\begin{enumerate}
\item $F$ preserves the boundary.
\item $F$ is a partially hyperbolic endomorphism.   
\item $F$ preserves an invariant center foliation satisfying \ref{F3}.
\item $F$ relates the dynamics along the boundary through a heteroclinical cycle.
\end{enumerate}

Let $F\in {\rm End}^r(M,\partial M)$ be defined by
$$F(\theta,t)=(E(\theta,t),\psi(\theta,t)).$$
$F$ preserves the boundaries by definition, so for every $\theta\in \Td$, $\psi(\theta,j)=j$, $j=0,1$.

The second item follows because partial hyperbolicity is a $C^r$-open property, $r\geq 1$ (see Subsection~\ref{ssec:hiperbparcial}). Therefore, the restriction of $F$ to each boundary $\Td\times\{j\}$ defines an expanding map on the torus $E_j\::\:\Td\to\Td$. Moreover,  in the set of endomorphisms on the torus ${\rm End}^r(\Td)$, each $E_j$ is $C^r$ close from $E$. 

Recall that the Kan-like map $K$ has a $C^r$-foliation, $r\geq 1$, in compact segments $\mathcal{F}^c_K=\{I_\theta=\{\theta\}\times I\::\:\theta\in\Td\}$ which is normally hyperbolic and plaque expansive. It follows from the classical theory of invariant foliations (see Theorem~\ref{teo:foliaciones}) that we can reduce the neighborhood $\mathcal U$ so that, in addition to the properties above, every $F\in\mathcal U$ has a unique $F$-invariant foliation $\mathcal F^c$ tangent to the center direction $E^c$, which is normally hyperbolic and plaque expansive. Moreover, there are bi-H\"older homeomorphisms $h:M\to M$ such that $(K,\mathcal F^c_K)$ and $(F,\mathcal F^c_F)$ are leaf conjugated. Moreover, the holonomies are H\"older continuous. 

Finally, to verify item (4) we note that when reducing $\mathcal U$, if it is necessary, each $E_j$, $j\in\{0,1\}$, has two fixed points $p_j$, $q_j$ which are the hyperbolic continuation of $p$ and $q$, the fixed points of $E$. Moreover, $\pi_{j,j^*}(p_j)=p_{j^*}$ and $\pi_{j,j^*}(q_j)=q_{j^*}$, and so the fibers
$$\mathcal F^c(p_0,0)=\mathcal F^c(p_1,1)\quad \mbox{and}\quad \mathcal F^c(q_0,0)=\mathcal F^c(q_1,1)$$ are fixed by $F$. Reducing $\mathcal U$ if it is necessary, the dynamics of $F$ along the fixed leaf $\mathcal F^c(p_0,0)$  have exactly two fixed points: a hyperbolic source at $(p_1,1)$, and a hyperbolic sink in $(p_0,0)$. Analogously, the dynamics of $F$ along the fixed leaf $\mathcal F^c(q_0,0)$ have exactly two fixed points: a hyperbolic sink at $(p_1,1)$, and a hyperbolic source in $(p_0,0)$. Therefore, $F$ relates the dynamics along the boundary through a heteroclinical cycle.
\endproof


\subsection{Equilibrium states for Kan-like maps}\label{ssec:mmecaracterization}
In this section, we address the existence of equilibrium states of $K$. We consider H\"older continuous potentials $\Phi:M\to \mathbb R$ which are constant along the fiber: 
$$\Phi(\theta,t)=\Phi(\theta,0),\quad\mbox{ for every } t\in I.$$
This class of potentials includes the particular case $\Phi=0$, which is interesting itself because the equilibrium states are the measures maximizing the topological entropy. Let us denote by $M_\Phi^{erg}(K)$ the set of ergodic equilibrium states for $\Phi$.

If $\phi\::\Td\to\mathbb R$ is defined as 
\begin{equation}\label{eq:phiPhi}
\phi(\theta)=\Phi(\theta,0),\quad\theta\in\Td,
\end{equation}
then $\phi$ is a H\"older continuous potential and by Ruelle's Theorem, $E$ has a unique equilibrium state $\nu_\phi$. Moreover, $\nu_\phi$ has a H\"older continuous Jacobian $J_{\nu_\phi}E$ (see Theorem~\ref{teo:Ruelle}).

\begin{prop}\label{prop:Bgen} 
Assume $K$ is a Kan-like map and let $\Phi\::\: M\to \mathbb R$ be H\"older continuous and constant along the fibers. Let $\nu_\phi$ be the unique equilibrium state of $E$ for the  potential $\phi(\theta)=\Phi(\theta,0)$, $\theta\in\Td$. Assume that
\begin{equation*}\label{eq:condK4.2mme}
\int\log |\partial_t \varphi (\theta,j)|\: d\nu_\phi(\theta)<0, \quad j=0,1;
\end{equation*}
\noindent Then, there exist exactly three equilibrium states $\mu_0, \mu$, and $\mu_1$ of $K$ for the potential $\Phi$. All of them are hyperbolic and 

\begin{enumerate}
\item $\mu_0$ and $\mu_1$ have negative central Lyapunov exponents and are supported on the boundaries, and each basin has positive weight with respect to the reference measure $m_\Phi=\nu_\phi\times \m_I$ and the union of their basin covers $m_\Phi$-a.e. the whole manifold $M$. Moreover, their basins are intermingled with respect to $m_\Phi$. 
\item $\mu$ has a positive central Lyapunov exponent and is approximated by ergodic measures supported on periodic orbits of $K$ in the interior of $M$. Moreover, if $K$ is $C^2$ then $\mu$  is fully supported on $M$.
\end{enumerate}
\end{prop}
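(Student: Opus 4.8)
The strategy is to reduce Proposition~\ref{prop:Bgen} to the two main results already established, namely Theorem~\ref{mainteo:KE1} (in the form of Propositions~\ref{teo:medidasnegativas} and~\ref{teo:medidapositiva}) and the abstract semiconjugacy inequality \eqref{eq:CarvalhoPerez} of Carvalho--Perez. First I would record the relevant properties of the base equilibrium state: since $\phi(\theta)=\Phi(\theta,0)$ is H\"older, Ruelle's Theorem (Theorem~\ref{teo:Ruelle}) gives a unique equilibrium state $\nu_\phi$ for $(E,\phi)$, it is fully supported on $\Td$, and $J_{\nu_\phi}E$ is H\"older continuous — hence has bounded distortion. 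Thus $\nu_\phi$ satisfies all the hypotheses needed in Propositions~\ref{teo:medidasnegativas} and~\ref{teo:medidapositiva}: $\supp\nu_\phi=\Td$ and $J_{\nu_\phi}E$ is H\"older. Together with the standing assumption $\int\log|\partial_t\varphi(\theta,j)|\,d\nu_\phi(\theta)<0$ for $j=0,1$, this already produces three ergodic $K$-invariant measures $\mu_0=\nu_\phi\times\delta_0$, $\mu_1=\nu_\phi\times\delta_1$ and the central measure $\mu$ (the pushforward of $\nu_\phi$ under $\theta\mapsto(\theta,\sigma(\theta))$), with all the dynamical/statistical properties listed in items (1) and (2): negative central exponents and intermingled basins covering $m_\Phi=\nu_\phi\times\m_I$ a.e.\ for $\mu_0,\mu_1$; positive central exponent, approximation by interior periodic orbits, and (if $K$ is $C^2$) full support for $\mu$.

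The genuinely new content is: (a) these three measures \emph{are} equilibrium states for $\Phi$, and (b) there are \emph{no others}. For (a), the key observation is that the fibers $\pi^{-1}(\theta)=I_\theta$ are intervals on which $K$ acts; since $K$ is expansive in the base and the fiber maps are equicontinuous diffeomorphisms, $\htop(K,\pi^{-1}(\theta))=0$ for every $\theta$, so the Ledrappier--Walters formula \eqref{eq:LW} gives $\sup\{h_\rho(K):\pi_*\rho=\nu\}=h_\nu(E)$ for every $E$-invariant $\nu$, and more relevantly \eqref{eq:CarvalhoPerez} gives $\Ptop(K,\Phi)=\Ptop(K,\phi\circ\pi)\le\Ptop(E,\phi)$; the reverse inequality is immediate since $h_{\mu_j}(K)\ge h_{\pi_*\mu_j}(E)=h_{\nu_\phi}(E)$ and $\int\Phi\,d\mu_j=\int\phi\,d\nu_\phi$, so in fact $\Ptop(K,\Phi)=\Ptop(E,\phi)$ and $\mu_0,\mu_1$ attain it. For the central measure $\mu$ one has $\pi_*\mu=\nu_\phi$ as well, so $\int\Phi\,d\mu=\int\phi\,d\nu_\phi$; the point is to show $h_\mu(K)=h_{\nu_\phi}(E)$, which follows because $h_\mu(K)-h_{\pi_*\mu}(E)\le\int\htop(K,\pi^{-1}(\theta))\,d\nu_\phi(\theta)=0$ and the opposite inequality is the general fact $h_{\pi_*\mu}(E)\le h_\mu(K)$ recorded in Subsection~\ref{ssec:entropy}. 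So $h_\mu(K)=h_{\nu_\phi}(E)$ and $\mu$ too is an equilibrium state.

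For (b), uniqueness, suppose $\rho\in M^{erg}_\Phi(K)$. Then $\pi_*\rho$ is $E$-invariant and $\Ptop(E,\phi)=\Ptop(K,\Phi)=h_\rho(K)+\int\Phi\,d\rho\le h_{\pi_*\rho}(E)+\int\htop(K,\pi^{-1}(\theta))\,d(\pi_*\rho) +\int\phi\,d(\pi_*\rho)=h_{\pi_*\rho}(E)+\int\phi\,d(\pi_*\rho)\le\Ptop(E,\phi)$, where I used the Ledrappier--Walters inequality (upper bound direction of \eqref{eq:LW}) applied fiberwise together with $\htop(K,\pi^{-1}(\theta))=0$. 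Hence equality holds throughout, so $\pi_*\rho$ is an equilibrium state for $(E,\phi)$, and by the uniqueness in Ruelle's Theorem $\pi_*\rho=\nu_\phi$. But Proposition~\ref{teo:medidasnegativas}(d) and the uniqueness clause of Proposition~\ref{teo:medidapositiva}(ii) together say that the only ergodic $K$-invariant measures projecting to $\nu_\phi$ are $\mu_0$, $\mu_1$, and $\mu$ — indeed $\rho$ either has negative central exponent, forcing $\rho\in\{\mu_0,\mu_1\}$ by (d), or it has nonnegative (in fact positive) central exponent and $\rho\ne\mu_j$, forcing $\rho=\mu$ by (ii). Thus $M^{erg}_\Phi(K)=\{\mu_0,\mu,\mu_1\}$. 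Hyperbolicity of all three is then immediate: the central exponents are nonzero by construction and the base exponent is positive since $E$ is expanding. Finally items (1) and (2) are just the restatements of the corresponding conclusions of Propositions~\ref{teo:medidasnegativas} and~\ref{teo:medidapositiva} with $\nu=\nu_\phi$ and $m=m_\Phi=\nu_\phi\times\m_I$.

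The main obstacle I anticipate is cleanly justifying $\htop(K,\pi^{-1}(\theta))=0$ uniformly in $\theta$ and, more subtly, verifying that $h_\mu(K)=h_{\nu_\phi}(E)$ for the central measure $\mu$ — the central measure is supported on the graph $\Sigma=\{(\theta,\sigma(\theta))\}$ of a merely measurable section, so one cannot argue by continuity; the Ledrappier--Walters formula sidesteps this but one must make sure its hypotheses (Polish spaces, Borel semiconjugacy, $\pi$ surjective) are met, which they are. Everything else is bookkeeping on top of results already in hand.
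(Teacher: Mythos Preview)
Your proposal is correct and follows essentially the same route as the paper: invoke Theorem~\ref{mainteo:KE1} (via Propositions~\ref{teo:medidasnegativas} and~\ref{teo:medidapositiva}) to produce and classify the three measures projecting to $\nu_\phi$, establish $\htop(K,\pi^{-1}(\theta))=0$ and hence $\Ptop(K,\Phi)=\Ptop(E,\phi)$ via \eqref{eq:CarvalhoPerez} and Ledrappier--Walters, then identify $M_\Phi^{erg}(K)$ with $\{\eta\in M^{erg}(K):\pi_*\eta=\nu_\phi\}$. The paper packages the equivalence $M_\Phi^{erg}(K)=\{\eta:\pi_*\eta=\nu_\phi\}$ as a single lemma (Lemma~\ref{le:characterization}) rather than treating existence and uniqueness separately as you do, and the fiber-entropy vanishing (your ``main obstacle'') is dispatched by the elementary count $|\mathcal{E}|\le n(1/\epsilon+1)$ for $(n,\epsilon)$-separated sets in an interval (Lemma~\ref{le:entropyzero}), but these are differences of organization, not of mathematical content.
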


It is an immediate consequence of Theorem~\ref{mainteo:KE1} that the set
$$\{\eta\in M^{erg}(K)\::\: \pi_*\eta=\nu_{\phi}\}=\{\mu_0,\mu,\mu_1\},$$
where $\mu_j=\nu_\phi\times\delta_j$ satisfies statement (1) and $\mu$ provided by Theorem~\ref{mainteo:KE1} part~\ref{TeoAii} satisfies statement (2).

It remains to be proven that 
$$M_\Phi^{erg}(K)=\{\eta\in M^{erg}(K)\::\: \pi_*\eta=\nu_{\phi}\},$$
and we focus our efforts on this. For this purpose, we prove that in the case of Kan-like maps, the topological pressures of $K$ and $E$ with the potentials defined above coincide. 
 
 \begin{lema}\label{le:entropyzero} For every $\theta\in\Td$, we have
$$\htop(K,\pi^{-1}(\theta))=0.$$
\end{lema}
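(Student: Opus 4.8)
## Proof proposal for Lemma~\ref{le:entropyzero}

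The plan is to show that each fiber $\pi^{-1}(\theta) = \{\theta\}\times I$ carries zero topological entropy for the induced dynamics. The key observation is that $\pi^{-1}(\theta)$ is a one-dimensional interval $I_\theta$, and after one application of $K$ we land in a different fiber $I_{E(\theta)}$; the relevant object is the sequence of fiber maps $\varphi_n\colon I_\theta\to I_{E^n(\theta)}$ defined by $\varphi_n(t) = p\circ K^n(\theta,t)$. First I would note that, by condition \ref{K1}, each $\varphi_n$ fixes the endpoints $0$ and $1$, and each $\varphi_n$ is a (strictly increasing) diffeomorphism of $I$ onto $I$, because $K$ is a local diffeomorphism preserving the boundary and $\varphi(\theta,\cdot)$ is monotone in $t$. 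So what we are really estimating is the topological entropy of a sequence of interval \emph{diffeomorphisms}.

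The main step is a Bowen-type separated-set count. Fix $\epsilon>0$ and consider an $(n,\epsilon)$-separated subset $\mathcal E\subseteq \pi^{-1}(\theta) = I_\theta$ (with respect to $K$). Two points $(\theta,s),(\theta,t)$ in the fiber are $(n,\epsilon)$-separated iff $|\varphi_k(s)-\varphi_k(t)|>\epsilon$ for some $0\le k\le n-1$ (here I am using that the $\theta$-coordinates of $K^k(\theta,s)$ and $K^k(\theta,t)$ agree, so distance in $M$ reduces to distance in the fiber). Now order $\mathcal E$ along $I_\theta$ as $t_1<t_2<\cdots<t_N$. For each consecutive pair $(t_i,t_{i+1})$ there is some time $k_i\le n-1$ with $\varphi_{k_i}(t_{i+1})-\varphi_{k_i}(t_i)>\epsilon$; since each $\varphi_{k_i}$ is monotone increasing, the images $\varphi_{k_i}(t_i)$ and $\varphi_{k_i}(t_{i+1})$ lie in $[0,1]$ and the intervals $(\varphi_{k_i}(t_i),\varphi_{k_i}(t_{i+1}))$ are each of length $>\epsilon$. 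Grouping the consecutive pairs by which time $k$ works for them, for each fixed $k$ the relevant sub-intervals $(\varphi_k(t_i),\varphi_k(t_{i+1}))$ are pairwise disjoint (by monotonicity of $\varphi_k$ and the ordering), hence there are at most $1/\epsilon$ of them. Summing over $k\in\{0,1,\dots,n-1\}$ gives $N-1 \le n/\epsilon$, so $s(n,\epsilon,\pi^{-1}(\theta))\le 1 + n/\epsilon$. Therefore
\[
\htop(K,\pi^{-1}(\theta)) = \lim_{\epsilon\to0}\limsup_{n\to\infty}\frac1n\log\!\left(1+\frac n\epsilon\right) = \lim_{\epsilon\to0} 0 = 0.
\]

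The only mild obstacle is the bookkeeping in the separated-set argument: one must be careful that, having chosen a witnessing time $k_i$ for each gap, the disjointness claim really only holds \emph{after} fixing $k$ and taking the sub-collection of gaps assigned to that $k$ (a single $\varphi_k$ is monotone, so it preserves the linear order and the images of disjoint sub-intervals are disjoint); across different values of $k$ one cannot expect disjointness, which is exactly why the factor $n$ (rather than $1$) appears — but $\frac1n\log(n/\epsilon)\to0$ absorbs it harmlessly. An alternative, essentially equivalent, route is to invoke directly that a monotone interval map has zero topological entropy together with the fact that $\htop(K,\pi^{-1}(\theta))$ is governed by the ``non-autonomous'' composition of such maps, but the explicit count above is self-contained and avoids quoting a non-autonomous entropy statement. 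I would present the explicit count.
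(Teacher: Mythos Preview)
Your proof is correct and follows essentially the same approach as the paper: both bound the cardinality of an $(n,\epsilon)$-separated set in the fiber by roughly $n/\epsilon$ and conclude that $\frac{1}{n}\log s(n,\epsilon,\pi^{-1}(\theta))\to 0$. The paper simply asserts the bound $|\mathcal{E}|\le n(1/\epsilon+1)$ without justification, whereas you spell out the monotonicity argument that makes it work; your presentation is in fact the fuller of the two.
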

\proof For fixed $\theta\in\Td$, we have $\pi^{-1}(\theta)=\{\theta\}\times[0,1]$ with bounded length $L=1$. Then, for every $\epsilon>0$ and $n\geq 1$, every $(n,\epsilon)$-separated subset $\mathcal{E}\subseteq \pi^{-1}(\theta)$ has cardinality
$$|\mathcal{E}|\leq n\left( \frac1\epsilon +1\right).$$ 
Then,
$$\frac{1}{n}\log s(n,\epsilon,\pi^{-1}(\theta))\leq \frac1n\left(\log n+ \log \left( \frac1\epsilon +1 \right) \right)$$
and taking $n\to\infty$, we obtain the result.
\endproof

\begin{lema} \label{le:entropy}  $\Ptop( K,\Phi)= \Ptop (E,\phi)$. 

\end{lema}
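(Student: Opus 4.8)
The plan is to combine the general semiconjugacy inequality of Carvalho–Perez \eqref{eq:CarvalhoPerez} with the fiber-entropy computation of Lemma~\ref{le:entropyzero} to get one inequality, and the variational principle together with the push-forward inequality for entropies to get the reverse inequality. Concretely, since $\pi\circ K = E\circ\pi$ and $\Phi = \phi\circ\pi$ by the definition \eqref{eq:phiPhi} of $\phi$, inequality \eqref{eq:CarvalhoPerez} gives
$$
\Ptop(K,\Phi) = \Ptop(K,\phi\circ\pi) \leq \Ptop(E,\phi) + \sup\left\{\int \htop(K,\pi^{-1}(\theta))\,d\nu(\theta) : \nu\in M^1(E)\right\}.
$$
By Lemma~\ref{le:entropyzero}, $\htop(K,\pi^{-1}(\theta)) = 0$ for every $\theta\in\Td$, so the supremum term vanishes and we obtain $\Ptop(K,\Phi)\leq \Ptop(E,\phi)$.

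For the reverse inequality, first I would invoke the variational principle \eqref{eq:VP2} for the pressure: $\Ptop(E,\phi) = \sup\{h_\eta(E) + \int\phi\,d\eta : \eta\in M^1(E)\}$, and by Ruelle's Theorem~\ref{teo:Ruelle} this supremum is attained at the unique equilibrium state $\nu_\phi$. Now I would lift $\nu_\phi$ to an arbitrary $K$-invariant measure $\mu$ with $\pi_*\mu = \nu_\phi$ — for instance any of the three measures $\mu_0,\mu,\mu_1$, or simply any measure obtained from the Ledrappier–Walters formula \eqref{eq:LW} — and use that, since $\Phi$ is constant along fibers,
$$
\int_M \Phi\,d\mu = \int_{\Td}\phi\,d(\pi_*\mu) = \int_{\Td}\phi\,d\nu_\phi.
$$
Using $h_{\pi_*\mu}(E)\le h_\mu(K)$ (the push-forward inequality recalled in Subsection~\ref{ssec:entropy}), we get
$$
\Ptop(E,\phi) = h_{\nu_\phi}(E) + \int\phi\,d\nu_\phi \leq h_\mu(K) + \int_M\Phi\,d\mu \leq \Ptop(K,\Phi),
$$
where the last inequality is again \eqref{eq:VP2} applied to $K$. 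Combining the two inequalities yields $\Ptop(K,\Phi) = \Ptop(E,\phi)$.

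The main obstacle — really the only nontrivial input — is the vanishing of the fiber topological entropy $\htop(K,\pi^{-1}(\theta)) = 0$, but this is already established in Lemma~\ref{le:entropyzero} from the one-dimensionality and uniformly bounded length of the fibers, so here it is used as a black box. A minor point to be careful about is that \eqref{eq:CarvalhoPerez} is stated for potentials of the form $\phi\circ\pi$ on the source, which is exactly our situation by \eqref{eq:phiPhi}; and that Lemma~\ref{le:entropyzero} gives the bound uniformly in $\theta$, so the supremum over $\nu\in M^1(E)$ of $\int\htop(K,\pi^{-1}(\theta))\,d\nu$ is genuinely zero rather than merely bounded. No further estimates are needed.
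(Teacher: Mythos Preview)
Your proof is correct. The upper bound $\Ptop(K,\Phi)\le\Ptop(E,\phi)$ is exactly the paper's argument: Carvalho--Perez \eqref{eq:CarvalhoPerez} together with Lemma~\ref{le:entropyzero}.

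For the reverse inequality $\Ptop(E,\phi)\le\Ptop(K,\Phi)$ you take a different route from the paper. The paper simply observes that $E$ is the restriction of $K$ to the invariant boundary $\Td\times\{0\}$ and $\phi=\Phi|_{\Td\times\{0\}}$, so by the elementary monotonicity of pressure under restriction (\cite[Theorem~9.8]{Wal}) one gets $\Ptop(E,\phi)\le\Ptop(K,\Phi)$ in one line. Your argument instead passes through the variational principle: take the equilibrium state $\nu_\phi$, lift it to $\mu\in M^1(K)$ with $\pi_*\mu=\nu_\phi$, and use $h_{\nu_\phi}(E)\le h_\mu(K)$ together with $\int\Phi\,d\mu=\int\phi\,d\nu_\phi$. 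This is valid, but it is heavier than necessary: you invoke Ruelle's theorem to guarantee the supremum is attained, and you need to exhibit a lift (the obvious choice $\nu_\phi\times\delta_0$ works, so mentioning the three measures $\mu_0,\mu,\mu_1$ or Ledrappier--Walters is overkill and mildly circular at this point in the paper). The paper's one-line restriction argument avoids all of this and does not even require $\phi$ to be H\"older.
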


\proof Since $E=K|(\Td\times\{0\})$ (see \cite[Theorem 9.8]{Wal}), 
\begin{equation}\label{eq:2.1.1}
\Ptop(E,\phi)\leq \Ptop(K,\Phi).
\end{equation}
On the other hand, since $K$ and $E$ are semiconjugated, it follows from \eqref{eq:CarvalhoPerez} that
\begin{equation}\label{eq:2.1.2}
\Ptop(K, \Phi) \leq \Ptop (E,\phi ) + \sup \left\{ \int \htop(K, \pi^{-1}(y)) d\nu :\nu\in M^1(E) \right\}
\end{equation}
The conclusion follows directly from \eqref{eq:2.1.1}, \eqref{eq:2.1.2} and Lemma~\ref{le:entropyzero}.
\endproof

Finally, we use the Ledrappier-Walters result to establish that the set of measures maximizing the pressure of $K$ coincides with the subset of ergodic measures that projects down to the equilibrium state for $E$. 

\begin{lema}\label{le:characterization} $M_\Phi^{erg}(K)=\{\eta\in M^{erg}(K)\::\: \pi_*\eta=\nu_{\phi}\}$.
\end{lema}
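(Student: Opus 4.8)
The plan is to transfer the equilibrium-state condition between $K$ and its factor $E$ through the projection $\pi\colon M\to\Td$. The one elementary observation I would isolate first is that, since $\Phi$ is constant along the fibers of $\pi$, we have $\Phi=\phi\circ\pi$ with $\phi$ as in \eqref{eq:phiPhi}; consequently $\int_M\Phi\,d\eta=\int_{\Td}\phi\,d(\pi_*\eta)$ for every $\eta\in M^1(K)$.

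Next I would establish the entropy identity $h_\eta(K)=h_{\pi_*\eta}(E)$, valid for every $\eta\in M^1(K)$. The inequality $h_{\pi_*\eta}(E)\le h_\eta(K)$ is the general factor bound recalled in Subsection~\ref{ssec:entropy}; for the reverse one I would apply the Ledrappier--Walters formula \eqref{eq:LW} to the $E$-invariant measure $\nu:=\pi_*\eta$, which combined with Lemma~\ref{le:entropyzero} gives
$$h_\eta(K)\ \le\ \sup\{h_\mu(K)\colon \pi_*\mu=\nu\}\ =\ h_\nu(E)+\int_{\Td}\htop(K,\pi^{-1}(\theta))\,d\nu(\theta)\ =\ h_\nu(E).$$

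Granting this identity, both inclusions become routine. If $\eta\in M^{erg}(K)$ with $\pi_*\eta=\nu_\phi$, then combining the entropy identity, the fiberwise-constancy remark, the fact that $\nu_\phi$ is the equilibrium state of $(E,\phi)$ (Theorem~\ref{teo:Ruelle}), and Lemma~\ref{le:entropy},
$$h_\eta(K)+\int_M\Phi\,d\eta\ =\ h_{\nu_\phi}(E)+\int_{\Td}\phi\,d\nu_\phi\ =\ \Ptop(E,\phi)\ =\ \Ptop(K,\Phi),$$
so $\eta\in M_\Phi^{erg}(K)$. Conversely, if $\eta\in M_\Phi^{erg}(K)$, the same three ingredients give
$$h_{\pi_*\eta}(E)+\int_{\Td}\phi\,d(\pi_*\eta)\ =\ h_\eta(K)+\int_M\Phi\,d\eta\ =\ \Ptop(K,\Phi)\ =\ \Ptop(E,\phi),$$
i.e. $\pi_*\eta$ is an equilibrium state of $E$ for the H\"older potential $\phi$; by the uniqueness part of Ruelle's theorem this forces $\pi_*\eta=\nu_\phi$, so $\eta$ lies in the right-hand set.

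I do not expect a genuine obstacle: Lemma~\ref{le:entropyzero}, Lemma~\ref{le:entropy} and the Ledrappier--Walters formula have been arranged precisely so that the proof reduces to matching the two ``equilibrium state'' identities through $\pi$. The only delicate point is the entropy identity $h_\eta(K)=h_{\pi_*\eta}(E)$, whose upper bound is exactly where the vanishing of the fiberwise topological entropy is used. Together with Theorem~\ref{mainteo:KE1}, which already identifies $\{\eta\in M^{erg}(K)\colon\pi_*\eta=\nu_\phi\}$ with $\{\mu_0,\mu,\mu_1\}$, this finishes Proposition~\ref{prop:Bgen}.
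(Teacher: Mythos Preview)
Your proof is correct and follows essentially the same route as the paper: both arguments combine the Ledrappier--Walters formula with Lemma~\ref{le:entropyzero} to control the fiber entropy, invoke Lemma~\ref{le:entropy} to match the pressures, and finish via the uniqueness of the equilibrium state in Ruelle's theorem. Your presentation is slightly cleaner in that you isolate the identity $h_\eta(K)=h_{\pi_*\eta}(E)$ once and for all, whereas the paper reproves the relevant inequality separately in each direction.
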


\proof From the Ledrappier-Walters formula \eqref{eq:LW} and Lemma~\ref{le:entropyzero}, we have
\begin{eqnarray*}
\sup_{\eta\in M^1(K)} \{{\rm h}_\eta(K)\::\: \pi_*\eta=\nu_\phi\}&=& {\rm h}_{\nu_\phi}(E)+\int_{\Td}\htop(K,\pi^{-1}(\theta))d\nu_\phi(\theta)\\
&=&{\rm h}_{\nu_\phi}(E).
\end{eqnarray*}
If $\eta\in M^{erg}(K)$ is such that $\pi_*\eta=\nu_\phi$, then $\int \Phi \ d\eta= \int \phi \ d\nu_\phi$ and so
\begin{eqnarray*}
\sup_{\eta\in M^1(K)} \{{\rm h}_\eta(K) + \int \Phi d\eta \::\:  \pi_*\eta=\nu_\phi\}&=&{\rm h}_{\nu_\phi}(E)+\int \phi \,d \nu_\phi\\ &=&P_{\rm{top}}(E,\phi)=P_{\rm{top}}(K,\Phi),\end{eqnarray*}
\noindent where the last equality comes from Lemma~\ref{le:entropy}. Hence, $\eta$ is an equilibrium state for $\Phi$ and so $\eta\in M^{erg}_\Phi(K)$.

Reciprocally, if $\eta_\Phi\in M_\Phi^{erg}(K)$, then $\pi_*{\eta_\Phi}=:\nu$ is an $E$-invariant measure. Again by the Ledrappier-Walters formula
$${\rm h}_{\eta_\Phi}(K)\leq {\rm h}_{\nu}(E)+\int_{\Td}\htop(K,\pi^{-1}(\theta))d\nu(\theta),$$
and since $\htop(K,\pi^{-1}(\theta))=0$ for every $\theta\in\Td$ (Lemma~\ref{le:entropyzero}) and $\int \Phi \ d\eta_\Phi= \int \phi \ d\nu$ when $\phi$ is defined by \eqref{eq:phiPhi}, we have
$$P_{\rm{top}}(K,\Phi)={\rm h}_{\eta_\Phi}(K)+\int \Phi \ d\eta_\Phi\leq {\rm h}_{\nu}(E)+\int \phi \ d\nu.$$
Since $\eta_\Phi$ is an equilibrium state for $\Phi$, from Lemma~\ref{le:entropy} we have
$$P_{\rm{top}}(E,\phi)=P_{\rm{top}}(K,\Phi)\leq {\rm h}_{\nu}(E)+\int \phi \ d\nu\leq P_{\rm{top}}(E,\phi).$$
Therefore, $\nu$ is an equilibrium state for $(E,\phi)$, and by uniqueness $\nu=\nu_\phi$.
\endproof

This completes the proof of Proposition~\ref{prop:Bgen} taking $\Phi=0$.


\subsection{Negative center Lyapunov exponent condition}\label{ssec:condnegLyap}

Let $K\::\:M\to M$ be a Kan-like map defined by
$$K(\theta,t)=(E(\theta),\varphi(\theta,t)).$$
Let  $\nu$ be an ergodic invariant probability measure for $E$ with $\supp\nu=\Td$.

Along this section we discuss about conditions to guarantee that the hypothesis \eqref{eq:condK4im}:
\begin{equation}\label{eq:condLEneg}
\int_{\Td}\log|\partial_t\varphi(\theta,j)|\,d\,\nu(\theta)<0,\quad j=0,1,
\end{equation}
is satisfied. 

Having in mind that $K$ preserves the boundary we are looking for $C^r$ functions $\varphi:M\to I$ such that
$$\varphi(\theta,j)=j,\quad j=0,1.$$
Moreover, since the center foliation should be normally hyperbolic, then we can assume that $\varphi$ is $C^r$-close of the identity on $t$, $r\geq 1$.  We can therefore suppose that  $\varphi\::\: M\to I$ defined by
$$\varphi (\theta,t)=t+\epsilon\psi(\theta,t)$$
where $\psi\::\:M\to I$ is a map $C^r$ close to 0, such that  $\psi(\theta,j)=0$, for every $\theta\in \Td$ and $j=0,1$.  We want to impose conditions on $\psi$ in order to conclude \eqref{eq:condLEneg}.

Since $\log(1+x)=x-x^2/2+O(x^3)$ then for every $\theta\in \Td$, we have
\begin{eqnarray*}
\log|\partial_t\varphi(\theta,j)|&=&\log\big(1+\epsilon\cdot\partial_t\psi(\theta,j)\big)\\
&=&\epsilon\cdot\partial_t\psi(\theta,j)-\frac{\epsilon^2}{2}\left[\partial_t\psi(\theta,j)\right]^2+O\left(\left[\epsilon\cdot\partial_t\psi(\theta,j)\right]^3\right).
\end{eqnarray*}
Denote by $R=\sup_{\theta\in\Td}|\partial_t\psi(\theta,j)|$. If we assume that
\begin{equation}\label{eq:condneg1}
\int_{\Td}\partial_t\psi(\theta,j)d\nu(\theta)=0,\quad \mbox{ but }\quad \partial_t\psi(\theta,j)\not\equiv 0,
\end{equation}
 then we can choose $\epsilon>0$ and $R>0$ so that
$$\int_{\Td}\log|\partial_t\varphi(\theta,j)|d\nu(\theta)=-\frac{\epsilon^2}{2}\int_{\Td}\left[\partial_t\psi(\theta,j)\right]^2d\nu(\theta)+\epsilon^3O(R^3)<0.$$
As a particular case, we can consider $\psi(\theta,t)=C(\theta)\xi(t)$, where $C$ and $\xi$ are $C^r$, and $\xi(j)=0$, $j=0,1$. Therefore, condition \eqref{eq:condneg1} is satisfied if
\begin{equation}\label{eq:condneg2}
\int_{\Td}C(\theta)\,d\nu(\theta)=0,\quad \mbox{ but }\quad C(\theta)\xi'(j)\not\equiv 0.
\end{equation}
Note that  $C(\theta)=\cos(2\pi\theta)$ and $\xi(t)=t(1-t)$ satisfy \eqref{eq:condneg2} when $\nu=\ms$ and so, the original Kan example satisfies \eqref{eq:condLEneg}. Of course, it is not difficult to exhibit examples of continuous functions (not identically zero) $C:\Td\to\mathbb R$, $\theta\in\Td$, such that
$$\int C(\theta)d\md(\theta)=0.$$
Summarizing we have
\begin{prop}\label{prop:condNLE1}
Let $\psi\::\: M\to I$ be $C^r$, $r\geq 1$, and $\psi(\theta,j)=0$, for $j=0,1$. Assume that
$$\int_{\Td}\partial_t\psi(\theta,j)d\nu(\theta)=0,\quad \mbox{ but }\quad \partial_t\psi(\theta,j)\not\equiv 0,$$
then, there exist $\epsilon>0$ such that $\|\psi\|_{C^1}<\epsilon$, then  \eqref{eq:condLEneg} is satisfied. 
\noindent In particular, if $\psi(\theta,t)=C(\theta)\xi(t),$ where $C$ and $\xi$ are $C^r$, $r\geq 1$, $\xi(j)=0$,  and
$$\int_{\Td}C(\theta)\,d\nu(\theta)=0,\quad \mbox{ but }\quad C(\theta)\xi'(j)\not\equiv 0,$$
then the same conclusion above holds.
\end{prop}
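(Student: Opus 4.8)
The plan is to turn the Taylor expansion displayed in the paragraphs preceding the statement into a rigorous estimate. As there, set $\varphi(\theta,t)=t+\epsilon\,\psi(\theta,t)$, fix $j\in\{0,1\}$, and write $g_j:=\partial_t\psi(\cdot,j)$, a continuous function on $\Td$, so that $\partial_t\varphi(\theta,j)=1+\epsilon\,g_j(\theta)$. First I would record two facts. By hypothesis $\int_{\Td}g_j\,d\nu=0$. Moreover $g_j\not\equiv0$ and $\supp\nu=\Td$, so $\{\,\theta:g_j(\theta)\neq0\,\}$ is a nonempty open set, hence of positive $\nu$-measure; consequently
$$\kappa_j:=\int_{\Td}g_j(\theta)^2\,d\nu(\theta)>0,$$
and this strict positivity is the only place the full support of $\nu$ enters. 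Put $R_j:=\|g_j\|_\infty$; note $R_j>0$ since $\kappa_j>0$.

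Next I would invoke the elementary inequality $\bigl|\log(1+x)-x+\tfrac{x^2}{2}\bigr|\le|x|^3$, valid for $|x|\le\tfrac12$. For $0<\epsilon<1/(2R_j)$ this applies pointwise with $x=\epsilon\,g_j(\theta)$; integrating against $\nu$ and using $\int_{\Td}g_j\,d\nu=0$ gives
$$\int_{\Td}\log|\partial_t\varphi(\theta,j)|\,d\nu(\theta)\;=\;-\frac{\epsilon^2}{2}\,\kappa_j+\epsilon^3\rho_j,\qquad |\rho_j|\le R_j^{3}.$$
Hence, once $\epsilon$ is small enough that $\epsilon R_j^{3}<\kappa_j/4$ (and $\epsilon<1/(2R_j)$), the cubic term is dominated by the quadratic one and the left-hand side is at most $-\epsilon^2\kappa_j/4<0$. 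Taking $\epsilon$ below the smaller of the two resulting thresholds for $j=0$ and $j=1$ yields \eqref{eq:condLEneg}, which proves the first assertion.

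For the ``in particular'' clause I would observe that when $\psi(\theta,t)=C(\theta)\xi(t)$ with $\xi(j)=0$, one has $g_j(\theta)=C(\theta)\xi'(j)$, so $\int_{\Td}g_j\,d\nu=\xi'(j)\int_{\Td}C\,d\nu=0$, while $g_j\not\equiv0$ is precisely the stated non-degeneracy $C(\theta)\xi'(j)\not\equiv0$; the argument above then applies verbatim. I do not expect a genuine obstacle here: the computation is routine, and the only points needing care are the strict positivity of the leading coefficient $\kappa_j$ (from $\supp\nu=\Td$ together with continuity of $g_j$) and the bookkeeping of the cubic remainder (from the logarithm estimate above).
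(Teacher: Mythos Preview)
Your proof is correct and follows essentially the same approach as the paper: both arguments expand $\log(1+\epsilon\,\partial_t\psi(\theta,j))$ to second order, use the mean-zero hypothesis to kill the linear term, and observe that the negative quadratic term $-\tfrac{\epsilon^2}{2}\int(\partial_t\psi)^2\,d\nu$ dominates the cubic remainder for small $\epsilon$. Your version is in fact more careful than the paper's, which simply writes the $O(\epsilon^3 R^3)$ remainder without an explicit constant; you supply the elementary bound $|\log(1+x)-x+\tfrac{x^2}{2}|\le|x|^3$ for $|x|\le\tfrac12$ and also make explicit why $\kappa_j>0$ (continuity of $g_j$ together with $\supp\nu=\Td$), a point the paper leaves implicit.
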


Now we are going to discuss the of the robustness of condition \eqref{eq:condLEneg}. Assume that $K\::\:M\to M$ is a Kan-like map defined by
$$K(\theta,t)=(E(\theta),\varphi(\theta,t)),$$
and $\nu$ be an ergodic invariant probability measure for $E$ with $\supp\nu=\Td$.

Suppose that $\varphi$  satisfies \eqref{eq:condLEneg} (for instance if $\psi$ satisfies some of  the conditions in Proposition~\ref{prop:condNLE1}). It is consequence of the continuity of the functional 
$$M^1(E)\ni\eta\to\int_{\Td}\log|\partial_t\varphi(\theta,j)|d\eta(\theta)\in\mathbb R$$
that if $\eta$ is close to $\nu$ in the weak* topology, then $\eta$ also satisfies \eqref{eq:condLEneg}. 

In the particular case where $E$ is an affine expanding map and $\nu=\md$ the condition  \eqref{eq:condLEneg} is satisfied for every $\eta$ full supported close to Lebesgue. This is precisely the case for certain equilibrium states associated  as we discuss below. 

If $E$ is an  affine expanding map, then  its maximum entropy measure coincides with its physical measure, which is Lebesgue. Equivalently,  $\md$ is the  equilibrium state associated to the null potential $\phi_0$ and geometric potential $\phi_{\mbox{geo}}$, respectively.
It follows from statistical stability (Theorem \ref{teo:StabstatVV}) we have that for every continuous H\"older potential $\phi$ close to $\phi_0$ (or $\phi_{\mbox{geo}}$), then $\phi$ satisfies \eqref{eq:boundvar}  and  its equilibrium state $\nu_{\phi}$ is close to $\md$  in the weak* topology. Of course by the remark above $\nu_\phi$ satisfies \eqref{eq:condLEneg}. Summarizing,

\begin{prop}\label{prop:condNLE2}
Assume that $K\::\:M\to M$ is a $C^r$, $r\geq 1$, Kan-like map defined by
$$K(\theta,t)=(E(\theta),\varphi(\theta,t)),$$
and $\nu$ is an ergodic invariant probability measure for $E$ with $\supp\nu=\Td$. Then, there exist an open neighborhood $\mathbb U\subseteq M^1(E)$, $\nu\in\mathbb U$, such that every $\eta\in \mathbb U$ satisfies \eqref{eq:condLEneg}.

If $K$ is an affine Kan like map, then for every continuous H\"older potential $\phi: \Td\to \mathbb R$ close to $\phi_0$   its equilibrium state $\nu_{\phi}$  satisfies \eqref{eq:condLEneg}.
\end{prop}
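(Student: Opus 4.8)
The plan is to prove the two assertions in turn; the first — robustness of \eqref{eq:condLEneg} under weak$^*$ perturbations of $\nu$ — carries the bulk of the argument, and the second (the affine case) follows by feeding statistical stability into the first. For the first part I would begin by noting that the relevant integrand is a \emph{fixed} continuous bounded function on $\Td$: since $K$ is a local diffeomorphism and
$$DK(\theta,t)=\begin{pmatrix}\partial_\theta E(\theta)&0\\ \partial_\theta\varphi(\theta,t)&\partial_t\varphi(\theta,t)\end{pmatrix}$$
with $\partial_\theta E(\theta)$ invertible ($E$ being expanding), necessarily $\partial_t\varphi(\theta,t)\neq 0$ for every $(\theta,t)\in M$; hence $\theta\mapsto\log|\partial_t\varphi(\theta,j)|$ is continuous on the compact manifold $\Td$, thus bounded, for $j=0,1$. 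Consequently the functional
$$M^1(E)\ni\eta\longmapsto\int_{\Td}\log|\partial_t\varphi(\theta,j)|\,d\eta(\theta)\in\mathbb R$$
is continuous in the weak$^*$ topology. Assuming, as in the discussion preceding the statement, that $\varphi$ satisfies \eqref{eq:condLEneg} for $\nu$ (i.e.\ the two integrals are strictly negative), these strict inequalities persist on a weak$^*$-open neighborhood $\mathbb U_j$ of $\nu$ in $M^1(E)$; taking $\mathbb U=\mathbb U_0\cap\mathbb U_1$ gives the first claim.

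For the affine case I would take $E(\theta)=A\theta$ with $A$ an integer expanding matrix, so that $\htop(E)=\log k$ with $k=|\det A|\geq 2$, and recall that $\nu=\md$ is simultaneously the measure of maximal entropy of $E$ and the unique equilibrium state of the null potential $\phi_0\equiv 0$, and that (as is assumed in this setting) $\md$ satisfies \eqref{eq:condLEneg}. Given a H\"older potential $\phi:\Td\to\mathbb R$ with $\|\phi-\phi_0\|_\infty$ small one has $\sup\phi-\inf\phi\le 2\|\phi\|_\infty<\log k=\htop(E)$, so the bounded-variation hypothesis \eqref{eq:boundvar} holds; Theorem~\ref{teo:StabstatVV} then yields that $(E,\phi)$ is statistically stable, and since the equilibrium state $\nu_\phi$ is unique by Ruelle's Theorem~\ref{teo:Ruelle}, statistical stability forces $\nu_\phi\to\nu_{\phi_0}=\md$ in the weak$^*$ topology as $\phi\to\phi_0$. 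Applying the first part with $\nu=\md$ produces a weak$^*$-neighborhood $\mathbb U$ of $\md$ on which \eqref{eq:condLEneg} holds; for $\phi$ sufficiently close to $\phi_0$ we then have $\nu_\phi\in\mathbb U$, which finishes the proof.

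I expect the argument to be essentially routine once these pieces are assembled. The only points requiring care are: the non-vanishing of $\partial_t\varphi$, which is exactly what guarantees the integrand is continuous and bounded on $\Td$ (and hence the functional on $M^1(E)$ weak$^*$-continuous); and the matching of topologies in the affine case — one must use that ``$\phi$ close to $\phi_0$ uniformly'' implies ``$\nu_\phi$ close to $\md$ weak$^*$'', which is precisely the content of Theorem~\ref{teo:StabstatVV} together with the uniqueness of $\nu_\phi$, since a priori statistical stability is phrased in terms of weak$^*$ accumulation points of equilibrium states rather than genuine convergence.
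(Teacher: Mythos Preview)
Your proposal is correct and follows essentially the same approach as the paper: the proof there is the discussion immediately preceding the proposition, which uses continuity of the functional $\eta\mapsto\int\log|\partial_t\varphi(\theta,j)|\,d\eta$ in the weak$^*$ topology for the first part, and then invokes statistical stability (Theorem~\ref{teo:StabstatVV}) together with the first part for the affine case. Your version is slightly more detailed --- in particular your justification that $\partial_t\varphi\neq 0$ (from $K$ being a local diffeomorphism) to ensure continuity of the integrand, and your explicit verification of the bounded-variation hypothesis \eqref{eq:boundvar} --- but the argument is the same.
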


Also we can consider perturbation of the Kan-like maps inside the class of skew product in the cylinder preserving the boundary. More precisely, we consider  $K^r(M,\partial M)$  as the set of  $C^r$ Kan-like map defined by
$$K(\theta,t)=(E(\theta),\varphi(\theta,t)).$$
Fix $K_0\in K^r(M,\partial M)$ defined by
$$K_0(\theta,t)=(E_0(\theta),\varphi_0(\theta,t)),$$
and  consider $\nu_0$  an ergodic invariant probability measure for $E$ with $\supp\nu_0=\Td$ and satisfying \eqref{eq:condLEneg}. The bilinear functional $\Phi:K^r(M,\partial M)\times \mathbb M^1(M)\to\mathbb R$ defined by
$$\Phi(E,\nu)=\int_{\Td}\log|\partial_t\varphi(\theta,j)|d\nu(\theta)$$
is continuous. Hence, there exist  neighborhoods $\mathcal U\subseteq K^r(M,\partial M)$, $K_0\in \mathcal U$ and $\mathbb U\subseteq \mathbb M^1(M)$, $\nu_0\in \mathbb U$, such that for every $K\in \mathcal U$ and  $\nu\in M^1(E)\cap\mathbb U$, we have
$$\int_{\Td}\log|\partial_t\varphi(\theta,j)|d\nu(\theta)<0.$$
Therefore, in the particular case where $K_0$ is an affine Kan-like map and  $\nu_0=\md$ is the equilibrium state for the potential $\phi_0^{E_0}$  and $\phi_{\mbox{geo}}^{E_0}$,  we obtain the neighborhood $\mathcal U$ and $\mathbb{U}$ as above. We can invoke again the statistical stability (Theorem \ref{teo:StabstatVV}) and we have that for every $K\in \mathcal U$, every continuous H\"older potential $\phi^E$ close to $\phi_0^{E_0}$ (or $\phi_{\mbox{geo}}^{E_0}$), then $\phi^E$ satisfies \eqref{eq:boundvar}  and  its equilibrium state $\nu_{\phi}^E\in\mathbb U$. Of course by the remark above $\nu_\phi^E$ satisfies \eqref{eq:condLEneg}. In particular this scheme allows to fix the potential ($\phi_0$ for instance) for every dynamic $K$. Summarizing,

\begin{prop}\label{prop:condNLE3}
Fix $K_0\in K^r(M,\partial M)$ defined by
$$K_0(\theta,t)=(E_0(\theta),\varphi_0(\theta,t)),$$
and  consider $\nu_0$  an ergodic invariant probability measure for $E_0$ with $\supp\nu_0=\Td$ and satisfying \eqref{eq:condLEneg}. 
Then, there exist  neighborhoods $\mathcal U\subseteq K^r(M,\partial M)$, $K_0\in \mathcal U$ and $\mathbb U\subseteq \mathbb M^1(M)$, $\nu_0\in \mathbb U$, such that for every $K\in \mathcal U$, any  $\nu\in M^1(E)\cap\mathbb U$ satisfies \eqref{eq:condLEneg}.

In particular, if $K_0$ is an affine Kan-like map  such that $\md$ satisfies \eqref{eq:condLEneg}, there exist  neighborhoods $\mathcal U\subseteq K^r(M,\partial M)$, $K_0\in \mathcal U$ and $\mathbb U\subseteq \mathbb M^1(M)$, $\md\in \mathbb U$, such that for every $K\in \mathcal U$, and  for every continuous H\"older potential $\phi: \Td\to \mathbb R$ close to $\phi_0$, the equilibrium state $\nu_{\phi}^E$  corresponding to the pair $(E,\phi)$ satisfies \eqref{eq:condLEneg}.
\end{prop}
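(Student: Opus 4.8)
The plan is to deduce both assertions from the joint continuity, in the relevant topologies, of the functionals occurring in \eqref{eq:condLEneg}. For $j\in\{0,1\}$, a Kan-like map $K=(E,\varphi)\in K^r(M,\partial M)$ and a Borel probability measure $\nu$ on $\Td$, set
$$\Phi_j(K,\nu):=\int_{\Td}\log|\partial_t\varphi(\theta,j)|\,d\nu(\theta),$$
so that \eqref{eq:condLEneg} for the pair $(E,\nu)$ reads precisely $\Phi_0(K,\nu)<0$ and $\Phi_1(K,\nu)<0$. The first thing to check is that each integrand is a bona fide bounded continuous function on $\Td$: since $K$ is a local diffeomorphism preserving the boundary, $\det DK(\theta,j)=\det\partial_\theta E(\theta)\cdot\partial_t\varphi(\theta,j)\ne0$, hence $\partial_t\varphi(\theta,j)\ne0$, and by compactness of $\Td\times\{0,1\}$ together with $C^1$-openness there are a $C^1$-neighborhood $\mathcal U_0\ni K_0$ and constants $0<c<C$ with $c\le|\partial_t\varphi(\theta,j)|\le C$ for every $(E,\varphi)\in\mathcal U_0$, every $\theta$ and every $j$; in particular $\theta\mapsto\log|\partial_t\varphi(\theta,j)|$ is continuous and uniformly bounded on $\mathcal U_0$.

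Next I would establish that $(K,\nu)\mapsto\Phi_j(K,\nu)$ is continuous on $\mathcal U_0\times\mathbb M^1(\Td)$, where $\mathbb M^1(\Td)$ carries the weak$^*$ topology and $K^r(M,\partial M)$ the $C^r$ topology. If $K_n=(E_n,\varphi_n)\to K=(E,\varphi)$ in $C^r$ (hence $\partial_t\varphi_n\to\partial_t\varphi$ uniformly) and $\nu_n\to\nu$ weak$^*$, then the uniform bounds above give $\log|\partial_t\varphi_n(\cdot,j)|\to\log|\partial_t\varphi(\cdot,j)|$ uniformly, and splitting $\Phi_j(K_n,\nu_n)-\Phi_j(K,\nu)$ into a term controlled by this uniform convergence of the integrands plus a term controlled by the weak$^*$ convergence of the measures, each of which tends to $0$. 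Since $\Phi_0(K_0,\nu_0)<0$ and $\Phi_1(K_0,\nu_0)<0$, continuity yields a $C^r$-neighborhood $\mathcal U\subseteq\mathcal U_0$ of $K_0$ inside $K^r(M,\partial M)$ and a weak$^*$-neighborhood $\mathbb U$ of $\nu_0$ on which both $\Phi_0$ and $\Phi_1$ are negative; thus every $K\in\mathcal U$ and every $E$-invariant $\nu\in\mathbb U$ satisfies \eqref{eq:condLEneg}. Shrinking $\mathcal U$ if necessary, Proposition~\ref{prop:condabiertas} also allows us to assume that every $K\in\mathcal U$ is again a Kan-like map, which proves the first statement.

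For the affine case, let $K_0=(E_0,\varphi_0)$ be an affine Kan-like map with $\md$ satisfying \eqref{eq:condLEneg}. Since $E_0$ is a linear expanding endomorphism, $\md$ is its measure of maximal entropy, i.e.\ the equilibrium state of $(E_0,\phi_0)$ for the null potential $\phi_0$. Apply the first statement with $\nu_0=\md$ to obtain $\mathcal U\ni K_0$ and a weak$^*$-neighborhood $\mathbb U\ni\md$. For $K=(E,\varphi)\in\mathcal U$ the base $E$ is $C^r$-close to $E_0$, so $\htop(E)=\log(\deg E)=\log(\deg E_0)>0$; hence for every H\"older potential $\phi$ with $\sup\phi-\inf\phi<\htop(E)$ (in particular for every H\"older $\phi$ close to $\phi_0$ in the uniform topology) Ruelle's Theorem~\ref{teo:Ruelle} provides a unique, fully supported equilibrium state $\nu_\phi^E$ with H\"older Jacobian. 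The pair $(E_0,\phi_0)$ is statistically stable by Theorem~\ref{teo:StabstatVV} (its variation equals $0<\htop(E_0)$), and I would upgrade this sequential statement to a neighborhood one by contradiction: were $\nu_\phi^E\notin\mathbb U$ along a sequence $(E_n,\phi_n)\to(E_0,\phi_0)$ (with the Lipschitz constants of the inverse branches of $E_n$ converging, automatic from $C^1$-convergence), then, by weak$^*$ compactness of $\mathbb M^1(\Td)$, a subsequence of $\nu_{\phi_n}^{E_n}$ would converge to some $\rho\notin\mathbb U$; statistical stability forces $\rho$ to be an equilibrium state of $(E_0,\phi_0)$, hence $\rho=\md\in\mathbb U$, a contradiction. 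Therefore, after shrinking $\mathcal U$ and the admissible size of the perturbation of $\phi_0$, we have $\nu_\phi^E\in\mathbb U$, and the first statement applied to the pair $(K,\nu_\phi^E)$ gives \eqref{eq:condLEneg}.

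The argument is mostly soft; the only genuinely delicate points are the uniform lower bound on $|\partial_t\varphi(\theta,j)|$, which keeps the logarithm harmless and turns the functionals $\Phi_j$ from merely upper semicontinuous into continuous, and the passage from the sequential definition of statistical stability to the neighborhood version, which rests on the uniqueness of the equilibrium state of $(E_0,\phi_0)$ and on weak$^*$ compactness. I expect this last passage to be the main (still modest) obstacle, since one must check the hypotheses of Theorem~\ref{teo:StabstatVV}—the bounded-variation condition \eqref{eq:boundvar} and the convergence of the Lipschitz data—uniformly as $K$ and $\phi$ vary near $K_0$ and $\phi_0$.
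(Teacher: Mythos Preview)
Your proposal is correct and follows essentially the same approach as the paper: both deduce the first part from joint continuity of the functional $(K,\nu)\mapsto\int_{\Td}\log|\partial_t\varphi(\theta,j)|\,d\nu$ and obtain the second part by invoking statistical stability (Theorem~\ref{teo:StabstatVV}) to ensure the equilibrium states land in the weak$^*$ neighborhood $\mathbb U$. Your version is in fact more careful than the paper's terse ``the bilinear functional\dots is continuous'' argument, since you supply the uniform positive lower bound on $|\partial_t\varphi(\theta,j)|$ (via the local-diffeomorphism condition) that keeps the logarithm bounded, and you make explicit the compactness/uniqueness contradiction argument upgrading sequential statistical stability to a neighborhood statement.
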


Finally, we can consider general perturbation of Kan-like maps inside the class ${\rm End}^r(M,\partial M)$ of local diffeomorphims in the cylinder preserving the boundary. Fix $K\in {\rm End}^r(M,\partial M)$ defined by
$$K(\theta,t)=(E(\theta),\varphi(\theta,t)),$$
and as above, consider $\nu$  an ergodic invariant probability measure for $E$ with $\supp\nu=\Td$ and satisfying 
\begin{equation}\label{eq:condLEneg2}
\int_{\Td}\log\|DK|E^c(\theta,j)\|\,d\,\nu(\theta)<0,\quad j=0,1,
\end{equation}
The map ${\rm End}^r(M,\partial M)\times M\to\mathbb  R$, defined by $(F,\theta,t)\to \|DF|E^c(\theta,t)\|$ is continuos, as well as the bilinear functional $\Phi:{\rm End}^r(M,\partial M)\times\mathbb M^1(M)\to\mathbb R$ defined by
$$\Phi(F,\nu)=\int_{\Td}\log\|DF|E^c(\theta,j)\|d\nu(\theta).$$
Hence, there exist  neighborhoods $\mathcal U\subseteq {\rm End}^r(M,\partial M)$, $K\in \mathcal U$ and $\mathbb U\subseteq M^1(M)$, $\nu_0\in \mathbb U$, such that for every $F\in \mathcal U$ and  $\nu\in M^1(E_j)\cap\mathbb U$, we have
\begin{equation}\label{eq:condLEneg3}
\int_{\Td}\log\|DF|E^c(\theta,j)\|d\nu(\theta)<0.
\end{equation}
Therefore, in the particular case where $K$ is an affine Kan-like map and  $\nu_0=\md$. Note that in this case
$$DK|E^c(\theta,j)=\partial_t\varphi(\theta,t),$$
and so,  if $\md$ satisfies condition \eqref{eq:condLEneg}, then we can obtain the neighborhood $\mathcal U$ and $\mathbb{U}$ above. Invoking once again the statistical stability (Theorem \ref{teo:StabstatVV}) we have that, for every $F\in \mathcal U$, every continuous H\"older potential $\phi^{E_j}$ close to $\phi_0^{E}$ (or $\phi_{\mbox{geo}}^{E}$), then $\phi^{E_j}$ satisfies \eqref{eq:boundvar}  and  its equilibrium state (with respect to $E_j$ $j=0,1$) $\nu_{\phi}^{E_j}\in\mathbb U$. Of course by the remark above $\nu_\phi^{E_j}$ satisfies \eqref{eq:condLEneg2}. In particular this scheme allows to fix the potential (for instance $\phi_0$) for every dynamic $F$ close to $K$. Summarizing,

\begin{prop}\label{prop:condNLE4}
Suppose that $K\in {\rm End}^r(M,\partial M)$ is a Kan-like map and let $\nu$   be an ergodic invariant probability measure for $E$ with $\supp\nu=\Td$ and satisfying \eqref{eq:condLEneg2}. 
Then, there exist  neighborhoods $\mathcal U\subseteq {\rm End}^r(M,\partial M)$, $K\in \mathcal U$, and $\mathbb U\subseteq M^1(M)$, $\nu\in \mathbb U$, such that for every $F\in \mathcal U$, any  $\nu\in M^1(E_j)\cap\mathbb U$ satisfies \eqref{eq:condLEneg3}.

In particular, if $K_0$ is an affine Kan-like map such that $\md$ satisfies \eqref{eq:condLEneg} , then there exist  neighborhoods $\mathcal U\subseteq  {\rm End}^r(M,\partial M)$, $K_0\in \mathcal U$, and $\mathbb U\subseteq \mathbb M^1(M)$, $\md\in \mathbb U$, such that for every $F\in \mathcal U$, and  for every continuous H\"older potential $\phi: \Td\to \mathbb R$ close to $\phi_0$, the equilibrium state $\nu_{\phi}^{E_j}$  corresponding to the pair $(E_j,\phi)$, $j=0,1$, satisfies \eqref{eq:condLEneg3}.

\end{prop}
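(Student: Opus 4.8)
The plan is to package the whole statement as a consequence of the joint continuity of a single functional, together with the statistical stability theorem already recalled (Theorem~\ref{teo:StabstatVV}). For $j\in\{0,1\}$ set
$$\Phi_j(F,\eta)=\int_{\Td}\log\big\|DF|E^c(\theta,j)\big\|\,d\eta(\theta),\qquad (F,\eta)\in{\rm End}^r(M,\partial M)\times M^1(M),$$
where $E^c=E^c_F$ denotes the center bundle of $F$, well defined for every $F$ in a fixed $C^1$-neighborhood of $K$ since partial hyperbolicity is $C^1$-open (Subsection~\ref{ssec:hiperbparcial}) and the center bundle, being determined by the invariant cone field, depends continuously on $F$. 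The first thing to check is that $\Phi_j$ is continuous: the map $(F,x)\mapsto\|DF|E^c_F(x)\|$ is continuous and bounded away from $0$ and $\infty$ on that neighborhood times $M$, so $\theta\mapsto\log\|DF|E^c(\theta,j)\|$ varies \emph{uniformly} with $F$, and splitting $\Phi_j(F_n,\eta_n)-\Phi_j(F,\eta)$ into $\int\big(\log\|DF_n|E^c\|-\log\|DF|E^c\|\big)\,d\eta_n$ plus $\int\log\|DF|E^c\|\,d(\eta_n-\eta)$ gives joint continuity (uniform convergence for the first term, weak$^*$ convergence for the second).

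Granting this, the first assertion is immediate. Hypothesis~\eqref{eq:condLEneg2} says $\Phi_j(K,\nu)<0$ for $j=0,1$, so by continuity there are neighborhoods $\mathcal U\ni K$ in ${\rm End}^r(M,\partial M)$ and $\mathbb U\ni\nu$ with $\Phi_j(F,\eta)<0$ for every $(F,\eta)\in\mathcal U\times\mathbb U$ and $j=0,1$; in particular this holds for every $\eta\in M^1(E_j)\cap\mathbb U$, which is precisely~\eqref{eq:condLEneg3}. Here one uses that $F\mapsto E_j=F(\cdot,j)$ is continuous from ${\rm End}^r(M,\partial M)$ into ${\rm End}^r(\Td)$ (restriction to a fixed submanifold) and that, after shrinking $\mathcal U$, each such $E_j$ is still expanding of degree $\deg E$.

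For the ``in particular'' clause, suppose $K_0$ is an affine Kan-like map, so $E$ is linear expanding and $E^c=\{0\}\times\mathbb R$; then $DK_0|E^c(\theta,j)=\partial_t\varphi_0(\theta,j)$, whence the assumption that $\md$ satisfies~\eqref{eq:condLEneg} is exactly $\Phi_j(K_0,\md)<0$, and the previous paragraph provides neighborhoods $\mathcal U\ni K_0$ and $\mathbb U\ni\md$ on which $\Phi_j<0$. It remains to arrange that, after shrinking $\mathcal U$ and fixing how close $\phi$ must be to $\phi_0$, the equilibrium state $\nu_\phi^{E_j}$ of the pair $(E_j,\phi)$ lies in $\mathbb U$. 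Since $E$ is linear expanding, its unique equilibrium state for $\phi_0\equiv 0$ is $\md$, and $\sup\phi_0-\inf\phi_0=0<\htop(E)$, so Theorem~\ref{teo:StabstatVV} applies at $(E,\phi_0)$: the (unique) equilibrium state depends continuously on the pair $(f,\psi)$ near $(E,\phi_0)$, the convergence of the relevant Lipschitz constants being automatic from $C^1$-convergence of the maps. As $F\to K_0$ in $\mathcal U$ the boundary map $E_j=F(\cdot,j)$ converges to $E$ in $C^1$, and any Hölder $\phi$ close enough to $\phi_0$ still satisfies $\sup\phi-\inf\phi<\htop(E_j)=\log\deg E$; hence $\nu_\phi^{E_j}$ is weak$^*$-close to $\md$, so $\nu_\phi^{E_j}\in\mathbb U$, and therefore $\Phi_j(F,\nu_\phi^{E_j})<0$, which is~\eqref{eq:condLEneg3}.

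I expect the only genuinely delicate point to be the joint continuity of $\Phi_j$ — concretely, the continuous dependence of the center bundle $E^c_F$ on $F$ near $K$, which is what forces the integrand to vary uniformly — everything else being a routine open-condition argument combined with the quoted statistical stability theorem. A secondary bookkeeping issue is translating the statistical-stability neighborhood, which lives in a space of pairs (map, potential), back into a $C^r$-neighborhood of $K_0$ times a neighborhood of $\phi_0$; this is handled by the continuity of $F\mapsto E_j$ together with the local constancy of $\deg E_j$, hence of $\htop(E_j)$.
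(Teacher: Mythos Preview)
Your proposal is correct and follows essentially the same route as the paper: both arguments hinge on the joint continuity of the functional $(F,\eta)\mapsto\int\log\|DF|E^c(\theta,j)\|\,d\eta$ to obtain the open neighborhoods $\mathcal U$ and $\mathbb U$, and then invoke Theorem~\ref{teo:StabstatVV} to place the equilibrium states $\nu_\phi^{E_j}$ inside $\mathbb U$. Your write-up is in fact somewhat more explicit than the paper's, particularly in the splitting argument for joint continuity and in noting that $\htop(E_j)=\log\deg E$ is locally constant.
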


%
%
%
%
%


\subsection{Measures of maximal entropy}\label{ssec:mmecaracterization}

Finally, we prove Theorem~\ref{mainteo:C}. 

Let $K:M\to M$ be a Kan-like map defined by \ref{K1}-\ref{K3}. Let $\nu_0$ be the unique measure of maximal entropy of $E$. We assume that
$$
\int\log |\partial_t \varphi (\theta,j)|\: d\nu_0(\theta)<0, \quad j=0,1.
$$
From Proposition~\ref{prop:condabiertas}, there exists a $C^r$-neighborhood $\mathcal U\subseteq {\rm End}^r(M,\partial M)$, $K\in \mathcal U$, such that every $F\in\mathcal U$, defined by
$$F(\theta,t)=(E^F(\theta,t), \varphi^F(\theta,t)),\quad(\theta,t)\in M,$$
satisfies \ref{F1}-\ref{F4}. 

We denote by $E^F_j:\Td\to\Td$ the expanding map defined by $E^F_j(\theta)=E^F(\theta, j)$, $j=0,1$. Recall that we have a natural identification among the (ergodic) $E_j^F$-invariant probability measures and the $F$-invariant probability measures supported on the boundary $\Td\times\{j\}$. We denote by $\nu_j^F$ the unique probability measure maximizing the entropy of $E_j^F$. 

Note that if $F$ is $C^r$ close to $K$, then the uniformly expanding map $E^F_j$ is $C^r$ close to the expanding maps $E$, and then $\nu_j^F\to\nu_0$ in the weak*
topology as $F\to K$ (see Lemma~\ref{le:statest}). 

On the other hand, the center direction $E^c_F$ of $F$ varies $C^0$ with respect to $F$, and so we have
$$\lambda^c(F,\mu^F_j)=\int\log\|DF|E^c_F\|d\nu^F_j\to \int\log|\partial_t\varphi(\theta,j)|d\nu_0=\lambda^c(K,\nu_0)<0,$$
where $\mu^F_j=\nu^F_j\times \delta_j$.

Therefore, reducing $\mathcal U$ if it is necessary, we determine that every $F\in \mathcal U$ satisfies the conditions of Proposition~\ref{prop:medidasnegativasparaF} and Proposition~\ref{prop:medidapositivaF}. Recalling that $\nu^F_j$ is an equilibrium state for the expanding map $E^F_j$, and hence $\supp \nu_j^F=\Td$ and the state has a Jacobian $J_{\nu_j^F}E^F_j$ with bounded distortion,  from Proposition~\ref{prop:medidasnegativasparaF} we have that
\begin{itemize}
        \item[(a)] the measures $\mu_j^F$, $j=0,1$, are observable with respect to $m^F$,
        \item[(b)] the basins of $\mu_j$ are intermingled with respect to $m^F$,
        \item[(c)] the union of their basins $\mathcal{B}(\mu_0^F)\cup \mathcal{B}(\mu_1^F)$  covers $m^F$-a.e. the whole manifold $M$, 
\end{itemize}
where $m^F$ is the Borel probability measure defined by
$$ m^F(A)=\int (\pi_{t,j})_*\nu^F_j(A)\,d\,\m_{I}(t); \mbox{ for every }A\subseteq M \mbox{ measurable.} $$
\noindent Again from Proposition~\ref{prop:medidasnegativasparaF}, we have that
        
\begin{itemize}
        \item[(d)] there is no other invariant measure $\mu$ such that $\pi_*\mu=\nu^F_j$ with a negative center Lyapunov exponent. 
\end{itemize}

On the other hand, from Proposition~\ref{prop:medidapositivaF} we have that

\begin{itemize}
        \item[(e)] There is a measurable, not continuous, map $\sigma^F: \Td \to I$ such that for $\nu^F_j$-a.e. $\theta\in \Td$
\begin{eqnarray*}
W^s_F((\theta, 0))&=&\{(\zeta,t)\in\mathcal F^c(F;(\theta,0))\::\:   t\in [0, \sigma(\theta))\},\quad\mbox{ and }\\
W^s_F(\pi_1(\theta, 0))&=&\{(\zeta,t)\in\mathcal F^c(F;(\theta,0))\::\:   t\in (\sigma(\theta), 1]\}.
\end{eqnarray*}
       \item[(f)] There exists a unique ergodic $F$-invariant probability measure $\mu^F$ such that $\pi_j^*\mu^F=\nu^F_j$ and $\mu^F\ne\mu_j^F$, $j=0,1$. Such measure $\mu$ is defined by
$$\int_M\Phi\,d\mu^F=\int_{\Td}\Phi(\theta,\sigma(\theta))\,d\nu^F_j(\theta),$$
for every continuous $\Phi:M\to \mathbb{R}$.
        \item[(g)] Moreover, $\mu^F$ has a nonnegative central Lyapunov exponent, is approximated by measures supported along periodic orbits in the interior of $M$ and $\supp\mu^F=M$ when $K$ is $2$-partially hyperbolic (see Lemma~\ref{le:transitive}).      
\end{itemize}

Let $\{F_n\}_{n\geq 1}$ be a sequence converging to $K$ in the $C^r$-topology. Let $\eta$ be an accumulation measure of $\mu_n$, where $\mu_n$ is the measure obtained from (f) above for $F_n$. Then, $\nu_n=\pi_j^n\circ\mu_n\to\pi_j\eta=\nu$ by Lemma~\ref{le:statest}. Hence, $\eta=\mu^K$, where $\mu^K$ is the ``central measure'' obtained from Proposition~\ref{prop:Bgen} part (2) for the measure of the maximal entropy of $K$. It follows from the convergence above that reducing $\mathcal U$, if necessary, 
$\lambda^c(F,\mu^F)>0$. The proof of Theorem~\ref{mainteo:C} is therefore finished.
\endproof

\vspace{0.5 cm}
\noindent {\bfseries Acknowledgments} We thank the anonymous referees and the editor for their thorough review and greatly appreciate their comments and suggestions, which significantly contributed to improving the quality of this work.


\bibliographystyle{plain}

\bibliography{KAN2}
\end{document}